\numberwithin{equation}{subsection}
\DeclareMathAlphabet{\mathbx}{U}{BOONDOX-cal}{m}{n}
\newcommand{\acurly}{\mathbx{a}}
\newcommand{\bcurly}{\mathbx{b}}
\newcommand{\ccurly}{\mathbx{c}}
\newcommand{\kcurly}{\mathbx{k}}
\newcommand{\pcurly}{\mathbx{p}}
\newcommand{\tcurly}{\mathbx{t}}
\DeclareFontFamily{U}{mathb}{\hyphenchar\font45}
\DeclareFontShape{U}{mathb}{m}{n}{
<-6> mathb5 <6-7> mathb6 <7-8> mathb7
<8-9> mathb8 <9-10> mathb9
<10-12> mathb10 <12-> mathb12
}{}
\DeclareSymbolFont{mathb}{U}{mathb}{m}{n}
\DeclareMathSymbol{\llcurly}{\mathrel}{mathb}{"CE}
\DeclareMathSymbol{\ggcurly}{\mathrel}{mathb}{"CF}
\def\st{0.8pt}
\def\tt{1.2pt}
\DeclareRobustCommand{\rchi}{{\mathpalette\irchi\relax}}
\newcommand{\irchi}[2]{\raisebox{\depth}{$#1\chi$}} 
\DeclareMathOperator{\edot}{\mathrel{=\hskip-0.81em\boldsymbol{\cdot}\hskip0.15em}}
\renewcommand\triangle{{\vartriangle}}
\renewcommand{\labelenumi}{{\rm \theenumi}}
\renewcommand{\theenumi}{{\rm(\arabic{enumi})}}
\renewcommand\epsilon{\varepsilon}
\renewcommand\emptyset{\varnothing}
\def\d{\partial}
\def\D{\mathbf d}
\renewcommand\kappa{\varkappa}
\def\<{\langle}
\def\>{\rangle}
\def\ito{\stackrel\sim\to}
\def\C{\mathbb C}
\def\c{\mathbf c}
\def\dd{\mathbf d}
\def\Z{\mathbb Z}
\def\R{\mathbb R}
\def\M{\mathcal M}
\def\ord{\mathop{\rm ord}\nolimits}
\def\diag{\mathop{\rm diag}\nolimits}
\def\o{\overline}
\def\T{\Upsilon}
\newtheorem{theorem}{Theorem}[subsection]
\newtheorem{proposition}[theorem]{Proposition}
\newtheorem{lemma}[theorem]{Lemma}
\newtheorem{corollary}[theorem]{Corollary}
\newtheorem{remark}[theorem]{Remark}
\newtheorem{definition}[theorem]{Definition}
\newtheorem{example}[theorem]{Example}
\def\le{\leqslant}
\def\ge{\geqslant}
\def\pt{{\rm pt}}
\def\Loc{{\rm Loc}}
\def\Sym{{\rm Sym}}
\def\k{\Bbbk}
\def\b{{\mathbf b}}
\def\c{{\mathbf c}}
\def\Tr{\mathop{\rm Tr}\nolimits}
\def\dist{\mathop{\rm dist}}
\def\cdist{\mathop{\rm condist}}
\def\id{\mathop{\rm id}\nolimits}
\def\im{\mathop{\rm im}}
\def\GL{\mathop{\rm GL}}
\def\SL{\mathop{\rm SL}}
\def\SU{\mathop{\rm SU}}
\def\Res{\mathbf{Loc}}
\def\Loc{\mathbf{Loc}}
\def\suchthat{\mathbin{\rm |}}
\def\and{\,\mathbin{\&}\,}
\def\Hom{\mathop{\rm Hom}\nolimits}
\def\ev{{\rm ev}}
\def\odd{{\rm odd}}
\def\op{{\rm op}}
\def\bim{\text{\rm-bim}}
\def\Gr{\mathop{\rm Gr}\nolimits}
\def\f{\mathbf{f}}
\def\F{\Bbbk}
\def\X{\mathcal X}
\def\Y{\mathcal Y}
\def\P{\mathcal P}
\def\M{\mathcal M}
\def\K{\mathscr K}
\def\B{\mathfrak B}
\def\C{\mathbb C}
\def\lm{\mathop{\rm lm}}
\def\lt{\mathop{\rm lt}}
\def\Mon{\mathop{\rm Mon}}
\renewcommand\phi{\varphi}
\def\={\equiv}
\renewcommand{\(}{\left(}
\renewcommand{\)}{\right)}
\def\m{\mathfrak m}
\def\n{{ n}}
\def\a{\mathbf a}
\def\b{\mathbf b}
\def\c{\mathbf c}
\def\t{\mathbf t}
\def\St{{\rm Str}}
\def\u{\underline}
\def\Sub{\mathop{\mathbf{Sub}}\nolimits}
\def\Tr{\mathop{\mathbf{Tr}}\nolimits}
\def\u{\underline}
\def\l{\ell}
\def\pd{\mathop{\rm pd}\nolimits}
\def\depth{\mathop{\rm depth}\nolimits}
\def\G{\mathcal G}
\def\T{\mathcal T}
\def\K{\mathcal K}
\def\B{\mathcal B}
\def\P{\mathcal P}
\def\BS{\mathrm{BS}}
\title[Homomorphisms between Bott-Samelson bimodules]{Homomorphisms between Bott-Samelson bimodules corresponding to sequences of reflections}
\author{Vladimir Shchigolev}
\address{Financial University under the Government of the Russian Federation\\49 Leningradsky Prospekt, Moscow, Russia}
\email{shchigolev\_vladimir@yahoo.com}
\renewcommand{\pmod}[1]{\ (\mathrm{mod}\ #1)}
\newcommand*\Bigcdot{\mathpalette\bigcdot@{.7}}
\newcommand*\bigcdot@[2]{\mathbin{\vcenter{\hbox{\scalebox{#2}{$\m@th#1\bullet$}}}}}
\def\bigcdot{{\Bigcdot}}
\begin{document}

\begin{abstract} We study the space of all
bimodule homomorphisms $R_x\otimes_R R(\u{t})\otimes_R R_y\to R_z\otimes_R R(\u{t}')\otimes_R R_w$
as a one-sided module, where $R_x,R_y,R_z,R_w$ are standard twisted bimodules and
$R(\u{t})$ and $R(\u{t}')$ are the Bott-Samelson bimodules corresponding to sequences
of reflections $\u{t}$ and $\u{t}'$ respectively. We prove that this module is always reflexive
under some reasonable restrictions on the representation of the underlying Coxeter group.

However, unlike the case where $\u{t}$ and $\u{t}'$ contain only simple reflections, this module
does not need any longer to be free. We provide a series of counterexamples already for the symmetric groups $S_n$,
where $n\ge4$. The projective dimension of the modules dual to them is $n-3$ and thus serves
to measure the deviation from the free modules.

When placed within a geometric framework, these examples show how to find fibers
of points fixed by the compact torus in the Bott-Samelson resolutions (as in the original definition
by Raoul Bott and Hans Samelson) with non-vanishing odd cohomology.
%
%
\end{abstract}

\subjclass[2020]{Primary: 20F55, 14M15; Secondary 55N91}
\keywords{Coxeter group, Bott-Samelson bimodule, Bott-Samelson resolution}

\maketitle

\section{Introduction} In his famous paper~\cite{Soergel}, W.\;Soergel showed that the space of 
homomorphisms
between Bott-Samelson bimodules corresponding to sequences of simple reflections is a free left module
as well as a free right-module. Soon afterwards N.\;Libedinsky constructed bases of these modules (light leaves bases)
explicitly in~\cite{Libedinsky}. They provided reach combinatorics and geometry in form of planar diagrams
that allowed B.\;Elias and G.\;Williamson to prove positivity of Kazhdan–Lusztig polynomials~\cite{Hodge} and
also allowed G.\;Williamson to construct counterexamples to Lusztig's and James' conjectures~\cite{Williamson}.

The motivating point of this paper was, therefore, an attempt to understand what part of the theory of
usual Bott-Samelson bimodules carry over to the theory of Bott-Samelson bimodules $R(\u{t})$ corresponding
to sequences of arbitrary reflections $\u{t}$, which we call here {\it reflection expressions}.
Note that such bimodules were already considered by T.\;Gobet and A.-L.\;Thiel in~\cite{Gobet_Thiel}
and also by the author in~\cite{tw}.

Naturally the first question to address is the existence of bases similar to light leaves bases.
This can of course be done only if the corresponding one-sided module $\Hom_{R\bim}^\bullet(R(\u{t}),R(\u{t}'))$
is free. This hypothesis, however, turns out to be false, as follows from Theorem~\ref{theorem:main_example},
which provides a series of counterexamples for the symmetric group $S_n$, where $n\ge4$.
Note that although the projective dimension of the one-sided modules of homomorphisms is always
equal to $1$ the projective dimension of their duals is equal to $n-3$ and thus tends to infinity as $n$ does.
The construction of these examples involves quite extensive calculations in the symmetric group
carried out in Section~\ref{Examples}. Thus we refer the reader who wants to skip these details
to the simplest counterexample in Section~\ref{example:n4}, which is also very useful to understand
the general case, as it unveils some hidden geometry. As can be seen from this picture and
from the general construction, our examples rely on single cycles in the subexpression graphs
of odd lengths starting from $7$.
Such cycles do not decompose into cycles of lengths $3$, $4$, $5$ and are thus prohibited by~\cite{cycb}
for sequences of simple reflections in the groups of type~$A$. It is an amazing fact that avoiding
such decompositions leads us so far away from simple reflections that non-free modules are encountered.

Remarkably, the same examples provide examples
of Bott-Samelson varieties $\BS_c(\u{t})$ defined as in~\cite{BS} for which
the fibre $\pi^{-1}(1\mathcal K)$ of the resolution $\pi:\BS_c(\u{t})\to\mathcal C/\mathcal K$
has non-vanishing odd cohomology, where $\mathcal C$ is a semisimple compact
Lie group and $\mathcal K$ is a maximal torus, see Corollary~\ref{corollary:topological_example}.
This result is in a sharp contrast with the case of algebraic Bott-Samelson varieties $\BS(\u{s})\cong\BS_c(\u{s})$
(see~\cite{Demazure} and Hansen~\cite{Hansen}) corresponding to sequences of simple reflections $\u{s}$,
called {\it expressions}. It was proved by S.\;Gaussent~\cite{Gaussent} and M.\;H\"arterich~\cite{Haerterich}
that in this case the fibres $\pi^{-1}(wK)$ of all points fixed by $\mathcal K$ have affine pavings
and thus have no non-trivial odd cohomology.

In spite of the fact that the one-sided modules $\Hom_{R\bim}^\bullet(R(\u{t}),R(\u{t}'))$ may fail to be free,
they deserve our attention. To generalize things, we consider a space
$$
\Hom_{R\bim}^\bullet(R_x\otimes_R R(\u{t})\otimes_R R_y,R_z\otimes_R R(\u{t}')\otimes_R R_w),
$$
where $R_x,R_y,R_z,R_w$ are standard twisted bimodules for elements $x,y,z,w$
of the defining Coxeter group (Section~\ref{Twisted_bimodules}).
Theorem~\ref{theorem:reflexive} claims that the above space is reflexive as a left module as well as a right module.

Below we review the main methods of this paper used to prove the above results.
Throughout, we fix a Coxeter system $(W,S)$
and its representation $W\to\GL(V)$, where $V$ is a finite dimensional space over a field of characteristic distinct from $2$.
We require that this representation be faithful and that elements $S$ act by reflections in $V$, see Section~\ref{Representations}.
Moreover, we require the so-called GKM-condition (Section~\ref{Restrictions_and_divided_differences}),
which is a quite weak restriction satisfied, for example, by the geometric representation. 
We consider the symmetric algebra $\Sym(V)$ as a $\Z$-graded ring so that
the elements of $V$ have degree 2. For a specific technical reason explained in Section~\ref{even_subsets},
we additionally localize $\Sym(V)$ by a graded $W$-invariant subset $\boldsymbol\sigma$ to define
our main ring $R=\boldsymbol\sigma^{-1}\Sym(V)$. This ring may have nonzero elements of negative degrees,
which restricts the methods of graded rings and modules. Fortunately for us, gradings do not play a central role in this paper,
although we prefer to keep them. On the other hand, the reader may assume that $\boldsymbol\sigma=\{1\}$
right after Section~\ref{even_subsets}.

Our main method to study Bott-Samelson bimodules and rings (Section~\ref{Bott-Samelson_bimodule_and_rings})
is localization (Section~\ref{Localization}). Unlike diagrammatics~\cite{EW}, this method works for 
reflection expressions as well as it does for 
expressions (see Section~\ref{DecompositionQ} for the description of the decomposition).
We describe the image of the localization in terms of congruences modulo powers of roots (Corollary~\ref{corollary:2}).
This result is similar to H\"arterich's description of the image of the geometric localization~\cite{Haerterich}
(see also~\cite{scbs}). The key observation that makes the approach undertaken here possible is the relative cardinality
(Section~\ref{Subsets_and_orders}), which replaces the cardinalities of load–bearing indices at a chosen root
in H\"arterich's paper. Then we apply the standard machinery (as in~\cite[Section~4.2]{scbs})
of copy and concentration (Section~\ref{Copy_and_concentration}).

However, in general the problem of missing orders remains. Moreover, it turns out to be the greatest difference
between the case of simple reflections, where the orders are given from the very beginning~\cite[Section~1]{Haerterich}.
The reader may wonder: why the Bruhat order does not provide them? But in the reality orders must be obtained by (quite complicated)
computations formulated as Algorithms~1 and~2 in Sections~\ref{Algorithm_1} and~\ref{Algorithm_2} respectively.
These algorithms produce orders serving slightly differen purposes (Theorems~\ref{lemma:21} and~\ref{theorem:4}).
However, sometimes\footnote{Actually quite seldom. We conjecture that this probability tends to $0$ as the length grows.}
they fail to produce any orders at all. These are the most interesting cases, from which our examples emerge
(Example~\ref{example:2} and Section~\ref{Examples}).

\section{Sets, sequences, orders and some commutative algebra}


\subsection{Subsets and orders}\label{Subsets_and_orders} We denote the cardinality of a finite set $X$ by $|X|$.
We will also use the notation $Y\subset_\ev X$ and $Y\subset_\odd X$
to mean that $Y\subset X$ and $|Y|$ is even and odd respectively.
In that case, we also say that $Y$ is an {\it even} and an {\it odd subset} of $X$ respectively.
Let 
$\P_\ev(X)$
denote the set of 
all even subsets of $X$.

If $X$ is additionally totaly ordered and $Y\subset X$, the we denote by $|Y|_X$ the number, called the {\it cardinality of $Y$ relative to $X$},
of elements of $Y$ that are at odd positions of $X$ counted decreasingly. For example, if $X=\{1,2,3,4,7,9\}$
and $Y=\{2,3,9\}$, then $|Y|_X=2$, as only $2$ and $9$ are at odd positions (5 and 1 respectively).
If $X\ne\emptyset$, then we denote by $X'$ the subset of $X$ consisting of elements distinct
from the maximal element
of $X$.
More generally, if $X$ is a subset of another totaly ordered set $Z$ and $z\in Z$, then we define $X^{<z}=\{x\in X\suchthat x<z\}$
and
$X^{\le z}=\{x\in X\suchthat x\le z\}$. We leave it to the reader, to prove that for a subset $Y\subset X$
the following formulas hold:
\begin{equation}\label{eq:7}
Y\not\subset X'\Rightarrow |Y|_X=|Y'|_X+1.
\end{equation}
\begin{equation}\label{eq:8}
\hspace{9pt}Y\subset X'\Rightarrow |Y|_X+|Y|_{X'}=|Y|.
\end{equation}
\begin{equation}\label{eq:triangleX}
\hspace{22pt}|Y\triangle Z|_X=|Y|_X+|Z|_X\pmod2.
\end{equation}
\begin{equation}\label{eq:summod2}
\hspace{22pt}\sum_{x\in X}|Y^{<x}|+|Y|\=|Y|_X\pmod2.
\end{equation}

For any commutative ring $R$ and a finite set $M$, we denote by $R^\ev(M)$ the set of all functions $\P_\ev(M)\to R$.
This set is a left $R$-module and a commutative ring under the pointwise addition and multiplication:
$$
(g+h)(Y)=g(Y)+h(Y),\quad (gh)(Y)=g(Y)h(Y),\quad (rg)(Y)=r g(Y)
$$
for $g,h\in R^\ev(M)$ and $r\in R$. If $R$ is {\it graded}, that is,
$R=\bigoplus_{i\in\Z}R_i$ so that $R_iR_j\subset R_{i+j}$ for any $i,j\in\Z$,
then so is $R^\ev(M)$.

\subsection{Sequences}\label{Sequences} Let $X$ be a set. A {\it sequence} $\u{x}$ of {\it length} $n<\infty$ with {\it entries} in $X$
is a map $\{1,\ldots,n\}\to X$. It is usually written in the form $\u{x}=(x_1,\ldots,x_n)$, where each $x_i$ is its value at $i$.
If we write $\u{x}$ in the reversed order, we get another sequence
$\overline{\u{x}}=(x_n,x_{n-1},\ldots,x_1)$.
If $\u{x}$ is nonempty, then we can truncate it:  $\u{x}'=(x_1,\ldots,x_{n-1})$.

A map $\u{y}:\Z\to X$ is called a {\it two-sided} sequence. As with finite sequences, its value at $i$ is denoted by $y_i$ and we write $\u{y}=(\ldots,y_{-1},y_0,y_1,\ldots)$. We say that $\u{y}$ is {\it $n$-periodic} if $y_i=y_{i+n}$ for any $i\in\Z$.
Such a sequence can be though of as a map $\Z/n\Z\to X$.

In this paper, we identify sequences $\u{x}$ of length $n$ and $n$-periodic two-sided sequences $\u{y}$,
following the natural rule $y_i=x_i$ for $i=1,\ldots,n$. This identification allows us to define
the {\it cyclic shift} by $k\in\Z$ as follows:
$$
\u{x}'=\u{x}[k]\Leftrightarrow \forall i\in\Z: x'_i=x_{k+i}
$$
and the {\it reverse sequence} by
$$
\u{y}=\overline{\u{x}}\Leftrightarrow \forall i\in\Z: y_i=x_{1-i}.
$$
The combination of these rules implies that
\begin{equation}\label{eq:rev_shift}
\overline{\u{x}[k]}=\overline{\u{x}}[-k].
\end{equation}
Also note that
$$
\u{x}[k][l]=\u{x}[k+l],\quad \u{x}\big[|\u{x}|\big]=\u{x}.
$$
Sequences and elements can be concatenated using the cap sign.
For example, $\u{x}\cup z\cup\u{y}=(x_1,\ldots,x_n,z,y_1,\ldots,y_m)$, where $n$ and $m$ are the lengths
of $\u{x}$ and $\u{y}$ respectively. 

In the rest of the text, we will also use the following version of a cyclic shift and the reversion.
Let $\u{x}=(x_1,\ldots,x_n)$ and $n>0$. Denoting $\u{x}^*=(x_2,\ldots,x_n)$, we define
$$
\u{x}\<k\>=x_1\cup(\u{x}^*[k]),\quad\ddot{\u{x}}=x_1\cup\overline{\u{x}^*}.
$$
Obviously,
$$
\u{x}\<k\>\<l\>=\u{x}\<k+l\>,\quad \u{x}\<|\u{x}|-1\>=\u{x}.
$$
A {\it repeating sequence} $(a,a,\ldots,a)$ of length $n$ is denoted by $\u{a}^n$.
The empty sequence is denoted by $\u{\emptyset}$.

\subsection{Graded modules} We denote by $\<f_1,\ldots,f_k\>$ the ideal generated by elements $f_1,\ldots,f_k$
of a commutative ring $R$. Suppose that $R$ is graded.
A {graded left $R$-module} is a left $R$-module $M$ endowed with a decomposition $M=\bigoplus_{i\in\Z}M_i$
so that $R_iM_j=M_{i+j}$ for any $i,j\in\Z$. Elements of $M_i$ and $R_i$ are called {\it homogeneous} of degree $i$.

Graded modules over a fixed commutative graded ring $R$ form an abelian category $R$-{\bf gr}
whose morphisms
are homomorphisms between graded modules preserving the grading. As this category has enough projectives,
we can define the projective dimension of a graded module as the shortest length of projective
resolutions of $M$. However, if we consider $M$ as an ungraded module, then its projective dimension
will be the same
(see, for example,~\cite[Corollary 2.3.2]{NVO} and~\cite[Corollary 5.5]{Lam}).
We denote this number by $\pd M$.

If $M$ and $N$ are graded modules over a graded ring, then we consider the direct sum
$$
\Hom_R^\bullet(M,N)=\bigoplus_{n\in\Z}\Hom_R^n(M,N),
$$
where $\Hom_R^n(M,N)$ is the set of all homomorphism $\phi:M\to N$ of $R$-modules such that
$\phi(M_i)\subset N_{n+i}$ for any $i\in\Z$. Such $\phi$ are called {\it homomorphisms of degree} $n$.
Thus the homomorphisms of degree $0$ are exactly the morphisms of the category $R$-{\bf gr}.
Note that $\Hom_R^\bullet(M,N)\subset\Hom_R(M,N)$ and this inclusion is in general strict.
However, if $N$ is finitely generated, as will always be in this paper, then both inclusions turn to equalities.

For any $n\in\Z$, we will consider the {\it shifted module} $M(n)$, which coincides with $M$ as
a left module and has the following grading $M(n)_i=M_{n+i}$.
Clearly, $M(n)(k)=M(n+k)$. If 
$$
M\cong\bigoplus_{i\in\Z}R(i)^{\oplus p_i},
$$

\vspace{-5pt}

\noindent
then $M$ is called {\it gr-free}. In this case,
we consider the Laurent polynomial $p=\sum_{i\in\Z}p_iv^i\in\Z[v,v^{-1}]$ and use the notation $R^{\oplus p}$
for the above direct sum. 

There are examples of graded modules that are free but not gr-free~\cite[Section 2.2]{NVO}.
However this can not happen in the following important case. A graded ring $R$ is called {\it nonnegatively graded}
if $R_i=0$ for any $i<0$. In this case, we denote $R_+=\bigoplus_{i>0}R_i$.
We also call a graded $R$-module $M$ {\it bounded below} if there exists $N\in\Z$ such that $M_i=0$ for $i<N$.

\begin{lemma}[graded Nakayama's lemma]\label{lemma:gr_Nakayama}
Let $R$ be a nonnegatively graded commutative ring and $M$ be a bounded below graded $R$-module.
If $R_+M=M$, then $M=0$.
\end{lemma}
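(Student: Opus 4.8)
The plan is to argue by contradiction, exploiting the ``bounded below'' hypothesis to locate a lowest-degree nonzero homogeneous component and derive a contradiction from the equation $R_+M=M$. First I would suppose $M\ne0$. Since $M$ is bounded below, the set $\{i\in\Z\suchthat M_i\ne0\}$ is nonempty and bounded below, hence has a smallest element, say $d$. Fix a nonzero homogeneous element $m\in M_d$.

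The key step is to observe that $R_+M=M$ forces $m$ to lie in $R_+M$. Writing $m=\sum_k r_k m_k$ with $r_k\in R_+$ homogeneous of positive degree and $m_k\in M$ homogeneous, and comparing degrees, we may assume each product $r_km_k$ is homogeneous of degree $d$; then $m_k\in M_{d-\deg r_k}$ with $\deg r_k>0$, so $m_k$ lies in a strictly lower-degree component $M_{d-\deg r_k}$. By minimality of $d$, each such $M_{d-\deg r_k}=0$, so every $m_k=0$, whence $m=0$. This contradicts the choice of $m$, and therefore $M=0$.

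I do not expect any real obstacle here; the only point requiring a little care is the bookkeeping with gradings when expanding $m\in R_+M=M$ into a finite sum and extracting the degree-$d$ part (using that $R$ is nonnegatively graded, so every homogeneous component of an element of $R_+$ has strictly positive degree, and that $M=\bigoplus_i M_i$ so projection onto $M_d$ is well defined). The ``bounded below'' hypothesis is used exactly once, and essentially — without it the statement fails, as in the ungraded case — to guarantee the existence of the minimal degree $d$.
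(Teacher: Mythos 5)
Your proof is correct, and it is the standard argument: the paper states this lemma without giving a proof, so there is no in-paper argument to compare against. The only minor imprecision is the phrasing "we may assume each product $r_km_k$ is homogeneous of degree $d$" — strictly, after writing $m$ as a finite sum of products of homogeneous elements $r_k\in R_+$, $m_k\in M$, one projects the whole sum onto the degree-$d$ component of $M$ and keeps only the terms with $\deg r_k+\deg m_k=d$ — but you address exactly this point in your closing paragraph, so the argument is complete.
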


\begin{corollary}
Let $R$ be a nonnegatively graded commutative ring such that $R_0$ is a field and $M$ be a bounded below
free module. Then $M$ is gr-free.
\end{corollary}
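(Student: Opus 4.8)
The plan is to lift a homogeneous basis of $M/R_+M$ to $M$, thereby obtaining a surjection onto $M$ from a manifestly gr-free module $F$, and then to show this surjection is an isomorphism using the \emph{ungraded} freeness of $M$ together with two applications of the graded Nakayama lemma. Concretely, I would first consider $\bar M:=M/R_+M=M\otimes_R R_0$. Since $R$ is nonnegatively graded and $M$ is bounded below, $\bar M$ is a bounded below graded module over the field $R_0$, hence a $\Z$-graded $R_0$-vector space; choose a homogeneous $R_0$-basis of it and lift it to a family of homogeneous elements $(m_\alpha)_{\alpha\in A}$ of $M$, with $\deg m_\alpha=d_\alpha$. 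Put $F:=\bigoplus_{\alpha\in A}R(-d_\alpha)$, which is gr-free (grouping the summands by the value of $d_\alpha$ rewrites it as $\bigoplus_{i\in\Z}R(-i)^{\oplus p_i}$, $p_i=\dim_{R_0}\bar M_i$) and bounded below (each $R(-d_\alpha)$ vanishes in degrees $<d_\alpha$, and the $d_\alpha$ are bounded below because $\bar M$ is). Let $\pi\colon F\to M$ be the degree-$0$ homomorphism sending the canonical generator of the $\alpha$-th summand to $m_\alpha$.

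The surjectivity of $\pi$ is the first application of graded Nakayama. By construction the induced map $F/R_+F\to M/R_+M$ is an isomorphism, so $M=\pi(F)+R_+M$, i.e.\ $R_+\cdot\bigl(M/\pi(F)\bigr)=M/\pi(F)$; as $M/\pi(F)$ is a quotient of $M$ it is bounded below, so Lemma~\ref{lemma:gr_Nakayama} gives $M/\pi(F)=0$.

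For the injectivity --- the one place where ``$M$ free'' is used rather than just ``$M$ graded'' --- set $K=\ker\pi$, so that $0\to K\to F\xrightarrow{\pi}M\to0$ is exact. Since $M$ is free it is projective, hence this sequence splits; tensoring the split sequence with $R_0$ over $R$ yields an exact sequence $0\to K/R_+K\to F/R_+F\xrightarrow{\bar\pi}M/R_+M\to0$ in which $\bar\pi$ is an isomorphism, whence $K/R_+K=0$. Being a submodule of the bounded below module $F$, $K$ is bounded below, so a second application of Lemma~\ref{lemma:gr_Nakayama} gives $K=0$. Thus $\pi$ is an isomorphism and $M\cong F$ is gr-free.

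The one delicate point is the splitting of $0\to K\to F\to M\to0$, equivalently the vanishing of $\Tor_1^R(M,R_0)$: this is exactly what fails for graded modules that are free but not gr-free, so the freeness hypothesis on $M$ cannot be weakened. Everything else is bookkeeping --- chiefly checking that $\bar M$, $F$ and $K$ are bounded below so that Lemma~\ref{lemma:gr_Nakayama} legitimately applies.
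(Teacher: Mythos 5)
Your proof is correct and complete; the paper does not actually supply a proof of this corollary (it is stated immediately after the graded Nakayama lemma with no argument), and the route you take is precisely the standard one that the placement of the statement invites. The key steps are all in order: lifting a homogeneous $R_0$-basis of $M/R_+M$ to build the degree-preserving surjection $\pi\colon F\to M$ from a bounded-below gr-free $F$ (surjectivity by Nakayama on $M/\pi(F)$), and then showing $K=\ker\pi$ vanishes by first observing $K/R_+K=0$ (via the ungraded splitting furnished by projectivity of the free module $M$, equivalently $\Tor_1^R(M,R_0)=0$) and then applying Nakayama again to the bounded-below submodule $K\subset F$. You have also correctly identified the two places where the hypotheses enter: $R_0$ a field is needed to choose the homogeneous basis of $\bar M$, and (ungraded) freeness of $M$ is needed exactly to kill $\Tor_1$, which is the obstruction that distinguishes free from gr-free over nonnegatively graded rings.
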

\noindent
Therefore, we will call bounded below (for example, finitely generated) gr-free modules
over rings $R$ as in the above corollary {\it graded free}.

We will use similar definitions for right modules and bimodules and write $H_{R\text{\rm-left}}^\bullet(M,N)$,
$H_{R\text{\rm-right}}^\bullet(M,N)$ and $H_{R\text{\rm-bim}}^\bullet(M,N)$
to distinguish between all possible cases.
These abelian groups are graded left, right and bi- modules respectively with the obvious actions.
For example, if $M$ and $N$ are graded $R$-$S$-bimodules,
the left $R$-action and the right $S$-action are given by $(r\phi)(m)=r\phi(m)$ and $(\phi s)(m)=\phi(m)s$.
We will also abbreviate the phrase ``$R$-$R$-bimodule'' to ``$R$-bimodule''.

\subsection{Graded localization}
A subset $\boldsymbol\sigma$ of a commutative ring $R$ is called {\it multiplicative}
if it contains the unity and $a,b\in\boldsymbol\sigma$ implies $ab\boldsymbol\in\boldsymbol\sigma$.
If $R$ is graded, then we call $\boldsymbol\sigma$ {\it graded} if all its elements are homogeneous, that is,
$\boldsymbol\sigma\subset\bigcup_{i\in\Z}R_i$.
In this case, the localization $\boldsymbol\sigma^{-1}A$ is automatically graded.
This ring may have nonzero elements of negative degree if $\boldsymbol\sigma$ has elements of positive degree.
We will see such examples further in the paper.

The multiplicative subset {\it generated} by a subset $X\subset R$
is the smallest multiplicative subset of $R$ containing $X$. It, obviously,
consists of all products $x_1^{n_1}\cdots x_k^{n_k}$ for $k\ge0$, $x_1,\ldots,x_k\in X$
and $n_1,\ldots,n_k\ge0$. This multiplicative subset is graded if and only if $X\subset\bigcup_{i\in\Z}R_i$.

\subsection{Opposite (bi)modules}\label{Opposite_(bi)modules} As we consider only commutative rings, there is no need to consider opposite rings.
However, we can consider opposite modules and bimodules. More precisely,
let $M$ be a graded left $R$-module over a graded commutative ring $R$.
Then we denote by $M^\op$ the graded right $R$-module that coincides with $M$ as a graded abelian group
and has the following right $R$-action: $m* r=rm$. If $M$ is a graded right $R$-module
or a graded $R$-bimodule, then we define the graded left $R$-module or a graded $R$-bimodule $M^\op$
respectively, using a similar rule.

We leave to the reader the proofs of the following simple facts:
\begin{equation}\label{eq:op}
M(n)^\op\cong M^\op(n),\quad (M^\op)^\op\cong M.
\end{equation}

\begin{proposition}\label{MopNop}
Let $M$ and $N$ be graded left $R$-modules (resp. right $R$-modules, $R$-bimodules).
The there is an isomorphism
$$
\Hom_R^\bullet(M,N)^\op\cong\Hom_R^\bullet(M^\op,N^\op)
$$
of graded right $R$-modules (resp. left $R$-modules, $R$-bimodules).
\end{proposition}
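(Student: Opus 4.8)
The plan is to exhibit the isomorphism concretely as ``the identity map on underlying abelian groups'' and then check that it respects the relevant structures. Concretely, fix graded left $R$-modules $M$ and $N$. An element $\phi\in\Hom_R^n(M,N)$ is an additive map $M\to N$ with $\phi(M_i)\subset N_{n+i}$ and $\phi(rm)=r\phi(m)$. Viewing the same $\phi$ as a map $M^\op\to N^\op$, it is still additive and still satisfies $\phi(M^\op_i)\subset N^\op_{n+i}$ since the gradings of $M^\op,N^\op$ agree with those of $M,N$; and it is right $R$-linear for the opposite actions because $\phi(m* r)=\phi(rm)=r\phi(m)=\phi(m)* r$. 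So the assignment $\phi\mapsto\phi$ gives a degree-preserving bijection $\Hom_R^\bullet(M,N)\to\Hom_R^\bullet(M^\op,N^\op)$, and its inverse is given by the same recipe using $(M^\op)^\op\cong M$, $(N^\op)^\op\cong N$ from~\eqref{eq:op}. This is already a graded abelian group isomorphism; the only remaining content is that it intertwines the two right $R$-module structures in question.

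For that, I would unwind the definitions of both sides. The left-hand side carries the right action coming from the left $R$-module $\Hom_R^\bullet(M,N)$ via the opposite construction: by Section~\ref{Opposite_(bi)modules} the left action on $\Hom_R^\bullet(M,N)$ is $(r\phi)(m)=r\phi(m)$, hence on $\Hom_R^\bullet(M,N)^\op$ we have $\phi* r=r\phi$, i.e.\ $(\phi* r)(m)=r\phi(m)$. The right-hand side $\Hom_R^\bullet(M^\op,N^\op)$ carries the right action for homomorphisms of right modules, which by the conventions of Section~\ref{Graded modules} (applied to the right-module case) is $(\phi * r)(m)=\phi(m)* r$ evaluated in $N^\op$, and $\phi(m)* r=r\phi(m)$ by definition of $N^\op$. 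The two formulas coincide, so the identity map is right $R$-linear, proving the isomorphism of graded right $R$-modules. The left-module case is literally dual (swap ``left'' and ``right'' throughout), and the bimodule case follows by running both arguments simultaneously: one checks the identity map intertwines the left action of the left-hand side (which is $\phi\mapsto(m\mapsto\phi(m)r)$ after passing through $\op$) with the left action on $\Hom_R^\bullet(M^\op,N^\op)$, and likewise on the right.

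There is really no hard part here; the whole statement is a bookkeeping exercise in which opposite construction puts which scalar on which side. The one place to be slightly careful is making sure the \emph{grading} on $M^\op$ is declared to be the same as that of $M$ (so that $\phi$ of degree $n$ stays of degree $n$), which is exactly what Section~\ref{Opposite_(bi)modules} stipulates, and that the degree-$n$ piece $\Hom_R^n$ is computed against this grading. I would also note in passing that since $N$ is finitely generated in all applications, $\Hom_R^\bullet$ agrees with $\Hom_R$, so no subtlety about the ``$\bullet$'' versus the full Hom intervenes. Thus the proof amounts to: write down the identity-on-underlying-sets map, verify well-definedness and bijectivity using~\eqref{eq:op}, and match the two scalar actions by a one-line computation in each of the three cases.
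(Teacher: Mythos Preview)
Your proof is correct and is precisely the routine verification the paper has in mind: the paper states this proposition without proof (it is placed among the ``simple facts'' of Section~\ref{Opposite_(bi)modules} that are left to the reader), and your argument---take the identity on underlying abelian groups, check it preserves the grading and $R$-linearity in the opposite sense, then match the module actions on both sides---is the natural and essentially unique way to carry this out.
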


\begin{proposition}\label{NopotimesSMop}
Let $S$ be a graded subring of a graded commutative ring $R$ and $M$ and $N$ be graded $R$-bi\-modules. Then
$$
(M\otimes_S N)^\op\cong N^\op\otimes_S M^\op
$$
as graded $R$-bimodules.
\end{proposition}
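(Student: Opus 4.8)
The statement to prove is Proposition~\ref{NopotimesSMop}: for $M,N$ graded $R$-bimodules and $S$ a graded subring, $(M\otimes_S N)^\op\cong N^\op\otimes_S M^\op$ as graded $R$-bimodules. The plan is to write down the obvious candidate map on elementary tensors and check it is a well-defined homomorphism of the appropriate sort, then exhibit an inverse built the same way. First I would define $\Phi:(M\otimes_S N)^\op\to N^\op\otimes_S M^\op$ on generators by $\Phi(m\otimes n)=n\otimes m$ (where on the right $n\in N^\op$ and $m\in M^\op$ denote the same underlying elements). To see this is well defined, I would recall that $M\otimes_S N$ is the quotient of the free abelian group on $M\times N$ by the bilinearity and $S$-balancing relations; since $S$ is commutative and the left/right $S$-actions on the opposite modules are the swaps of the original ones, the relation $ms\otimes n=m\otimes sn$ in $M\otimes_S N$ corresponds under the swap precisely to the relation $sn\otimes m = n\otimes ms$ in $N^\op\otimes_S M^\op$, so the assignment descends to the tensor product. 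I would spell this out by invoking the universal property: the map $M\times N\to N^\op\otimes_S M^\op$, $(m,n)\mapsto n\otimes m$, is additive in each variable and $S$-balanced (using that $S$ is commutative so there is no ambiguity about which side of $S$ acts), hence factors through $M\otimes_S N$.

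Next I would check that $\Phi$ respects the grading and the bimodule structures. For the grading: elements $m\otimes n$ are homogeneous of degree $\deg m+\deg n$, and $n\otimes m$ has the same degree, and passing to opposite modules does not change degrees, so $\Phi$ is a degree-$0$ map and in particular a morphism in $R$-\textbf{gr}. For the bimodule structure, recall that the left and right $R$-actions on $(M\otimes_S N)^\op$ are the right and left actions on $M\otimes_S N$ swapped, i.e.\ $r*(m\otimes n)*r' = r'\cdot(m\otimes n)\cdot r = r'm\otimes nr$; applying $\Phi$ gives $nr\otimes r'm$, while acting on $\Phi(m\otimes n)=n\otimes m$ by $r$ on the left and $r'$ on the right in $N^\op\otimes_S M^\op$ gives $r*(n\otimes m)*r' = (n*r)\otimes(r'*m)=nr\otimes r'm$ by the definition of the actions on $N^\op$ and $M^\op$. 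These agree, so $\Phi$ is a bimodule homomorphism. Finally, the map $\Psi:N^\op\otimes_S M^\op\to(M\otimes_S N)^\op$ given symmetrically by $\Psi(n\otimes m)=m\otimes n$ is well defined by the same argument (or by the same argument applied to the pair $(N^\op,M^\op)$ together with $(N^\op)^\op\cong N$ from~\eqref{eq:op}), and $\Phi\circ\Psi$ and $\Psi\circ\Phi$ are the identity on generators, hence identities.

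I do not expect a genuine obstacle here; this is a formal verification. The only point requiring a moment's care — the ``main obstacle'' in a weak sense — is bookkeeping the two layers of op: one must be consistent about whether, say, a symbol $n$ appearing in $N^\op\otimes_S M^\op$ carries the original $N$-action or its opposite, and about the fact that the tensor product $N^\op\otimes_S M^\op$ is formed over $S$ using the \emph{restricted} (opposite) actions, which coincide with the original ones up to the swap precisely because $S$ is commutative. Once the conventions from Section~\ref{Opposite_(bi)modules} are pinned down, every relation matches up on the nose, and the two mutually inverse maps above finish the proof.
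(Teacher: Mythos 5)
Your proof is correct. The paper does not actually supply a proof of Proposition~\ref{NopotimesSMop} — it is among the ``simple facts'' implicitly left to the reader around Section~\ref{Opposite_(bi)modules} — and your argument (the swap map $m\otimes n\mapsto n\otimes m$, verified $S$-balanced via commutativity of $S$, checked to intertwine the swapped bimodule actions, with the symmetric inverse) is exactly the standard verification the author would have intended.
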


\subsection{Dual modules} Let $M$ be a graded left or right $R$-module. Then we define the {\it graded dual module} by
$$
M^\vee=\Hom_R^\bullet(M,R).
$$
This module is a left or right $R$-module respectively. Proposition~\ref{MopNop} implies that
$$
(M^\vee)^\op\cong (M^\op)^\vee.
$$
Obviously,
$$
(M\oplus N)^\vee\cong M^\vee\oplus N^\vee,\quad M(n)^\vee\cong M^\vee(-n).
$$
Therefore, the graded dual of a finitely generated graded free module is again a finitely
generated graded free module.

\begin{definition}
A graded module $M$ is called {\it reflexive} if the natural evaluation
map $M\to M^{\vee\vee}$ is an isomorphism.
\end{definition}

\noindent
For example, all finitely generated graded free modules are reflexive.

It is often practical to prove that a module $M$ is reflexive by finding another graded module $N$ and
establishing a {\it perfect pairing} $M\times N\to R(n)$,
which is a bilinear pairing preserving degrees such that both evaluation maps
$$
M\to N^\vee(n),\quad N\to M^\vee(n)
$$
are isomorphisms.

Note that graded dual modules coincide with their ungraded counterparts for finitely generated modules.
This is exactly the case in the paper.

\subsection{Projective dimension over polynomial rings} Let $R=\F[x_1,\ldots,x_n]$ be a polynomial ring over a field $\F$.
We denote $\m=\<x_1,\ldots,x_n\>$. It is a unique maximal graded ideal of $R$.

Let $M$ be a finitely generated graded $R$-module. A {\it regular} or {\it $M$-sequence} is a sequence of homogeneous polynomials $f_1,\ldots,f_k$
of $\m$ that satisfies the following properties:
{\renewcommand{\labelenumi}{{\it(\roman{enumi})}}
\renewcommand{\theenumi}{{\rm(\roman{enumi})}}
\begin{enumerate}
\itemsep6pt
\item\label{M-seq:i} the multiplication map
      $
      \begin{tikzcd}
      M/\<f_1,\ldots,f_{i-1}\>M\arrow{r}{f_i}&M/\<f_1,\ldots,f_{i-1}\>M
      \end{tikzcd}
      $
      is injective for any $i=1,\ldots,k$;
\item\label{M-seq:ii} $M\ne\<f_1,\ldots,f_i\>M$.
\end{enumerate}}
An $M$-sequence is called {\it maximal} if no homogeneous polynomial can be appended to it (to the right) to yield
another $M$-sequence.
The common length of all maximal $M$-sequences is called the {\it depth} of $M$ and is denoted by $\depth M$,
see~\cite[A.4]{Herzog_Hibi}. Our main inrstument to compute the projective dimension is the following resut,
see~\cite[Corollary A.4.3]{Herzog_Hibi}.

\begin{proposition}[Auslander–Buchsbaum formula]\label{proposition:Auslander–Buchsbaum}
Let $M$ be a finitely generated graded module over a polynomial ring in $n$ variables over a field.
Then
$$
\pd M+\depth M=n.
$$
\end{proposition}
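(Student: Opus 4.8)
The plan is to deduce the formula from the two standard homological descriptions of $\pd$ and $\depth$ relative to the graded maximal ideal $\m=\langle x_1,\ldots,x_n\rangle$ and the residue field $\F=R/\m$, and then to induct on projective dimension. On the one hand, from a minimal graded free resolution $\cdots\to F_1\to F_0\to M\to 0$, graded Nakayama (Lemma~\ref{lemma:gr_Nakayama}) shows that every differential has entries in $\m$, so $\Ext^i_R(M,\F)\cong\Hom_\F(F_i\otimes_R\F,\F)$ and hence $\pd M=\max\{i:\Ext^i_R(M,\F)\ne 0\}=\max\{i:F_i\ne 0\}$. On the other hand, I would invoke Rees's theorem $\depth M=\min\{i:\Ext^i_R(\F,M)\ne 0\}$, whose short proof I recall: $\depth M=0$ iff $\Hom_R(\F,M)\ne 0$, and for an $M$-regular element $x\in\m$ one has $\depth M/xM=\depth M-1$ while, as $x$ annihilates $\F$, the long exact sequence of $\Ext^\bullet_R(\F,-)$ for $0\to M\xrightarrow{x}M\to M/xM\to 0$ breaks into the exact sequences $0\to\Ext^i_R(\F,M)\to\Ext^i_R(\F,M/xM)\to\Ext^{i+1}_R(\F,M)\to 0$, which shifts the least nonvanishing degree by one.

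With these in hand I would induct on $p:=\pd M$. For $p=0$ the module $M$ is graded free, so $\depth M=\depth R$, and $\depth R=n$ because $x_1,\ldots,x_n$ is an $R$-regular sequence in $\m$ (each $R/\langle x_1,\ldots,x_i\rangle\cong\F[x_{i+1},\ldots,x_n]$ being a domain) which is moreover maximal, since no homogeneous element of $\m$ acts injectively on $R/\m=\F$; hence $\pd M+\depth M=n$.

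For $p\ge 1$, a minimal homogeneous generating set of $M$ produces a short exact sequence $0\to M'\to F\to M\to 0$ with $F$ graded free and $M'\subseteq\m F$; since $\pd M\ge 1$, $M$ is not free, $M'\ne 0$, and $\pd M'=p-1$, so the inductive hypothesis gives $\depth M'=n-p+1$, i.e.\ $\Ext^i_R(\F,M')=0$ for $i<n-p+1$ and $\Ext^{n-p+1}_R(\F,M')\ne 0$. Feeding the sequence into the long exact sequence for $\Ext^\bullet_R(\F,-)$ and using $\Ext^i_R(\F,F)=0$ for $i<n$ (as $\depth F=n$): for $i<n-p$ the term $\Ext^i_R(\F,M)$ sits between $\Ext^i_R(\F,F)=0$ and $\Ext^{i+1}_R(\F,M')=0$, hence vanishes, giving $\depth M\ge n-p$; and for $p\ge 2$ both $\Ext^{n-p}_R(\F,F)$ and $\Ext^{n-p+1}_R(\F,F)$ vanish (their degrees are $<n$), so $\Ext^{n-p}_R(\F,M)\cong\Ext^{n-p+1}_R(\F,M')\ne 0$ and $\depth M=n-p$, as wanted. (Here $p\le n$, which is Hilbert's syzygy theorem and is in any case forced by the computation.)

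The case $p=1$ is the delicate one I expect to be the main obstacle, since then $\Ext^n_R(\F,F)\ne 0$ and the relevant stretch of the long exact sequence is only $0\to\Ext^{n-1}_R(\F,M)\to\Ext^n_R(\F,M')\xrightarrow{\iota_*}\Ext^n_R(\F,F)$, so I must show the map $\iota_*$ is not injective. For this I would compute the top Ext directly from the Koszul resolution of $\F$, whose tail is $R\xrightarrow{(x_1,\ldots,x_n)}R^n$: this gives a natural isomorphism $\Ext^n_R(\F,N)\cong N/\m N$, under which $\iota_*$ becomes the map $M'/\m M'\to F/\m F$ induced by the inclusion $M'\hookrightarrow F$. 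Since $M'\subseteq\m F$ by minimality, $\iota_*=0$, whence $\Ext^{n-1}_R(\F,M)\cong M'/\m M'\ne 0$ by graded Nakayama (using $M'\ne 0$). Thus in every case $\min\{i:\Ext^i_R(\F,M)\ne 0\}=n-p$, i.e.\ $\depth M=n-\pd M$, which is the claim.
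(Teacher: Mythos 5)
The paper does not prove this proposition; it simply cites \cite[Corollary A.4.3]{Herzog_Hibi}. What you have written is therefore a proof where the paper supplies only a reference, and it is correct. Your route is the standard homological one: characterize $\pd M$ as the length of a minimal graded free resolution (equivalently $\max\{i:\Ext^i_R(M,\F)\ne 0\}$, via graded Nakayama), characterize $\depth M$ as $\min\{i:\Ext^i_R(\F,M)\ne 0\}$ (Rees's theorem), then run the long exact sequence of $\Ext^\bullet_R(\F,-)$ along a minimal presentation $0\to M'\to F\to M\to 0$ and induct on $\pd M$. Your handling of the borderline case $p=1$ is the right fix: identifying $\Ext^n_R(\F,N)\cong N/\m N$ functorially via the tail of the Koszul resolution and using $M'\subseteq\m F$ to force $\iota_*=0$ is exactly how one avoids the apparent ambiguity in the long exact sequence.

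Two minor points worth tightening if you wanted this to be fully self-contained. First, the induction presupposes $\pd M<\infty$; your parenthetical ``forced by the computation'' is circular as stated, since there is no computation to run when $\pd M=\infty$. One should simply invoke Hilbert's syzygy theorem (or observe directly from the Koszul resolution of $\F$ that $\Ext^i_R(M,\F)=\Ext^i_R(\F,M\otimes\cdots)=0$ for $i>n$, hence a minimal resolution has length $\le n$). Second, your sketch of Rees's theorem in the graded setting tacitly uses graded prime avoidance to produce a homogeneous $M$-regular element when $\depth M>0$, and uses the well-definedness of $\depth$ (``common length of all maximal $M$-sequences''), which the paper itself assumes from \cite{Herzog_Hibi}; that is consistent with the paper's conventions, so it is not a gap, but it is an unstated input.
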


\subsection{Gröbner bases for submodules} Let $R$ be a polynomial ring over a field $\F$ in finitely many
variables. We denote by $\Mon(R)$ the semigroup of all monomials in the variables of $R$. Let $n$ be a positive integer.
For each $i=1,\ldots,n$, let $e_i=(0,\ldots,0,1,0,\ldots,0)$ be the element of $R^{\oplus n}$ with $1$ at the $i$th place.
A {\it monomial} in the free module $R^{\oplus n}$ is a product $Xe_i$, where $X\in\Mon(R)$.
We denote by $\Mon(R^{\oplus n})$ the set of all monomials of $R^{\oplus n}$.
It is convenient to imagine $R^{\oplus n}$ as an $R$-submodule of the lager polynomial ring
$R'$ that is freely generated over $\F$ by the variables of $R$ and the variables $e_1,\ldots,e_n$.
In this context, we will say that a monomial $X$ (in $R$ or $R^{\oplus n}$) {\it divides} a monomial $Y$ (in $R$ or $R^{\oplus n}$)
if $X$ divides $Y$ in $R'$. Equivalently, we say that $Y$ is {\it divisible} by $X$.

Next, we review the main definitions of~\cite{Rutman} and~\cite{Lezama}.
\begin{definition}
A monomial order on $\Mon(R^{\oplus n})$ is a total order $>$ satisfying the following conditions:
{\renewcommand{\labelenumi}{{\it(\roman{enumi})}}
\renewcommand{\theenumi}{{\rm(\roman{enumi})}}
\begin{enumerate}
\itemsep6pt
\item $ZX>X$ for any $X\in\Mon(R^{\oplus n})$ and $Z\in\Mon(R)\setminus\{1\}$;
\item $Y>X\Rightarrow ZY>ZX$ for any $X,Y\in\Mon(R^{\oplus n})$ and $Z\in\Mon(R)$.
\end{enumerate}}
\end{definition}
Note that the case $n=1$ is also considered (where we skip $e_1$). This prompts us that
we can produce a monomial order on $R^{\oplus n}$ from a monomial order on $R$ by the rule:
$$
Xe_i>Ye_j\Longleftrightarrow i>j\text{ or }(i=j\text{ and }X>Y).
$$
It is a POT order (position over term) in the terminology of~\cite{Rutman} and~\cite{Lezama}.

For a nonzero element $r$ of $R^{\oplus n}$ or $R$, we denote by $\lm(r)$ and $\lt(r)$ the {\it leading monomial}
and {\it the leading term} of $r$.

\begin{definition}
Let $M$ be an $R$-submodule of $R^{\oplus n}$. A subset $G\subset M$ is called a Gr\"{o}bner basis of $M$
if for any $r\in M\setminus\{0\}$, the leading term $\lm(r)$ is divisible by $\lm(g)$ for some $g\in G$.
\end{definition}

\noindent
The reader can easily prove the following result.

\begin{proposition}
Let $G$ be a  Gr\"{o}bner basis of $M$. Then the $R$-submodule generated by $G$ coincides with $M$.
\end{proposition}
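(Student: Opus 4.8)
The statement to be proved is the standard fact that a Gröbner basis generates the submodule it refines. The plan is to show directly that if $M' = \langle G \rangle$ denotes the $R$-submodule generated by $G$, then $M' = M$. Since $G \subset M$ and $M$ is an $R$-submodule, we automatically have $M' \subseteq M$; the content is the reverse inclusion.

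First I would argue by contradiction: suppose $M \setminus M'$ is nonempty. Among all nonzero elements of $M \setminus M'$, I would like to pick one whose leading monomial $\lm(r)$ is minimal with respect to the monomial order $>$. This requires knowing that $>$ is a well-order on $\Mon(R^{\oplus n})$ — that any nonempty set of monomials has a least element. This follows from the two monomial-order axioms together with the fact that $\Mon(R)$, being a finitely generated commutative monoid, has no infinite strictly decreasing sequences (Dickson's lemma); axiom (i) forces $X \le ZX$, so leading monomials cannot decrease forever. I expect this well-foundedness point to be the only place where any real argument is needed; everything else is bookkeeping.

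Next, having fixed such a minimal $r \in M \setminus M'$, I apply the Gröbner basis property: $\lm(r)$ is divisible by $\lm(g)$ for some $g \in G$, say $\lm(r) = Z\,\lm(g)$ with $Z \in \Mon(R)$. Writing $c, c'$ for the leading coefficients of $r$ and $g$ respectively, set $r' = r - (c/c')\, Z\, g$. Then $r' \in M$ since both $r$ and $g$ lie in $M$. By construction the leading terms cancel, so either $r' = 0$ or $\lm(r') < \lm(r)$ (using monomial-order axiom (ii) to control $\lm(Zg)$ and the fact that all other terms of $r$ and of $(c/c')Zg$ are strictly smaller than $\lm(r)$).

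Finally I derive the contradiction. If $r' = 0$, then $r = (c/c')\,Z\,g \in M'$, contrary to the choice of $r$. If $r' \ne 0$, then since $(c/c')\,Z\,g \in M'$ and $r \notin M'$ we get $r' = r - (c/c')Zg \notin M'$, so $r'$ is a nonzero element of $M \setminus M'$ with $\lm(r') < \lm(r)$, contradicting the minimality of $\lm(r)$. Hence $M \setminus M' = \emptyset$, i.e. $M = M' = \langle G \rangle$, as claimed. The main (indeed only) obstacle is the well-ordering of $\Mon(R^{\oplus n})$; once that is in hand the proof is the familiar division-style descent.
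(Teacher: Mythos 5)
Your proof is correct and is exactly the standard reduction argument that the paper expects the reader to supply (the paper gives no proof and simply states ``The reader can easily prove the following result''). The only delicate point is the one you correctly flag: well-foundedness of the monomial order on $\Mon(R^{\oplus n})$, which follows from Dickson's lemma together with axiom (i), since divisibility of monomials implies comparability and Dickson's lemma forbids infinite antichains; the case of $R^{\oplus n}$ reduces to $n$ copies of $\Mon(R)$. With that in hand, the minimal-counterexample descent you give is complete and correct (note the division by the leading coefficient is legitimate because $R$ is a polynomial ring over a field).
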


The above definitions were made for the ungraded case. In what follows, we will, however, consider $R^{\oplus n}$
along with some grading. In that case the generators $e_i$ will receive some degree $k$, so we will
write $R(-k)^{\oplus n}$ instead of $R^{\oplus n}$ if $k\ne0$.

\subsection{String modules}\label{string_modules} Let $R$ be as in the previous section.
This ring is graded so that $R_2$ is the subspace spanned by the variables.
Let $n\ge2$ and $\u{x}=(x_1,\ldots,x_n)$ be a sequence of linearly independent elements of $R_2$.
Then we can choose $y_1,\ldots,y_m\in R_2$ so that $(x_1,\ldots,x_n,y_1,\ldots,y_m)$ be a basis of $R_2$.
Therefore, we have $R=\F[x_1,\ldots,x_n,y_1,\ldots,y_m]$. We will also write $f\=g\pmod{h}$ to say that $f-g\in\<h\>$.

Let us define a {\it string module}:
\begin{multline*}
\St_R(\u{x})
=
\big\{(f_1,\ldots,f_{n-1})\in R^{\oplus n-1}\suchthat f_1\=0\pmod{x_1},f_n\=0\pmod{x_n},\\
\forall i\in\{2,\ldots,n-1\}: f_i\=f_{i-1}\pmod{x_i}\big\}.
\end{multline*}
This set is obviously an $R$-submodule of $R^{\oplus n-1}$. It can also be thought of as the module of global sections of the following
moment graph\footnote{see,~\cite[Section~2]{Jantzen} for a quick introduction} with the natural projections to edges:

\vspace{5pt}

\begin{center}
\scalebox{0.9}{
\begin{tikzpicture}
\draw       (-3,0)--(0,0)node[below=10pt,left=20pt]{$R/\<x_1\>$};
\draw       (0,0)--(3,0) node[below=10pt,left=20pt]{$R/\<x_2\>$};
\draw       (3,0)--(3.3,0);
\draw       (6.7,0)--(7,0);
\draw       (7,0)--(10,0) node[below=10pt,left=15pt]{$R/\<x_{n-1}\>$};
\draw       (10,0)--(13,0) node[below=10pt,left=18pt]{$R/\<x_n\>$};

\draw[fill] (-3,0) circle(0.06)node[above=2pt]{$0$};
\draw[fill] (0,0) circle(0.06)node[above=2pt]{$R$}node[below=2pt]{$f_1$};
\draw[fill] (3,0) circle(0.06)node[above=2pt]{$R$}node[below=2pt]{$f_2$};

\draw[fill] (3.8,0) circle(0.02);
\draw[fill] (4.6,0) circle(0.02);
\draw[fill] (5.4,0) circle(0.02);
\draw[fill] (6.2,0) circle(0.02);

\draw[fill] (7,0) circle(0.06)node[above=2pt]{$R$}node[below=2pt]{$f_{n-2}$};
\draw[fill] (10,0) circle(0.06)node[above=2pt]{$R$}node[below=2pt]{$f_{n-1}$};
\draw[fill] (13,0) circle(0.06)node[above=2pt]{$0$};

\end{tikzpicture}}
\end{center}

Let us consider the following total order on the variables: $y_m>\cdots>y_1>x_n>\cdots>x_1$.
Then the monomials in $R$ are ordered lexicographically and the monomials in $R^{\oplus n-1}$ are ordered
with respect to the POT-order\footnote{also lexicographically, if we assume that $e_{n-1}>\cdots>e_1>y_m>\cdots>y_1>x_n>\cdots>x_1$}.
Note that $R^{\oplus n-1}$ is graded so that the degree of each $e_i$ is zero.

\begin{lemma}\label{lemma:ca:1}
The set of elements
$$p_{i,j}=x_ix_j(e_i+e_{i+1}+\cdots+e_{j-1}),$$
where $1\le i<j\le n$, is a Gr\"{o}bner basis of $\St_R(\u{x})$.
\end{lemma}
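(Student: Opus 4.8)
The plan is to verify the two defining properties of a Gröbner basis directly: first, that each $p_{i,j}$ actually lies in $\St_R(\u{x})$; and second, that the leading monomials of the $p_{i,j}$ divide the leading monomial of every nonzero element of the module. The containment is a routine check against the three families of congruences defining $\St_R(\u{x})$: the $e_1$-component of $p_{i,j}$ is $x_ix_j$ (present only when $i=1$), which is divisible by $x_1$; the $e_n$-component is $x_ix_j$ (present only when $j=n$), divisible by $x_n$; and for $2\le k\le n-1$ the difference of the $e_k$- and $e_{k-1}$-components is $0$ unless $k=i$ or $k=j$, in which cases it equals $\pm x_ix_j$, divisible by $x_k$. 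So I would dispatch this in a sentence or two.

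The substantive part is the leading-monomial computation. With the POT order induced by $e_{n-1}>\cdots>e_1>y_m>\cdots>y_1>x_n>\cdots>x_1$, the leading term of $p_{i,j}=x_ix_j(e_i+\cdots+e_{j-1})$ is $x_ix_j\,e_{j-1}$, since among $e_i,\ldots,e_{j-1}$ the index $j-1$ is largest. So I must show: for any nonzero $(f_1,\ldots,f_{n-1})\in\St_R(\u{x})$, writing $\lm(f_k)\,e_k$ for the leading monomial of the whole tuple (where $k$ is the largest index with $f_k\ne0$), the monomial $x_ix_j\,e_{j-1}$ divides $\lm(f_k)e_k$ for some $i<j$; equivalently, taking $j-1=k$, i.e. $j=k+1$, it suffices to find some $i\le k$ with $x_ix_{k+1}\mid \lm(f_k)$. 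The idea is to use the congruences to force divisibility. Since $k$ is the top nonzero index, the congruence at position $k+1$ (if $k+1\le n-1$) reads $f_{k+1}\equiv f_k\pmod{x_{k+1}}$ with $f_{k+1}=0$, giving $x_{k+1}\mid f_k$; and if $k=n-1$ the congruence $f_n\equiv 0\pmod{x_n}$ combined with the chain... wait — here one must be careful, as the defining relations link $f_n$ (which is not a component) implicitly; rather for $k=n-1$ the relation $f_{n-1}\equiv 0 \pmod{x_n}$ holds directly, so $x_n\mid f_{n-1}$. In both cases $x_{k+1}\mid f_k$, hence $x_{k+1}\mid\lm(f_k)$. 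Now I still need a second variable $x_i$ with $i\le k$ dividing $\lm(f_k)$. For this I would descend: let $i$ be minimal such that $f_i,f_{i+1},\ldots,f_k$ are not all divisible by the \emph{same} issue — more precisely, walk down the chain $f_k\equiv f_{k-1}\pmod{x_k}$, $f_{k-1}\equiv f_{k-2}\pmod{x_{k-1}}$, and so on, down to $f_1\equiv 0\pmod{x_1}$. Telescoping, $f_k\equiv f_1\pmod{\langle x_2,\ldots,x_k\rangle}$ and $f_1\equiv 0\pmod{x_1}$, so $f_k\in\langle x_1,x_2,\ldots,x_k\rangle$. Therefore $\lm(f_k)$ is divisible by some $x_i$ with $1\le i\le k$. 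Combined with $x_{k+1}\mid\lm(f_k)$ from the previous step (note $i\le k<k+1$, so these are distinct variables and their product divides $\lm(f_k)$), we conclude $x_ix_{k+1}\mid\lm(f_k)$, i.e. $\lm\bigl(p_{i,k+1}\bigr)=x_ix_{k+1}e_k$ divides $\lm\bigl((f_1,\ldots,f_{n-1})\bigr)=\lm(f_k)e_k$. That is exactly what a Gröbner basis must satisfy. One case remains: if the top nonzero index is $k=n-1$ we used $x_n\mid f_{n-1}$ and $f_{n-1}\in\langle x_1,\ldots,x_{n-1}\rangle$, again yielding $x_ix_n\mid\lm(f_{n-1})$ with $i\le n-1$, matching $\lm(p_{i,n})$.

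I expect the main obstacle to be bookkeeping around the boundary relations and making the telescoping argument airtight — in particular ensuring the variable $x_i$ produced by ``$f_k\in\langle x_1,\ldots,x_k\rangle$'' and the variable $x_{k+1}$ produced by the top relation are genuinely different so that their product divides the leading monomial (this is automatic since $i\le k<k+1$, but it should be stated), and handling the degenerate situation where $f_k$ is a monomial times a unit so $\lm(f_k)=f_k$ up to scalar. A cleaner way to organize the whole thing, which I might adopt instead, is to argue by a division/reduction algorithm: given nonzero $v\in\St_R(\u{x})$ with top index $k$, use the relations to exhibit $v$ as lying in the submodule generated by the $p_{i,j}$ with top component at level $k$ plus a ``tail'' of strictly smaller top index, then induct on the top index. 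But the leading-monomial formulation above is more direct and I would present that, using \eqref{eq:8} or the other congruence identities only if they streamline the telescoping — though in fact they do not seem to be needed here, the argument being purely about divisibility in the polynomial ring.
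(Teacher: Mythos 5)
Your proposal is correct and follows essentially the same route as the paper: use the top-index congruence to get $x_{k+1}\mid\lm(f_k)$, telescope the chain of congruences down to $f_1\equiv 0\pmod{x_1}$ to get $f_k\in\langle x_1,\ldots,x_k\rangle$ (which forces $\lm(f_k)$ divisible by some $x_i$ with $i\le k$, since that ideal is a monomial ideal), and combine the two distinct variables to recognize $\lm(p_{i,k+1})$. The boundary case $k=n-1$ you flagged is handled uniformly in the paper by the same observation you made ($x_n\mid f_{n-1}$), and the ``degenerate monomial'' worry is harmless.
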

\begin{proof}
As these polynomials obviously belong to $\St_R(\u{x})$, it suffices to prove that
at least one of their leading terms divides the leading term $\lm(p)$ of any nonzero element $p\in\St_R(\u{x})$.

Let us write $\lm(p)=Ze_j$. Then $p=f_1e_1+\cdots+f_je_j$ for some $f_1,\ldots,f_j\in R$ such that $f_j\ne0$.
By definition, $x_{j+1}$ divides $f_j$ and thus also $Z=\lm(f_j)$.


It is easy to prove by the inverse induction on $k=0,\ldots,j$ that
$$
f_k=f_j+x_{k+1}q_{k+1}+\cdots+x_jq_j
$$
for some polynomials $q_{k+1},\ldots,q_j\in R$, where $f_0=0$. Applying the above formula for
$k=0$, we see that each monomial of $f_j$ is divisible by some $x_i$ for $i\le j$.
Therefore, this is also true for $Z$. Hence $x_ix_{j+1}$ divides $Z$ and $x_ix_{j+1}e_j=\lm(p_{i,j+1})$ divides $\lm(p)$.
\end{proof}


\begin{lemma}\label{lemma:ca:2}
The sequence $y_1,\ldots,y_m,x_n,x_1$ is a maximal $\St_R(\u{x})$-sequence.
Therefore, $\depth\St_R(\u{x})=m+2$.
\end{lemma}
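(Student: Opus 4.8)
The plan is to first reinterpret $\St_R(\u{x})$ as a syzygy module, where its depth becomes transparent. Setting $f_0=f_n=0$, the defining congruences of $\St_R(\u{x})$ say exactly that $x_i$ divides $f_i-f_{i-1}$ for $i=1,\dots,n$, so the rule $g_i:=(f_i-f_{i-1})/x_i$ sends $(f_1,\dots,f_{n-1})$ to a tuple $(g_1,\dots,g_n)\in R^{\oplus n}$ with $\sum_i x_ig_i=\sum_i(f_i-f_{i-1})=0$, while $f_k=\sum_{i\le k}x_ig_i$ inverts it. Hence $\St_R(\u{x})$ is isomorphic, up to a degree shift, to the module $Z:=\{(g_1,\dots,g_n)\in R^{\oplus n}\suchthat x_1g_1+\cdots+x_ng_n=0\}$ of Koszul $1$-cycles of the regular sequence $x_1,\dots,x_n$ in $R=\F[x_1,\dots,x_n,y_1,\dots,y_m]$. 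In particular $Z$ is generated by the Koszul relations $r_{ij}:=x_ie_j-x_je_i$ ($1\le i<j\le n$, with $e_1,\dots,e_n$ the standard basis of $R^{\oplus n}$), in agreement with Lemma~\ref{lemma:ca:1}, and it is the second syzygy of $R/\langle x_1,\dots,x_n\rangle$. I would prove both assertions of the lemma directly for $Z$.

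First I would verify that $x_1,x_n,y_m,\dots,y_1$ is a $Z$-sequence. Multiplication by $x_1$ is injective because $Z\subseteq R^{\oplus n}$ and $R$ is a domain. For $x_n$ on $Z/x_1Z$: if $x_nv=x_1w$ with $v,w\in Z$, then $x_nv_i=x_1w_i$ in each coordinate, so $x_1\mid v_i$ by coprimality of $x_1$ and $x_n$ in the UFD $R$; writing $v=x_1u$ one gets $u\in Z$ from $x_1\sum_i x_iu_i=\sum_i x_iv_i=0$, so $v\in x_1Z$. For the variables $y_j$, I would set $S=\F[x_1,\dots,x_n]$ and $Z_0:=\{(g_i)\in S^{\oplus n}\suchthat\sum_i x_ig_i=0\}$; since $R$ is free over $S$ we have $Z=Z_0\otimes_S R$, so $Z/\langle x_1,x_n\rangle Z$ is the polynomial module $\overline{Z_0}[y_1,\dots,y_m]$ over $R/\langle x_1,x_n\rangle$, where $\overline{Z_0}:=Z_0/\langle x_1,x_n\rangle Z_0$, and $y_m,\dots,y_1$ is a regular sequence on it (variables acting on a polynomial module). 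Nonvanishing of all the successive quotients is handled uniformly by graded Nakayama (Lemma~\ref{lemma:gr_Nakayama}): for every nonzero graded module $N$ occurring here and every proper graded ideal $\mathfrak a$, one has $N\ne\m N$, hence $N\ne\mathfrak a N$.

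For maximality I would show the terminal quotient $\St_R(\u{x})/\langle x_1,x_n,y_1,\dots,y_m\rangle\St_R(\u{x})\cong\overline{Z_0}$ has $\m$ among its associated primes, by exhibiting the class of $r_{1n}=x_1e_n-x_ne_1$ as a nonzero socle element. It is killed by $x_1,x_n$ and the $y_j$ trivially, and by $x_l$ for $2\le l\le n-1$ via the identity $x_lr_{1n}=x_1r_{ln}+x_nr_{1l}$, so $\m$ annihilates its class; and that class is nonzero because $Z_0$ is generated in its lowest nonzero degree (the common degree of the $r_{ij}$), so $\langle x_1,x_n\rangle Z_0$ lies in strictly higher degrees and cannot contain $r_{1n}$. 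Consequently no homogeneous polynomial of $\m$ acts injectively on $\overline{Z_0}$, so the $\St_R(\u{x})$-sequence $x_1,x_n,y_m,\dots,y_1$ admits no extension and is maximal; by the definition of depth, $\depth\St_R(\u{x})=m+2$. As an independent check, $Z$ is the second syzygy of $R/\langle x_1,\dots,x_n\rangle$, so $\pd\St_R(\u{x})=n-2$, and Proposition~\ref{proposition:Auslander–Buchsbaum} gives $\depth\St_R(\u{x})=(n+m)-(n-2)=m+2$ as well.

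I expect the maximality step to be the main obstacle. Once the syzygy description is available the regularity of the sequence is essentially formal, but the socle computation — isolating $r_{1n}$, checking the cancellation identity $x_lr_{1n}=x_1r_{ln}+x_nr_{1l}$, and running the degree argument for $r_{1n}\notin\langle x_1,x_n\rangle Z_0$ — is the one point that requires a genuine, if short, calculation.
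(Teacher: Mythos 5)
Your proof is correct, and it follows a genuinely different route than the paper's. The paper works directly with the string module and its Gr\"obner basis $\{p_{i,j}\}$: each regularity step is a contradiction argument in which a minimal counterexample (in the POT order) is reduced by subtracting a suitable multiple of a basis element, and the $x_1$ step requires an additional specialization trick ($y_j\mapsto 0$). Your key move — the change of coordinates $g_i=(f_i-f_{i-1})/x_i$ with $f_0=f_n=0$, identifying $\St_R(\u{x})$ up to a shift with the first Koszul syzygy module $Z=\ker\bigl(R^{\oplus n}\to R,\ (g_i)\mapsto\sum x_ig_i\bigr)$ — is not in the paper and makes the regularity arguments nearly automatic: $x_1$ by the domain property, $x_n$ by coprimality in a UFD, and the $y_j$ by flat base change $Z=Z_0\otimes_S R$ (standard polynomial-module regularity). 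Your identity $x_\ell r_{1n}=x_1 r_{\ell n}+x_n r_{1\ell}$ is the image of the paper's $x_j p_{1,n}=x_n p_{1,j}+x_1 p_{j,n}$ under the isomorphism, so the maximality step is the same in substance, though you also observe — correctly — that it becomes logically redundant once you know $\pd Z=n-2$ from the Koszul resolution and apply Auslander--Buchsbaum, since a regular sequence of length $\operatorname{depth} M$ is automatically maximal. In short: your approach buys conceptual transparency and an independent verification of the depth, while the paper's is more self-contained in that it relies only on the Gr\"obner basis already established in Lemma~\ref{lemma:ca:1} and avoids importing the Koszul machinery.
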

\begin{proof}
As condition~\ref{M-seq:ii} of the definition of a regular sequence is automatically satisfied
by the degree reason, we need only to check condition~\ref{M-seq:i}.

First let us prove for any $i=1,\ldots,m+1$ that the multiplication map
$$
\begin{tikzcd}
\St_R(\u{x})/\<y_1,\ldots,y_{i-1}\>\St_R(\u{x})\arrow{r}{y_i}&\St_R(\u{x})/\<y_1,\ldots,y_{i-1}\>\St_R(\u{x})
\end{tikzcd}
$$
is injective, where we use the notation $y_{m+1}=x_n$.
Suppose it is not. Then there exists $q\in\St_R(\u{x})$ such that
\begin{equation}\label{eq:yiP:0}
q\notin\<y_1,\ldots,y_{i-1}\>\St_R(\u{x}),
\end{equation}
\begin{equation}\label{eq:yiP:1}
y_iq=y_1q_1+y_2q_2+\cdots+y_{i-1}q_{i-1}
\end{equation}
for some $q_1,\ldots,q_{i-1}\in\St_R(\u{x})$.
Let us choose $q$ satisfying these properties so that $\lm(q)$ be minimal.
Note that $y_i\lm(q)$ is the leading monomial of $y_iq$ and
thus also the leading monomial of the right-hand side of~(\ref{eq:yiP:1}).
The last assertion implies that $y_i\lm(q)$ is divisible by some $y_j$ for $j<i$.
Hence $\lm(q)$ is itself divisible by $y_j$.
On the other hand, $\lm(q)$ is divisible by some $x_kx_le_{l-1}=\lm(p_{k,l})$ for $1\le k<l\le n$ by Lemma~\ref{lemma:ca:1}.
Therefore, $\lm(q)$ is divisible by their product and we get the following element of $R$:
\begin{equation}\label{eq:f}
f=\frac{\lt(q)}{y_jx_kx_le_{l-1}}.
\end{equation}
Let us consider the following element
$
q'=q-y_jfp_{k,l}
$.
It belongs to $\St_R(\u{x})$ and is nonzero by~(\ref{eq:yiP:0}). We see that $\lm(q')<\lm(q)$. However,
\begin{multline*}
y_iq'=y_i(q-y_jfp_{k,l})=y_1q_1+y_2q_2+\cdots+y_{i-1}q_{i-1}-y_iy_j fp_{k,l}\\
=y_1q_1+\cdots+y_{j-1}q_{j-1}+y_j(q_j-y_ifp_{k,l})+y_{j+1}q_{j+1}+\cdots+y_{i-1}q_{i-1}.
\end{multline*}
It remains to note that $q'\notin\<y_1,\ldots,y_{i-1}\>\St_R(\u{x})$ by~(\ref{eq:yiP:0}) to get a contradiction.

Next, let us prove that the multiplication map
$$
\begin{tikzcd}
\St_R(\u{x})/\<y_1,\ldots,y_m,x_n\>\St_R(\u{x})\arrow{r}{x_1}&\St_R(\u{x})/\<y_1,\ldots,y_m,x_n\>\St_R(\u{x})
\end{tikzcd}
$$
is injective. Suppose it is not. Then there exists $q\in\St_R(\u{x})$ such that
\begin{equation}\label{eq:yiP:1.5}
q\notin\<y_1,\ldots,y_m,x_n\>\St_R(\u{x}),
\end{equation}
\begin{equation}\label{eq:yiP:2}
x_1q=y_1q_1+y_2q_2+\cdots+y_{m-1}q_{m-1}+x_nh
\end{equation}
for some $q_1,\ldots,q_m,h\in\St_R(\u{x})$.
Again, let us choose $q$ satisfying these properties so that $\lm(q)$ be minimal.
Arguing as above and applying~(\ref{eq:yiP:1.5}), we see that $\lm(q)$ is divisible by some $y_j$ with $j\le m+1$.
Let us choose $j$ to be as minimal as possible. On the other hand,
$\lm(q)$ is divisible by some $x_kx_le_{l-1}=\lm(p_{k,l})$ for $1\le k<l\le n$ by Lemma~\ref{lemma:ca:1}.

If $j\le m$, then $\lm(q)$ is divisible by the product $y_jx_kx_le_{l-1}$.
In that case, we can consider $f\in R$ defined by~(\ref{eq:f}) and obtain a contradiction as above.
So assume that $j=m+1$. By what we have just said, we can exclude the case where $\lm(q)$
is divisible by $y_jx_kx_le_{l-1}$.
Thus $l=n$ and $\lm(q)$ has degree 1 with respect to $x_n$ and is not divisible by $y_1,\ldots,y_m$.
Let $q^\circ$ and $h^\circ$ be the polynomials obtained from $q$ and $h$ respectively by
the substitution $y_1\mapsto0$,\ldots,$y_m\mapsto0$.
By Lemma~\ref{lemma:ca:1}, we get $q^\circ,h^\circ\in\St_R(\u{x})$, as the elements of the Gr\"{o}bner basis do not
depend on $y_1,\ldots,y_m$. Thus~(\ref{eq:yiP:2}) implies
$$
x_1q^\circ=x_nh^\circ.
$$
Note that $\lm(q^\circ)=\lm(q)$. Hence
$$
x_1\lm(q)=x_n\lm(h^\circ).
$$
As both sides have degree 1 with respect to $x_n$, we obtain that $\lm(h^\circ)$ is not divisible by $x_n$.
This, however, directly contradicts the fact that $h^\circ\in\St_R(\u{x})$ as $\lm(h^\circ)$ is divisible by $e_{n-1}$
(as the left-hand side is).

Finally, let us prove that our sequence is maximal. We need to prove that
the multiplication map
$$
\begin{tikzcd}
\St_R(\u{x})/\<y_1,\ldots,y_m,x_n,x_1\>\St_R(\u{x})\arrow{r}{f}&\St_R(\u{x})/\<y_1,\ldots,y_m,x_n,x_1\>\St_R(\u{x})
\end{tikzcd}
$$
is injective for no $f\in\mathfrak m$. Let us consider the following element of the above quotient module:
$$
a=p_{1,n}+\<y_1,\ldots,y_m,x_n,x_1\>\St_R(\u{x}).
$$
Considering the degrees, we get $a\ne0$. We will prove that $za=0$ for any variable $z$.
It suffices to consider the case $z=x_j$, where $1<j<n$. In this case, the claim is true, as
$$
x_jp_{1,n}=\sum_{i=1}^{n-1}x_jx_1x_ne_i=x_n\sum_{i=1}^{j-1}x_1x_je_i+x_1\sum_{i=j}^{n-1}x_jx_ne_i
=x_np_{1,j}+x_1p_{j,n}.
$$
Hence $fa=0$ for any $f\in\mathfrak m$.
\end{proof}

\begin{corollary}\label{corollary:ca:1}
$\pd\St_R(\u{x})=|\u{x}|-2$.
\end{corollary}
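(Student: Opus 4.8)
The plan is to deduce this directly from the Auslander--Buchsbaum formula together with the depth computation already carried out in Lemma~\ref{lemma:ca:2}. First I would record the ambient data: by the construction at the start of Section~\ref{string_modules} we have $R=\F[x_1,\ldots,x_n,y_1,\ldots,y_m]$, a polynomial ring in $n+m$ variables over the field $\F$, where $n=|\u{x}|$. The module $\St_R(\u{x})$ is a finitely generated graded $R$-module: it is a graded submodule of the free module $R(-k)^{\oplus n-1}$ (for the appropriate shift $k$ of the generators $e_i$), and it is finitely generated either because $R$ is Noetherian or, more concretely, because Lemma~\ref{lemma:ca:1} exhibits an explicit finite generating set, namely the Gröbner basis $\{p_{i,j}\}_{1\le i<j\le n}$.

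Having checked that the hypotheses of Proposition~\ref{proposition:Auslander--Buchsbaum} are met, I would apply it to $M=\St_R(\u{x})$ to get
$$
\pd\St_R(\u{x})+\depth\St_R(\u{x})=n+m.
$$
Then I substitute the value $\depth\St_R(\u{x})=m+2$ supplied by Lemma~\ref{lemma:ca:2}, obtaining
$$
\pd\St_R(\u{x})=(n+m)-(m+2)=n-2=|\u{x}|-2,
$$
which is the assertion.

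There is essentially no obstacle here: the corollary is a formal consequence of the two preceding results. The only point deserving a word is the bookkeeping of the number of variables---one must be careful that the polynomial ring really has exactly $n+m$ variables and that the $n-1$ free generators $e_i$ of the ambient module are \emph{not} counted as ring variables (they carry a grading but live in the module, not the ring), so that Auslander--Buchsbaum is applied with the correct value $n+m$ rather than, say, $n+m+(n-1)$. Everything else is immediate.
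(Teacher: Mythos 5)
Your proof is correct and follows exactly the same route as the paper's: apply the Auslander--Buchsbaum formula to $\St_R(\u{x})$ over $R=\F[x_1,\ldots,x_n,y_1,\ldots,y_m]$ and substitute $\depth\St_R(\u{x})=m+2$ from Lemma~\ref{lemma:ca:2}. The extra remarks you make about finite generation and counting variables are sound sanity checks but add nothing beyond the paper's one-line computation.
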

\begin{proof}
By Lemma~\ref{lemma:ca:2} and Proposition~\ref{proposition:Auslander–Buchsbaum}, we get
$$
\pd\St_R(\u{x})=n+m-\depth\St_R(\u{x})=n+m-(m+2)=n-2,
$$
where $n=|\u{x}|$.
\end{proof}

\begin{remark}\label{remark:n2}
\rm For $\u{x}$ of length $2$, the module $\St_R(\u{x})$ has projective dimension $0$. Thus it is free
by the Quillen–Suslin theorem.
Actually, it follows from the definition that $\St_R(\u{x})\cong R(-4)$ freely generated by $x_1x_2e_1$.
\end{remark}

\subsection{Dual modules} 
Arguing as in the proof of Lemma~\ref{lemma:ca:2}, we get
\begin{equation}\label{eq:tripleq}
x_kp_{i,j}+x_ip_{j,k}=x_jp_{i,k}
\end{equation}
for any $1\le i<j<k\le n$.
Now we can describe the module $\St_R(\u{x})$ as a quotient module of a free module as follows.
Let 
$R(-4)^{\oplus n(n-1)/2}$ be the free $R$-module with free generators $e_{i,j}$ (of degree $4$), where $1\le i<j\le n$,
and $\phi:R(-4)^{\oplus n(n-1)/2}\to\St_R(\u{x})$ be the graded homomorphism such that $\phi(e_{i,j})=p_{i,j}$.
By Lemma~\ref{lemma:ca:1}, it is epimorphic.
Thus
\begin{equation}\label{eq:Stvee}
\St_R(\u{x})^\vee\cong\big\{\theta\in\big(R(-4)^{\oplus n(n-1)/2}\big)^\vee\suchthat\theta(\ker\phi)=0\big\}.
\end{equation}
By~(\ref{eq:tripleq}), we get
$$
x_je_{i,k}-x_ke_{i,j}-x_ie_{j,k}\in\ker\phi
$$
for any $1\le i<j<k\le n$. We denote the left-hand side by $q_{i,j,k}$.
We are going to prove that these elements actually generate $\ker\phi$.

To this end, we define the following total order on the coordinate vectors:
$$
e_{i,j}>e_{k,l}\Longleftrightarrow j>l\text{ or }(j=l\text{ and }i<k).
$$
Keeping the order on the variables $x_1,\ldots,x_n,y_1,\ldots,y_m$ as in the previous section,
we consider the POT-order on the monomials of $R^{\oplus n(n-1)/2}$. Then we get
$$
\lm(q_{i,j,k})=x_je_{i,k}.
$$

\begin{lemma}\label{lemma:ca:4}
The set of polynomials $q_{i,j,k}$ for $1\le i<j<k\le n$ is
a Gr\"{o}bner basis of $\ker\phi$.
\end{lemma}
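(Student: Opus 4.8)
The plan is to check the two clauses in the definition of a Gröbner basis of $\ker\phi$ directly; once this is done, the fact that the $q_{i,j,k}$ generate $\ker\phi$ comes for free from the Proposition quoted just before Subsection~\ref{string_modules}. That each $q_{i,j,k}=x_je_{i,k}-x_ke_{i,j}-x_ie_{j,k}$ lies in $\ker\phi$ is exactly~(\ref{eq:tripleq}). So the entire content is the leading‑term condition: for every nonzero $r\in\ker\phi$ the monomial $\lm(r)$ is divisible by $\lm(q_{i,j,k})=x_je_{i,k}$ for some $1\le i<j<k\le n$. Write $r=\sum_{i<j}f_{i,j}e_{i,j}$ with $f_{i,j}\in R$ and let $e_{a,b}$ be the largest coordinate vector (in the order chosen just above the Lemma) with $f_{a,b}\ne0$; since the order on $R^{\oplus n(n-1)/2}$ is a POT order, $\lm(r)=\lm(f_{a,b})\,e_{a,b}$. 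Maximality of $e_{a,b}$ means precisely that $f_{i,j}=0$ whenever $j>b$, and $f_{i,b}=0$ whenever $i<a$.

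The crucial step is to read off a relation from the $(b-1)$st coordinate of $\phi(r)=\sum_{i<j}f_{i,j}x_ix_j(e_i+\dots+e_{j-1})=0$ inside $R^{\oplus n-1}$. A summand $f_{i,j}x_ix_j(e_i+\dots+e_{j-1})$ contributes to the coordinate indexed by $b-1$ exactly when $i\le b-1<j$; among such summands only $j=b$ survives, since $j>b$ forces $f_{i,j}=0$. Hence the $(b-1)$st coordinate of $\phi(r)$ equals $x_b\sum_{i<b}f_{i,b}x_i$, so $\sum_{i<b}f_{i,b}x_i=0$; discarding the terms with $i<a$ (which vanish) gives $f_{a,b}\,x_a=-\sum_{a<i<b}f_{i,b}x_i\in\langle x_{a+1},\dots,x_{b-1}\rangle$.

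Now I would use that $x_1,\dots,x_n,y_1,\dots,y_m$ are algebraically independent variables of the polynomial ring $R$: the quotient $R/\langle x_{a+1},\dots,x_{b-1}\rangle$ is again a polynomial ring in which the image of $x_a$ is a variable, hence a nonzerodivisor. Therefore $f_{a,b}\in\langle x_{a+1},\dots,x_{b-1}\rangle$. As $f_{a,b}\ne0$, this already forces $b\ge a+2$, and it says that every monomial of $f_{a,b}$, in particular $\lm(f_{a,b})$, is divisible by $x_j$ for some $a<j<b$. For such a $j$ we have $1\le a<j<b\le n$ and $\lm(q_{a,j,b})=x_je_{a,b}$ divides $\lm(f_{a,b})\,e_{a,b}=\lm(r)$, which is what we wanted. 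Combined with the Proposition quoted above, this also shows the $q_{i,j,k}$ generate $\ker\phi$.

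There is no real obstacle here; the argument runs parallel to the proof of Lemma~\ref{lemma:ca:1}, and the only ingredient beyond bookkeeping is the elementary fact that $x_a$ stays a nonzerodivisor modulo $\langle x_{a+1},\dots,x_{b-1}\rangle$. The one point needing a moment's thought is the choice of coordinate to project onto: the index $b-1$ is forced, being the unique coordinate to which only the pairs $(i,b)$ contribute. (One could instead verify Buchberger's S‑polynomial criterion — the leading terms $x_je_{i,k}$ overlap only for pairs $q_{i,j,k},q_{i,j',k}$ sharing the outer pair, with $j<j'$, and there $S(q_{i,j,k},q_{i,j',k})=x_kq_{i,j,j'}-x_iq_{j,j',k}$, whose two leading monomials $x_kx_je_{i,j'}$ and $x_ix_{j'}e_{j,k}$ lie strictly below the relevant $\mathrm{lcm}$ — but that route additionally requires proving separately that the $q_{i,j,k}$ generate $\ker\phi$, which the direct argument avoids.)
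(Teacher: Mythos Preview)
Your proof is correct and essentially identical to the paper's own argument: both project $\phi(r)=0$ onto the $e_{b-1}$-coordinate to obtain $\sum_{i<b}f_{i,b}x_i=0$, isolate $f_{a,b}x_a$ on one side, and conclude that $\lm(f_{a,b})$ is divisible by some $x_j$ with $a<j<b$. The only cosmetic difference is that the paper phrases the last step via leading monomials of both sides of $f_{a,b}x_a=-\sum_{a<i<b}f_{i,b}x_i$, whereas you invoke that $x_a$ is a nonzerodivisor modulo the monomial ideal $\langle x_{a+1},\dots,x_{b-1}\rangle$.
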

\begin{proof}
Let us consider a nonzero element
$$
a=\sum_{1\le i<j\le n}f_{i,j}e_{i,j}\in\ker\phi
$$
Let $\lm(a)=Ze_{k,l}$. Thus $f_{k,l}\ne0$ and $Z=\lm(f_{k,l})$.
Note that the above summation actually goes over $1\le i<j\le l$. Hence
$$
\sum_{1\le i<j\le l}f_{i,j}p_{i,j}=0.
$$
Taking the $e_{l-1}$-coordinate of both sides of this equality and cancelling $x_l$, we get
$$
\sum_{1\le i<l}f_{i,l}x_i=0.
$$
Note that $f_{i,l}=0$ for $i<k$, as $\lm(a)=Ze_{k,l}$. Hence
$$
f_{k,l}x_k=-\sum_{k<i<l}f_{i,l}x_i.
$$
The case $k=l-1$ is impossible, as then the sum would be empty, which contradicts $f_{k,l}\ne0$.
Taking the leading monomials of both parts, we get that $\lm(f_{k,l})x_k$ is divisible by
one of the variables $x_i$ for $k<i<l$. Therefore, $\lm(f_{k,l})$ is also divisible $x_i$.
This implies that $\lm(a)$ is divisible by $\lm(q_{k,i,l})=x_ie_{k,l}$.
\end{proof}

The $R$-module $(R(-4)^{\oplus n(n-1)/2})^\vee$ is freely generated by the coordinate
homomorphisms $e_{i,j}^*$ (of degree $-4$) for $1\le i<j\le n$.
Thus for any $\theta\in(R(-4)^{\oplus n(n-1)/2})^\vee\cong R(4)^{\oplus n(n-1)/2}$, we get
$$
\theta=\sum_{1\le i<j\le n-1}\theta_{i,j}e_{i,j}^*,
$$
where $\theta_{i,j}=\theta(e_{i,j})$. Lemma~\ref{lemma:ca:4} allows us to rewrite~(\ref{eq:Stvee}) in these terms as follows:
\begin{equation}\label{eq:Stvee_in_coordinates}
\St_R(\u{x})^\vee\cong\{\theta\in R(4)^{\oplus n(n-1)/2}\suchthat\forall\, 1\le i<j<k\le n: x_k\theta_{i,j}+x_i\theta_{j,k}=x_j\theta_{i,k}\}.
\end{equation}
Thus we have identified $\St_R(\u{x})^\vee$ with a submodule of $R(4)^{\oplus n(n-1)/2}$.
We are ready to find a free resolution of this module\footnote{For $n=2$, there are no equations to be satisfied.
So $\St_R(\u{x})^\vee\cong R(4)$ in accordance with Remark~\ref{remark:n2}.}.
First, let us find its generators. To this end, consider the POT-order on the monomials of $R(4)^{\oplus n(n-1)/2}$
with the following order on its generators\footnote{It is our old order on the generators $e_{i,j}$ with the asterisk added.}:
%
$$
e^*_{i,j}>e^*_{k,l}\Longleftrightarrow j>l\text{ or }(j=l\text{ and }i<k).
$$

\begin{lemma}\label{lemma:ca:5}
Let $n\ge3$. The set of elements
$$
\theta^{(h)}=x_1e_{1,h}^*+x_2e_{2,h}^*+\cdots+x_{h-1}e_{h-1,h}^*-x_{h+1}e_{h,h+1}^*-x_{h+2}e_{h,h+2}^*-\cdots-x_ne_{h,n}^*,
$$
where $1\le h\le n$,
is a Gr\"{o}bner basis of $\St_R(\u{x})^\vee$ (under identification~(\ref{eq:Stvee_in_coordinates})).
\end{lemma}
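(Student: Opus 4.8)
The plan is to verify the Gröbner basis property directly from the definition: one shows that each $\theta^{(h)}$ lies in $\St_R(\u{x})^\vee$, and that for every nonzero $\theta\in\St_R(\u{x})^\vee$ the leading monomial $\lm(\theta)$ is divisible by $\lm(\theta^{(h)})$ for at least one $h$.

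The first task is routine. Writing $\theta^{(h)}_{a,b}=x_a$ if $b=h$, $\theta^{(h)}_{a,b}=-x_b$ if $a=h$, and $\theta^{(h)}_{a,b}=0$ otherwise, one checks the defining congruences of~(\ref{eq:Stvee_in_coordinates}) by splitting each triple $1\le i<j<k\le n$ into the cases $h\in\{i,j,k\}$, $i<h<j$, $j<h<k$, $h<i$ and $h>k$; in every case the identity $x_k\theta_{i,j}+x_i\theta_{j,k}=x_j\theta_{i,k}$ reduces to an immediate cancellation (cf.~(\ref{eq:tripleq})). Next I would compute the leading monomials: since the order on $R(4)^{\oplus n(n-1)/2}$ is POT, the generator decides first, so $\lm(\theta^{(h)})$ is carried by the largest $e^*_{a,b}$ occurring in $\theta^{(h)}$. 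For $h<n$ this is $e^*_{h,n}$, whence $\lm(\theta^{(h)})=x_ne^*_{h,n}$; for $h=n$ there are no negative terms and the largest generator is $e^*_{1,n}$, whence $\lm(\theta^{(n)})=x_1e^*_{1,n}$.

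The core of the argument treats a general nonzero $\theta=\sum_{i<j}\theta_{i,j}e^*_{i,j}\in\St_R(\u{x})^\vee$ with $\lm(\theta)=Ze^*_{k,l}$, so that $\theta_{k,l}\ne0$, $Z=\lm(\theta_{k,l})$, and $\theta_{i,j}=0$ whenever $e^*_{i,j}>e^*_{k,l}$. First I would show $l=n$: otherwise, for any $m$ with $l<m\le n$ the defining equation for the triple $(k,l,m)$ has $\theta_{l,m}=\theta_{k,m}=0$ (both generators exceed $e^*_{k,l}$), hence collapses to $x_m\theta_{k,l}=0$, forcing $\theta_{k,l}=0$, a contradiction. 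Thus $\lm(\theta)=Ze^*_{k,n}$ with $k<n$. If $k\ge2$, then for $i<k$ the equation for $(i,k,n)$ reads $x_n\theta_{i,k}+x_i\theta_{k,n}=x_k\theta_{i,n}$ with $\theta_{i,n}=0$ (since $e^*_{i,n}>e^*_{k,n}$), so $x_i\theta_{k,n}\in\langle x_n\rangle$; as $\langle x_n\rangle$ is prime and $x_i\notin\langle x_n\rangle$ we get $\theta_{k,n}\in\langle x_n\rangle$, hence $x_n\mid Z$ and $\lm(\theta^{(k)})=x_ne^*_{k,n}$ divides $\lm(\theta)$. If $k=1$, then $n\ge3$ furnishes some $j$ with $1<j<n$, and the equation for $(1,j,n)$ gives $x_j\theta_{1,n}\in\langle x_1,x_n\rangle$; primality of $\langle x_1,x_n\rangle$ together with $x_j\notin\langle x_1,x_n\rangle$ yields $\theta_{1,n}\in\langle x_1,x_n\rangle$, so every monomial of $\theta_{1,n}$ — in particular $Z$ — is divisible by $x_1$ or by $x_n$; accordingly $\lm(\theta^{(n)})=x_1e^*_{1,n}$ or $\lm(\theta^{(1)})=x_ne^*_{1,n}$ divides $\lm(\theta)$. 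This exhausts all cases.

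The delicate point is the case $k=1$: one should avoid chasing leading terms through the possibly cancelling sum $x_n\theta_{1,j}+x_1\theta_{j,n}$ and instead exploit that membership of $x_j\theta_{1,n}$ in the \emph{monomial} ideal $\langle x_1,x_n\rangle$ forces every monomial of $\theta_{1,n}$ to be divisible by $x_1$ or $x_n$. This is precisely why both $\theta^{(1)}$ and $\theta^{(n)}$ are needed (and together suffice) to cover the single generator $e^*_{1,n}$, which is the only subtlety distinguishing this Gröbner basis from the one in Lemma~\ref{lemma:ca:1}.
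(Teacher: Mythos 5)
Your proof is correct and follows essentially the same route as the paper's: verify membership case-by-case, compute the leading monomials $x_ne^*_{h,n}$ (for $h<n$) and $x_1e^*_{1,n}$, prove $l=n$ by the same collapse argument, and then extract divisibility of $Z$ by $x_n$ (for $k\ge2$) or by $x_1$ or $x_n$ (for $k=1$) from the defining relations. The only difference is organizational — you argue directly via primality of $\langle x_n\rangle$ and of $\langle x_1,x_n\rangle$ rather than first reducing to ``$x_n\nmid Z$'' and deriving a contradiction in the $k\ge2$ case — and your explicit remark that membership of $\theta_{1,n}$ in the monomial ideal $\langle x_1,x_n\rangle$ is what forces $Z$ to be divisible by one of $x_1,x_n$ is a welcome clarification of a step the paper leaves rather terse.
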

\begin{proof}
First, let us prove that $\theta^{(h)}\in\St_R(\u{x})^\vee$. Let us take $1\le i<j<k\le n$ and check that
\begin{equation}\label{eq:thetah}
x_k\theta^{(h)}_{i,j}+x_i\theta^{(h)}_{j,k}=x_j\theta^{(h)}_{i,k}.
\end{equation}
{\it Case 1: $h\notin\{i,j,k\}$.} Equation~(\ref{eq:thetah}) is true,
as $\theta^{(h)}_{i,j}=\theta^{(h)}_{j,k}=\theta^{(h)}_{i,k}=0$.

\noindent
{\it Case 2: $h=i$.} Equation~(\ref{eq:thetah}) is true, as
$$
x_k\theta^{(h)}_{h,j}+x_i\theta^{(h)}_{j,k}=x_k(-x_j)+x_i\cdot0=-x_kx_j=x_j(-x_k)=x_j\theta^{(h)}_{h,k}
$$

\noindent
{\it Case 3: $h=j$.} Equation~(\ref{eq:thetah}) is true, as
$$
x_k\theta^{(h)}_{i,h}+x_i\theta^{(h)}_{h,k}=x_kx_i+x_i(-x_k)=0=x_h\theta^{(h)}_{i,k}.
$$

\noindent
{\it Case 4: $h=k$.} Equation~(\ref{eq:thetah}) is true, as
$$
x_k\theta^{(h)}_{i,j}+x_i\theta^{(h)}_{j,h}=x_k\cdot0+x_ix_j=x_jx_i=x_j\theta^{(h)}_{i,h}.
$$

It follows from the definition, that
$$
\lm(\theta^{(n)})=x_1e^*_{1,n},\quad \lm(\theta^{(h)})=x_ne^*_{h,n}\text{ for }h<n.
$$
Let us take an nonzero element $\theta\in\St_R(\u{x})^\vee$ and let $\lm(\theta)=Ze_{i,j}$.
Here $Z=\lm(\theta_{i,j})$.

We claim that $j=n$. Indeed suppose that $j<n$. We get $i<j<n$, whence
$$
x_n\theta_{i,j}+x_i\theta_{j,n}=x_j\theta_{i,n}.
$$
As $e^*_{j,n}$ and $e^*_{i,n}$ are both greater than $e^*_{i,j}$, we get $\theta_{j,n}=\theta_{i,n}=0$.
Hence $x_n\theta_{i,j}=0$. Cancelling out $x_n$, we obtain a contradiction $\theta_{i,j}=0$.

Now that we have proved that $j=n$, we can assume that $Z$ is not divisible by $x_n$,
as otherwise $Z$ would be divisible by $x_ne^*_{i,n}=\lm(\theta^{(i)})$.
We have the following two cases.

{\it Case a: $i>1$.} Then
$$
x_n\theta_{1,i}+x_1\theta_{i,n}=x_i\theta_{1,n}.
$$
As $e^*_{1,n}>e^*_{i,n}=e^*_{i,j}$, we get $\theta_{1,n}=0$.
Hence $-x_n\theta_{1,i}=x_1\theta_{i,n}$, which implies that $\theta_{i,n}$ is divisible by $x_n$
and thus also $Z=\lm(\theta_{i,n})$ is also divisible by $x_n$.
But we have already excluded this case.

{\it Case b: $i=1$.} We have $1=i<2<j=n$, as $n\ge3$. We get
$$
x_n\theta_{1,2}+x_1\theta_{2,n}=x_2\theta_{1,n}.
$$
As $\lm(\theta_{1,n})=\lm(\theta_{i,j})=Z$ is not divisible by $x_n$, it is divisible by $x_1$.
Hence it is also divisible by $x_1e^*_{1,n}=\lm(\theta^{(n)})$.
\end{proof}

Let us consider the free $R$-module $R(2)^{\oplus n}$ freely generated by the elements $\epsilon_h$ (of degree $-2$),
where $1\le h\le n$ . We consider the graded homomorphism $\psi:R(2)^{\oplus n}\to\St_R(\u{x})^\vee$
given by $\psi(\epsilon_h)=\theta^{(h)}$. It is epimorphic by Lemma~\ref{lemma:ca:5}.

\begin{lemma}\label{lemma:ca:6} Let $n\ge3$.
The module $\ker\psi$ is freely generated by the element
$$
w=x_1\epsilon_1+x_2\epsilon_2+\cdots+x_n\epsilon_n.
$$
\end{lemma}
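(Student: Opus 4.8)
The claim is that $\ker\psi$ is the rank-one free submodule of $R(2)^{\oplus n}$ generated by $w=\sum_{h=1}^n x_h\epsilon_h$. My plan is to verify $w\in\ker\psi$ by a direct computation, then prove the reverse inclusion $\ker\psi\subseteq Rw$ by the standard Gröbner-style leading-term argument, reusing the machinery already set up for $\St_R(\u{x})^\vee$, and finally observe that $w$ is a non-zero-divisor so that $R\to Rw$ is an isomorphism, giving freeness.

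First I would check $\psi(w)=0$, i.e. $\sum_{h=1}^n x_h\theta^{(h)}=0$. Writing out $\sum_h x_h\theta^{(h)}$ and collecting the coefficient of each basis vector $e^*_{i,j}$ with $i<j$: the term $e^*_{i,j}$ receives $+x_i$ from $\theta^{(j)}$ (the part $x_1e^*_{1,j}+\cdots+x_{j-1}e^*_{j-1,j}$, multiplied by $x_j$, contributes $x_jx_ie^*_{i,j}$; wait, I must be careful with which index is summed) — more precisely, in $x_h\theta^{(h)}$ the basis vector $e^*_{i,j}$ appears exactly when $h=i$ (with coefficient $-x_hx_j=-x_ix_j$) or when $h=j$ (with coefficient $x_ix_h=x_ix_j$). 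Summing over $h$, the coefficient of $e^*_{i,j}$ in $\sum_h x_h\theta^{(h)}$ is $-x_ix_j+x_ix_j=0$. Hence $w\in\ker\psi$, and since $x_1,\ldots,x_n$ are linearly independent in $R$ (a polynomial ring over a field), $w$ is a non-zero-divisor, so the map $R(-2)\to R(2)^{\oplus n}$, $1\mapsto w$, is injective and identifies $Rw\cong R(-2)$ as a free module of rank one.

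For the reverse inclusion I would argue exactly as in the proofs of Lemmas~\ref{lemma:ca:2} and~\ref{lemma:ca:5}. Take a nonzero $\theta=\sum_h g_h\epsilon_h\in\ker\psi$ and consider its leading monomial $\lm(\theta)=Z\epsilon_k$ with respect to the POT-order induced by $\epsilon_n>\cdots>\epsilon_1$ (and the existing order on $x_1,\ldots,x_n,y_1,\ldots,y_m$), so $Z=\lm(g_k)$ and $g_k\ne0$. The plan is to show $\lm(w)=x_n\epsilon_n$ divides $\lm(\theta)$; equivalently $k=n$ and $x_n\mid Z$. Using $\psi(\theta)=\sum_h g_h\theta^{(h)}=0$ and extracting the coefficient of a suitably chosen $e^*_{i,j}$ — say the coefficient of $e^*_{k,n}$ for $k<n$, which by the shape of the $\theta^{(h)}$ is $g_k\cdot(-x_n)+g_n\cdot x_k$ plus possibly a $g_?$-term — one deduces a relation of the form $x_n g_k=\pm x_k g_n$ (or a short sum of such), forcing $x_n\mid\lm(g_k)=Z$, and then chasing which $\epsilon_h$ can carry the leading monomial forces $k=n$. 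Once $\lm(w)\mid\lm(\theta)$, subtract an appropriate monomial multiple $x^{\alpha}w$ from $\theta$ to strictly decrease the leading monomial while staying in $\ker\psi$; by Noetherian induction (the POT-order is a well-order on monomials) this terminates, proving $\theta\in Rw$.

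The main obstacle I expect is the bookkeeping in the leading-term step: one has to identify exactly which coefficient of $\psi(\theta)=0$ to read off so that it isolates $g_k$ against higher-indexed $g_h$'s that have already been killed by maximality of the order (the $\theta_{j,n}=\theta_{i,n}=0$ type deductions in Lemma~\ref{lemma:ca:5}), and to handle the boundary cases $k=n$, $k=n-1$, and whether $x_n$ already divides $Z$, cleanly. Everything else — $w\in\ker\psi$, the non-zero-divisor observation, the termination of the division — is routine. Alternatively, one could avoid the Gröbner argument entirely: $\psi$ fits into the complex $R(-2)\xrightarrow{\ w\ }R(2)^{\oplus n}\xrightarrow{\ \psi\ }\St_R(\u{x})^\vee\to0$, and since by Lemma~\ref{lemma:ca:5} (with Corollary~\ref{corollary:ca:1} applied to the dual) one knows $\pd\St_R(\u{x})^\vee$, a Hilbert-series / Auslander–Buchsbaum count shows this two-term presentation must already be a minimal free resolution, whence $\ker\psi$ is free of the predicted rank and, being contained in $Rw$ with $w$ a non-zero-divisor, equals $Rw$; I would present the direct argument as the primary one and mention this as a remark.
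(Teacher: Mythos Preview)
Your primary argument is essentially the paper's: verify $w\in\ker\psi$ by the same $e^*_{i,j}$-coefficient computation, impose the POT-order with $\epsilon_n>\cdots>\epsilon_1$, and show $\lm(w)=x_n\epsilon_n$ divides $\lm(\theta)$ for any nonzero $\theta\in\ker\psi$ by reading off the $e^*_{k,n}$- and $e^*_{1,n}$-coefficients of $\psi(\theta)=0$. The bookkeeping you flagged is in fact simpler than you anticipate---when $k<n$ the leading-term hypothesis already gives $g_n=0$, so the $e^*_{k,n}$-coefficient yields $-x_ng_k=0$ and hence $g_k=0$ outright (no divisibility chase needed); also note your alternative Hilbert-series route would be circular, since the bound $\pd\St_R(\u{x})^\vee\le1$ is Corollary~\ref{corollary:ca:2}, which is deduced \emph{from} this lemma.
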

\begin{proof}
Let us prove first that $w\in\ker\psi$. We have
$$
\psi(w)=x_1\theta^{(1)}+x_2\theta^{(2)}+\cdots+x_n\theta^{(n)}.
$$
The $e^*_{i,j}$-coefficient of the right-hand side, where $i<j$, is equal to
$$
x_i\theta^{(i)}_{i,j}+x_j\theta^{(j)}_{i,j}=x_i(-x_j)+x_jx_i=0.
$$

Next, let us introduce the following order on the generators: $\epsilon_n>\epsilon_{n-1}>\cdots>\epsilon_1$,
introduce the POT-order on $R(2)^{\oplus n}$ and prove that $w$ is a Gr\"{o}bner basis of $\ker\psi$.
Suppose that
$$
a=f_1\epsilon_1+f_2\epsilon_2+\cdots+f_n\epsilon_n\in\ker\psi
$$
for some polynomials $f_1,f_2,\ldots,f_n\in R$.

Let $\lm(a)=Z\epsilon_h$. We have $Z=\lm(f_h)$ and $f_{h+1}=\cdots=f_n=0$.
Thus
\begin{equation}\label{eq:fthetah0}
f_1\theta^{(1)}+f_2\theta^{(2)}+\cdots+f_h\theta^{(h)}=0.
\end{equation}
Suppose that $h<n$. Then the unique term of the right-hand side of~(\ref{eq:fthetah0}) divisible by $e^*_{h,n}$
is $f_he^*_{h,n}$. Hence we get a contradiction $f_h=0$. Thus we have proved that $h=n$.
Let us compute the $e^*_{1,n}$-coefficient of the right-hand side of~(\ref{eq:fthetah0}).
Note that only the first and the last terms can contribute to this coefficient, whence it is equal to
$$
-f_1x_n+x_1f_n=0.
$$
Hence $f_n$ is divisible by $x_n$.
Thus $\lm(a)=Z\epsilon_n$ is divisible by $x_n\epsilon_n=\lm(w)$.

Finally, note that $\ker\psi$ is generated by $w$ freely, as it is a 1-generated submodule of a free module
(which is torsion free).
\end{proof}

\begin{corollary}\label{corollary:ca:2}
Let $n\ge3$. Then $\pd(\St_R(\u{x})^\vee)\le 1$ with the following 
free resolution:
$$
\begin{tikzcd}
0\arrow{r}&R\arrow{r}&R(2)^{\oplus n}\arrow{r}&\St_R(\u{x})^\vee\arrow{r}&0.
\end{tikzcd}
$$
\end{corollary}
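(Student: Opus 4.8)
The plan is to read the corollary off directly from the two preceding lemmas, with no new work required. Lemma~\ref{lemma:ca:5} asserts that $\theta^{(1)},\dots,\theta^{(n)}$ form a Gröbner basis of $\St_R(\u{x})^\vee$; since a Gröbner basis generates the submodule it is a basis of, this means precisely that the graded homomorphism $\psi\colon R(2)^{\oplus n}\to\St_R(\u{x})^\vee$, $\epsilon_h\mapsto\theta^{(h)}$, is surjective. Lemma~\ref{lemma:ca:6} identifies $\ker\psi$ as the submodule of $R(2)^{\oplus n}$ freely generated by the single element $w=x_1\epsilon_1+\cdots+x_n\epsilon_n$, that is, $\ker\psi\cong R$ via $1\mapsto w$. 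Splicing the inclusion $\ker\psi\hookrightarrow R(2)^{\oplus n}$ before $\psi$ produces the exact sequence $0\to R\to R(2)^{\oplus n}\to\St_R(\u{x})^\vee\to0$, which is by construction a free resolution of length~$1$; hence $\pd(\St_R(\u{x})^\vee)\le1$.

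The only bookkeeping point is to confirm that the leftmost term is $R$ with its standard grading, not a shift $R(k)$. This is immediate: each $\epsilon_h$ lies in degree $-2$ and each $x_h$ in degree $2$, so $w$ is homogeneous of degree $0$, and therefore the map $R\to R(2)^{\oplus n}$ sending $1\mapsto w$ is degree-preserving. With this observed, the resolution has exactly the displayed form.

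I do not anticipate any obstacle here, since all the substantive effort — exhibiting the generators $\theta^{(h)}$ of $\St_R(\u{x})^\vee$ and isolating its unique syzygy $w$ — has already been expended in Lemmas~\ref{lemma:ca:5} and~\ref{lemma:ca:6}, and the corollary is a one-line assembly of those results together with the Auslander–Buchsbaum-free observation about $\ker\psi$ being torsion free. It may be worth remarking in passing that the bound is in fact sharp for every $n\ge3$: a graded splitting of the sequence would require a degree-preserving retraction $R(2)^{\oplus n}\to R$, whose value on each $\epsilon_h$ would have to lie in $R_{-2}=0$, which is impossible; so $\St_R(\u{x})^\vee$ is never free and $\pd(\St_R(\u{x})^\vee)=1$. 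This refinement is not needed for the statement as written, so I would keep the proof itself to the short assembly above.
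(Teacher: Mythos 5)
Your argument is exactly the paper's: the corollary is assembled from Lemma~\ref{lemma:ca:5} (surjectivity of $\psi$) and Lemma~\ref{lemma:ca:6} (the kernel is freely generated by $w$), with only a quick degree check. The sharpness remark $\pd(\St_R(\u{x})^\vee)=1$ is a correct extra observation not in the paper, but it is not needed for the statement as given.
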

\begin{proof}
The result follows from Lemmas~\ref{lemma:ca:5} and~\ref{lemma:ca:6}, where
the map $R\to R(2)^{\oplus n}$ is given by $1\mapsto w$ and the map
$R(2)^{\oplus n}\to\St_R(\u{x})^\vee$ is given by $\psi$.
\end{proof}

\section{Localization of Bott-Samelson bimodules}\label{Localization}

\subsection{Representations}\label{Representations} Let $(W,S)$ be a Coxeter system. 
Elements of $S$ are called {\it simple reflections} and their $W$-conjugates are called {\it reflections}.
The set of all reflections is denoted by $T$.

Let $V$ be a finite dimensional vector space over a field $\k$ of characteristic distinct from 2.
We assume that $W$ acts faithfully on $V$ on the left in such a way that the action of any $s\in S$
is a reflection, that is, there are a {\it root} $\alpha_s\in V$ and a {\it coroot} $\alpha^\vee_s\in V^*$
such that\footnote{Here and in what follows we adopt the bracketless notation for a $W$-action.}
$$
sv=v-\alpha_s^\vee(v)\alpha_s
$$
for any $v\in V$, where $\alpha_s^\vee(\alpha_s)=2$. The last condition 
is equivalent to
$s\alpha_s=-\alpha_s$.

\begin{lemma}\label{lemma:4}
Any $t\in T$ acts on $V$ by a reflection.
\end{lemma}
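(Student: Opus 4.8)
The plan is to transport the root and coroot of a simple reflection along a conjugation. By definition of $T$, there are $w\in W$ and $s\in S$ with $t=wsw^{-1}$; I will build an explicit root and coroot for $t$ out of $\alpha_s$ and $\alpha_s^\vee$ and check the two conditions in the definition of ``acting by a reflection''.

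First I would put $\alpha_t:=w\alpha_s\in V$ and define $\alpha_t^\vee\in V^*$ by $\alpha_t^\vee(v):=\alpha_s^\vee(w^{-1}v)$ for $v\in V$ (that is, $\alpha_t^\vee$ is the image of $\alpha_s^\vee$ under the contragredient $W$-action on $V^*$). Note that $\alpha_s\neq0$, since $\alpha_s^\vee(\alpha_s)=2$ and $\mathrm{char}\,\k\neq2$; as $w$ acts invertibly on $V$, this gives $\alpha_t\neq0$.

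Next I would verify the reflection formula by the direct computation
$$
tv=wsw^{-1}v=w\bigl(w^{-1}v-\alpha_s^\vee(w^{-1}v)\,\alpha_s\bigr)=v-\alpha_s^\vee(w^{-1}v)\,w\alpha_s=v-\alpha_t^\vee(v)\,\alpha_t
$$
for every $v\in V$, and then the normalization $\alpha_t^\vee(\alpha_t)=\alpha_s^\vee(w^{-1}w\alpha_s)=\alpha_s^\vee(\alpha_s)=2$; in particular $t\alpha_t=-\alpha_t$. This is exactly the data required, so $t$ acts on $V$ by a reflection.

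I do not anticipate a genuine obstacle here. The only point deserving a comment is that the pair $(\alpha_t,\alpha_t^\vee)$ depends a priori on the chosen expression $t=wsw^{-1}$; but the lemma only asserts existence, so this is harmless, and in any case, exactly as for any reflection in characteristic $\neq2$, replacing $\alpha_t$ by $\lambda\alpha_t$ (for $\lambda\in\k^\times$) forces $\alpha_t^\vee$ to be replaced by $\lambda^{-1}\alpha_t^\vee$, so the data is canonical up to this standard rescaling.
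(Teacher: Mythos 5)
Your proof is correct, but it takes a genuinely different route from the paper's. The paper proves the lemma by induction on $\ell(t)$, using the fact (Björner--Brenti, Exercise~1.10) that a reflection $t$ of length $>1$ can be written $t=st's$ with $s\in S$, $t'\in T$, and $\ell(t')=\ell(t)-2$; it then computes $\alpha_t=\alpha_{t'}-\alpha_s^\vee(\alpha_{t'})\alpha_s$ and $\alpha_t^\vee=\alpha_{t'}^\vee-\alpha_{t'}^\vee(\alpha_s)\alpha_s^\vee$ from the inductive data and checks the normalization. You instead pick any expression $t=wsw^{-1}$ and transport $(\alpha_s,\alpha_s^\vee)$ by $w$ in one step, with no induction. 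Your argument is shorter and more transparent, and it in fact yields the conjugation formula $\alpha_{utu^{-1}}\sim u\alpha_t$ (the paper's~(\ref{eq:conj})) immediately, whereas the paper derives it afterward by a second induction on $\ell(w)$ using the intermediate relation $(\alpha_t,\alpha_t^\vee)\approx(s(\alpha_{t'}),s(\alpha_{t'}^\vee))$ extracted from its inductive step. The one thing the paper's approach buys is precisely that intermediate single-$s$ relation in a form tailored to the subsequent induction; but since your construction gives the full conjugation relation directly, nothing is lost.
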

\noindent
{\it Proof.}
Let us apply induction on the length $\ell(t)$. If $\ell(t)=1$, then $t\in S$
and the result is true by the choice of a $W$-action on $V$.

Suppose that $\ell(t)>1$. By~\cite[Exercise 1.10]{BB}, there is a representation $t=st's$
such that $s\in S$, $t'\in T$ and $\ell(t')=\ell(t)-2$.
By the induction hypothesis, there are $\alpha_{t'}\in V$ and $\alpha_{t'}^\vee\in V^*$
such that $t'v=v-\alpha_{t'}^\vee(v)\alpha_{t'}$ for any $v\in V$ and $\alpha_{t'}^\vee(\alpha_{t'})=2$.
Hence we get
\begin{multline*}
tv=st'sv=st'(v-\alpha_s^\vee(v)\alpha_s)=s(v-\alpha_{t'}^\vee(v)\alpha_{t'}-\alpha_s^\vee(v)\alpha_s+\alpha_s^\vee(v)\alpha_{t'}^\vee(\alpha_s)\alpha_{t'})\\
=s(v-\alpha_s^\vee(v)\alpha_s-(\alpha_{t'}^\vee(v)-\alpha_s^\vee(v)\alpha_{t'}^\vee(\alpha_s))\alpha_{t'})\\
=v-(\alpha_{t'}^\vee(v)-\alpha_s^\vee(v)\alpha_{t'}^\vee(\alpha_s))(\alpha_{t'}-\alpha_s^\vee(\alpha_{t'})\alpha_s).
\end{multline*}
We define $\alpha_t=\alpha_{t'}-\alpha_s^\vee(\alpha_{t'})\alpha_s$ and
$\alpha_t^\vee=\alpha_{t'}^\vee-\alpha_{t'}^\vee(\alpha_s)\alpha_s^\vee$.
The following calculation finishes the proof:
\begin{multline*}
\alpha_t^\vee(\alpha_t)=\alpha_{t'}^\vee(\alpha_{t'}-\alpha_s^\vee(\alpha_{t'})\alpha_s)
-\alpha_{t'}^\vee(\alpha_s)\alpha_s^\vee(\alpha_{t'}-\alpha_s^\vee(\alpha_{t'})\alpha_s)\\
=2-\alpha_s^\vee(\alpha_{t'})\alpha_{t'}^\vee(\alpha_s)-\alpha_{t'}^\vee(\alpha_s)\alpha_s^\vee(\alpha_{t'})
+2\alpha_{t'}^\vee(\alpha_s)   \alpha_s^\vee(\alpha_{t'})=2.\quad\square\!\!\!\!\!
\end{multline*}
In what follows, we fix for any $t\in T$ a root $\alpha_t$ and a coroot $\alpha_t^\vee$ representing
the action of $t$ on $V$. As $t^2=1$ for any, we have the following formula
\begin{equation}\label{eq:tN}
t^Nv=v+((-1)^N-1)\frac{\alpha_t^\vee(v)}2\alpha_t.
\end{equation}

The choice of a pair $(\alpha_t,\alpha_t^\vee)$ is unique up to the following
equivalence relation on $V\times V^*$: $(v,v^\vee)\approx(u,u^\vee)$
if and only if $v=\lambda u$, $v^\vee=\lambda^{-1}u^\vee$ for some $\lambda\in\k^\times$.

Let $W$ act on $V^*$ by $(wf)(v)=f(w^{-1}v)$, where $w\in W$, $f\in V^*$ and $v\in V$.
The calculations of Lemma~\ref{lemma:4} imply that
$$
(\alpha_t,\alpha_t^\vee)\approx(s\alpha_{t'},s\alpha_{t'}^\vee)
$$
As both action of $W$ on $V$ and on $V^*$ are linear, we get
$$
(v,v^\vee)\approx(u,u^\vee)\Rightarrow (wv,wv^\vee)\approx(wu,wu^\vee)
$$
for $w\in W$.
These formulas imply  that
$$
(\alpha_{wtw^{-1}},\alpha_{wtw^{-1}}^\vee)\approx(w\alpha_t,w\alpha_t^\vee)
$$
for any $w\in W$ and $t\in T$ by induction on $\ell(w)$.

Let $\sim$ be the equivalence relation on $V$ such that $v\sim u$ if and only if $v=\lambda u$ for some $\lambda\in\k^\times$.
In that case, we call $u$ and $v$ {\it proportional}.
The above formula implies that
\begin{equation}\label{eq:conj}
\alpha_{wtw^{-1}}\sim w\alpha_t.
\end{equation}

We consider the symmetric algebra $\Sym(V)$ of $V$. It will be considered as a graded ring
with elements of $V$ of degree $2$. Let $Q$ be the localization of $\Sym(V)$
with respect to the multiplicative subset consisting of all its nonzero homogeneous elements (polynomials).
The $W$-ac\-tion on $V$ can be extended to a $W$-action on $Q$ that respects
the multiplication and division rules. We fix a multiplicative $W$-invariant graded subset $\boldsymbol\sigma\subset\Sym(V)$
that does not contain elements divisible by any roots $\alpha_t$. 
Then we consider the localization $R=\boldsymbol\sigma^{-1}\Sym(V)$, which is a $W$-invariant subring of $Q$.
Note that both $R$ and $Q$ are naturally graded. They are Noetherian domains.

\subsection{Demazure operator} Let $t\in T$. We denote by $R^t$ and $Q^t$ be the subrings of $t$-in\-vari\-ants
in $R$ and $Q$ respectively. The operators $\wp_t:Q\to Q^t$ and $\d_t:Q\to Q^t(-2)$ are defined by
$$
\wp_t(q)=\frac{q+tq}2,\qquad \d_t(q)=\frac{q-tq}{\alpha_t}.
$$
It is obvious from the definition that these maps are $Q^t$-linear.
The operator $\d_t$ is called the {\it Demazure operator}.

\begin{example}\label{example:1}
\rm
Let $\beta\in\boldsymbol\sigma$. Then we get
$$
\d_t\(\frac1\beta\)=\frac1{\alpha_t}\(\frac1\beta-\frac1{t\beta}\)=\frac{t\beta-\beta}{\alpha_t}\frac1{\beta\cdot t\beta}=-\frac{\d_t(\beta)}{\beta\cdot t\beta}.
$$
This element belongs to $R$, as $\boldsymbol\sigma$ is $T$-invariant.
\end{example}

The Demazure operator satisfies the following properties:
\begin{enumerate}
\item $\d_t(v)=\alpha_t^\vee(v)$ for any $v\in V$;\\[-10pt]
\item $\d_t(pq)=\d_t(p)\cdot q+tp\cdot\d_t(q)$ for any $p,q\in Q$.
\end{enumerate}
These properties actually define $\d_t$ in an alternative way.
Hence and from the above example it also follows that the restriction of $\d_t$
induces an operator $R\to R^t(-2)$. As $2$ is invertible in $\k$,
the restriction of $\wp_t$ induces an operator $R\to R^t$.

Note that there are the following isomorphisms
\begin{equation}\label{eq:1} 
Q\cong Q^t\oplus Q^t(-2),\quad R\cong R^t\oplus R^t(-2)
\end{equation}
of $Q^t$- and $R^t$-bimodules respectively given by $q\mapsto(\wp_t(q),\d_t(q))$
with the inverse map $(p,r)\mapsto p+r\alpha_t/2$.

\begin{lemma}\label{lemma:1}
Let $M$ and $N$ are $R$- and $Q$-bimodules respectively and
$\phi:M\to N$ be a homomorphism of graded $R$-bimodules, where the $R$-bimodule structure
on $N$ comes from restriction. If $\phi$ is an embedding (surjective, isomorphic),
then so is the morphism of $R$-bimodules
$$
\phi\otimes\iota:M\otimes_{R^t}R\to N\otimes_{Q^t}Q.
$$
\end{lemma}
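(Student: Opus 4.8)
The plan is to show that, up to natural isomorphisms, the map $\phi\otimes\iota$ is obtained from $\phi$ by a \emph{flat base change}, and therefore inherits injectivity, surjectivity and bijectivity from $\phi$. The key input is \eqref{eq:1}: it says that $\{1,\alpha_t/2\}$ is simultaneously a homogeneous $R^t$-basis of $R$ and a homogeneous $Q^t$-basis of $Q$, the coordinates of an element being given by $\wp_t$ and $\d_t$. In particular $R$ is a graded free, hence flat, $R^t$-module of rank $2$, $Q$ is a graded free $Q^t$-module of rank $2$, and there is a canonical identification $Q\cong Q^t\otimes_{R^t}R$ via $p\otimes r\mapsto pr$ (both sides being free $Q^t$-modules on $1$ and $\alpha_t/2$).

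First I would factor the map as $\phi\otimes\iota=\nu\circ(\phi\otimes\id_R)$, where $\phi\otimes\id_R\colon M\otimes_{R^t}R\to N\otimes_{R^t}R$ uses the right $R$-linearity of $\phi$ together with the $R$-bimodule structure on $N$ restricted from $Q$, and $\nu\colon N\otimes_{R^t}R\to N\otimes_{Q^t}Q$ is the natural map $n\otimes r\mapsto n\otimes r$. The map $\nu$ is well defined because $R^t\subset Q^t$. It is an isomorphism: using associativity of the tensor product and the identification $Q\cong Q^t\otimes_{R^t}R$ above, one has
$$
N\otimes_{R^t}R\;\cong\;\big(N\otimes_{Q^t}Q^t\big)\otimes_{R^t}R\;\cong\;N\otimes_{Q^t}\big(Q^t\otimes_{R^t}R\big)\;\cong\;N\otimes_{Q^t}Q ,
$$
and chasing $n\otimes r$ through these identifications — writing $r=\wp_t(r)+\d_t(r)\,\alpha_t/2$ and noting that $\wp_t(r),\d_t(r)\in R^t\subset Q^t$, so these scalars may be moved across the $Q^t$-tensor — shows that the composite is precisely $\nu$.

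It then remains to observe that $\phi\otimes\id_R$ preserves each of the three properties: surjectivity because $-\otimes_{R^t}R$ is right exact; injectivity because $R$ is flat (indeed free) over $R^t$ by \eqref{eq:1}; and being an isomorphism because any additive functor preserves isomorphisms. Composing with the isomorphism $\nu$ then yields the statement for $\phi\otimes\iota$, and the grading shifts match up summand by summand, so no global shift appears. I expect the only point needing genuine care to be the verification that $\nu$ is an isomorphism — that is, that extending scalars along $R^t\hookrightarrow R$ agrees, on the $Q$-bimodule $N$, with extending along $Q^t\hookrightarrow Q$ — which is exactly where the compatibility of the two decompositions in \eqref{eq:1} (the same root $\alpha_t$ splitting off the ``odd part'' of both $R$ and $Q$) is essential; everything else is formal. (Alternatively, one may prove the lemma by directly decomposing $M\otimes_{R^t}R\cong M\oplus M(-2)$ and $N\otimes_{Q^t}Q\cong N\oplus N(-2)$ via \eqref{eq:1} and checking that under these identifications $\phi\otimes\iota$ becomes $\phi\oplus\phi(-2)$.)
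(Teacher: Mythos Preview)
Your proof is correct. Your main argument uses a flat base-change factorization $\phi\otimes\iota=\nu\circ(\phi\otimes\id_R)$, whereas the paper takes precisely the ``alternative'' route you mention at the end: it decomposes $M\otimes_{R^t}R\cong M\oplus M(-2)$ and $N\otimes_{Q^t}Q\cong N\oplus N(-2)$ via~\eqref{eq:1} and observes that under these identifications $\phi\otimes\iota$ becomes $\phi\oplus\phi$, so the result is immediate. Both approaches rest on the same input~\eqref{eq:1}; your version is more categorical (flatness and associativity of tensor products) and would generalize more readily, while the paper's version is shorter and makes the graded structure of the map completely explicit.
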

\begin{proof} 
We have $N\otimes_{Q^t}Q\cong N\oplus N(-2)$ as $Q$-$Q^t$-bimodules by~(\ref{eq:1}).
This isomorphism and its inverse are given by
\begin{equation}\label{eq:0}
n\otimes q\mapsto (n\wp_t(q),n\d_t(q)),\quad (n_1,n_2)\mapsto n_1\otimes 1+n_2\otimes\frac{\alpha_t}2.
\end{equation}
Similarly, $M\otimes_{R^t}R\cong M\oplus M(-2)$ as $R$-$R^t$-bimodules.
Now the result follows from the commutative diagram
$$
\begin{tikzcd}
M\otimes_{R^t}R\arrow{r}{\phi\otimes\iota}\arrow{d}[swap]{\wr}&N\arrow{d}\otimes_{Q^t}Q\arrow{d}{\wr}\\
M\oplus M(-2)\arrow{r}{\phi\oplus\phi}&N\oplus N(-2)
\end{tikzcd}
$$

\vspace{-17.5pt}

\end{proof}

Note that for any $w\in W$ and $t\in T$, we have
\begin{equation}\label{eq:wQtwRt}
wQ^t=Q^{wtw^{-1}},\qquad wR^t=R^{wtw^{-1}}.
\end{equation}

\subsection{Expressions and subexpressions}\label{Expressions_and_subexpressions}
A {\it reflection expression} is a finite sequence $\u{t}=(t_1,\ldots,t_n)$ of elements of $T$.
Moreover, $\u{t}$ is called an {\it expression} if all its entries are in $S$.
For any $p\in T$, let 
$$
\M_p(\u{t})=\{i=1,\ldots,n\suchthat t_i=p\}.
$$

A {\it subexpression} of $\u{t}$ is a sequence $\u{\epsilon}=(\epsilon_1,\ldots,\epsilon_n)$ such that each $\epsilon_i$ is equal to $0$ or $1$.
This fact is denoted by $\u{\epsilon}\subset\u{t}$.
We denote by $\Sub(\u{t})$ the set of all subexpressions of $\u{t}$ and assume that
these sets are disjoint for different $\u{t}$. That is, each subexpression ``knows'' its
reflection expression. A subexpression $\u{\epsilon}$ is said to have {\it target} $w\in W$
if $t_1^{\epsilon_1}t_2^{\epsilon_2}\cdots t_n^{\epsilon_n}=w$.
The subset of $\Sub(\u{t})$ consisting of subexpressions with target $w$
is denoted by $\Sub(\u{t},w)$.
We will also use the notation $\bar e=1-e$ for $e\in\{0,1\}$.


For any subexpression $\u{\epsilon}\subset\u{t}$ and $i=1,\ldots,n$, we denote\footnote{In~\cite{cycb}, the notation
$\u{\epsilon}^{\to i}=\u{\epsilon}^{<i}(-\alpha_{t_i})$ and $\u{\epsilon}^{\leftarrow i}=\u{\epsilon}^{\le i}(-\alpha_{t_i})$
was used for subexpressions of expressions. In this paper, the sign of roots does not play any role
(except Section~\ref{Balanced_subexpression}). So we prefer to skip the minus to simplify calculations.}
$$
\u{\epsilon}^{<i}=t_1^{\epsilon_1}t_2^{\epsilon_2}\cdots t_{i-1}^{\epsilon_{i-1}},
\qquad
\u{\epsilon}^{\le i}=t_1^{\epsilon_1}t_2^{\epsilon_2}\cdots t_i^{\epsilon_i},\quad
\u{\epsilon}^i=\u{\epsilon}^{<i}t_i(\u{\epsilon}^{<i})^{-1}=\u{\epsilon}^{\le i}t_i(\u{\epsilon}^{\le i})^{-1}
$$
$$
\u{\epsilon}^{\to i}=\u{\epsilon}^{<i}\alpha_{t_i},
\quad
\u{\epsilon}^{\leftarrow i}=\u{\epsilon}^{\le i}\alpha_{t_i},\quad
\f_i\u{\epsilon}=(\epsilon_1,\ldots,\epsilon_{i-1},\bar{\epsilon}_i,\epsilon_{i+1},\ldots,\epsilon_n).
$$
We also set $\u{\epsilon}^{\max}=\u{\epsilon}^{\le n}$ and consider
a reflection expression $\u{\epsilon}^\bigcdot=(\u{\epsilon}^1,\u{\epsilon}^2,\ldots,\u{\epsilon}^n)$.
By~(\ref{eq:conj}), we get
\begin{equation}\label{eq:alphattoi}
\alpha_{\u{\epsilon}^i}\sim\u{\epsilon}^{\to i}=\pm\u{\epsilon}^{\leftarrow i}.
\end{equation}
We will also use the following notation:
$$
\M_p(\u{\epsilon})=\M_p(\u{\epsilon}^\bigcdot)=\{i=1,\ldots,n\suchthat\u{\epsilon}^i=p\},\quad
\f_X=\f_{x_1,\ldots,x_k}=\f_{x_1}\f_{x_2}\cdots\f_{x_k}
$$
for $p\in T$ and 
$X=\{x_1,\ldots,x_k\}\subset\{1,\ldots,n\}$ with pairwise distinct $x_1,\ldots,x_k$.

The reader can easily check the following simple formulas:
\begin{equation}\label{eq:fXfY}
\f_X\f_Y=\f_{X\triangle Y},
\end{equation}
\begin{equation}\label{eq:4}
(\f_i\u{\epsilon})^{<k}=
\left\{\!\!
\begin{array}{ll}
\u{\epsilon}^i\u{\epsilon}^{<k}&\text{if }k>i;\\[3pt]
\u{\epsilon}^{<k}&\text{if }k\le i,
\end{array}
\right.
\quad
(\f_i\u{\epsilon})^{\to k}=
\left\{\!\!
\begin{array}{ll}
\u{\epsilon}^i\u{\epsilon}^{\to k}&\text{if }k>i;\\[3pt]
\u{\epsilon}^{\to k}&\text{if }k\le i,
\end{array}
\right.
\end{equation}
\begin{equation}\label{eq:4.5}
(\f_i\u{\epsilon})^{\le k}=
\left\{\!\!
\begin{array}{ll}
\u{\epsilon}^i\u{\epsilon}^{\le k}&\text{if }k\ge i;\\[3pt]
\u{\epsilon}^{\le k}&\text{if }k<i,
\end{array}
\right.
\quad
(\f_i\u{\epsilon})^{\leftarrow k}=
\left\{\!\!
\begin{array}{ll}
\u{\epsilon}^i\u{\epsilon}^{\leftarrow k}&\text{if }k\ge i;\\[3pt]
\u{\epsilon}^{\leftarrow k}&\text{if }k<i.
\end{array}
\right.
\end{equation}

\begin{lemma}\label{lemma:fX}
Let $\u{\epsilon}\subset\u{t}$, $X\subset\{1,\ldots,|\u{t}|\}$ and $k=1,\ldots,|\u{t}|$.
Let $X^{<k}=\{x_1<\cdots<x_i\}$ and $X^{\le k}=\{x_1<\cdots<x_j\}$
Then $(\f_X\u{\epsilon})^{<k}=\u{\epsilon}^{x_1}\cdots \u{\epsilon}^{x_i}\u{\epsilon}^{<k}$
and $(\f_X\u{\epsilon})^{\le k}=\u{\epsilon}^{x_1}\cdots \u{\epsilon}^{x_j}\u{\epsilon}^{\le k}$.
\end{lemma}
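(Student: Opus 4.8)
The plan is a straightforward induction on $|X|$, since the statement is just the iterated form of the one-step formulas~(\ref{eq:4}) and~(\ref{eq:4.5}); the one design choice that makes it clean is to peel off the \emph{largest} element of $X$ at each step.

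For the base case $X=\emptyset$ both identities are trivial: $\f_\emptyset\u{\epsilon}=\u{\epsilon}$ and the products on the right-hand sides are empty. For the inductive step I would write $X=\{x_1<\cdots<x_m\}$ with $m\ge1$, set $Y=\{x_1<\cdots<x_{m-1}\}$, and let $\u{\delta}=\f_{x_m}\u{\epsilon}$, so that $\f_X\u{\epsilon}=\f_Y\u{\delta}$ (by the convention for $\f_X$ read off directly, or, if one worries about reordering, via~(\ref{eq:fXfY})). The first observation to record is that the conjugating factors at positions before $x_m$ are unaffected: for $\ell<m$ one has $x_\ell<x_m$, so~(\ref{eq:4}) gives $(\f_{x_m}\u{\epsilon})^{<x_\ell}=\u{\epsilon}^{<x_\ell}$, and since $\u{\delta}^{x_\ell}=\u{\delta}^{<x_\ell}t_{x_\ell}(\u{\delta}^{<x_\ell})^{-1}$ depends only on $\u{\delta}^{<x_\ell}$, we get $\u{\delta}^{x_\ell}=\u{\epsilon}^{x_\ell}$. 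Applying the induction hypothesis to $Y$ and $\u{\delta}$ then expresses $(\f_Y\u{\delta})^{<k}$ as $\u{\delta}^{y_1}\cdots\u{\delta}^{y_p}\u{\delta}^{<k}$ with $\{y_1<\cdots<y_p\}=Y^{<k}$, and each $\u{\delta}^{y_\ell}$ may be replaced by $\u{\epsilon}^{y_\ell}$.

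It then remains to split on the position of $k$ relative to $x_m$. If $k\le x_m$, then $Y^{<k}=X^{<k}$ and~(\ref{eq:4}) gives $\u{\delta}^{<k}=\u{\epsilon}^{<k}$, so the two sides agree immediately. If $k>x_m$, then $X^{<k}=Y^{<k}\cup\{x_m\}$ with $x_m$ its largest element, while~(\ref{eq:4}) gives $\u{\delta}^{<k}=\u{\epsilon}^{x_m}\u{\epsilon}^{<k}$; the extra factor $\u{\epsilon}^{x_m}$ therefore lands exactly to the right of $\u{\epsilon}^{y_p}$ and to the left of $\u{\epsilon}^{<k}$, which is precisely the claimed expression. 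The identity for $(\f_X\u{\epsilon})^{\le k}$ is proved verbatim, using~(\ref{eq:4.5}) in place of~(\ref{eq:4}), replacing $\u{\epsilon}^{<\bullet}$ by $\u{\epsilon}^{\le\bullet}$ throughout, and taking the case split at $k\ge x_m$ versus $k<x_m$.

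I do not expect a genuine obstacle here — everything is bookkeeping. The two points that do need care are: (i) stripping off the largest index of $X$, which is what makes both~(\ref{eq:4}) and the subsequent case analysis line up; and (ii) checking that the new conjugating factor $\u{\epsilon}^{x_m}$ produced by~(\ref{eq:4})/(\ref{eq:4.5}) is inserted on the correct side, consistently with the convention $\u{\epsilon}^i=\u{\epsilon}^{<i}t_i(\u{\epsilon}^{<i})^{-1}=\u{\epsilon}^{\le i}t_i(\u{\epsilon}^{\le i})^{-1}$. Once these are pinned down the computation is forced.
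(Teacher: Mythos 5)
Your proof is correct and takes essentially the same route as the paper: an induction that strips one element of $X$ at a time and appeals to the one-step formulas~(\ref{eq:4}) and~(\ref{eq:4.5}), using the observation that $\u{\delta}^{x_\ell}=\u{\epsilon}^{x_\ell}$ for the remaining indices because the prefix $\u{\delta}^{<x_\ell}$ is unchanged. The only cosmetic difference is the direction of the peeling: the paper first discards the elements of $X$ that are $\ge k$ (by~(\ref{eq:4}) they do not affect $(\cdot)^{<k}$) and then strips the \emph{smallest} index $x_1$, which closes the chain of equalities without a case split, whereas stripping the \emph{largest} index $x_m$, as you do, keeps all of $X$ in play and therefore requires the case distinction on $k$ versus $x_m$ — same content, slightly different bookkeeping.
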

\begin{proof}
By~(\ref{eq:4}), we get
\begin{multline*}
(\f_X\u{\epsilon})^{<k}=(\f_{x_1}\cdots\f_{x_i}\u{\epsilon})^{<k}
=(\f_{x_2}\cdots\f_{x_i}\u{\epsilon})^{x_1}(\f_{x_2}\cdots\f_{x_i}\u{\epsilon})^{<k}\\
=\u{\epsilon}^{x_1}(\f_{x_2}\cdots\f_{x_i}\u{\epsilon})^{<k}
=
\cdots=\u{\epsilon}^{x_1}\cdots \u{\epsilon}^{x_i}\u{\epsilon}^{<k}.
\end{multline*}
The second formula can be proved similarly, applying~(\ref{eq:4.5}).
\end{proof}

\begin{corollary}\label{lemma:2}
Let $\u{\epsilon}\subset\u{t}$, $p\in T$, $X\subset\M_p(\u{\epsilon})$ and $k=1,\ldots,|\u{t}|$.
Then
$$
(\f_X\u{\epsilon})^{<k}=p^{|X^{<k}|}\u{\epsilon}^{<k},\quad (\f_X\u{\epsilon})^{\to k}=p^{|X^{<k}|}\u{\epsilon}^{\to k},
$$
$$
(\f_X\u{\epsilon})^{\le k}=p^{|X^{\le k}|}\u{\epsilon}^{\le k},\quad (\f_X\u{\epsilon})^{\leftarrow k}=p^{|X^{\le k}|}\u{\epsilon}^{\leftarrow k}.
$$
\end{corollary}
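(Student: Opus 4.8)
The plan is to obtain all four identities directly from Lemma~\ref{lemma:fX}, the only extra input being the hypothesis $X\subset\M_p(\u{\epsilon})$.

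I would start with the formula for $(\f_X\u{\epsilon})^{<k}$. Writing $X^{<k}=\{x_1<\cdots<x_i\}$, Lemma~\ref{lemma:fX} gives $(\f_X\u{\epsilon})^{<k}=\u{\epsilon}^{x_1}\cdots\u{\epsilon}^{x_i}\u{\epsilon}^{<k}$. Since $x_1,\ldots,x_i\in X\subset\M_p(\u{\epsilon})$, the definition $\M_p(\u{\epsilon})=\{j\suchthat\u{\epsilon}^j=p\}$ forces $\u{\epsilon}^{x_1}=\cdots=\u{\epsilon}^{x_i}=p$, so the product collapses and $(\f_X\u{\epsilon})^{<k}=p^i\u{\epsilon}^{<k}=p^{|X^{<k}|}\u{\epsilon}^{<k}$. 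Exactly the same reasoning applied to the second formula of Lemma~\ref{lemma:fX}, with $X^{\le k}=\{x_1<\cdots<x_j\}$, yields $(\f_X\u{\epsilon})^{\le k}=p^{|X^{\le k}|}\u{\epsilon}^{\le k}$.

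For the arrow versions I would use that $\f_X\u{\epsilon}$ is still a subexpression of the same reflection expression $\u{t}$, so its $k$-th underlying entry is again $t_k$ and the defining relations $\u{\epsilon}^{\to k}=\u{\epsilon}^{<k}\alpha_{t_k}$, $\u{\epsilon}^{\leftarrow k}=\u{\epsilon}^{\le k}\alpha_{t_k}$ hold verbatim with $\f_X\u{\epsilon}$ in place of $\u{\epsilon}$. Hence $(\f_X\u{\epsilon})^{\to k}=(\f_X\u{\epsilon})^{<k}\alpha_{t_k}=p^{|X^{<k}|}\u{\epsilon}^{<k}\alpha_{t_k}$, and because $W$ acts on $V$ as a group of linear automorphisms one has $p^{|X^{<k}|}\u{\epsilon}^{<k}\alpha_{t_k}=p^{|X^{<k}|}(\u{\epsilon}^{<k}\alpha_{t_k})=p^{|X^{<k}|}\u{\epsilon}^{\to k}$. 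The identity $(\f_X\u{\epsilon})^{\leftarrow k}=p^{|X^{\le k}|}\u{\epsilon}^{\leftarrow k}$ follows in the same way from the formula for $(\f_X\u{\epsilon})^{\le k}$.

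I do not expect any genuine obstacle: the entire argument is a two-line specialization of Lemma~\ref{lemma:fX}. The only points worth spelling out are that $X\subset\M_p(\u{\epsilon})$ is precisely what makes every factor $\u{\epsilon}^{x_m}$ equal to the single element $p$, and that passing from the superscripts $^{<k},\,^{\le k}$ to $^{\to k},\,^{\leftarrow k}$ amounts to applying the (group, linear) $W$-action on $V$ to both sides of an already established identity.
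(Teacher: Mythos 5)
Your proof is correct and takes the natural route the paper intends: it is stated as a Corollary to Lemma~\ref{lemma:fX} with no written proof, and your argument is exactly the two-step specialization that justifies that label — substitute $\u{\epsilon}^{x_m}=p$ for each $x_m\in X\subset\M_p(\u{\epsilon})$ into the formulas of Lemma~\ref{lemma:fX}, then obtain the $^{\to k}$ and $^{\leftarrow k}$ versions by applying the defining identities $\u{\delta}^{\to k}=\u{\delta}^{<k}\alpha_{t_k}$, $\u{\delta}^{\leftarrow k}=\u{\delta}^{\le k}\alpha_{t_k}$ (valid for every subexpression $\u{\delta}$ of the fixed $\u{t}$, in particular for $\f_X\u{\epsilon}$) together with associativity of the $W$-action on $V$. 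No gap.
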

This corollary prompts the following definition. Let $\u{t}$ be a reflection expression of length $n$
and $X=\{x_1<\cdots<x_k\}\subset\{1,\ldots,n\}$ be such that
$t_{x_1}=\cdots=t_{x_k}=p$.
Then we define
$\f_X\u{t}$ also denoted $\f_{x_1,\ldots,x_k}\u{t}$ to be the reflection expression $\u{r}$
such that $r_i=t_i$ unless $x_{2j-1}<i<x_{2j}$ for some index $j$ satisfying $1\le j\le(k+1)/2$,
where $x_{k+1}=n+1$, in which case $r_i=pt_ip$. In other words,
$$
\f_{x_1,\ldots,x_k}\u{t}=(t_1,\ldots,t_{x_1},pt_{x_1+1}p,\ldots,pt_{x_2-1}p,t_{x_2},\ldots,t_{x_3},pt_{x_3+1}p,\ldots,pt_{x_4-1}p,t_{x_4},\ldots).
$$

\begin{corollary}\label{corollary:+}
Let $\u{\epsilon}\subset\u{t}$, $p\in T$ and $X\subset\M_p(\u{\epsilon})$.
Then $(\f_X\u{\epsilon})^\bigcdot=\f_X(\u{\epsilon}^\bigcdot)$.
\end{corollary}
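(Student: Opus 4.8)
The plan is to prove the asserted identity of reflection expressions entrywise, i.e.\ to check that the $k$-th entry of $(\f_X\u{\epsilon})^\bigcdot$ agrees with the $k$-th entry of $\f_X(\u{\epsilon}^\bigcdot)$ for every $k\in\{1,\ldots,|\u{t}|\}$. First I would note that the right-hand side is legitimate: since $X\subset\M_p(\u{\epsilon})=\M_p(\u{\epsilon}^\bigcdot)$, every entry of the reflection expression $\u{\epsilon}^\bigcdot$ at a position of $X$ equals $p$, so the operation $\f_X$ introduced just before the statement does apply to $\u{r}:=\u{\epsilon}^\bigcdot$.

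For the left-hand side I would unwind the definition $(\f_X\u{\epsilon})^k=(\f_X\u{\epsilon})^{<k}\,t_k\,\bigl((\f_X\u{\epsilon})^{<k}\bigr)^{-1}$ and substitute the formula $(\f_X\u{\epsilon})^{<k}=p^{|X^{<k}|}\u{\epsilon}^{<k}$ from Corollary~\ref{lemma:2}. Since $p^{-1}=p$, the conjugating factors on either side of $t_k$ combine, giving $(\f_X\u{\epsilon})^k=p^{|X^{<k}|}\,\u{\epsilon}^k\,p^{|X^{<k}|}$; hence this entry equals $\u{\epsilon}^k$ when $|X^{<k}|$ is even and $p\,\u{\epsilon}^k\,p$ when $|X^{<k}|$ is odd.

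For the right-hand side I would read off the $k$-th entry of $\f_X\u{r}$ directly from its explicit description: writing $X=\{x_1<\cdots<x_\l\}$ with $x_0=-\infty$, $x_{\l+1}=+\infty$, it equals $\u{\epsilon}^k$ unless $x_{2j-1}<k<x_{2j}$ for some $j$, in which case it equals $p\,\u{\epsilon}^k\,p$. It then remains to match the two descriptions, which comes down to the elementary observation that $k$ lies strictly between $x_{2j-1}$ and $x_{2j}$ for some $j$ precisely when $k\notin X$ and $|X^{<k}|$ is odd; and that in the remaining case $k\in X$ one has $\u{\epsilon}^k=p$, so that $p^{|X^{<k}|}\u{\epsilon}^k p^{|X^{<k}|}=p^{2|X^{<k}|+1}=p$ agrees with the right-hand entry $r_k=p$ there as well. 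Splitting into the three cases $k\in X$, $k\notin X$ with $|X^{<k}|$ even, and $k\notin X$ with $|X^{<k}|$ odd then finishes the proof.

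The argument is essentially bookkeeping; the only step that needs a little care is this last one, reconciling the interval description of $\f_X$ on a reflection expression with the parity of the counts $|X^{<k}|$, and even there the verification is routine. As an alternative one could bypass Corollary~\ref{lemma:2} and argue by induction on $|X|$, peeling off one flip at a time using~(\ref{eq:4}) together with the definition of $\f_i$ on reflection expressions, but the direct computation above seems the cleanest route.
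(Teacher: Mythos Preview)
Your proof is correct and follows precisely the route the paper intends: the statement is placed immediately after Corollary~\ref{lemma:2} and the definition of $\f_X$ on reflection expressions, with no proof given, because the entrywise comparison you carry out is the obvious way to see it. Your computation $(\f_X\u{\epsilon})^k=p^{|X^{<k}|}\u{\epsilon}^kp^{|X^{<k}|}$ from Corollary~\ref{lemma:2} and the subsequent three-case match against the interval description of $\f_X(\u{\epsilon}^\bigcdot)$ are exactly right.
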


In what follows, for any (graded) commutative ring $A$ and a reflection expression $\u{t}$,
we denote by $A^{\otimes\u{t}}$ the set of all functions $\Sub(\u{t})\to A$.
It is a (graded) left $A$-module and a commutative ring that is the direct sum of $|\u{t}|$
copies of $A$. Therefore, the multiplication and the addition on it are given by
\begin{equation}\label{eq:sum_left_mult}
(g+h)(\u{\epsilon})=g(\u{\epsilon})+h(\u{\epsilon}),\quad (ag)(\u{\epsilon})=a\cdot g(\u{\epsilon}),
\quad
(gh)(\u{\epsilon})=g(\u{\epsilon})\cdot h(\u{\epsilon})
\end{equation}
for $g,h\in A^{\otimes\u{t}}$ and $a\in A$. Moreover, if we are given a $W$-action on $A$,
then we will consider $A^{\otimes\u{t}}$ as an $A$-bimodule with respect to the following right action:
\begin{equation}\label{eq:right}
(ga)(\u{\epsilon})=g(\u{\epsilon})\cdot\u{\epsilon}^{\max}a.
\end{equation}
For an element $w\in W$, we denote by $A^{\otimes\u{t}}(w)$ the set of all functions $\Sub(\u{t},w)\to A$.
It is also a left $A$-module (an $A$-bimodule under the same assumption) and a ring with the multiplication and the addition given
by the above formulas. A function $g$ of $A^{\otimes\u{t}}$ or of $A^{\otimes\u{t}}(w)$ is said to
be {\it supported on} a subset $\Phi$ of its domain of definition if $g(\u{\epsilon})=0$ for $\u{\epsilon}\notin\Phi$.
For example, the {\it characteristic function} $1_\Phi$, which takes value 1 on $\Phi$ and $0$ outside $\Phi$,
is supported on $\Phi$ (but not on any proper subset of $\Phi$).

\subsection{Equivalence relations} For any reflection $p\in T$, we define a binary relation $\edot_p$ on $\Sub(\u{t})$ as follows:
$\u{\epsilon}\edot_p\u{\delta}$ if and only if $\epsilon_k\ne\delta_k$ implies $\u{\epsilon}^k=p$.
We denote the restriction of $\edot_p$ to any $\Sub(\u{t},w)$ also by $\edot_p$.

\begin{lemma}\label{lemma:sim}
{\renewcommand{\labelenumi}{{\it(\roman{enumi})}}
\renewcommand{\theenumi}{{\rm(\roman{enumi})}}
\begin{enumerate}
\itemsep6pt
\item\label{lemma:sim:i} If $\u{\epsilon}\edot_p\u{\delta}$, then $\M_p(\u{\epsilon})=\M_p(\u{\delta})$.
\item\label{lemma:sim:ii} The relation $\edot_p$ is an equivalence relation.
\item\label{lemma:sim:iii} The $\edot_p$-equivalence class in $\Sub(\u{t})$ of $\u{\epsilon}$ consists of all 
                           $\f_X\u{\epsilon}$ for $X\subset\M_p(\u{\epsilon})$.
\item\label{lemma:sim:iv} The $\edot_p$-equivalence class in $\Sub(\u{t},w)$ of $\u{\epsilon}$ consists of all 
                           $\f_X\u{\epsilon}$ for $X\subset_\ev\M_p(\u{\epsilon})$.
\end{enumerate}}
\end{lemma}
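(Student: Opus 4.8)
To prove Lemma~\ref{lemma:sim} the plan is to treat the four parts in the order (i), (ii), (iii), (iv), since each rests on the previous ones together with Corollary~\ref{lemma:2} and otherwise only on the definitions. The starting observation, valid for any reflection $p$, is that if $\u{\epsilon}\edot_p\u{\delta}$ and we set $X:=\{k\suchthat\epsilon_k\ne\delta_k\}$, then by the definition of $\edot_p$ we have $X\subset\M_p(\u{\epsilon})$, while by the definition of $\f_X$ on subexpressions $\u{\delta}=\f_X\u{\epsilon}$; thus the whole lemma is really about what happens when one flips a subset of $\M_p(\u{\epsilon})$. For part (i), Corollary~\ref{lemma:2} applied with this $X\subset\M_p(\u{\epsilon})$ gives $(\f_X\u{\epsilon})^{<k}=p^{|X^{<k}|}\u{\epsilon}^{<k}$, whence $(\f_X\u{\epsilon})^k=(\f_X\u{\epsilon})^{<k}\,t_k\,((\f_X\u{\epsilon})^{<k})^{-1}=p^{|X^{<k}|}\,\u{\epsilon}^k\,p^{-|X^{<k}|}$, so $(\f_X\u{\epsilon})^k$ is a conjugate of $\u{\epsilon}^k$ by a power of the involution $p$; since $pqp^{-1}=p$ only for $q=p$, this yields $(\f_X\u{\epsilon})^k=p$ if and only if $\u{\epsilon}^k=p$, that is, $\M_p(\u{\delta})=\M_p(\u{\epsilon})$. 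I would also mention the Corollary-free alternative for (i): induct on $k$ to show $\u{\delta}^{<k}(\u{\epsilon}^{<k})^{-1}\in\{1,p\}$ for every $k$, the inductive step splitting according to whether $\epsilon_k=\delta_k$.

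With (i) available, part (ii) is a short formal check. Reflexivity of $\edot_p$ is vacuous. For symmetry, if $\u{\epsilon}\edot_p\u{\delta}$ and $\delta_k\ne\epsilon_k$, then $\u{\epsilon}^k=p$, so $k\in\M_p(\u{\epsilon})=\M_p(\u{\delta})$ by (i), hence $\u{\delta}^k=p$. For transitivity, if $\u{\epsilon}\edot_p\u{\delta}$ and $\u{\delta}\edot_p\u{\gamma}$ and $\epsilon_k\ne\gamma_k$, then $\epsilon_k\ne\delta_k$ or $\delta_k\ne\gamma_k$; in the first case $\u{\epsilon}^k=p$ directly, and in the second $\u{\delta}^k=p$, so again $k\in\M_p(\u{\delta})=\M_p(\u{\epsilon})$ by (i) and $\u{\epsilon}^k=p$.

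Part (iii) is then nearly immediate: if $X\subset\M_p(\u{\epsilon})$ then $\f_X\u{\epsilon}$ differs from $\u{\epsilon}$ only at positions in $\M_p(\u{\epsilon})$, so $\u{\epsilon}\edot_p\f_X\u{\epsilon}$; conversely, the opening observation exhibits every $\u{\delta}$ with $\u{\epsilon}\edot_p\u{\delta}$ as $\f_X\u{\epsilon}$ with $X\subset\M_p(\u{\epsilon})$, and by (ii) this set is genuinely the $\edot_p$-class. For part (iv), take $\u{\epsilon}\in\Sub(\u{t},w)$; by (ii) and (iii) its $\edot_p$-class inside $\Sub(\u{t},w)$ is $\{\f_X\u{\epsilon}\suchthat X\subset\M_p(\u{\epsilon})\}\cap\Sub(\u{t},w)$, and Corollary~\ref{lemma:2} gives $(\f_X\u{\epsilon})^{\max}=p^{|X|}\u{\epsilon}^{\max}=p^{|X|}w$, which equals $w$ precisely when $|X|$ is even since $p$ has order $2$; hence $\f_X\u{\epsilon}\in\Sub(\u{t},w)$ if and only if $X\subset_\ev\M_p(\u{\epsilon})$, which is the assertion.

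The only part with real content is (i); (ii)--(iv) are bookkeeping around it. Accordingly, the step I expect to take the most care is isolating the correct invariance statement for (i) --- that flipping entries of $\u{\epsilon}$ lying in $\M_p(\u{\epsilon})$ changes each partial product $\u{\epsilon}^{<k}$ only by left multiplication by a power of $p$, equivalently changes each $\u{\epsilon}^k$ only by conjugation by a power of $p$ --- which is exactly what Corollary~\ref{lemma:2} packages (itself resting on~(\ref{eq:conj}) and Corollary~\ref{corollary:+}). Once that is in place the arguments above are routine, so I do not anticipate a genuine obstacle beyond careful use of the definitions.
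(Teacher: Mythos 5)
Your proposal is correct and follows essentially the same route as the paper: part (i) via Corollary~\ref{lemma:2} and the observation that conjugation by powers of the involution $p$ preserves the condition of being equal to $p$, and parts (ii)--(iv) as formal consequences of (i) together with the definitions and the formula $(\f_X\u{\epsilon})^{\max}=p^{|X|}\u{\epsilon}^{\max}$. The only cosmetic difference is that for (iv) you cite Corollary~\ref{lemma:2} at $k=|\u{t}|$ where the paper invokes Lemma~\ref{lemma:fX}; both deliver the same identity.
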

\begin{proof}\ref{lemma:sim:i} Suppose that $\u{\epsilon}\edot_p\u{\delta}$.
Then $\u{\epsilon}=\f_X\u{\delta}$ for some $X\subset\M_p(\u{\epsilon})$.
Let $k=1,\ldots,|\u{\epsilon}|$.
By Corollary~\ref{lemma:2}, we get
$$
(\f_X\u{\epsilon})^{<k}=p^{|X^{<k}|}\u{\epsilon}^{<k},
$$
whence
$$
\u{\delta}^k=(\f_X\u{\epsilon})^k=p^{|X^{<k}|}\u{\epsilon}^{<k}t_k(\u{\epsilon}^{<k})^{-1}p^{|X^{<k}|}=p^{|X^{<k}|}\u{\epsilon}^kp^{|X^{<k}|}.
$$
This formula proves that the conditions $\u{\epsilon}^k=p$ and $\u{\delta}^k=p$ are equivalent.
In other words, $\M_p(\u{\epsilon})=\M_p(\u{\delta})$.

\ref{lemma:sim:ii} Obviously, $\edot_p$ is reflexive. Let us prove that this relation is symmetric.
Let $\u{\epsilon}\edot_p\u{\delta}$. If $\epsilon_k\ne\delta_k$, then by part~\ref{lemma:sim:i},
we get $k\in\M_p(\u{\epsilon})=\M_p(\u{\delta})$, that is $\u{\delta}^k=p$.

Finally, let us prove that $\edot_p$ is transitive. Let $\u{\epsilon}\edot_p\u{\delta}\edot_p\u{\tau}$.
If $\epsilon_k\ne\tau_k$, then $\epsilon_k\ne\delta_k$ or $\delta_k\ne\tau_k$.
By reflexivity (which is proved), we get $\u{\delta}^k=p$ in both cases.
Next, by part~\ref{lemma:sim:i}, we get $k\in\M_p(\u{\delta})=\M_p(\u{\epsilon})$, that is,
$\u{\epsilon}^k=p$.

\ref{lemma:sim:iii} First, we get $\u{\epsilon}\edot_p\f_X\u{\epsilon}$ for any $X\subset \M_p(\u{\epsilon})$
by the definition of $\edot_p$. Conversely, let $\u{\delta}\edot_p\u{\epsilon}$.
We can write $\u{\delta}=\f_X\u{\epsilon}$ for some $X$. We get the following chain of implications:
$$
k\in X\Leftrightarrow\epsilon_k\ne\delta_k\Rightarrow\u{\epsilon}^k=p\Leftrightarrow k\in \M_p(\u{\epsilon}).
$$
Thus we have proved that $X\subset \M_p(\u{\epsilon})$.

\ref{lemma:sim:iv} Let $\u{\epsilon}\in\Sub(\u{t},w)$. For any $X\subset_\ev \M_p(\u{\epsilon})$,
we get $\u{\epsilon}\edot_p\f_X\u{\epsilon}$ by part~\ref{lemma:sim:iii}.
Moreover, by Corollary~\ref{lemma:2}, we get
$$
(\f_X\u{\epsilon})^{\max}=p^{|X|}\u{\epsilon}^{\max}=\u{\epsilon}^{\max}=w.
$$
Thus $\f_X\u{\epsilon}\in\Sub(\u{t},w)$.

Conversely, let $\u{\delta}\edot_p\u{\epsilon}$ and $\u{\epsilon},\u{\delta}\in\Sub(\u{t},w)$.
By part~\ref{lemma:sim:iii}, we get $\u{\delta}=\f_X\u{\epsilon}$
for some $X\subset \M_p(\u{\epsilon})$. Suppose that $|X|$ is odd.
Then by Corollary~\ref{lemma:2}, we get a contradiction
$$
w=\u{\delta}^{\max}=(\f_X\u{\epsilon})^{\max}=p^{|X|}\u{\epsilon}^{\max}=pw.
$$
\end{proof}

\subsection{Twisted bimodules}\label{Twisted_bimodules} Let $M$ be a $Q$-bimodule and $w\in W$. We consider a new $Q$-bimodule $M_w$ that coincides with $M$ as a
an abelian group and whose structure of a $Q$-bimodule is defined by the following multiplication:
$$
q\cdot_w m=qm,\qquad m\cdot_w q=m\cdot wq.
$$
We will use the similar construction for $R$-bimodules.
There are the following isomorphisms of graded $R$-bimodules and $Q$-bimodules:
\begin{equation}\label{eq:twisted_prod}
Q_w\otimes_Q Q_{w'}\cong Q_{ww'},\qquad R_w\otimes_R R_{w'}\cong R_{ww'}
\end{equation}
defined by $p\otimes q\mapsto p\cdot wq$.

\begin{lemma}\label{lemma:Rxop}
$(R_w)^\op\cong R_{w^{-1}}$.
\end{lemma}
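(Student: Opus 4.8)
The plan is to exhibit an explicit map between the two $R$-bimodules and check it is an isomorphism. Recall that $R_w^\op$ coincides with $R_w$ as an abelian group, hence with $R$ as an abelian group; its bimodule structure is obtained from that of $R_w$ by the rule $m *_{\op} q = q \cdot_w m$ on the left and $q *_{\op} m = m \cdot_w q$ on the right, where $\cdot_w$ denotes the twisted $R_w$-action $q\cdot_w m = qm$, $m\cdot_w q = m\cdot wq$. So explicitly, for $a\in R_w^\op$ and $r\in R$ the left action is $r\cdot a = a\cdot_w r = a\,w(r)$ (ordinary product in $R$) and the right action is $a\cdot r = r\cdot_w a = ra$. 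I would therefore define $\Psi:R_{w^{-1}}\to R_w^\op$ to be, on underlying abelian groups, the map $R\to R$ given by $a\mapsto w^{-1}(a)$, and check it intertwines the two bimodule structures.

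First I would verify that $\Psi$ is a homomorphism of graded abelian groups: $w^{-1}$ acts $\k$-linearly on $V$, hence on $\Sym(V)$ and (since $\boldsymbol\sigma$ is $W$-invariant) on $R$, and it preserves the grading because elements of $V$ have degree $2$ and the $W$-action is linear. Next I would check left-linearity: in $R_{w^{-1}}$ the left action of $r\in R$ on $a$ is ordinary multiplication $ra$, and $\Psi(ra)=w^{-1}(ra)=w^{-1}(r)w^{-1}(a)$; meanwhile in $R_w^\op$ the left action of $r$ on $\Psi(a)=w^{-1}(a)$ is $w(r)\cdot w^{-1}(a)$ — wait, this does not match, so I would instead take $\Psi$ to be the identity map on underlying sets and recompute: the left $R$-action on $R_w^\op$ is $r\cdot a = a\,w(r)$, while on $R_{w^{-1}}$ it is $r\cdot a = ra$; these agree only up to the twist $w$, which is exactly compensated by composing with $w^{-1}$ somewhere. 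The cleanest choice is $\Psi = \mathrm{id}$ on sets together with the observation that the left $R_{w^{-1}}$-module action $r\cdot a = ra$ and the right action $a\cdot r = a\,w^{-1}(r)$ must be matched against the $R_w^\op$ actions $r\cdot a = a\,w(r)$, $a\cdot r = ra$; so in fact $R_{w^{-1}}$ with its left action equal to the right action of $R_w^\op$ and vice versa — i.e. $\Psi=\mathrm{id}$ realizes $R_{w^{-1}}$ as $(R_w)^{\op}$ directly once one unwinds that ``op'' swaps left and right, and the twist $w$ on one side becomes $w^{-1}$ after the swap because $(m\cdot wq)$ read as a left action is $wq\cdot m$, and re-expressing the scalar via $q\mapsto w^{-1}q$ turns $w$ into $\mathrm{id}$ on that side and $\mathrm{id}$ into $w^{-1}$ on the other. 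I would write this out carefully as a short chain of equalities.

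Concretely, the safe route I would follow in the writeup: let $\Psi:R_{w^{-1}}\to R_w^\op$ send $a$ to $a$ (same element of $R$), and check
$$
\Psi(r\cdot_{\,w^{-1}} a) = \Psi(ra) = ra = a *_{\op} r = \Psi(a) *_{\op} r,
$$
using that $*_{\op}$-right-action on $R_w^\op$ is ordinary multiplication, and
$$
\Psi(a \cdot_{\,w^{-1}} r) = \Psi(a\, w^{-1}(r)) = a\,w^{-1}(r) = w^{-1}(r)\cdot a = r *_{\op} a = r *_{\op}\Psi(a),
$$
using that the $*_{\op}$-left-action of $r$ on $R_w^\op$ is multiplication by $w(r)$ applied to... — at this point the honest check is that the $*_{\op}$-left action of $r$ is $a\mapsto a\cdot_w r = a\,w(r)$, so I actually need $\Psi(a\cdot_{w^{-1}}r)=\Psi(a)\,w(r)$, i.e. $a\,w^{-1}(r) = a\,w(r)$, which is false; hence $\Psi$ must genuinely be $a\mapsto w(a)$ (or $w^{-1}(a)$) and not the identity. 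Taking $\Psi(a)=w(a)$ one gets $\Psi(a\cdot_{w^{-1}}r)=w(a\,w^{-1}(r))=w(a)\,r$ and this should equal the $*_{\op}$-left action of $r$ on $w(a)$, namely $w(a)\cdot w(r)$ — still off by the twist. So the correct map is pinned down by requiring exactly one application of $w$ to kill exactly one twist; I will determine the sign of the exponent by matching one side and then automatically get the other. This bookkeeping — getting the single power of $w$ in the right place so that both the left and right actions transport correctly — is the only real content, and it is the step I expect to spend the most care on; everything else (additivity, grading, bijectivity since $w$ acts invertibly on $R$) is immediate. I would also remark that one may alternatively deduce the statement formally from Proposition~\ref{MopNop} and the isomorphisms~(\ref{eq:twisted_prod}), by writing $R_w = R_1\otimes_R R_w$ and dualizing, but the direct construction above is shorter and self-contained.
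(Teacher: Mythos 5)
Your overall strategy is the same as the paper's: write down an explicit map between the two bimodules given by a single application of $w$ or $w^{-1}$ to the underlying ring element, and verify it intertwines the actions. The paper defines $\phi:R_w^\op\to R_{w^{-1}}$ by $\phi(r)=w^{-1}r$ and checks it in one line; the inverse of this map is exactly your $\Psi:R_{w^{-1}}\to R_w^\op$, $\Psi(a)=w(a)$, which you tried and then wrongly abandoned.

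The error is a left/right mismatch in your verification, and it causes you to reject the correct map. A bimodule homomorphism $\Psi:M\to N$ must satisfy $\Psi(r\cdot m)=r\cdot\Psi(m)$ and $\Psi(m\cdot r)=\Psi(m)\cdot r$ — left to left, right to right. In your ``safe route'' check with $\Psi=\mathrm{id}$ you computed $\Psi(r\cdot_{w^{-1}}a)=ra$ and then identified this with $a*_{\op}r=\Psi(a)*_{\op}r$, which is the \emph{right} $*_{\op}$-action of $r$; you should have compared against the \emph{left} action $r*_{\op}a=a\,w(r)$, and these do not agree, so $\Psi=\mathrm{id}$ fails (as expected). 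Conversely, when you tested $\Psi(a)=w(a)$, you computed $\Psi(a\cdot_{w^{-1}}r)=w(a)\,r$ and then said ``this should equal the $*_\op$-\emph{left} action of $r$ on $w(a)$'' — but $a\cdot_{w^{-1}}r$ is the right action, so it must match the $*_{\op}$-\emph{right} action, namely $w(a)*_{\op}r=r\cdot_w w(a)=r\,w(a)$, which does agree by commutativity of $R$. Similarly, $\Psi(r\cdot_{w^{-1}}a)=\Psi(ra)=w(r)w(a)$ matches the $*_{\op}$-left action $r*_{\op}w(a)=w(a)\cdot_w r=w(a)\,w(r)$. So $\Psi(a)=w(a)$ is a bimodule isomorphism $R_{w^{-1}}\ito R_w^\op$, which is what you need; its inverse is precisely the map $\phi(r)=w^{-1}r$ used in the paper. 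As written, your proposal stops before completing this check (``I will determine the sign of the exponent\ldots'') and contains a computation that concludes the opposite of the truth, so it does not constitute a proof. The closing remark about deducing the statement from Proposition~\ref{MopNop} by ``writing $R_w=R_1\otimes_R R_w$ and dualizing'' is also not a worked-out alternative: that proposition concerns $\Hom$-modules and there is no dualization step that obviously produces $R_{w^{-1}}$.
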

\noindent
{\it Proof.}
The isomorphism $\phi:(R_w)^\op\ito R_{w^{-1}}$
is given by $\phi(r)=w^{-1}r$.
Indeed, the map $\phi$ is bijective and for any $r\in R_w$ and $r_1,r_2\in R$, we get
(where $*$ as usual denotes the multiplication in the opposite module)
\begin{multline*}
\phi(r_1*r*r_2)=\phi(r_2\cdot_w r\cdot_w r_1)=\phi(r_2r\cdot wr_1)=w^{-1}(r_2r\cdot wr_1)\\
=w^{-1}r_2\cdot w^{-1}r\cdot r_1=r_1\cdot_{w^{-1}}\phi(r)\cdot_{w^{-1}} r_2.
\hspace{20pt}\square\hspace{-10pt}
\end{multline*}

\begin{lemma}\label{lemma:MtotimesNRx=MtoNtimesRinversex}
Let $M$ and $N$ be graded $R$-bimodules and $w\in W$. Then there is an isomorphisms
of graded $R$-bimodules
$$
\Hom_{R\bim}^\bullet(M\otimes_RR_w,N)\cong\Hom_{R\bim}^\bullet(M,N\otimes_R R_{w^{-1}})\otimes_RR_w.
$$
\end{lemma}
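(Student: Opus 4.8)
The plan is to realise $-\otimes_R R_w$ as an auto-equivalence of the category of graded $R$-bimodules and then to track carefully what happens to the two one-sided actions when a $\Hom$-space is transported through it: the outer factor $\otimes_R R_w$ on the right-hand side is precisely the twist that corrects the right action.

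First I would record that, by~(\ref{eq:twisted_prod}) and associativity of $\otimes_R$, for every graded $R$-bimodule $P$ there are natural graded isomorphisms $(P\otimes_R R_w)\otimes_R R_{w^{-1}}\cong P\otimes_R(R_w\otimes_R R_{w^{-1}})\cong P\otimes_R R\cong P$, and likewise with $w$ and $w^{-1}$ interchanged; hence $-\otimes_R R_w$ and $-\otimes_R R_{w^{-1}}$ are mutually quasi-inverse exact auto-equivalences, in particular fully faithful. Applying $-\otimes_R R_{w^{-1}}$ to homomorphisms and precomposing with the unit isomorphism $M\cong M\otimes_R R_w\otimes_R R_{w^{-1}}$, $m\mapsto m\otimes 1\otimes 1$, I get a bijection
$$
\alpha\colon\Hom_{R\bim}^\bullet(M\otimes_R R_w,N)\longrightarrow\Hom_{R\bim}^\bullet(M,N\otimes_R R_{w^{-1}}),\qquad \alpha(\phi)(m)=\phi(m\otimes 1)\otimes 1 .
$$
Here everything is graded (the $W$-action on $R$ preserves degrees, so~(\ref{eq:twisted_prod}) is an isomorphism of degree $0$), and $\Hom^\bullet$ may be replaced by $\Hom$ since all the relevant targets are finitely generated.

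Next I would determine how $\alpha$ interacts with the one-sided actions. The left $R$-action on both $\Hom$-spaces is the one coming from the left $R$-action on $N$ (tensoring with $R_{w^{-1}}$ on the right does not touch the left action), so $\alpha(r\phi)=r\,\alpha(\phi)$ is immediate. For the right action one uses that the right $R$-action on $N\otimes_R R_{w^{-1}}$ is $(n\otimes q)\cdot r=n\otimes q\,w^{-1}(r)$; moving $w^{-1}(r)\in R$ across the tensor by the ordinary left action of $R$ on $R_{w^{-1}}$ gives
$$
\alpha(\phi)(m)\cdot r=\bigl(\phi(m\otimes 1)\otimes 1\bigr)\cdot r=\phi(m\otimes 1)\otimes w^{-1}(r)=\bigl(\phi(m\otimes 1)\cdot w^{-1}(r)\bigr)\otimes 1 ,
$$
whereas $\alpha(\phi\cdot r)(m)=\bigl(\phi(m\otimes 1)\cdot r\bigr)\otimes 1$. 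Thus $\alpha(\phi\cdot r)=\alpha(\phi)\cdot w(r)$, i.e. $\alpha$ is an isomorphism of graded $R$-bimodules from $\Hom_{R\bim}^\bullet(M\otimes_R R_w,N)$ onto the right-twisted bimodule $\bigl(\Hom_{R\bim}^\bullet(M,N\otimes_R R_{w^{-1}})\bigr)_w$, the left structure being unaffected by the twist.

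Finally I would invoke the general fact — the evident extension of~(\ref{eq:twisted_prod}) from $P=R_{w'}$ to arbitrary $P$ — that for any graded $R$-bimodule $P$ the map $\beta\colon P\otimes_R R_w\to P_w$, $\beta(p\otimes q)=p\cdot q$ (the original right action of $P$, with $q$ regarded as an element of $R$), is an isomorphism of graded $R$-bimodules: it is well defined ($pr\otimes q$ and $p\otimes rq$ both go to $p\cdot(rq)$), it is a bimodule map by direct inspection, it is onto since $\beta(p\otimes 1)=p$, and it is injective because $R_w$ is graded free of rank one as a \emph{left} $R$-module, so every element of $P\otimes_R R_w$ already has the form $p\otimes 1$. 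Taking $P=\Hom_{R\bim}^\bullet(M,N\otimes_R R_{w^{-1}})$ and composing $\beta^{-1}$ with the $\alpha$ of the previous paragraph produces the isomorphism in the statement. I expect the bookkeeping of the previous paragraph to be the only real obstacle: one must confirm that $\alpha$ twists the right action by precisely $w$ while leaving the left action untouched, so that the discrepancy is cancelled exactly by the outer $\otimes_R R_w$; everything else is formal.
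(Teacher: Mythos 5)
Your proof is correct and takes essentially the same route as the paper's sketch: your $\alpha$ is the paper's $\Theta$, your observation that $\alpha(\phi\cdot r)=\alpha(\phi)\cdot w(r)$ is the paper's equivariance formula $\Theta(r\cdot\phi\cdot r')=r\cdot\Theta(\phi)\cdot wr'$, and your composition with $\beta^{-1}\colon P_w\to P\otimes_R R_w$ is the paper's passage from $\Theta$ to $\Theta'(\phi)=\Theta(\phi)\otimes 1$. You merely make explicit two points the paper leaves to the reader — that $\alpha$ is a bijection (via the autoequivalence $-\otimes_R R_w$) and that $P\otimes_R R_w\cong P_w$ as bimodules — so this is the same proof, filled in a bit more.
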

\noindent
{\it Sketch of the proof.} First, we define two maps
$$
\begin{tikzcd}
\Hom_{R\bim}^\bullet(M\otimes_RR_x,N)\arrow[yshift=3pt]{r}{\Theta}&\arrow[yshift=-3pt]{l}{\Xi}\Hom_{R\bim}^\bullet(M,N\otimes_R R_{x^{-1}}).
\end{tikzcd}
$$
inverse to each other. The upper arrow is defined by
$
\Theta(\phi)(m)=\phi(m\otimes 1)\otimes1.
$
To define the lower arrow, we consider the maps
$$
\pi_M:M\otimes_RR_x\to M,\qquad\pi_N:N\otimes_RR_{x^{-1}}\to N
$$
given by $\pi_M(m\otimes r)=mr$ and $\pi_N(n\otimes r)=nr$. Thenwe set
$
\Xi(\psi)=\pi_N\circ\psi\circ\pi_M.
$
We leave to the reader the routine check of the fact that these maps are well-defined and
that
$$
\Theta(r\cdot\phi\cdot r')=r\cdot\Theta(\phi)\cdot wr'.
$$
Therefore, to obtain the required isomorphism, it remains to define the maps
$$
\begin{tikzcd}
\Hom_{R\bim}^\bullet(M\otimes_RR_x,N)\arrow[yshift=3pt]{r}{\Theta'}&\arrow[yshift=-3pt]{l}{\Xi'}\Hom_{R\bim}^\bullet(M,N\otimes_R R_{x^{-1}})\otimes_RR_x
\end{tikzcd}
$$
as follows: $\Theta'(\phi)=\Theta(\phi)\otimes1$ and $\Xi'(\psi\otimes r)=\Xi(\psi r)$.\hfill$\square$

\noindent
The proof of the following similar result is left to the reader.

\begin{lemma}\label{lemma:19}
Let $M$ be graded left $R$-module and $w\in W$. Then there is an isomorphisms
of graded left $R$-modules
$$
\Hom_{R\text{\rm-left}}^\bullet(R_w\otimes_RM,R)\cong R_w\otimes_R\Hom_{R\text{\rm-left}}^\bullet(M,R).
$$
\end{lemma}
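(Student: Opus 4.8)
The plan is to write down an explicit natural isomorphism and then establish bijectivity by exploiting that $R_w$ is an invertible $R$-bimodule. Recall that the right $R$-action on $R_w$ is $m\cdot_w q=m\,w(q)$, i.e.\ the regular right action restricted along the ring automorphism $w\colon R\to R$; since $w$ is an automorphism, $R_w$ is free of rank one as a right $R$-module, generated by $1$. In particular every element of $R_w\otimes_R\Hom_{R\text{\rm-left}}^\bullet(M,R)$ is uniquely of the form $1\otimes\psi$ with $\psi\in\Hom_{R\text{\rm-left}}^\bullet(M,R)$. First I would define
$$
\Phi\colon R_w\otimes_R\Hom_{R\text{\rm-left}}^\bullet(M,R)\longrightarrow\Hom_{R\text{\rm-left}}^\bullet(R_w\otimes_RM,R),\qquad \Phi(r\otimes\psi)(s\otimes m)=s\,r\,w(\psi(m)).
$$

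Then I would check the routine points. For fixed $r,\psi$, the assignment $s\otimes m\mapsto s\,r\,w(\psi(m))$ respects the defining relation $s\,w(q)\otimes m=s\otimes qm$ of $R_w\otimes_RM$ (this reduces to $w(q)$ commuting with $r$ and with $w(\psi(m))$ in the commutative ring $R$, together with $w(q\psi(m))=w(q)w(\psi(m))$), and it is left $R$-linear for the action $q\cdot(s\otimes m)=(qs)\otimes m$ coming from the untwisted left structure of $R_w$; next, $\Phi$ itself respects the relation $r\,w(q)\otimes\psi=r\otimes q\psi$ of the outer tensor product, is left $R$-linear, and preserves degrees, the last point using that $w$ acts on $R$ by a degree-preserving automorphism.

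For the inverse, given $\phi\in\Hom_{R\text{\rm-left}}^\bullet(R_w\otimes_RM,R)$ I would set $\psi_\phi=w^{-1}\circ\phi\circ\iota_1$, where $\iota_1\colon M\to R_w\otimes_RM$ is $m\mapsto 1\otimes m$; left $R$-linearity of $\psi_\phi$ follows from $\phi(1\otimes qm)=\phi\bigl(w(q)\cdot(1\otimes m)\bigr)=w(q)\,\phi(1\otimes m)$, and $\psi_\phi$ is homogeneous of the same degree as $\phi$. Setting $\Psi(\phi)=1\otimes\psi_\phi$, a direct substitution gives $\Phi(1\otimes\psi_\phi)(s\otimes m)=s\,\phi(1\otimes m)=\phi(s\otimes m)$ and $\psi_{\Phi(1\otimes\psi)}=\psi$; combined with the unique-presentation remark above, this shows $\Phi$ and $\Psi$ are mutually inverse graded isomorphisms of left $R$-modules.

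The only point that needs care rather than being purely mechanical is keeping the two $R$-actions on $R_w$ apart: the left action is untwisted while the right action is twisted by $w$, and the factor $w(\,\cdot\,)$ in the formula for $\Phi$ must sit in exactly the place that makes $\Phi$ simultaneously balanced over the inner tensor product $R_w\otimes_RM$ and over the outer tensor product $R_w\otimes_R\Hom_{R\text{\rm-left}}^\bullet(M,R)$; once the formula is pinned down, everything else is bookkeeping. Alternatively, one could argue more conceptually: since $R_w\otimes_RR_{w^{-1}}\cong R$ by~(\ref{eq:twisted_prod}), the functor $R_w\otimes_R-$ is an autoequivalence of graded left $R$-modules, so it induces $\Hom_{R\text{\rm-left}}^\bullet(R_w\otimes_RM,R)\cong\Hom_{R\text{\rm-left}}^\bullet(R_w\otimes_RM,R_w\otimes_RR_{w^{-1}})\cong\Hom_{R\text{\rm-left}}^\bullet(M,R_{w^{-1}})$, after which one identifies $R_{w^{-1}}$ with $R$ as a left module and tracks the residual left action against that of $R_w\otimes_R\Hom_{R\text{\rm-left}}^\bullet(M,R)$; but the explicit map seems cleaner to verify.
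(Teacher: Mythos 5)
Your proof is correct, and the paper itself gives no proof of this lemma (it is explicitly "left to the reader"); your argument is in the same explicit style as the sketch the paper gives for the neighbouring Lemma on $\Hom_{R\bim}^\bullet(M\otimes_R R_w,N)$. The formula $\Phi(r\otimes\psi)(s\otimes m)=s\,r\,w(\psi(m))$ is the right one, the balancedness and linearity checks are all sound, and the ``unique presentation'' via right-freeness of $R_w$ correctly closes the bijectivity argument.
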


\subsection{Bott-Samelson bimodules and rings}\label{Bott-Samelson_bimodule_and_rings}
Let $\u{t}=(t_1,\ldots,t_n)$ be a reflection expression. We define
$$
Q(\u{t})=Q\otimes_{Q^{t_1}}Q\otimes_{Q^{t_2}}\cdots\otimes_{Q^{t_n}}Q.
$$
This product is a $Q$-bimodule and a ring under the following operations:
$$
p(a_1\otimes\cdots\otimes a_{n+1})=pa_1\otimes\cdots\otimes a_{n+1},
\quad
(a_1\otimes\cdots\otimes a_{n+1})q=pa_1\otimes\cdots\otimes a_{n+1}q
$$
$$
(a_1\otimes\cdots\otimes a_{n+1})(a'_1\otimes\cdots\otimes a'_{n+1})=a_1a'_1\otimes\cdots\otimes a_{n+1}a'_{n+1}.
$$
Similarly, let
$$
R(\u{t})=R\otimes_{R^{t_1}}R\otimes_{R^{t_2}}\cdots\otimes_{R^{t_n}}R
$$
be the corresponding {\it Bott-Samelson bimodule} and {\it ring} with the similar multiplication rules.
We assume that $Q(\u{\emptyset})=Q$ and $R(\u{\emptyset})=R$.

\begin{corollary}[of Lemma~\ref{lemma:1}]\label{corollary:-1}
The natural map $R(\u{t})\to Q(\u{t})$ is an embedding of $R$-$R$-bimodules and rings.
\end{corollary}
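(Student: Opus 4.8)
The plan is to prove the statement by induction on the length $n=|\u{t}|$, using Lemma~\ref{lemma:1} for the inductive step. Write $\u{t}=(t_1,\ldots,t_n)$ and, when $n\ge1$, $\u{t}'=(t_1,\ldots,t_{n-1})$. By associativity of the tensor product we have canonical identifications $R(\u{t})\cong R(\u{t}')\otimes_{R^{t_n}}R$ and $Q(\u{t})\cong Q(\u{t}')\otimes_{Q^{t_n}}Q$, where in the first one the right $R^{t_n}$-action on $R(\u{t}')$ is the restriction of the right $R$-action (multiplication in the last tensor factor), and similarly for $Q$. The ``natural map'' of the statement is the one induced by the inclusions $R\hookrightarrow Q$ performed in every tensor factor; it is well defined because $R^{t_i}=R\cap Q^{t_i}$ sits inside $Q^{t_i}$, so that an $R^{t_i}$-balanced map is automatically $Q^{t_i}$-balanced. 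For $n=0$ this map is just the inclusion $R\hookrightarrow Q$, which is injective, so the base case holds.

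For the inductive step, assume the natural map $\phi':R(\u{t}')\to Q(\u{t}')$ is an embedding. It is a homomorphism of graded $R$-bimodules, where $Q(\u{t}')$ carries the $R$-bimodule structure obtained by restriction from its $Q$-bimodule structure: the left and right $R$-actions are multiplication in the first and last tensor slots, the inclusion $R\hookrightarrow Q$ is $R$-linear on both sides, and it preserves degrees. Thus Lemma~\ref{lemma:1}, applied with $M=R(\u{t}')$, $N=Q(\u{t}')$, $t=t_n$ and $\phi=\phi'$, shows that $\phi'\otimes\iota\colon R(\u{t}')\otimes_{R^{t_n}}R\to Q(\u{t}')\otimes_{Q^{t_n}}Q$ is again an embedding of $R$-bimodules. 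Under the identifications of the previous paragraph, $\phi'\otimes\iota$ is exactly the natural map $R(\u{t})\to Q(\u{t})$: both send a decomposable tensor $a_1\otimes\cdots\otimes a_{n+1}$ (with all $a_i\in R$) to the same element of $Q(\u{t})$. This completes the induction, so the natural map is an embedding of $R$-bimodules.

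It remains to note that the natural map is also a ring homomorphism, which Lemma~\ref{lemma:1} does not address. This is immediate from the componentwise definition of multiplication on $R(\u{t})$ and $Q(\u{t})$, namely $(a_1\otimes\cdots\otimes a_{n+1})(a'_1\otimes\cdots\otimes a'_{n+1})=a_1a'_1\otimes\cdots\otimes a_{n+1}a'_{n+1}$, together with the fact that the inclusion $R\hookrightarrow Q$ is a ring homomorphism preserving the unit; hence the natural map is multiplicative and sends $1\otimes\cdots\otimes1$ to $1\otimes\cdots\otimes1$.

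I do not expect a genuine obstacle here: all the real work is hidden in Lemma~\ref{lemma:1}. The only points requiring care are formal bookkeeping: checking that the natural map is well defined (the $R^{t_i}$-versus-$Q^{t_i}$ balancing), verifying that at each inductive step it is honestly a morphism of graded $R$-bimodules so that Lemma~\ref{lemma:1} applies, and matching $\phi'\otimes\iota$ with the natural map after the associativity identifications. The ring-homomorphism assertion is then an elementary separate observation as above.
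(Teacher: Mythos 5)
Your proof is correct and fills in exactly the induction the paper intends when it labels the statement as a corollary of Lemma~\ref{lemma:1}: induct on the length, apply Lemma~\ref{lemma:1} with $M=R(\u{t}')$, $N=Q(\u{t}')$, $t=t_n$, and identify $\phi'\otimes\iota$ with the natural map via associativity of the tensor product, then note the ring-homomorphism property separately. The paper gives no explicit argument, so your write-up is the intended one.
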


For any reflection expression $\u{t}$ and $w\in W$, we denote $\u{t}^w=(w^{-1}t_1w,w^{-1}t_2w,\ldots,w^{-1}t_nw)$,
where $n=|\u{t}|$. Obviously, $(\u{t}^w)^{w'}=\u{t}^{ww'}$. We have a isomorphism of bimodules and rings
\begin{equation}\label{eq:Rtw}
R(\u{t})\cong R_w\otimes R(\u{t}^w)\otimes R_{w^{-1}},
\end{equation}
which is given by $a_1\otimes a_2\otimes\cdots\otimes a_{n+1}\mapsto1\otimes w^{-1}a_1\otimes w^{-1}a_2\otimes\cdots\otimes w^{-1}a_{n+1}\otimes1$.
It is well-defined by~(\ref{eq:wQtwRt}).

\subsection{Decomposition of $Q(\u{t})$}\label{DecompositionQ} 
First, we notice that there is the following biproduct isomorphism of $Q$-bimodules:
\begin{equation}\label{eq:2.5}
Q\otimes_{Q^t}Q\cong Q\oplus Q_t,
\end{equation}
where the projections $p_0:Q\otimes_{Q^t}Q\to Q$ and $p_1:Q\otimes_{Q^t}Q\to Q_t$
are given by
$$
p_0(a\otimes b)=ab,\qquad p_1(a\otimes b)=a\cdot tb
$$
and the embeddings $i_0:Q\to Q\otimes_{Q^t}Q$ and $i_1:Q_t\to Q\otimes_{Q^t}Q$ are given by
$$
i_0(1)=\frac1{\alpha_t}\,\c_t,
\quad
i_1(1)=\frac1{\alpha_t}\,\dd_t
$$
where
\begin{equation}\label{eq:cd}
\c_t=\frac{\alpha_t}2\otimes1+1\otimes\frac{\alpha_t}2,
\qquad
\dd_t=\frac{\alpha_t}2\otimes1-1\otimes\frac{\alpha_t}2.
\end{equation}

We can proceed by induction to decompose $Q(\u{t})$ for any reflection expression~$\u{t}$.
We are going to define for any $\u{\epsilon}\subset\u{t}$ a projection $p_{\u{\epsilon}}:Q(\u{t})\to Q_{\u{\epsilon}}$ and
an embedding $i_{\u{\epsilon}}:Q_{\u{\epsilon}}\to Q(\u{t})$, where
$$
Q_{\u{\epsilon}}=Q\otimes_QQ_{t_1^{\epsilon_1}}\otimes_QQ_{t_2^{\epsilon_2}}\otimes_Q\cdots\otimes_QQ_{t_n^{\epsilon_n}}.
$$
First, we set $p_{\u{\emptyset}}=i_{\u{\emptyset}}=\id_Q$.
Next, suppose that $\u{t}$ is of length $n>0$. Let $\u{t}'=(t_1,\ldots,t_{n-1})$ be the truncated sequence.
To apply induction, consider the following maps:
$$
\begin{tikzcd}
Q(\u{t})\arrow[shift left=3pt]{r}{\boldsymbol\alpha}&\arrow[shift left=3pt]{l}{\boldsymbol\beta}
Q(\u{t}')\otimes_QQ\otimes_{Q^{t_n}}Q.
\end{tikzcd}
$$
given by
$$
a_1\otimes\cdots\otimes a_{n+1}\stackrel{\boldsymbol\alpha}{\mapsto}a_1\otimes\cdots\otimes a_n\otimes1\otimes a_{n+1}
$$
$$
a_1\otimes\cdots\otimes a_n\otimes b\otimes a_{n+1}\stackrel{\boldsymbol\beta}{\mapsto}a_1\otimes\cdots\otimes a_nb\otimes a_{n+1}
$$
These maps are clearly inverse to each other.
We define $p_{\u{\epsilon}}$ and $i_{\u{\epsilon}}$ to be the following compositions:
$$
\begin{tikzcd}
Q(\u{t})\arrow{r}{\boldsymbol\alpha}&
Q(\u{t}')\otimes_QQ\otimes_{Q^{t_n}}Q\arrow{r}{p_{\u{\epsilon}'}\otimes p_{\epsilon_n}}&Q_{\u{\epsilon}'}\otimes_QQ_{t_n^{\epsilon_n}}=Q_{\u{\epsilon}},
\end{tikzcd}
$$
$$
\begin{tikzcd}
Q_{\u{\epsilon}}=Q_{\u{\epsilon}'}\otimes_QQ_{t_n^{\epsilon_n}}\arrow{r}{i_{\u{\epsilon}'}\otimes i_{\epsilon_n}}&
Q(\u{t}')\otimes_QQ\otimes_{Q^{t_n}}Q\arrow{r}{\boldsymbol\beta}&Q(\u{t})
\end{tikzcd}
$$
respectively. Note that $\u{t}=(t_1)$ for this definition implies that
$p_{\u{\epsilon}}=\id\otimes p_{\epsilon_1}$ and $i_{\u{\epsilon}}=i_{\epsilon_1}\circ q_{\epsilon_1}$, where
$q_{\epsilon_1}:Q_{\u{\epsilon}}=Q\otimes_QQ_{t_1^{\epsilon_1}}\ito Q_{t_1^{\epsilon_1}}$
is the natural isomorphism given by $a\otimes b\mapsto ab$.

\begin{theorem}\label{theorem:1}
The collection of homomorphisms $i_{\u{\epsilon}}$ and $p_{\u{\epsilon}}$ defines
a biproduct isomorphism of $Q$-bimodules
$$
Q(\u{t})\cong\bigoplus_{\u{\epsilon}\subset\u{t}}Q_{\u{\epsilon}}.
$$

\end{theorem}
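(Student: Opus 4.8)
The plan is to proceed by induction on $n = |\u{t}|$, exactly along the inductive scaffolding set up before the statement. The base case $n = 0$ is trivial since $Q(\u{\emptyset}) = Q = Q_{\u{\emptyset}}$ and $p_{\u{\emptyset}} = i_{\u{\emptyset}} = \id_Q$. For the base-ish case $n = 1$ we must check that the two maps $i_{\epsilon_1}, p_{\epsilon_1}$ furnished by~(\ref{eq:2.5}) for $\epsilon_1 \in \{0,1\}$ genuinely form a biproduct decomposition $Q \otimes_{Q^{t_1}} Q \cong Q \oplus Q_{t_1}$; this is the content of~(\ref{eq:2.5}) itself, and I would verify directly that $p_0 i_0 = \id$, $p_1 i_1 = \id$, $p_0 i_1 = 0$, $p_1 i_0 = 0$, and $i_0 p_0 + i_1 p_1 = \id$, using the explicit formulas $i_0(1) = \tfrac1{\alpha_t}\c_t$, $i_1(1) = \tfrac1{\alpha_t}\dd_t$ and $\c_t + \dd_t = \alpha_t \otimes 1$, $\c_t - \dd_t = 1 \otimes \alpha_t$, together with the fact that $a \otimes b = \tfrac1{\alpha_t}\big(\wp_t(\text{stuff}) \cdot \c_t + \d_t(\text{stuff}) \cdot \dd_t\big)$ coming from~(\ref{eq:1}). (Here $t = t_1$.) These are the cofactor identities of a biproduct and they do not depend on $\u{t}$ being longer.

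For the inductive step, assume the result for $\u{t}' = (t_1, \ldots, t_{n-1})$. The mutually inverse isomorphisms $\boldsymbol\alpha, \boldsymbol\beta$ identify $Q(\u{t})$ with $Q(\u{t}') \otimes_Q Q \otimes_{Q^{t_n}} Q$ as $Q$-bimodules. Tensoring the inductive biproduct decomposition $Q(\u{t}') \cong \bigoplus_{\u{\epsilon}' \subset \u{t}'} Q_{\u{\epsilon}'}$ on the right with the $Q$-bimodule $Q \otimes_{Q^{t_n}} Q \cong Q \oplus Q_{t_n}$ (the $n=1$ case just established), and using that $\otimes_Q$ commutes with finite direct sums, yields
$$
Q(\u{t}') \otimes_Q Q \otimes_{Q^{t_n}} Q \;\cong\; \bigoplus_{\u{\epsilon}' \subset \u{t}'} Q_{\u{\epsilon}'} \otimes_Q \big(Q \oplus Q_{t_n}\big) \;\cong\; \bigoplus_{\u{\epsilon}' \subset \u{t}'} \big(Q_{\u{\epsilon}'} \otimes_Q Q_{t_n^{0}} \;\oplus\; Q_{\u{\epsilon}'} \otimes_Q Q_{t_n^{1}}\big) \;=\; \bigoplus_{\u{\epsilon} \subset \u{t}} Q_{\u{\epsilon}},
$$
where the last equality is the bijection $\u{\epsilon} \leftrightarrow (\u{\epsilon}', \epsilon_n)$ between $\Sub(\u{t})$ and $\Sub(\u{t}') \times \{0,1\}$, and where we write $Q_{\u{\epsilon}'} \otimes_Q Q_{t_n^{\epsilon_n}} = Q_{\u{\epsilon}}$ by definition. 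The point is then to check that the composite isomorphism obtained this way is precisely the one whose $\u{\epsilon}$-component projection is $(p_{\u{\epsilon}'} \otimes p_{\epsilon_n}) \circ \boldsymbol\alpha = p_{\u{\epsilon}}$ and whose $\u{\epsilon}$-component inclusion is $\boldsymbol\beta \circ (i_{\u{\epsilon}'} \otimes i_{\epsilon_n}) = i_{\u{\epsilon}}$, which is exactly how $p_{\u{\epsilon}}$ and $i_{\u{\epsilon}}$ were defined. Since a direct sum of biproduct decompositions, transported along an isomorphism, is again a biproduct decomposition with the evidently composed structure maps, this finishes the induction.

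The only genuinely substantive point — the rest being bookkeeping of which tensor factor is which — is confirming that $\boldsymbol\alpha$ and $\boldsymbol\beta$ really are mutually inverse $Q$-bimodule maps and that $p_{\epsilon_n} \otimes \id$, $i_{\epsilon_n} \otimes \id$ commute past them correctly; equivalently, that forming $Q(\u{t}') \otimes_Q (-)$ is an exact additive functor on $Q$-bimodules so that it carries the biproduct $(i_0, i_1, p_0, p_1)$ for $Q \otimes_{Q^{t_n}} Q$ to a biproduct. This is immediate because $\otimes_Q$ over the base ring is additive and preserves direct sums, so I do not expect a real obstacle; the main care needed is purely notational, keeping track of the ordering of tensor factors and the identification $\Sub(\u{t}) = \Sub(\u{t}') \times \{0,1\}$ consistently throughout. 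I would also remark that since each $p_{\u{\epsilon}}, i_{\u{\epsilon}}$ is built from $Q$-bimodule maps by composition and tensoring, the resulting decomposition is automatically one of $Q$-bimodules, and it is graded because all the structure maps in~(\ref{eq:2.5}) and in $\boldsymbol\alpha, \boldsymbol\beta$ are homogeneous (the elements $\c_t, \dd_t$ have degree $2$ and $\tfrac1{\alpha_t}$ has degree $-2$).
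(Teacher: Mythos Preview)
Your proposal is correct and takes essentially the same approach as the paper: induction on $n=|\u{t}|$, with the $n\le 1$ cases handled by~(\ref{eq:2.5}), and the inductive step obtained by tensoring the decomposition for $\u{t}'$ with the one for $Q\otimes_{Q^{t_n}}Q$ and transporting along $\boldsymbol\alpha,\boldsymbol\beta$. The only stylistic difference is that the paper writes out the biproduct identities $p_{\u{\epsilon}}i_{\u{\delta}}=\delta_{\u{\epsilon},\u{\delta}}\id$ and $\sum_{\u{\epsilon}}i_{\u{\epsilon}}p_{\u{\epsilon}}=\id$ explicitly, while you phrase the same computation as ``tensor product of biproducts, transported along an isomorphism, is a biproduct''; these are the same argument.
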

\noindent
{\it Proof.} We can obviously assume that $n=|\u{t}|>1$, as otherwise the theorem is
either trivial or follows from the description of~(\ref{eq:2.5}).
Using the inductive hypothesis, for any subexpressions $\u{\epsilon},\u{\delta}\subset\u{t}$, we get
$$
p_{\u{\epsilon}}i_{\u{\delta}}=(p_{\u{\epsilon'}}\otimes p_{\epsilon_n})\circ{\boldsymbol\alpha}\circ{\boldsymbol\beta}\circ(i_{\u{\delta'}}\otimes i_{\delta_n})
=(p_{\u{\delta'}}\otimes p_{\delta_n})\circ(i_{\u{\delta'}}\otimes i_{\delta_n})
=(p_{\u{\epsilon'}}\circ i_{\u{\delta'}})\otimes(p_{_n}\circ i_{\delta_n}).
$$
If $\u{\epsilon}=\u{\delta}$, the last product is equal to $\id_{Q_{\u{\epsilon}'}}\otimes\id_{Q_{t_n^{\epsilon_n}}}$.
Otherwise, it is equal to zero, as either $\u{\epsilon'}\ne\u{\delta'}$ or $\epsilon_n\ne \delta_n$.
Finally, we get
\begin{multline*}
\sum_{\u{\epsilon}\subset\u{t}}i_{\u{\epsilon}}p_{\u{\epsilon}}
=\sum_{\u{\epsilon}\subset\u{t}}\boldsymbol\beta\circ(i_{\u{\epsilon}'}\otimes i_{\epsilon_n})\circ(p_{\u{\epsilon}'}\otimes p_{\epsilon_n})\circ\boldsymbol\alpha
=\boldsymbol\beta\circ\Bigg(\sum_{\u{\epsilon}\subset\u{t}}(i_{\u{\epsilon}'}\circ p_{\u{\epsilon}'})\otimes(i_{\epsilon_n}\circ p_{\epsilon_n})\Bigg)\circ\boldsymbol\alpha\\
=\boldsymbol\beta\circ\Bigg(\Bigg(\sum_{\u{\epsilon}'\subset\u{t}'}i_{\u{\epsilon}'}\circ p_{\u{\epsilon}'}\Bigg)\otimes\Bigg(\sum_{\epsilon_n\in\{0,1\}}i_{\epsilon_n}\circ p_{\epsilon_n}\Bigg)\Bigg)\circ\boldsymbol\alpha
=\boldsymbol\beta\circ\Big(\id_{Q(\u{t}')}\otimes\id_{Q\otimes_{Q^{t_n}}Q}\Big)\circ\boldsymbol\alpha\\[10pt]
=\boldsymbol\beta\circ\id_{Q(\u{t}')\otimes_QQ\otimes_{Q^{t_n}}Q}\circ\boldsymbol\alpha=
\boldsymbol\beta\circ\boldsymbol\alpha=\id_{Q(\u{t})}.\quad\square\!\!\!\!
\end{multline*}

It is easy to prove by induction that $p_{\u{\epsilon}}$ is given by
$$
a_1\otimes a_2\otimes\cdots\otimes a_{n+1}\mapsto a_1\otimes t_1^{\epsilon_1}a_2\otimes t_2^{\epsilon_2}a_3\otimes\cdots\otimes t_n^{\epsilon_n}a_{n+1}.
$$
To proceed further, we consider the isomorphism $q_{\u{\epsilon}}:Q_{\u{\epsilon}}\ito Q_{\u{\epsilon}^{\max}}$ of $Q$-bimodules given~by
$$
b_0\otimes b_1\otimes\cdots\otimes b_n\mapsto b_0\cdot\u{\epsilon}^{<1}b_1\cdot\u{\epsilon}^{<2}b_2\cdots\u{\epsilon}^{<n}b_n.
$$
Hence we get the following formula for the composition $q_{\u{\epsilon}}\circ p_{\u{\epsilon}}$:
\begin{equation}\label{eq:3}
a_1\otimes a_2\otimes\cdots\otimes a_{n+1}\mapsto \u{\epsilon}^{<1}a_1\cdot \u{\epsilon}^{<2}a_2\cdots \u{\epsilon}^{<n+1}a_{n+1}.
\end{equation}
Taking the sums, we get an isomorphism of $Q$-bimodules:
$$
\begin{tikzcd}
Q(\u{t})\arrow{r}{\bigoplus\limits_{\u{\epsilon}\subset\u{t}} p_{\u{\epsilon}}}[swap]{\sim}&[12pt]\displaystyle\bigoplus_{\u{\epsilon}\subset\u{t}}Q_{\u{\epsilon}}\arrow{r}{\bigoplus\limits_{\u{\epsilon}\subset\u{t}} q_{\u{\epsilon}}}[swap]{\sim}&[12pt]\displaystyle\bigoplus_{\u{\epsilon}\subset\u{t}}Q_{\u{\epsilon}^{\max}}=Q^{\oplus\u{t}}.
\end{tikzcd}
$$
It follows from~(\ref{eq:3}) that this composition is also an isomorphism of rings
and that the restriction to $R(\u{t})$ is an injective morphism of $R$-bimodules and rings
$$
\begin{tikzcd}
\Res_{\u{t}}:R(\u{t})\arrow[hookrightarrow]{r}&\displaystyle\bigoplus_{\u{\epsilon}\subset\u{t}}R_{\u{\epsilon}^{\max}}=R^{\oplus\u{t}}.
\end{tikzcd}
$$
Note that the above map is in general not surjective. Therefore, discribing its image is our next aim.
From~(\ref{eq:3}), we get
\begin{equation}\label{eq:Res}
\Res_{\u{t}}(a_1\otimes a_2\otimes\cdots\otimes a_{n+1})(\u{\epsilon})=\u{\epsilon}^{<1}a_1\cdot \u{\epsilon}^{<2}a_2\cdots \u{\epsilon}^{<n+1}a_{n+1}.
\end{equation}

\begin{lemma}\label{lemma:elementsimLoc}
Let $\u{t}$ be a reflection expression, $i=1,\ldots,|\u{t}|$ and $\beta\in R$. The functions
$\u{\epsilon}\mapsto\u{\epsilon}^{<i}\beta$, $\u{\epsilon}\mapsto\u{\epsilon}^{\to i}$
and $\u{\epsilon}\mapsto\u{\epsilon}^{\leftarrow i}$ belong to the image of $\Res_{\u{t}}$.
\end{lemma}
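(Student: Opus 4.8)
The plan is to exhibit each of the three functions directly as the image under $\Res_{\u{t}}$ of a suitable pure tensor, reading off its values from formula~(\ref{eq:Res}). For an index $j\in\{1,\ldots,|\u{t}|+1\}$ and an element $\gamma\in R$, let
$$
u_j(\gamma)=1\otimes\cdots\otimes1\otimes\gamma\otimes1\otimes\cdots\otimes1\in R(\u{t})
$$
denote the pure tensor with $\gamma$ in the $j$-th slot and $1$ in every other slot. Since each $w\in W$ acts on $R$ by a ring automorphism and therefore fixes $1$, all the factors $\u{\epsilon}^{<k}a_k$ with $k\ne j$ appearing in~(\ref{eq:Res}) are equal to $1$, so that
$$
\Res_{\u{t}}\bigl(u_j(\gamma)\bigr)(\u{\epsilon})=\u{\epsilon}^{<j}\gamma
\qquad\text{for every }\u{\epsilon}\subset\u{t}.
$$

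Granting this observation, the three claims follow by bookkeeping the tensor slots. The function $\u{\epsilon}\mapsto\u{\epsilon}^{<i}\beta$ is precisely $\Res_{\u{t}}(u_i(\beta))$, and the slot $i$ is available since $i\le|\u{t}|<|\u{t}|+1$. For the second function, note that $\alpha_{t_i}\in V\subset\Sym(V)\subset R$; hence $\u{\epsilon}\mapsto\u{\epsilon}^{\to i}=\u{\epsilon}^{<i}\alpha_{t_i}$ is the special case $\beta=\alpha_{t_i}$ of the first, i.e.\ $\Res_{\u{t}}(u_i(\alpha_{t_i}))$. For the third function the only new point is a one-slot shift: from the definitions in Section~\ref{Expressions_and_subexpressions} one has $\u{\epsilon}^{\le i}=\u{\epsilon}^{<i+1}$, so $\u{\epsilon}^{\leftarrow i}=\u{\epsilon}^{\le i}\alpha_{t_i}=\u{\epsilon}^{<i+1}\alpha_{t_i}$, which equals $\Res_{\u{t}}(u_{i+1}(\alpha_{t_i}))$; here the slot $i+1$ exists because $i\le|\u{t}|$, i.e.\ $i+1\le|\u{t}|+1$.

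There is essentially no obstacle here: once~(\ref{eq:Res}) is in hand, everything is a direct computation of $\Res_{\u{t}}$ on pure tensors. (Alternatively one could argue by induction on $|\u{t}|$ using the maps $\boldsymbol\alpha,\boldsymbol\beta$ defining $\Res_{\u{t}}$, but that is more roundabout.) The single place that warrants a moment's care is recognizing that $\u{\epsilon}\mapsto\u{\epsilon}^{\leftarrow i}$ is obtained from the $\u{\epsilon}^{\to i}$ construction simply by moving $\alpha_{t_i}$ one slot to the right, which corresponds exactly to replacing $\u{\epsilon}^{<i}$ by $\u{\epsilon}^{<i+1}=\u{\epsilon}^{\le i}$.
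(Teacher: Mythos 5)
Your proposal is correct and is essentially the same as the paper's one-line proof, which sets $a_j=1$ for all slots except $a_i=\beta$, $a_i=\alpha_{t_i}$, or $a_{i+1}=\alpha_{t_i}$ respectively in~(\ref{eq:Res}); you have simply spelled out the pure-tensor evaluation and the identity $\u{\epsilon}^{\le i}=\u{\epsilon}^{<i+1}$ that the paper leaves implicit.
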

\begin{proof}
It suffices to set $\alpha_j=1$ except $a_i=\beta$, $a_i=\alpha_{t_i}$ and $a_{i+1}=\alpha_{t_i}$ respectively in~(\ref{eq:Res}).
\end{proof}

\subsection{The image of $\Res_{\u{t}}$}\label{imLoc} For any $g\in Q^{\oplus\u{t}}$, $\u{\epsilon}\subset\u{t}$, $p\in T$ and
$X\subset\M_p(\u{\epsilon})$, we consider the following element of $Q$ (recall the notion of relative cardinality from Section~\ref{Subsets_and_orders}):
$$
\Sigma_X^{\u{\epsilon}}(g)=\sum_{Y\subset X}(-1)^{|Y|_X}g(\f_Y\u{\epsilon}).
$$
Clearly, $\Sigma^{\u{\epsilon}}_X$ is a $Q$-linear map $Q^{\oplus\u{t}}\to Q$, which maps $R^{\oplus\u{t}}$ to $R$.
%
Let $\mathcal X(\u{t})$ denote
the set of all functions $g\in R^{\oplus\u{t}}$ such that
$$
\Sigma^{\u{\epsilon}}_X(g)\in\alpha_p^{|X|}R
$$
for all $p\in T$, $\u{\epsilon}\subset\u{t}$ and $X\subset M_p(\u{\epsilon})$.

\begin{lemma}\label{lemma:3}
$\im\Res_{\u{t}}\subset\mathcal X(\u{t})$.
\end{lemma}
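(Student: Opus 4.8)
The statement asserts that for any element of the form $g=\Res_{\u t}(a_1\otimes\cdots\otimes a_{n+1})$ and any $p\in T$, $\u\epsilon\subset\u t$, $X\subset\M_p(\u\epsilon)$, the alternating sum $\Sigma^{\u\epsilon}_X(g)$ is divisible by $\alpha_p^{|X|}$ in $R$. By $Q$-linearity of $\Sigma^{\u\epsilon}_X$ (which maps $R^{\oplus\u t}\to R$) and multiplicativity of $\Res_{\u t}$, it suffices to understand $\Sigma^{\u\epsilon}_X$ applied to the ``elementary'' generators appearing in Lemma~\ref{lemma:elementsimLoc}: namely the functions $\u\delta\mapsto\u\delta^{<i}\beta$ for $\beta\in R$ and $i=1,\dots,n+1$. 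Indeed, formula~(\ref{eq:Res}) exhibits $g$ as a pointwise product $g(\u\epsilon)=\prod_{i=1}^{n+1}g_i(\u\epsilon)$ with $g_i(\u\delta)=\u\delta^{<i}a_i$, so the whole problem reduces, via a Leibniz-type expansion of $\Sigma^{\u\epsilon}_X$ on a product, to the single-factor case.

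\textbf{Key steps.} First I would record the behaviour of the substitution $\f_Y$ on the relevant subexpression data: by Corollary~\ref{lemma:2}, for $Y\subset X\subset\M_p(\u\epsilon)$ one has $(\f_Y\u\epsilon)^{<i}=p^{|Y^{<i}|}\u\epsilon^{<i}$, so the $i$-th factor evaluated at $\f_Y\u\epsilon$ is $p^{|Y^{<i}|}(\u\epsilon^{<i}a_i)$. Thus each factor depends on $Y$ only through the parity vector $\big(|Y^{<1}|,\dots,|Y^{<n+1}|\big)\bmod 2$, i.e.\ through which of the ``walls'' $i$ the set $Y$ has crossed an odd number of times. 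Second, I would set up the Leibniz formula: writing $g=\prod_i g_i$ and expanding, $\Sigma^{\u\epsilon}_X(g)=\sum_{Y\subset X}(-1)^{|Y|_X}\prod_i p^{|Y^{<i}|}b_i$ where $b_i=\u\epsilon^{<i}a_i\in R$. Grouping the $Y$ by the subset $Z=\{i: |Y^{<i}|\text{ odd}\}$, this rewrites as a sum over $Z$ of $\big(\prod_{i\in Z}(pb_i)\big)\big(\prod_{i\notin Z}b_i\big)$ times a signed count $\sum_{Y}(-1)^{|Y|_X}$ over all $Y\subset X$ realizing that parity pattern. Third, and this is the combinatorial heart, I would show that for each $i$ the ``difference'' $b_i-pb_i$ is divisible by $\alpha_p$ (since $b_i-pb_i=\alpha_p\d_p(b_i)\in\alpha_p R$), and that the alternating sign structure forces each $Y$ to contribute a product in which $Z$ is determined so that the number of ``sign changes'' along $X$ is exactly $|X|$ — hence $\alpha_p^{|X|}$ factors out. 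Concretely, I would order $X=\{x_1<\cdots<x_{|X|}\}$ and telescope: the definition of $|Y|_X$ (positions counted decreasingly) together with~(\ref{eq:summod2}) and~(\ref{eq:triangleX}) lets one reindex the sum so that $\Sigma^{\u\epsilon}_X(g)=\prod_{k=1}^{|X|}(\text{something in }\alpha_p R)$ up to a unit, each telescoping step consuming one power of $\alpha_p$.

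\textbf{Cleaner inductive route.} Rather than a single global computation, I expect the smoothest argument is induction on $|X|$ using formulas~(\ref{eq:7}) and~(\ref{eq:8}) for relative cardinality together with the recursive structure of $X\mapsto X'$. Let $x=\max X$ and split $\{Y\subset X\}$ according to whether $x\in Y$. By~(\ref{eq:7}) the sign flips precisely when $x$ is adjoined, so $\Sigma^{\u\epsilon}_X(g)=\Sigma^{\u\epsilon}_{X'}(g)-\Sigma^{\f_x\u\epsilon}_{X'}(g)$ after a parity bookkeeping; now $g-\text{(shift by }\f_x)$ should be, factorwise, divisible by $\alpha_p$ because $x\in\M_p(\u\epsilon)$ means the factors straddling position $x$ differ by a $p$-action, i.e.\ lie in $\alpha_p R$ after subtraction. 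The inductive hypothesis applied to $X'$ then supplies $\alpha_p^{|X'|}=\alpha_p^{|X|-1}$, and the extra factor of $\alpha_p$ from the difference completes the step. The base case $|X|\le 1$ is immediate: $\Sigma^{\u\epsilon}_{\emptyset}(g)=g(\u\epsilon)\in R$, and for $|X|=1$ the sum is $g(\u\epsilon)-g(\f_x\u\epsilon)\in\alpha_p R$ by~(\ref{eq:Res}) and Example~\ref{example:1}-style divisibility of $b-pb$.

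\textbf{Main obstacle.} The delicate point is the bookkeeping that turns ``$Y$ crosses wall $i$ oddly'' into the correct power of $\alpha_p$ without over- or under-counting: one must show that the sign $(-1)^{|Y|_X}$ conspires with the parity pattern $Z(Y)$ so that the coefficient of any product $\prod_{i\in Z}(pb_i)\prod_{i\notin Z}b_i$ in $\Sigma^{\u\epsilon}_X(g)$ vanishes unless $Z$ has ``enough alternations along $X$'', and in that case yields exactly the missing powers of $\alpha_p$. This is where~(\ref{eq:summod2}) and~(\ref{eq:triangleX}) do the real work, and it is the step I would write out most carefully; everything else is either a direct appeal to Lemma~\ref{lemma:elementsimLoc}, Corollary~\ref{lemma:2}, and formula~(\ref{eq:Res}), or routine $Q$-linearity.
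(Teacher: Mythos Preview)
Your inductive route on $|X|$ is the right idea, and in fact once carried out correctly it is essentially the computation the paper performs in its Case~2. But as written there is a real gap. The identity you assert,
\[
\Sigma^{\u\epsilon}_X(g)=\Sigma^{\u\epsilon}_{X'}(g)-\Sigma^{\f_x\u\epsilon}_{X'}(g)\quad\text{``after parity bookkeeping'',}
\]
is not correct even up to a global sign: by~(\ref{eq:8}) one has $(-1)^{|Y|_X}=(-1)^{|Y|}(-1)^{|Y|_{X'}}$ for $Y\subset X'$, and the extra $(-1)^{|Y|}$ depends on $Y$. More importantly, you never identify the function to which the inductive hypothesis is applied. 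What actually happens (with $x=\max X$) is that $g(\f_Y\u\epsilon)-g(\f_Y\f_x\u\epsilon)=(-1)^{|Y|}\alpha_p\,\partial_p\!\big(\prod_{i>x}\u\epsilon^{<i}a_i\big)\cdot\tilde g(\f_Y\u\epsilon)$, where $\tilde g=\Res_{\u t}(a_1\otimes\cdots\otimes a_x\otimes1\otimes\cdots\otimes1)$; the factor $(-1)^{|Y|}$ then combines with $(-1)^{|Y|_X}$ via~(\ref{eq:8}) to give $(-1)^{|Y|_{X'}}$, so that
\[
\Sigma^{\u\epsilon}_X(g)=\alpha_p\,\partial_p\!\Big(\prod_{i>x}\u\epsilon^{<i}a_i\Big)\cdot\Sigma^{\u\epsilon}_{X'}(\tilde g),
\]
and the induction applies to $\tilde g\in\im\Res_{\u t}$, not to $g$. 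Without this identification your inductive step does not close.

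The paper organizes the same computation differently: it inducts on $n=|\u t|$ rather than on $|X|$, always peeling off the last tensor factor. When $n\notin X$ it absorbs $a_{n+1}$ into $a_n$ and reduces to $\u t'$; when $n\in X$ it performs exactly the calculation above (with $x=n$), producing $h=\Res_{\u t'}\big(a_1\otimes\cdots\otimes a_n\cdot\partial_{t_n}(t_n^{\epsilon_n}a_{n+1})\big)$ and the identity $\Sigma^{\u\epsilon}_X(g)=\u\epsilon^{\to n}\cdot\Sigma^{\u\epsilon'}_{X'}(h)$. This framing makes the function for the inductive step manifest from the tensor structure of $R(\u t)$, so it avoids the gap in your sketch. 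Your induction on $|X|$ is arguably more direct (no separate ``$n\notin X$'' case), but you must explicitly exhibit $\tilde g\in\im\Res_{\u t}$ and track the $(-1)^{|Y|}$ correctly; your ``Leibniz expansion'' paragraph gestures at this but does not do it.
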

\noindent
{\it Proof.}
By linearity, it suffices to prove that $g=\Res_{\u{t}}(a_1\otimes a_2\otimes\cdots\otimes a_{n+1})$
belongs to $\X(\u{t})$, where $n=|\u{t}|$. We will apply induction on $n$, the case $n=0$ being obvious.
So we assume that $n>0$ and the result is proved for shorter reflection expressions.
Let $\u{\epsilon}\subset\u{t}$, $p\in T$ and $X\subset\M_p(\u{\epsilon})$.

{\it Case 1: $n\notin X$}. We consider the following element of $R^{\oplus\u{t}'}$:
$$
h=\Res_{\u{t}'}(a_1\otimes a_2\otimes\cdots\otimes a_n\cdot t_n^{\epsilon_n}a_{n+1}).
$$
Let $Y\subset X$. Denoting $\u{\delta}=\f_Y\u{\epsilon}$, we get by~(\ref{eq:Res}) that
\begin{multline*}
g(\f_Y\u{\epsilon})=\u{\delta}^{<1}a_1\cdot \u{\delta}^{<2}a_2\cdots \u{\delta}^{<n}a_n\cdot\u{\delta}^{<n+1}a_{n+1}\\
=\u{\delta}^{<1}a_1\cdot \u{\delta}^{<2}a_2\cdots \u{\delta}^{<n}(a_n\cdot t_n^{\epsilon_n}a_{n+1})
=h(\u{\delta}')=h(\f_Y\u{\epsilon}').
\end{multline*}
Taking the sum over all subsets $Y\subset X$, we get
$\Sigma^{\u{\epsilon}}_X(g)=\Sigma^{\u{\epsilon}'}_X(h)\in\alpha_p^{|X|}R$
by the inductive hypothesis.

{\it Case 2: $n\in X$}. In this case $n\in\M_p(\u{\epsilon})$, whence $\u{\epsilon}^n=p$ and $\u{\epsilon}^{\to n}\sim\alpha_p$
by~(\ref{eq:conj}). Applying~(\ref{eq:7}), we get
\begin{equation}\label{eq:0.5}
\begin{array}{l}
\displaystyle\Sigma^{\u{\epsilon}}_X(g)=\sum_{Y\subset X'}\Big((-1)^{|Y|_X}g(\f_Y\u{\epsilon})+(-1)^{|Y\cup\{n\}|_X}g(\f_Y\f_n\u{\epsilon})\Big)\\[6pt]
\displaystyle\hspace{210pt}=\sum_{Y\subset X'}(-1)^{|Y|_X}\big(g(\f_Y\u{\epsilon})-g(\f_n\f_Y\u{\epsilon})\big).
\end{array}
\end{equation}
Denoting $\u{\delta}=\f_Y\u{\epsilon}$ and $B=\u{\delta}^{<1}a_1\cdot \u{\delta}^{<2}a_2\cdots\u{\delta}^{<n}a_n$
and applying Corollary~\ref{lemma:2}, we compute the difference in the brackets:
\begin{multline*}
\!\!\!
g(\f_Y\u{\epsilon})-g(\f_n\f_Y\u{\epsilon})=
B\cdot\u{\delta}^{<n+1}a_{n+1}-B\cdot\u{\delta}^{<n+1}t_na_{n+1}
=B\cdot\u{\delta}^{<n}\big(t_n^{\epsilon_n}a_{n+1}-t_nt_n^{\epsilon_n}a_{n+1}\big)\\
=B\cdot\u{\delta}^{<n}\big(\partial_{t_n}(t_n^{\epsilon_n}a_{n+1})\cdot\alpha_{t_n}\big)
=B\cdot\u{\delta}^{<n}\big(\partial_{t_n}(t_n^{\epsilon_n}a_{n+1})\big)\cdot\u{\delta}^{\to n}
=B\cdot\u{\delta}^{<n}\big(\partial_{t_n}(t_n^{\epsilon_n}a_{n+1})\big)\cdot p^{|Y|}\u{\epsilon}^{\to n}\\
=(-1)^{|Y|}\u{\delta}^{<1}a_1\cdot \u{\delta}^{<2}a_2\cdots\u{\delta}^{<n-1}a_{n-1}\cdot\u{\delta}^{<n}\big(a_n\cdot\partial_{t_n}(t_n^{\epsilon_n}a_{n+1})\big)\cdot\u{\epsilon}^{\to n}.
\end{multline*}
Therefore, denoting
$$
h=\Res_{\u{t}'}\Big(a_1\otimes a_2\otimes\cdots\otimes a_{n-1}\otimes a_n\cdot\d_{t_n}(t_n^{\epsilon_n}a_{n+1})\Big),
$$
we get
$$
g(\f_Y\u{\epsilon})-g(\f_n\f_Y\u{\epsilon})=(-1)^{|Y|}h(\u{\delta}')\cdot\u{\epsilon}^{\to n}
=(-1)^{|Y|}h(\f_Y\u{\epsilon}')\cdot\u{\epsilon}^{\to n}.
$$
Substituting this to~(\ref{eq:0.5}) and applying~(\ref{eq:8}) and the inductive hypothesis, we obtain
$$
\hspace{40pt}
\Sigma^{\u{\epsilon}}_X(g)=\u{\epsilon}^{\to n}\cdot\sum_{Y\subset X'}(-1)^{|Y|_X+|Y|}h(\f_Y\u{\epsilon}')
=\u{\epsilon}^{\to n}\cdot\Sigma^{\u{\epsilon}'}_{X'}(h)\in\alpha_p\cdot\alpha_p^{|X'|}R=\alpha_p^{|X|}R.\hspace{40pt}\square
$$

\subsection{Copy and concentration}\label{Copy_and_concentration}
Let $\u{t}=(t_1,\ldots,t_n)$ be a nonempty reflection expression and
$g\in Q^{\oplus\u{t}'}$. We define the {\it copy} of $g$ to be a function of $Q^{\oplus\u{t}}$ given by
$$
(g\Delta)(\u{\epsilon})=g(\u{\epsilon}').
$$
Let additionally $e\in\{0,1\}$. We define the {\it concentration} of $g$ at $e$
to be a function of $Q^{\oplus\u{t}}$ given by
$$
(g\nabla_e)(\u{\epsilon})=
\left\{\!\!
\begin{array}{ll}
\u{\epsilon}^{\to n}g(\u{\epsilon}')&\text{if }\epsilon_n=e;\\[6pt]
                                   0&\text{otherwise}.
\end{array}
\right.
$$
Clearly, these operators are $Q$-linear maps $Q^{\oplus\u{t}'}\to Q^{\oplus\u{t}}$
with respect to the left action of~$Q$.
They also act on functions on the right, which may seam
a strange notational idea\footnote{The other thing to notice is that the reflection expression $\u{t}$,
which will always be clear from the context, is missing in the notation of $\Delta$ and $\nabla_e$.}.
This will however pay off (and eliminate unnecessary permutations)
when we construct bases in Section~\ref{BasesofXt}.

\begin{lemma}\label{lemma:Delta} Let $\u{t}$ be a nonempty reflection expression. Then
{\renewcommand{\labelenumi}{{\it(\roman{enumi})}}
\renewcommand{\theenumi}{{\rm(\roman{enumi})}}
\begin{enumerate}
\item\label{lemma:Delta:i} $\X(\u{t}')\Delta\subset\X(\u{t})$;\\[-9pt]
\item\label{lemma:Delta:ii} $\big(\!\im\Res_{\u{t}'}\big)\Delta\subset\Res_{\u{t}}$.
\end{enumerate}}
\end{lemma}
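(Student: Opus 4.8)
\textbf{Proof plan for Lemma~\ref{lemma:Delta}.}

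The plan is to verify both statements by unwinding the definitions of $\Delta$, $\Res_{\u{t}}$, and $\X(\u{t})$, reducing everything to properties of $\Res_{\u{t}'}$ and $\X(\u{t}')$ that are already available. For part~\ref{lemma:Delta:ii}, I would start from an arbitrary element $a_1\otimes\cdots\otimes a_n$ in $R(\u{t}')$ and observe, via formula~(\ref{eq:Res}), that $\bigl(\Res_{\u{t}'}(a_1\otimes\cdots\otimes a_n)\bigr)\Delta$ evaluated at a subexpression $\u{\epsilon}\subset\u{t}$ depends only on $\u{\epsilon}'$ and equals $\u{\epsilon}^{<1}a_1\cdots\u{\epsilon}^{<n}a_n$; this is precisely $\Res_{\u{t}}(a_1\otimes\cdots\otimes a_n\otimes 1)(\u{\epsilon})$, since the last tensor factor $1$ contributes nothing. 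Hence the copy operator, applied to an element of $\im\Res_{\u{t}'}$, produces an element of $\im\Res_{\u{t}}$ coming from appending $\otimes\,1$ to the tensor, and by $R$-linearity this extends to all of $\im\Res_{\u{t}'}$.

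For part~\ref{lemma:Delta:i}, take $g\in\X(\u{t}')$ and show $g\Delta\in\X(\u{t})$, i.e. that $\Sigma^{\u{\epsilon}}_X(g\Delta)\in\alpha_p^{|X|}R$ for every $p\in T$, every $\u{\epsilon}\subset\u{t}$ and every $X\subset\M_p(\u{\epsilon})$. The key observation is that $\M_p(\u{\epsilon})\subset\{1,\ldots,n-1\}$ can still contain $n$ only if $\u{\epsilon}^n=p$, so I would split into the case $n\notin X$ and the case $n\in X$. If $n\notin X$, then for every $Y\subset X$ we have $(\f_Y\u{\epsilon})'=\f_Y(\u{\epsilon}')$ (the flips occur at positions $<n$), so $(g\Delta)(\f_Y\u{\epsilon})=g(\f_Y\u{\epsilon}')$; moreover the relative cardinalities match, $|Y|_X$ computed inside $\{1,\ldots,n\}$ versus inside $\{1,\ldots,n-1\}$ agree because $X$ omits $n$. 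Thus $\Sigma^{\u{\epsilon}}_X(g\Delta)=\Sigma^{\u{\epsilon}'}_{X}(g)\in\alpha_p^{|X|}R$ directly from $g\in\X(\u{t}')$. If $n\in X$, then $X\not\subset X'$ where $X'=X\setminus\{n\}$; I would pair each $Y\subset X'$ with $Y\cup\{n\}$, use $(\f_{Y\cup\{n\}}\u{\epsilon})'=(\f_Y\u{\epsilon})'=\f_Y(\u{\epsilon}')$, and note that the $\Delta$-copy is insensitive to the $n$th coordinate of the subexpression, so $(g\Delta)(\f_Y\u{\epsilon})=(g\Delta)(\f_{Y\cup\{n\}}\u{\epsilon})=g(\f_Y\u{\epsilon}')$; the two terms carry opposite signs by~(\ref{eq:7}) (since the positions $n$ contributes an extra $+1$ to the relative cardinality), so they cancel in pairs and $\Sigma^{\u{\epsilon}}_X(g\Delta)=0\in\alpha_p^{|X|}R$ trivially.

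I expect the only delicate point to be the sign bookkeeping with the relative cardinality $|\cdot|_X$ in the $n\in X$ case: one must check that $|Y|_X$ and $|Y\cup\{n\}|_X$ differ by exactly $1$ modulo $2$, which is exactly formula~(\ref{eq:7}) applied to $X$ and the subset $Y\cup\{n\}$ (whose ``truncation'' is $Y$). Everything else is a routine matching of indices once one notes that $\Delta$ forgets the last coordinate and that flips at position $n$ therefore have no effect on $g\Delta$. Note also that part~\ref{lemma:Delta:ii} together with Lemma~\ref{lemma:3} gives an alternative proof of part~\ref{lemma:Delta:i} restricted to $\im\Res_{\u{t}'}$, but since $\X(\u{t}')$ may be strictly larger than $\im\Res_{\u{t}'}$ at this stage of the paper, the direct argument above is the one to use.
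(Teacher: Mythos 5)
Your proposal is correct and follows essentially the same route as the paper's proof: part (ii) reduces to the identity $\Res_{\u{t}'}(a)\Delta=\Res_{\u{t}}(a\otimes 1)$ read off from~(\ref{eq:Res}), and part (i) splits on $n\in X$ versus $n\notin X$, in the first case identifying $\Sigma^{\u{\epsilon}}_X(g\Delta)$ with $\Sigma^{\u{\epsilon}'}_X(g)$ and in the second case pairing $Y$ with $Y\cup\{n\}$ so that the terms cancel via~(\ref{eq:7}). Your closing remark that the direct argument for (i) is needed because $\im\Res_{\u{t}'}=\X(\u{t}')$ is not yet established at this point is a correct and worthwhile observation.
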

\begin{proof} 
\ref{lemma:Delta:i}
Let $n=|\u{t}|$, $p\in T$, $\u{\epsilon}\subset\u{t}$ and $X\subset\M_p(\u{\epsilon})$.
We take a function $g\in\X(\u{t}')$ and will prove that $\Sigma^{\u{\epsilon}}_X(g\Delta)$ is divisible by $\alpha_p^{|X|}$.

{\it Case 1: $n\notin X$.} In this case $X\subset\M_p(\u{\epsilon}')$.
Let $Y\subset X$. 
We get
$$
\Sigma^{\u{\epsilon}}_X(g\Delta)=\sum_{Y\subset X}(-1)^{|Y|_X}(g\Delta)(\f_Y\u{\epsilon})
=\sum_{Y\subset X}(-1)^{|Y|_X}g(\f_Y\u{\epsilon}')=\Sigma^{\u{\epsilon}'}_X(g)\in\alpha_p^{|X|}R.
$$

{\it Case 2: $n\in X$}. Let $Y\subset X'$. 
By~(\ref{eq:7}), we have
\begin{multline*}
(-1)^{|Y|_X}(g\Delta)(\f_Y\u{\epsilon})+(-1)^{|Y\cup\{n\}|_X}(g\Delta)(\f_Y\f_n\u{\epsilon})
=(-1)^{|Y|_X}g((\f_Y\u{\epsilon})')-(-1)^{|Y|_X}g((\f_Y\f_n\u{\epsilon})')\\
=(-1)^{|Y|_X}g(\f_Y\u{\epsilon}')-(-1)^{|Y|_X}g(\f_Y\u{\epsilon}')=0.
\end{multline*}
Taking the sum over all such subsets $Y$, we get $\Sigma^{\u{\epsilon}}_X(g\Delta)=0$.

\ref{lemma:Delta:ii} The result follows from the formula
$
\Res_{\u{t}'}(a)\Delta=\Res_{\u{t}}(a\otimes1)$,
which is a simple corollary of~(\ref{eq:Res}).
\end{proof}

\begin{lemma}\label{lemma:nabla}
Let $\u{t}$ be a nonempty reflection expression and $e\in\{0,1\}$. Then

{\renewcommand{\labelenumi}{{\it(\roman{enumi})}}
\renewcommand{\theenumi}{{\rm(\roman{enumi})}}
\begin{enumerate}
\item\label{lemma:nabla:i} $\X(\u{t}')\nabla_e\subset\X(\u{t})$;\\[-9pt]
\item\label{lemma:nabla:ii} $\big(\!\im\Res_{\u{t}'}\big)\nabla_e\subset\Res_{\u{t}}$.
\end{enumerate}}
\end{lemma}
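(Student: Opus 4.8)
\emph{The approach.} Everything rests on the pointwise identity, valid for every $g\in Q^{\oplus\u{t}'}$ (with $n=|\u{t}|$),
$$
g\nabla_e=\nu_e\cdot(g\Delta),\qquad\nu_e(\u{\epsilon})=\begin{cases}\u{\epsilon}^{\to n}&\text{if }\epsilon_n=e,\\0&\text{otherwise},\end{cases}
$$
the product being taken in the ring $R^{\oplus\u{t}}$. This reduces part~\ref{lemma:nabla:ii} to two observations. First, $g\Delta\in\im\Res_{\u{t}}$ whenever $g\in\im\Res_{\u{t}'}$, by Lemma~\ref{lemma:Delta}\ref{lemma:Delta:ii}. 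Second, $\nu_e\in\im\Res_{\u{t}}$: by Lemma~\ref{lemma:elementsimLoc} (take all tensor factors in~\eqref{eq:Res} equal to $1$ except $\alpha_{t_n}/2$, placed at position $n$, resp.\ $n+1$) the functions $g_1\colon\u{\epsilon}\mapsto\tfrac12\u{\epsilon}^{\to n}$ and $g_2\colon\u{\epsilon}\mapsto\tfrac12\u{\epsilon}^{\leftarrow n}$ lie in $\im\Res_{\u{t}}$, and $\u{\epsilon}^{\leftarrow n}=(-1)^{\epsilon_n}\u{\epsilon}^{\to n}$ by~\eqref{eq:alphattoi}, so $\nu_e=g_1+(-1)^e g_2$. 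Since $\Res_{\u{t}}$ is a ring homomorphism (see the discussion after Theorem~\ref{theorem:1}), $\im\Res_{\u{t}}$ is a subring of $R^{\oplus\u{t}}$, and hence it contains $\nu_e\cdot(g\Delta)=g\nabla_e$. This proves part~\ref{lemma:nabla:ii}.

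For part~\ref{lemma:nabla:i} I do not want to invoke any multiplicativity of $\X(\u{t})$, so I would instead compute $\Sigma^{\u{\epsilon}}_X(g\nabla_e)$ directly for fixed $p\in T$, $\u{\epsilon}\subset\u{t}$, $X\subset\M_p(\u{\epsilon})$ and $g\in\X(\u{t}')$. The case $X=\emptyset$ is vacuous, so assume $|X|\ge1$ and set $u=\u{\epsilon}^{\to n}$; recall also that $\M_p(\u{\epsilon}')=\M_p(\u{\epsilon})\cap\{1,\dots,n-1\}$, because $\u{\epsilon}^i$ depends only on the first $i-1$ entries of $\u{\epsilon}$. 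If $n\in X$ then $\u{\epsilon}^n=p$, hence $u\sim\alpha_p$ by~\eqref{eq:conj}; splitting $Y\subset X$ according to whether $n\in Y$, replacing $(\f_{Y_0}\u{\epsilon})^{\to n}$ and $(\f_{Y_0}\f_n\u{\epsilon})^{\to n}$ by $(-1)^{|Y_0|}u$ (Corollary~\ref{lemma:2} and $u\sim\alpha_p$), and rearranging signs via~\eqref{eq:7}--\eqref{eq:8}, one finds $\Sigma^{\u{\epsilon}}_X(g\nabla_e)=\pm\,u\,\Sigma^{\u{\epsilon}'}_{X'}(g)\in\alpha_p\cdot\alpha_p^{|X'|}R=\alpha_p^{|X|}R$. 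If $n\notin X$ and $\epsilon_n\ne e$, then every term of $\Sigma^{\u{\epsilon}}_X(g\nabla_e)$ vanishes.

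The one remaining case, $n\notin X$ and $\epsilon_n=e$, is where the real work is. Here $X\subset\M_p(\u{\epsilon}')$ and $(g\nabla_e)(\f_Y\u{\epsilon})=p^{|Y|}u\cdot g(\f_Y\u{\epsilon}')$ for $Y\subset X$ by Corollary~\ref{lemma:2}; using $p^{|Y|}u=u$ for $|Y|$ even and $p^{|Y|}u=u-\alpha_p^\vee(u)\alpha_p$ for $|Y|$ odd one gets
$$
\Sigma^{\u{\epsilon}}_X(g\nabla_e)=u\,\Sigma^{\u{\epsilon}'}_X(g)-\alpha_p^\vee(u)\,\alpha_p\,S_\odd,\qquad S_\odd=\sum_{Y\subset_\odd X}(-1)^{|Y|_X}g(\f_Y\u{\epsilon}').
$$
The first term is in $\alpha_p^{|X|}R$ since $g\in\X(\u{t}')$, so it remains to show $S_\odd\in\alpha_p^{|X|-1}R$; this is the main obstacle. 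The plan is to put $m=\max X$ and $X'=X\setminus\{m\}$, write $\Sigma^{\u{\epsilon}'}_{X'}(g)=A_\ev+A_\odd$ and $\Sigma^{\f_m\u{\epsilon}'}_{X'}(g)=B_\ev+B_\odd$ as the sums of their even- and odd-cardinality parts, and then, using~\eqref{eq:7}--\eqref{eq:8}, express the other two sums in the same four quantities: $\Sigma^{\u{\epsilon}'}_X(g)=(A_\ev-A_\odd)-(B_\ev-B_\odd)$ and $S_\odd=-A_\odd-B_\ev$. Since $m\in X\subset\M_p(\u{\epsilon}')$, we have $\f_m\u{\epsilon}'\in\Sub(\u{t}')$ and $\M_p(\f_m\u{\epsilon}')=\M_p(\u{\epsilon}')$ (Lemma~\ref{lemma:sim}\ref{lemma:sim:i}), so the three sums $\Sigma^{\u{\epsilon}'}_X(g)$, $\Sigma^{\u{\epsilon}'}_{X'}(g)$, $\Sigma^{\f_m\u{\epsilon}'}_{X'}(g)$ all vanish modulo $\alpha_p^{|X|-1}$ by the definition of $\X(\u{t}')$. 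Substituting $A_\odd\equiv-A_\ev$ and $B_\odd\equiv-B_\ev$ into $(A_\ev-A_\odd)-(B_\ev-B_\odd)\equiv0$ gives $2(A_\ev-B_\ev)\equiv0$; as $\operatorname{char}\k\ne2$ this forces $A_\ev\equiv B_\ev$, and therefore $S_\odd=-A_\odd-B_\ev\equiv A_\ev-B_\ev\equiv0\pmod{\alpha_p^{|X|-1}}$. Combining all cases, $\Sigma^{\u{\epsilon}}_X(g\nabla_e)\in\alpha_p^{|X|}R$ always, i.e.\ $g\nabla_e\in\X(\u{t})$, which is part~\ref{lemma:nabla:i}.
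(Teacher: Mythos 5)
Your proof is correct; it departs from the paper's argument in two places, one cosmetic and one substantive.

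For part~\ref{lemma:nabla:ii} you factor $g\nabla_e=\nu_e\cdot(g\Delta)$ with $\nu_e=g_1+(-1)^eg_2$ and $g_1,g_2\in\im\Res_{\u{t}}$. This is the paper's observation $\Res_{\u{t}'}(a)\nabla_0=\Res_{\u{t}}(a\,\c_{t_n})$, $\Res_{\u{t}'}(a)\nabla_1=\Res_{\u{t}}(a\,\dd_{t_n})$ unwound through the ring structure of $\X(\u{t})$ (recall $\c_{t_n}=\frac{\alpha_{t_n}}2\otimes1+1\otimes\frac{\alpha_{t_n}}2$ and $\dd_{t_n}$ the analogous difference, so $\Res_{\u{t}}(\c_{t_n})=g_1+g_2$, $\Res_{\u{t}}(\dd_{t_n})=g_1-g_2$); essentially the same route, phrased multiplicatively.

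The genuine difference is in the hard case of part~\ref{lemma:nabla:i}, namely $n\notin X$ and $\epsilon_n=e$. Both you and the paper write $\Sigma^{\u{\epsilon}}_X(g\nabla_e)$ as a term manifestly in $\alpha_p^{|X|}R$ plus $\alpha_p$ times a correction: your correction is $\alpha_p^\vee(u)\,S_\odd$, the paper's is $\frac{\alpha_p^\vee(u)}2\,T$ with $T=\sum_{Y\subset X}(-1)^{|Y|+|Y|_X}g(\f_Y\u{\epsilon}')$, and the two bookkeepings are linked by $\Sigma^{\u{\epsilon}'}_X(g)-T=2S_\odd$. The paper then disposes of the correction in one step by splitting $Y\subset X$ according to whether $\max X\in Y$ and recognizing $T=\Sigma_{X'}^{\u{\epsilon}'}(g)+\Sigma_{X'}^{\f_x\u{\epsilon}'}(g)$, each summand divisible by $\alpha_p^{|X'|}=\alpha_p^{|X|-1}$; nothing more is needed. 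You instead expand $\Sigma^{\u{\epsilon}'}_X(g)$, $\Sigma^{\u{\epsilon}'}_{X'}(g)$, $\Sigma^{\f_m\u{\epsilon}'}_{X'}(g)$ in $A_\ev,A_\odd,B_\ev,B_\odd$, derive $2(A_\ev-B_\ev)\=0\pmod{\alpha_p^{|X|-1}}$, cancel the $2$, and conclude $S_\odd\=A_\ev-B_\ev\=0$. This is sound (cancelling $2$ is legitimate since $2$ is a unit of $R$), but it costs one extra congruence and the explicit division by $2$, whereas the paper only carries the harmless scalar $\frac12$ already sitting inside $\frac{\alpha_p^\vee(u)}2$; it is a slightly more indirect path to the same divisibility. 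Your treatment of the remaining cases ($X=\emptyset$; $n\in X$; $n\notin X$ with $\epsilon_n\ne e$) agrees with the paper's, and the justifications you invoke (Corollary~\ref{lemma:2}, \eqref{eq:7}, \eqref{eq:8}, \eqref{eq:tN}, Lemma~\ref{lemma:sim}\ref{lemma:sim:i}) are the right ones.
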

\begin{proof} 
\ref{lemma:nabla:i} Let $n$, $p$, $\u{\epsilon}$, $X$ and $g$ be as in the proof of the previous lemma.
It suffices to consider the case $X\ne\emptyset$. Let $x$ be the maximal element of $X$. The we get $X'=X\setminus\{x\}$.

{\it Case 1: $n\notin X$.} It is clear from the definition of $\nabla_e$ that
$\Sigma^{\u{\epsilon}}_X(g\nabla_e)=0$ unless $\epsilon_n=e$. So we will assume that
this equality holds. Let $Y\subset X$.
By~(\ref{eq:tN}) and Corollary~\ref{lemma:2}, we get
\begin{multline*}
(-1)^{|Y|_X}(g\nabla_e)(\f_Y\u{\epsilon})=(-1)^{|Y|_X}(\f_Y\u{\epsilon})^{\to n}\cdot g(\f_Y\u{\epsilon}')
=(-1)^{|Y|_X}p^{|Y|}\u{\epsilon}^{\to n}\cdot g(\f_Y\u{\epsilon}')\\
=(-1)^{|Y|_X}\(\u{\epsilon}^{\to n}+((-1)^{|Y|}-1)\frac{\alpha_p^\vee(\u{\epsilon}^{\to n})}2\alpha_p\)g(\f_Y\u{\epsilon}')\\
=\(\u{\epsilon}^{\to n}-\frac{\alpha_p^\vee(\u{\epsilon}^{\to n})}2\alpha_p\)(-1)^{|Y|_X}g(\f_Y\u{\epsilon}')
+\frac{\alpha_p^\vee(\u{\epsilon}^{\to n})}2\alpha_p(-1)^{|Y|+|Y|_X}g(\f_Y\u{\epsilon}').
\end{multline*}
Taking the sum over all $Y\subset X$, we get
$$
\Sigma^{\u{\epsilon}}_X(g\nabla_e)=
\(\u{\epsilon}^{\to n}-\frac{\alpha_p^\vee(\u{\epsilon}^{\to n})}2\alpha_p\)\Sigma^{\u{\epsilon}'}_X(g)
+\frac{\alpha_p^\vee(\u{\epsilon}^{\to n})}2\alpha_p\sum_{Y\subset X}(-1)^{|Y|+|Y|_X}g(\f_Y\u{\epsilon}').
$$
By the hypothesis, the first summand in the right-hand side is divisible by $\alpha_p^{|X|}$.
So it suffices to prove that the sum in the second summand is divisible by $\alpha_p^{|X|-1}$.
We will break this sum into the following two sums, which we can compute applying~(\ref{eq:7}) and~(\ref{eq:8}):
$$
\sum_{Y\subset X'}(-1)^{|Y|+|Y|_X}g(\f_Y\u{\epsilon}')=\sum_{Y\subset X'}(-1)^{|Y|_{X'}}g(\f_Y\u{\epsilon}')
=\Sigma_{X'}^{\u{\epsilon}'}(g)
$$
and
\begin{multline*}
\sum_{Y\subset X'}(-1)^{|Y\cup\{x\}|+|Y\cup\{x\}|_X}g(\f_Y\f_x\u{\epsilon}')
=\sum_{Y\subset X'}(-1)^{|Y|+|Y|_X}g(\f_Y\f_x\u{\epsilon}')\\
=\sum_{Y\subset X'}(-1)^{|Y|_{X'}}g(\f_Y\f_x\u{\epsilon}')=\Sigma_{X'}^{\f_x\u{\epsilon}'}(g).
\end{multline*}
Note that $\f_x\u{\epsilon}'\edot_p\u{\epsilon}'$, whence $X'\subset\M_p(\u{\epsilon}')=\M_p(\f_x\u{\epsilon}')$
by Lemma~\ref{lemma:sim}\ref{lemma:sim:i}.
By the hypothesis, the right-hand sides of both formulas are divisible by $\alpha_p^{|X'|}=\alpha_p^{|X|-1}$.

{\it Case 2: $n\in X$}. In this case $n\in\M_p(\u{\epsilon})$, whence $\u{\epsilon}^n=p$ and
$\u{\epsilon}^{\to n}\sim\alpha_p$ by~(\ref{eq:alphattoi}). First consider the case $\epsilon_n=e$.
Then by~(\ref{eq:8}) and Corollary~\ref{lemma:2}, we get
\begin{multline*}
\Sigma^{\u{\epsilon}}_X(g\nabla_e)=\sum_{Y\subset X'}(-1)^{|Y|_X}(g\nabla_e)(\f_Y\u{\epsilon})
=\sum_{Y\subset X'}(-1)^{|Y|_X}(\f_Y\u{\epsilon})^{\to n}g(\f_Y\u{\epsilon}')\\
=\u{\epsilon}^{\to n}\sum_{Y\subset X'}(-1)^{|Y|_X+|Y|}g(\f_Y\u{\epsilon}')
=\u{\epsilon}^{\to n}\sum_{Y\subset X'}(-1)^{|Y|_{X'}}g(\f_Y\u{\epsilon}')
=\u{\epsilon}^{\to n}\Sigma_{X'}^{\u{\epsilon}'}(g).
\end{multline*}
The required result follows, as the sum in the right-hand side is divisible by $\alpha_p^{|X'|}=\alpha_p^{|X|-1}$
by the hypothesis.

Next consider the case $\epsilon_n\ne e$. Then by~(\ref{eq:7}), ~(\ref{eq:8}) and Corollary~\ref{lemma:2}, we get
\begin{multline*}
\Sigma^{\u{\epsilon}}_X(g\nabla_e)=\sum_{Y\subset X'}(-1)^{|Y\cup\{n\}|_X}(g\nabla_e)(\f_Y\f_n\u{\epsilon})
=-\sum_{Y\subset X'}(-1)^{|Y|_X}(\f_Y\f_n\u{\epsilon})^{\to n}g(\f_Y\u{\epsilon}')\\
=-\u{\epsilon}^{\to n}\sum_{Y\subset X'}(-1)^{|Y|_X+|Y|}g(\f_Y\u{\epsilon}')
=-\u{\epsilon}^{\to n}\sum_{Y\subset X'}(-1)^{|Y|_{X'}}g(\f_Y\u{\epsilon}')
=-\u{\epsilon}^{\to n}\Sigma_{X'}^{\u{\epsilon}'}(g).
\end{multline*}
Again it suffices to note that the sum in the right-hand side is divisible by $\alpha_p^{|X'|}=\alpha_p^{|X|-1}$
by the inductive hypothesis.

\ref{lemma:nabla:ii} The result follows from the formulas (where notation~(\ref{eq:cd}) is used):
$$
\Res_{\u{t}'}(a)\nabla_0=\Res_{\u{t}}(a\,\c_{t_n}),\quad \Res_{\u{t}'}(a)\nabla_1=\Res_{\u{t}}(a\,\dd_{t_n}),
$$
which are simple corollaries of~(\ref{eq:Res}).
\end{proof}

\subsection{Restrictions and divided differences}\label{Restrictions_and_divided_differences} Let $\u{t}=(t_1,\ldots,t_n)$ be a nonempty reflection expression,
$g\in Q^{\oplus\u{t}}$ and $e\in\{0,1\}$.
We define the {\it restriction} of $g$ at $e$ and the {\it divided difference} of $g$ at $e$
to be functions of $Q^{\oplus\u{t}'}$  defined by
$$
g|_e(\u{\epsilon})=g(\u{\epsilon}\cup e),
\qquad
g{\downarrow}_e(\u{\epsilon})=\frac{g(\u{\epsilon}\cup e)-g(\u{\epsilon}\cup\bar e)}{(\u{\epsilon}\cup e)^{\to n}}
$$
respectively.

\begin{lemma}\label{lemma:6}
Let $\u{t}$ be a nonempty reflection expression and $e\in\{0,1\}$. Then
$\X(\u{t})|_e\subset\X(\u{t}')$.
\end{lemma}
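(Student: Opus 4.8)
The plan is to reduce the membership condition for $g|_e$ over the shorter expression $\u{t}'$ directly to the one already known for $g$ over $\u{t}$, exploiting that appending the fixed letter $e$ in the last slot is compatible with all the operations in the definition of $\X$. That $g|_e\in R^{\oplus\u{t}'}$ needs no comment: it is merely the restriction of the $R$-valued function $g$ along the injection $\u{\epsilon}\mapsto\u{\epsilon}\cup e$ of $\Sub(\u{t}')$ into $\Sub(\u{t})$. So the only real content is the divisibility condition $\Sigma^{\u{\epsilon}}_X(g|_e)\in\alpha_p^{|X|}R$.

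First I would record two elementary facts about concatenation in the last position, both immediate from the definitions of Section~\ref{Expressions_and_subexpressions}: writing $n=|\u{t}|$, for every $Y\subset\{1,\dots,n-1\}$ one has $\f_Y(\u{\epsilon}\cup e)=(\f_Y\u{\epsilon})\cup e$, since all the flips occur at positions $<n$; and for every $k\le n-1$ one has $(\u{\epsilon}\cup e)^{<k}=\u{\epsilon}^{<k}$ and $(\u{\epsilon}\cup e)^k=\u{\epsilon}^k$, because the $k$th letters of $\u{t}$ and $\u{t}'$ agree and $\u{\epsilon}^k=\u{\epsilon}^{<k}t_k(\u{\epsilon}^{<k})^{-1}$ depends only on $\u{\epsilon}^{<k}$ and $t_k$. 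The second fact gives $\M_p(\u{\epsilon})\subset\M_p(\u{\epsilon}\cup e)$ for every $p\in T$ (the inclusion may be strict, the new element $n$ entering if $(\u{\epsilon}\cup e)^n=p$, but this is harmless).

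Now fix $p\in T$, a subexpression $\u{\epsilon}\subset\u{t}'$, and $X\subset\M_p(\u{\epsilon})$ (computed inside $\u{t}'$). By the inclusion just noted, $X\subset\M_p(\u{\epsilon}\cup e)$, so $\Sigma^{\u{\epsilon}\cup e}_X(g)$ is one of the quantities controlled by the hypothesis $g\in\X(\u{t})$. Since the relative cardinality $|Y|_X$ depends only on the totally ordered set $X$ and the subset $Y$, and not on any ambient sequence, the definition of $g|_e$ together with the first fact above yields
\[
\Sigma^{\u{\epsilon}}_X(g|_e)=\sum_{Y\subset X}(-1)^{|Y|_X}g|_e(\f_Y\u{\epsilon})=\sum_{Y\subset X}(-1)^{|Y|_X}g\big((\f_Y\u{\epsilon})\cup e\big)=\sum_{Y\subset X}(-1)^{|Y|_X}g\big(\f_Y(\u{\epsilon}\cup e)\big)=\Sigma^{\u{\epsilon}\cup e}_X(g).
\]
The right-hand side lies in $\alpha_p^{|X|}R$ by the definition of $\X(\u{t})$, hence so does the left-hand side; as $p$, $\u{\epsilon}$ and $X$ were arbitrary, $g|_e\in\X(\u{t}')$.

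There is no serious obstacle here; the argument is pure bookkeeping. The only point deserving a line of verification is the implication $X\subset\M_p(\u{\epsilon})\Rightarrow X\subset\M_p(\u{\epsilon}\cup e)$, that is, the stability of the conjugates $\u{\epsilon}^k$ under appending a letter on the right, and this is forced by the formula $\u{\epsilon}^k=\u{\epsilon}^{<k}t_k(\u{\epsilon}^{<k})^{-1}$ once one observes that neither $\u{\epsilon}^{<k}$ nor $t_k$ changes for $k<n$.
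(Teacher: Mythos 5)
Your proof is correct and takes essentially the same approach as the paper: both reduce $\Sigma^{\u{\epsilon}}_X(g|_e)$ to $\Sigma^{\u{\epsilon}\cup e}_X(g)$ via the identity $(\f_Y\u{\epsilon})\cup e=\f_Y(\u{\epsilon}\cup e)$ and then invoke $g\in\X(\u{t})$. You merely spell out the small supporting facts (invariance of $\u{\epsilon}^{<k}$ and $\u{\epsilon}^k$ for $k<n$, hence $X\subset\M_p(\u{\epsilon}\cup e)$) that the paper leaves implicit.
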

\begin{proof}
Let $g\in\X(\u{t})$, $\u{\epsilon}\subset\u{t}'$, $p\in T$ and $X\subset\M_p(\u{\epsilon})$.
We get
\begin{multline*}
\Sigma_X^{\u{\epsilon}}(g|_e)=\sum_{Y\subset X}(-1)^{|Y|_X}g|_e(\f_Y\u{\epsilon})
=\sum_{Y\subset X}(-1)^{|Y|_X}g((\f_Y\u{\epsilon})\cup e)\\
=\sum_{Y\subset X}(-1)^{|Y|_X}g(\f_Y(\u{\epsilon}\cup e))
=\Sigma_X^{\u{\epsilon}\cup e}(g)\in\alpha_p^{|X|}R,
\end{multline*}
as $X\subset\M_p(\u{\epsilon}\cup e)$.
\end{proof}

From now on, we assume the following condition for the action of $W$ on $V$:
$$
\alpha_t\sim\alpha_{t'}\Rightarrow t=t',
$$
which we call the {\it GKM-condition} after Mark Goresky, Robert Kottwitz and Robert MacPherson~\cite{GKM}.
This condition is satisfied for the geometric representation of $W$ and more generally
if $(W,S)$ is the Coxeter system associated to a based root system~\cite{BD}.

\begin{lemma}\label{lemma:7}
Let $\u{t}$ be a nonempty reflection expression and $e\in\{0,1\}$. Then
$\X(\u{t}){\downarrow}_e\subset\X(\u{t}')$.
\end{lemma}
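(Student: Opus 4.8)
The statement to be proved is that $\X(\u{t})\!\downarrow_e\subset\X(\u{t}')$, where the divided difference $g\!\downarrow_e$ is a function on $\Sub(\u{t}')$. So fix $g\in\X(\u{t})$, a reflection $p\in T$, a subexpression $\u{\epsilon}\subset\u{t}'$ and a subset $X\subset\M_p(\u{\epsilon})$; I must show $\Sigma^{\u{\epsilon}}_X(g\!\downarrow_e)\in\alpha_p^{|X|}R$. The natural move is to relate $\Sigma^{\u{\epsilon}}_X(g\!\downarrow_e)$ to the two sums $\Sigma^{\u{\epsilon}\cup e}_X(g)$ and $\Sigma^{\u{\epsilon}\cup\bar e}_X(g)$ built out of $g$ on $\Sub(\u{t})$, both of which are divisible by $\alpha_p^{|X|}$ by hypothesis (note $X\subset\M_p(\u{\epsilon})=\M_p(\u{\epsilon}\cup e)=\M_p(\u{\epsilon}\cup\bar e)$, since appending a last entry does not change the reflections $\u{\epsilon}^k$ for $k\le n-1$). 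The first thing I would record is the key compatibility: for any $Y\subset X\subset\{1,\dots,n-1\}$ one has $\f_Y(\u{\epsilon}\cup e)=(\f_Y\u{\epsilon})\cup e$ and $(\f_Y\u{\epsilon}\cup e)^{\to n}=p^{|Y|}(\u{\epsilon}\cup e)^{\to n}$ by Corollary~\ref{lemma:2} — the denominator only picks up a sign $(-1)^{|Y|}$ when we twist by elements of $X$.

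**Main computation.** Using the definition of $g\!\downarrow_e$, write
$$
\Sigma^{\u{\epsilon}}_X(g\!\downarrow_e)=\sum_{Y\subset X}(-1)^{|Y|_X}\,\frac{g((\f_Y\u{\epsilon})\cup e)-g((\f_Y\u{\epsilon})\cup\bar e)}{((\f_Y\u{\epsilon})\cup e)^{\to n}}.
$$
By the compatibility above this denominator equals $p^{|Y|}(\u{\epsilon}\cup e)^{\to n}$; acting by $p$ on an element proportional to $\alpha_p$ just multiplies it by $-1$, so the factor is $(-1)^{|Y|}(\u{\epsilon}\cup e)^{\to n}$. Pulling the common factor $(\u{\epsilon}\cup e)^{\to n}$ out and absorbing the sign $(-1)^{|Y|}$ into $(-1)^{|Y|_X}$, I get, via~\eqref{eq:summod2}/the parity identities,
$$
((\u{\epsilon}\cup e)^{\to n})\cdot\Sigma^{\u{\epsilon}}_X(g\!\downarrow_e)=\sum_{Y\subset X}(-1)^{|Y|_X+|Y|}\big(g(\f_Y(\u{\epsilon}\cup e))-g(\f_Y(\u{\epsilon}\cup\bar e))\big).
$$
Now I would split the right-hand side into two sums over $Y$; the first is $\pm\Sigma^{\u{\epsilon}\cup e}_X(g)$ and the second is $\pm\Sigma^{\u{\epsilon}\cup\bar e}_X(g)$ up to the same global sign pattern governing $(-1)^{|Y|}$ versus $|Y|_X$ (this is exactly the re-indexing trick already used in Lemma~\ref{lemma:nabla}, where $(-1)^{|Y|+|Y|_X}=(-1)^{|Y|_{X'}}$-type identities from \eqref{eq:7}--\eqref{eq:8} are invoked). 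Hence the left-hand side lies in $\alpha_p^{|X|}R$. Since $(\u{\epsilon}\cup e)^{\to n}=\u{\epsilon}^{<n}\alpha_{t_n}$ is a homogeneous unit times a root, and $R$ is a domain, I can cancel it (here the GKM-condition, just assumed, guarantees $\alpha_{t_n}$ behaves like a genuine root and — more to the point — will matter only if $t_n=p$, in which case I need the sharper statement that after cancelling one factor of $\alpha_p$ the result is still divisible by $\alpha_p^{|X|}$; that divisibility must be extracted already at the level of the two sums above rather than by brute cancellation).

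**The obstacle.** The delicate case is precisely $t_n=p$, i.e. when the "removed" last position carries the same reflection as the root we are dividing by. Then $(\u{\epsilon}\cup e)^{\to n}\sim\alpha_p$, and what I actually need is $\Sigma^{\u{\epsilon}}_X(g\!\downarrow_e)\in\alpha_p^{|X|}R$ knowing only that the combination above lies in $\alpha_p^{|X|}R$ \emph{after multiplication by $\alpha_p$} — so naive cancellation only gives $\alpha_p^{|X|-1}R$. The fix is to observe that in this situation $X\cup\{n\}\subset\M_p(\u{\epsilon}\cup e)$ as well, so I have the \emph{stronger} hypothesis $\Sigma^{\u{\epsilon}\cup e}_{X\cup\{n\}}(g)\in\alpha_p^{|X|+1}R$ and likewise for $\bar e$; expanding $\Sigma^{\u{\epsilon}\cup e}_{X\cup\{n\}}(g)$ over subsets of $X\cup\{n\}$ using \eqref{eq:7} produces exactly a difference of the shape $g(\f_Y(\u{\epsilon}\cup e))-g(\f_Y(\u{\epsilon}\cup \bar e))$ summed against $(-1)^{|Y|_{X\cup\{n\}}}$ — that is, it reproduces (up to sign) the left-hand side of my identity with one extra factor of $\alpha_p$ built in. Matching the two computations gives the divisibility by $\alpha_p^{|X|}$ on the nose. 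I expect roughly half the work to be this bookkeeping of parities ($|Y|_X$ versus $|Y\cup\{n\}|_X$ versus $|Y|_{X'}$) via \eqref{eq:7}, \eqref{eq:8} and \eqref{eq:summod2}, and the other half to be the clean separation of the two cases $t_n=p$ and $t_n\ne p$.
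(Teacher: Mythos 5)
There is a genuine gap in the main computation, and it occurs precisely where you believe the argument is routine. You assert that $(\f_Y(\u{\epsilon}\cup e))^{\to n}=p^{|Y|}(\u{\epsilon}\cup e)^{\to n}$ "only picks up a sign $(-1)^{|Y|}$," which would give $((\u{\epsilon}\cup e)^{\to n})\cdot\Sigma^{\u{\epsilon}}_X(g{\downarrow}_e)=\sum_{Y\subset X}(-1)^{|Y|_X+|Y|}\big(g(\f_Y(\u{\epsilon}\cup e))-g(\f_Y(\u{\epsilon}\cup\bar e))\big)$. But $p^{|Y|}v=(-1)^{|Y|}v$ holds only when $v$ is proportional to $\alpha_p$, i.e., only when $(\u{\epsilon}\cup e)^n=p$. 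In the complementary case, by~(\ref{eq:tN}) one has $p^{|Y|}v=v+((-1)^{|Y|}-1)\tfrac{\alpha_p^\vee(v)}{2}\alpha_p$, a genuine vector change depending on the parity of $|Y|$, so you cannot pull a common denominator out of the sum, and the displayed identity is simply false. The case division you propose at the end (you write "$t_n=p$", but the correct condition is $(\u{\epsilon}\cup e)^n=p$, a conjugate of $t_n$) is therefore inverted in difficulty: the case $(\u{\epsilon}\cup e)^n=p$, which you call delicate and handle via $\Sigma^{\u{\epsilon}\cup e}_{X\cup\{n\}}(g)$, is the easier one and your treatment of it is essentially correct (it reproduces the paper's Case~2). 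The case $(\u{\epsilon}\cup e)^n\ne p$, which you dismiss as "brute cancellation," is the one that requires real work.

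What the paper does there that your proposal omits: it proves the divisibility $\Sigma_X^{\u{\epsilon}}(g{\downarrow}_e)\in\alpha_p^{|X|}R$ by \emph{induction on $|X|$}. In Case~1 (i.e., $n\notin\M_p(\u{\epsilon}\cup e)$), it computes $\Sigma_X^{\u{\delta}}(g)-\Sigma_X^{\f_n\u{\delta}}(g)$ with $\u{\delta}=\u{\epsilon}\cup e$, expands $(\f_Y\u{\delta})^{\to n}=p^{|Y|}\u{\delta}^{\to n}$ via~(\ref{eq:tN}), and splits the result as
$$(\u{\delta}^{\to n}-\tfrac{\alpha_p^\vee(\u{\delta}^{\to n})}2\alpha_p)\Sigma_X^{\u{\epsilon}}(g{\downarrow}_e)+\tfrac{\alpha_p^\vee(\u{\delta}^{\to n})}2\alpha_p\big(\Sigma_{X'}^{\u{\epsilon}}(g{\downarrow}_e)+\Sigma_{X'}^{\f_x\u{\epsilon}}(g{\downarrow}_e)\big),$$
where the two sums over the truncated set $X'$ are divisible by $\alpha_p^{|X|-1}$ by the \emph{inductive hypothesis}, and the coefficient in front of $\Sigma_X^{\u{\epsilon}}(g{\downarrow}_e)$ is coprime to $\alpha_p$ by the GKM-condition, so it can be cancelled. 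Your proposal has neither the induction nor the~(\ref{eq:tN}) decomposition, so it cannot close this case. To repair it you would need to add the induction on $|X|$ and carry out the Case~1 expansion, at which point you would essentially be reproducing the paper's proof.
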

\begin{proof}
We denote $n=|\u{t}|$. Let $g\in\X(\u{t})$. First we need to prove that $g{\downarrow}_e$ takes values in $R$.
Indeed, for any $\u{\epsilon}\subset\u{t}'$, the following element of $R$:
$$
\Sigma_{\{n\}}^{\u{\epsilon}\cup e}(g)=(-1)^{|\emptyset|_{\{n\}}}g(\u{\epsilon}\cup e)+(-1)^{|\{n\}|_{\{n\}}}g(\u{\epsilon}\cup\bar e)=(\u{\epsilon}\cup e)^{\to n}g{\downarrow}_e(\u{\epsilon})
$$
is divisible by $\alpha_{(\u{\epsilon}\cup e)^n}\sim(\u{\epsilon}\cup e)^{\to n}$.
Hence $g{\downarrow}_e(\u{\epsilon})\in R$.

Let $\u{\epsilon}\subset\u{t}'$, $p\in T$ and $X\subset\M_p(\u{\epsilon})$.
We are going to prove that $\Sigma_X^{\u{\epsilon}}(g{\downarrow}_e)$ is divisible by $\alpha_p^{|X|}$ by induction on $|X|$.
The base case $X=\emptyset$ is obvious.
So let $X\ne\emptyset$. We denote $x$ the maximal element of $X$ and
set $\u{\delta}=\u{\epsilon}\cup e$.

{\it Case~1: $n\notin M_p(\u{\delta})$}. By the GKM-condition, $\u{\delta}^{\to n}$ is not proportional to $\alpha_p$.
We get
\begin{multline*}
\Sigma_X^{\u{\delta}}(g)-\Sigma_X^{\f_n\u{\delta}}(g)=\sum_{Y\subset X}(-1)^{|Y|_X}\big(g(\f_Y\u{\delta})-g(\f_Y\f_n\u{\delta})\big)
=\sum_{Y\subset X}(-1)^{|Y|_X}(\f_Y\u{\delta})^{\to n}g{\downarrow}_e(\f_Y\u{\epsilon})\\
=\sum_{Y\subset X}(-1)^{|Y|_X}p^{|Y|}\u{\delta}^{\to n}\cdot g{\downarrow}_e(\f_Y\u{\epsilon})
=\sum_{Y\subset X}(-1)^{|Y|_X}\Big(\u{\delta}^{\to n}+((-1)^{|Y|}-1)\frac{\alpha_p^\vee(\u{\delta}^{\to n})}2\alpha_p \Big)g{\downarrow}_e(\f_Y\u{\epsilon})\\
\shoveleft{
=\(\u{\delta}^{\to n}-\frac{\alpha_p^\vee(\u{\delta}^{\to n})}2\alpha_p\)\sum_{Y\subset X}(-1)^{|Y|_X}g{\downarrow}_e(\f_Y\u{\epsilon})
+\frac{\alpha_p^\vee(\u{\delta}^{\to n})}2\alpha_p\sum_{Y\subset X'}(-1)^{|Y|_X+|Y|}g{\downarrow}_e(\f_Y\u{\epsilon})}\\
\shoveright{+\frac{\alpha_p^\vee(\u{\delta}^{\to n})}2\alpha_p\sum_{Y\subset X'}(-1)^{|Y\cup\{x\}|_X+|Y\cup\{x\}|}g{\downarrow}_e(\f_Y\f_x\u{\epsilon})}\\
=\(\u{\delta}^{\to n}-\frac{\alpha_p^\vee(\u{\delta}^{\to n})}2\alpha_p\)\Sigma_X^{\u{\epsilon}}(g{\downarrow}_e)
+\frac{\alpha_p^\vee(\u{\delta}^{\to n})}2\alpha_p\Sigma_{X'}^{\u{\epsilon}}(g{\downarrow}_e)
+\frac{\alpha_p^\vee(\u{\delta}^{\to n})}2\alpha_p\Sigma_{X'}^{\f_x\u{\epsilon}}(g{\downarrow}_e).
\end{multline*}
In the above calculations, we again applied formulas (\ref{eq:7}), (\ref{eq:8}), (\ref{eq:tN}) and Corollary~\ref{lemma:2}.
By induction, both $\Sigma_{X'}^{\u{\delta}}(g{\downarrow}_e)$ and $\Sigma_{X'}^{\f_x\u{\delta}}(g{\downarrow}_e)$
are divisible by $\alpha_p^{|X'|}=\alpha_p^{|X|-1}$.
Hence the last two summands in the right-hand side are divisible by $\alpha_p^{|X|}$.
By the hypothesis of the lemma, the left-hand side is also divisible by $\alpha_p^{|X|}$.
As the constant factor before $\Sigma_X^{\u{\epsilon}}(g{\downarrow}_e)$ in the right-hand side
is not proportional to $\alpha_p$ this element itself is divisible by $\alpha_p^{|X|}$ as required.

{\it Case~2: $n\in\M_p(\u{\delta})$}. In this case, $\u{\delta}^{\to n}\sim\alpha_p$.
\begin{multline*}
\Sigma_{X\cup\{n\}}^{\u{\delta}}(g)=\sum_{Y\subset X}(-1)^{|Y|_{X\cup\{n\}}}g(\f_Y\u{\delta})
+\sum_{Y\subset X}(-1)^{|Y\cup\{n\}|_{X\cup\{n\}}}g(\f_Y\f_n\u{\delta})\\
=\sum_{Y\subset X}(-1)^{|Y|_{X\cup\{n\}}}(\f_Y\u{\delta})^{\to n}g{\downarrow}_e(\f_Y\u{\epsilon})
=\sum_{Y\subset X}(-1)^{|Y|_{X\cup\{n\}}}p^{|Y|}\u{\delta}^{\to n}\cdot g{\downarrow}_e(\f_Y\u{\epsilon})\\
=\u{\delta}^{\to n}\sum_{Y\subset X}(-1)^{|Y|_{X\cup\{n\}}+|Y|}g{\downarrow}_e(\f_Y\u{\epsilon})
=\u{\delta}^{\to n}\sum_{Y\subset X}(-1)^{|Y|_X}g{\downarrow}_e(\f_Y\u{\epsilon})
=\u{\delta}^{\to n}\Sigma_X^{\u{\epsilon}}(g{\downarrow}_e).
\end{multline*}
As the right-hand side is divisible by $\alpha_p^{|X\cup\{n\}|}=\alpha_p^{|X|+1}$,
the sum $\Sigma_X^{\u{\epsilon}}(g{\downarrow}_e)$ is divisible by $\alpha_p^{|X|}$.
\end{proof}

\begin{remark}\label{rm:1}\rm
If $g(\u{\epsilon}\cup\bar e)=0$ for any $\u{\epsilon}\subset\u{t}'$,
then $g=g{\downarrow}_e\nabla_e$. 
\end{remark}

\subsection{Bases of $\X(\u{t})$}\label{BasesofXt}
Let $\Tr_n$ be the set of all perfect (rooted) binary trees of height $n$,
in which exactly one link connecting any node with its children is decorated by $0$ or $1$.
For such a tree $\Upsilon$, where $n>0$, we denote $\Upsilon_\Delta$ and $\Upsilon_\nabla$ the subtrees corresponding
to the undecorated link and the decorated link respectively.
The number that decorates one of the two links accident to the root is denoted by $(\Upsilon)$.
Below is an example of a tree of $\Tr_2$.

$$
\Upsilon=
\begin{tikzpicture}[baseline=-30pt]
\draw[fill] (0,0) circle(0.05);
\draw[fill] (-1,-1) circle(0.05);
\draw[fill] (1,-1) circle(0.05);
\draw (0,0)--(-1,-1) node[midway,above=2pt,left=-2pt]{\tiny$0$};
\draw (0,0)--(1,-1);
\draw[fill] (-1.5,-2) circle(0.05);
\draw[fill] (-0.5,-2) circle(0.05);
\draw (-1,-1)--(-1.5,-2);
\draw (-1,-1)--(-0.5,-2)node[midway,above=2pt,right=-2pt]{\tiny$1$};
\draw[fill] (1.5,-2) circle(0.05);
\draw[fill] (0.5,-2) circle(0.05);
\draw (1,-1)--(1.5,-2);
\draw (1,-1)--(0.5,-2)node[midway,above=2pt,left=-2pt]{\tiny$0$};
\end{tikzpicture}
$$
In that case,

$$
\Upsilon_\Delta=
\begin{tikzpicture}[baseline=-45pt]
\draw[fill] (1,-1) circle(0.05);
\draw[fill] (0.5,-2) circle(0.05);
\draw[fill] (1.5,-2) circle(0.05);
\draw (1,-1)--(1.5,-2);
\draw (1,-1)--(0.5,-2)node[midway,above=2pt,left=-2pt]{\tiny$0$};
\end{tikzpicture}
\qquad\quad
\Upsilon_\nabla=
\begin{tikzpicture}[baseline=-45pt]
\draw[fill] (-1,-1) circle(0.05);
\draw[fill] (-0.5,-2) circle(0.05);
\draw[fill] (-1.5,-2) circle(0.05);
\draw (-1,-1)--(-1.5,-2);
\draw (-1,-1)--(-0.5,-2)node[midway,above=2pt,right=-2pt]{\tiny$1$};
\end{tikzpicture}
\qquad\quad
(\Upsilon)=0.
$$

\vspace{6pt}

\noindent
The set $\Tr_0$ contains the tree $\bullet$ with one vertex and no edges.

We define a map ${\mathfrak B}_{\u{t}}^\Upsilon:\{\Delta,\nabla\}^n\to\X(\u{t})$ for
a reflection expression $\u{t}$ of length $n$ and $\Upsilon\in\Tr_n$
recursively by
$$
{\mathfrak B}_{\u{\emptyset}}^\bullet(\u{\emptyset})=1,\quad
{\mathfrak B}_{\u{t}}^\Upsilon(\u{L})=
\left\{
\begin{array}{ll}
{\mathfrak B}_{\u{t}'}^{\Upsilon_\Delta}(\u{L}')\Delta&\text{if }L_n=\Delta;\\[6pt]
{\mathfrak B}_{\u{t}'}^{\Upsilon_\nabla}(\u{L}')\nabla_{(\Upsilon)}&\text{if }L_n=\nabla.
\end{array}
\right.
$$
We can visualize ${\mathfrak B}_{\u{t}}^\Upsilon$ as follows: going from a leaf to the root, we read off
the labels of decorated links and use them as subscripts of $\nabla$, while for undecorated links we use $\Delta$.
For example, the tree above
produces the following map ${\mathfrak B}_{\u{t}}^\Upsilon$:
$$
\Delta\nabla\mapsto1\Delta\nabla_0,\quad
\nabla\nabla\mapsto1\nabla_1\nabla_0,\quad
\nabla\Delta\mapsto1\nabla_0\Delta,\quad
\Delta\Delta\mapsto1\Delta\Delta,
$$
where we enumerate the leafs from left to right. In this example and in what follows,
we will write down the sequences in $\Delta$ and $\nabla$ without brackets and commas.

\begin{theorem}\label{theorem:2}
Any ${\mathfrak B}_{\u{t}}^\Upsilon$ is a left $R$-basis of $\X(\u{t})$.
\end{theorem}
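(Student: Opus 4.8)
The plan is to argue by induction on $n=|\u{t}|$, exactly paralleling the recursive definition of the maps $\mathfrak{B}_{\u{t}}^\Upsilon$. The base case $n=0$ is trivial, since $\X(\u{\emptyset})=R$ and $\mathfrak{B}_{\u{\emptyset}}^\bullet$ sends the empty sequence to $1$. For the inductive step, fix a tree $\Upsilon\in\Tr_n$, let $e=(\Upsilon)$, and separate the $2^n$ elements $\u{L}\in\{\Delta,\nabla\}^n$ according to whether $L_n=\Delta$ or $L_n=\nabla$. The images $\mathfrak{B}_{\u{t}}^\Upsilon(\u{L})$ then split into the family $\{\mathfrak{B}_{\u{t}'}^{\Upsilon_\Delta}(\u{M})\Delta : \u{M}\in\{\Delta,\nabla\}^{n-1}\}$ and the family $\{\mathfrak{B}_{\u{t}'}^{\Upsilon_\nabla}(\u{M})\nabla_e : \u{M}\in\{\Delta,\nabla\}^{n-1}\}$. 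By the inductive hypothesis, $\mathfrak{B}_{\u{t}'}^{\Upsilon_\Delta}$ and $\mathfrak{B}_{\u{t}'}^{\Upsilon_\nabla}$ are left $R$-bases of $\X(\u{t}')$, and by Lemmas~\ref{lemma:Delta} and~\ref{lemma:nabla} the operators $\Delta$ and $\nabla_e$ carry $\X(\u{t}')$ into $\X(\u{t})$; so all these elements do lie in $\X(\u{t})$, and it remains to show they form a left $R$-basis.

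For this I would establish the short exact sequence of left $R$-modules
$$
\begin{tikzcd}
0\arrow{r}&\X(\u{t}')\arrow{r}{\Delta}&\X(\u{t})\arrow{r}{{\downarrow}_{\bar e}}&\X(\u{t}')\arrow{r}&0,
\end{tikzcd}
$$
where the first map is the copy $\Delta$ and the second is the divided difference ${\downarrow}_{\bar e}$ (both land in the right target by Lemmas~\ref{lemma:Delta}, \ref{lemma:6}, \ref{lemma:7}). Injectivity of $\Delta$ is immediate from $(g\Delta)(\u{\epsilon}\cup e)=g(\u{\epsilon})$; exactness in the middle and surjectivity on the right follow by a direct computation with the formulas for $\Delta$, ${\downarrow}_{\bar e}$ and $\nabla_e$: if $g\in\X(\u{t})$ with $g{\downarrow}_{\bar e}=0$ then $g$ depends only on the truncated subexpression and $g=(g|_{e})\Delta=(g|_{\bar e})\Delta$, while for any $h\in\X(\u{t}')$ the element $h\nabla_{\bar e}\in\X(\u{t})$ satisfies $(h\nabla_{\bar e}){\downarrow}_{\bar e}=h$ up to the convention in Remark~\ref{rm:1}. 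One subtle point is that ${\downarrow}_{\bar e}\circ\Delta$ need not be literally zero; rather the composition of $\Delta$ with the two restriction maps $g\mapsto(g|_e,g|_{\bar e})$ gives the diagonal, and one should phrase the sequence so that $\ker({\downarrow}_{\bar e})=\im(\Delta)$ holds on the nose — this is the one place requiring care with the $\nabla$-versus-$\Delta$ bookkeeping. Given the exact sequence, the section $\nabla_e$ (more precisely, the map $h\mapsto h\nabla_e$, which by a short check also splits ${\downarrow}_{\bar e}$ since $e$ and $\bar e$ play symmetric roles here after adjusting signs) identifies $\X(\u{t})$ with $\X(\u{t}')\Delta \oplus \X(\u{t}')\nabla_e$ as left $R$-modules.

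Once this splitting is in place, the theorem follows formally: applying $\Delta$ to the basis $\mathfrak{B}_{\u{t}'}^{\Upsilon_\Delta}$ gives a basis of $\X(\u{t}')\Delta$, applying $\nabla_e$ to the basis $\mathfrak{B}_{\u{t}'}^{\Upsilon_\nabla}$ gives a generating set of the complementary summand that is again a basis because $\nabla_e$ composed with the splitting projection is an isomorphism, and the union of the two is a basis of the direct sum, which is precisely $\mathfrak{B}_{\u{t}}^\Upsilon$. The main obstacle I anticipate is purely bookkeeping: verifying that $\Delta$ and $\nabla_e$ (resp. the restriction/divided-difference maps) assemble into an honest short exact sequence with the right kernels and images, keeping track of the decoration $(\Upsilon)=e$ and the signs produced by the factor $(\u{\epsilon}\cup e)^{\to n}$ in the definition of ${\downarrow}_e$. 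Everything else is a routine induction, and no further hypotheses beyond those already imposed (in particular the GKM-condition, used in Lemma~\ref{lemma:7}) are needed.
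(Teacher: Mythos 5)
Your proposal is correct and takes essentially the same route as the paper: induction on $n=|\u{t}|$, decomposing $\X(\u{t})$ into $\X(\u{t}')\Delta\oplus\X(\u{t}')\nabla_e$ using the operators of Lemmas~\ref{lemma:Delta}, \ref{lemma:nabla}, \ref{lemma:6} and \ref{lemma:7}; the paper presents this as a two-step existence-plus-independence argument rather than a split short exact sequence, but the computations coincide. The subtlety you flagged does not arise: since $(\u{\epsilon}\cup e)^{\to n}=(\u{\epsilon}\cup\bar e)^{\to n}$ (both only depend on the first $n-1$ entries), a direct check gives $(h\Delta){\downarrow}_{\bar e}=0$ and $(h\nabla_e){\downarrow}_{\bar e}=-h$ on the nose, so the sequence is exact and the splitting holds with no sign bookkeeping beyond an overall $-1$ that does not affect the conclusion.
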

\begin{proof}
We apply induction on $n=|\u{t}|$. The base case $n=0$ being clear, let 
$n>0$. We denote $e=(\Upsilon)$.

{\it Existence.}
Let $g\in\X(\u{t})$.
By Lemma~\ref{lemma:6}, we get $g|_{\bar e}\in\X(\u{t}')$. By induction,
$$
g|_{\bar e}=\sum_{\u{M}\in\{\Delta,\nabla\}^{n-1}}r^\Delta_{\u{M}}\cdot{\mathfrak B}_{\u{t}'}^{\Upsilon_\Delta}(\u{M})
$$
for some $r^\Delta_{\u{M}}\in R$. Applying $\Delta$, we get
$$
g|_{\bar e}\,\Delta=\sum_{\u{M}\in\{\Delta,\nabla\}^{n-1}}r_{\u{L}}^\Delta\cdot{\mathfrak B}_{\u{t}'}^{\Upsilon_\Delta}(\u{M})\Delta
=\sum_{\u{M}\in\{\Delta,\nabla\}^{n-1}}r^\Delta_{\u{M}}\cdot{\mathfrak B}_{\u{t}}^\Upsilon(\u{M}\cup\Delta)
=\sum_{\u{L}\in\{\Delta,\nabla\}^n\atop L_n=\Delta}r^\Delta_{\u{L}'}\cdot{\mathfrak B}_{\u{t}}^\Upsilon(\u{L}).
$$
Now consider the difference $u=g-g|_{\bar e}\Delta$. Hence $u(\u{\epsilon}\cup\bar e)=0$ for any $\u{\epsilon}\subset\u{t}'$.
By Lemma~\ref{lemma:7} and Remark~\ref{rm:1}, there exists a function $v\in\X(\u{t}')$ such that $v\nabla_e=u$.
By induction, we get
$$
v=\sum_{\u{M}\in\{\Delta,\nabla\}^{n-1}}r^\nabla_{\u{M}}\cdot{\mathfrak B}_{\u{t}'}^{\Upsilon_\nabla}(\u{M})
$$
for some $r^\nabla_{\u{M}}\in R$. Applying $\nabla_e$, we get
$$
u=v\nabla_e=\sum_{\u{M}\in\{\Delta,\nabla\}^{n-1}}r^\nabla_{\u{M}}\cdot{\mathfrak B}_{\u{t}'}^{\Upsilon_\nabla}(\u{M})\nabla_e
=\sum_{\u{M}\in\{\Delta,\nabla\}^{n-1}}r^\nabla_{\u{M}}\cdot{\mathfrak B}_{\u{t}}^\Upsilon(\u{M}\cup\nabla)
=\sum_{\u{L}\in\{\Delta,\nabla\}^n\atop L_n=\nabla}r^\nabla_{\u{L}'}\cdot{\mathfrak B}_{\u{t}}^\Upsilon(\u{L}).
$$
Defining $r_{\u{L}}=r_{\u{L}'}^{L_n}$, we get
$$
g=\sum_{\u{L}\in\{\Delta,\nabla\}^n}r_{\u{L}}\cdot{\mathfrak B}_{\u{t}}^\Upsilon(\u{L}).
$$

{\it Independence.} Suppose that
\begin{equation}\label{eq:12}
\sum_{\u{p}\in\{\Delta,\nabla\}^n}r_{\u{L}}\cdot{\mathfrak B}_{\u{t}}^\Upsilon(\u{L})=0
\end{equation}
for some coefficients $r_{\u{L}}\in R$. Let us evaluate both sides at $\u{\epsilon}\cup\bar e$ for
$\u{\epsilon}\subset\u{t}'$:
\begin{multline*}
0=\sum_{\u{L}\in\{\Delta,\nabla\}^n}r_{\u{L}}\cdot{\mathfrak B}_{\u{t}}^\Upsilon(\u{L})(\u{\epsilon}\cup\bar e)
=\sum_{\u{L}\in\{\Delta,\nabla\}^n\atop L_n=\Delta}r_{\u{L}}\cdot{\mathfrak B}_{\u{t}}^\Upsilon(\u{L})(\u{\epsilon}\cup\bar e)\\
=\sum_{\u{M}\in\{\Delta,\nabla\}^{n-1}}r_{\u{M}\cup\Delta}\cdot\big({\mathfrak B}_{\u{t}'}^{\Upsilon_\Delta}(\u{M})\Delta\big)(\u{\epsilon}\cup\bar e)
=\sum_{\u{M}\in\{\Delta,\nabla\}^{n-1}}r_{\u{M}\cup\Delta}\cdot{\mathfrak B}_{\u{t}'}^{\Upsilon_\Delta}(\u{M})(\u{\epsilon}).
\end{multline*}
As $\u{\epsilon}$ was chosen arbitrarily, we get $r_{\u{M}\cup\Delta}=0$ for any $\u{M}$ by induction.
We can rewrite~(\ref{eq:12}) as follows:
$$
0=\sum_{\u{M}\in\{\Delta,\nabla\}^{n-1}}r_{\u{M}\cup\nabla}\cdot{\mathfrak B}_{\u{t}}^\Upsilon(\u{M}\cup\nabla)
=\sum_{\u{M}\in\{\Delta,\nabla\}^{n-1}}r_{\u{M}\cup\nabla}\cdot{\mathfrak B}_{\u{t}'}^{\Upsilon_\nabla}(\u{M})\nabla_e.
$$
Evaluating both sides at $\u{\epsilon}\cup e$ for $\u{\epsilon}\subset\u{t}'$, we get
$$
0=\sum_{\u{M}\in\{\Delta,\nabla\}^{n-1}}(\u{\epsilon}\cup e)^{\to n}r_{\u{M}\cup\nabla}\cdot{\mathfrak B}_{\u{t}'}^{\Upsilon_\nabla}(\u{M})(\u{\epsilon})
=(\u{\epsilon}\cup e)^{\to n}\sum_{\u{M}\in\{\Delta,\nabla\}^{n-1}}r_{\u{M}\cup\nabla}\cdot{\mathfrak B}_{\u{t}'}^{\Upsilon_\nabla}(\u{M})(\u{\epsilon}).
$$
Cancelling out $(\u{\epsilon}\cup e)^{\to n}$ and applying the inductive hypothesis,
we get $r_{\u{M}\cup\nabla}=0$ for any~$\u{M}$.
\end{proof}

\begin{corollary}\label{corollary:2}
$\im\Res_{\u{t}}=\X(\u{t})$. In particular, $\X(\u{t})$ is a ring.
\end{corollary}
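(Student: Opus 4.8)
The inclusion $\im\Res_{\u{t}}\subset\X(\u{t})$ is already established in Lemma~\ref{lemma:3}, so the plan is to prove the opposite inclusion $\X(\u{t})\subset\im\Res_{\u{t}}$ by induction on $n=|\u{t}|$, and then to deduce the ring statement. The base case $n=0$ is trivial: $\X(\u{\emptyset})=R=\im\Res_{\u{\emptyset}}$, and indeed ${\mathfrak B}_{\u{\emptyset}}^\bullet(\u{\emptyset})=1=\Res_{\u{\emptyset}}(1)$. For $n>0$, I would fix any tree $\Upsilon\in\Tr_n$ and invoke Theorem~\ref{theorem:2}: the family $\big({\mathfrak B}_{\u{t}}^\Upsilon(\u{L})\big)_{\u{L}\in\{\Delta,\nabla\}^n}$ is a left $R$-basis of $\X(\u{t})$, hence generates it as a left $R$-module. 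Since $\im\Res_{\u{t}}$ is a left $R$-submodule of $R^{\oplus\u{t}}$, it therefore suffices to show that every basis element ${\mathfrak B}_{\u{t}}^\Upsilon(\u{L})$ lies in $\im\Res_{\u{t}}$.

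For the latter, I would unravel a single step of the recursion defining ${\mathfrak B}$. Writing $e=(\Upsilon)$, one has ${\mathfrak B}_{\u{t}}^\Upsilon(\u{L})={\mathfrak B}_{\u{t}'}^{\Upsilon_\Delta}(\u{L}')\Delta$ if $L_n=\Delta$, and ${\mathfrak B}_{\u{t}}^\Upsilon(\u{L})={\mathfrak B}_{\u{t}'}^{\Upsilon_\nabla}(\u{L}')\nabla_e$ if $L_n=\nabla$. The subtrees $\Upsilon_\Delta$ and $\Upsilon_\nabla$ belong to $\Tr_{n-1}$, so by Theorem~\ref{theorem:2} applied at length $n-1$ the element ${\mathfrak B}_{\u{t}'}^{\Upsilon_\Delta}(\u{L}')$ (resp. ${\mathfrak B}_{\u{t}'}^{\Upsilon_\nabla}(\u{L}')$) lies in $\X(\u{t}')$, which by the inductive hypothesis equals $\im\Res_{\u{t}'}$. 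Now Lemma~\ref{lemma:Delta}\ref{lemma:Delta:ii} (resp. Lemma~\ref{lemma:nabla}\ref{lemma:nabla:ii}) says that applying $\Delta$ (resp. $\nabla_e$) carries $\im\Res_{\u{t}'}$ into $\im\Res_{\u{t}}$, so ${\mathfrak B}_{\u{t}}^\Upsilon(\u{L})\in\im\Res_{\u{t}}$ in either case. This closes the induction and yields $\im\Res_{\u{t}}=\X(\u{t})$.

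The ring assertion is then immediate: the map $\Res_{\u{t}}\colon R(\u{t})\to R^{\oplus\u{t}}$ was constructed in Section~\ref{DecompositionQ} as an injective homomorphism of rings, where $R^{\oplus\u{t}}$ carries the pointwise product of~(\ref{eq:sum_left_mult}); hence its image is a subring of $R^{\oplus\u{t}}$, and by the equality just proved $\X(\u{t})$ coincides with this subring. In particular $\Res_{\u{t}}$ restricts to a ring isomorphism $R(\u{t})\ito\X(\u{t})$. I do not expect any genuine obstacle at this stage: all the analytic content — the divisibility estimates for $\Sigma^{\u{\epsilon}}_X$ and the use of the GKM-condition through Lemma~\ref{lemma:7} — has already been absorbed into Theorem~\ref{theorem:2} and the copy/concentration Lemmas~\ref{lemma:Delta} and~\ref{lemma:nabla}. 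The only point requiring a little care is to keep the recursion on binary trees synchronized, applying the inductive hypothesis to the two subtrees $\Upsilon_\Delta$ and $\Upsilon_\nabla$ rather than to a single tree; this is exactly why Theorem~\ref{theorem:2} was phrased uniformly for all $\Upsilon\in\Tr_n$.
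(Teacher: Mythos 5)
Your proposal is correct and takes essentially the same route as the paper: the inclusion $\im\Res_{\u{t}}\subset\X(\u{t})$ is Lemma~\ref{lemma:3}, and the reverse inclusion is obtained by Theorem~\ref{theorem:2} together with Lemmas~\ref{lemma:Delta}\ref{lemma:Delta:ii} and~\ref{lemma:nabla}\ref{lemma:nabla:ii}. The explicit induction on $n$ you lay out is just a more detailed write-up of what the paper leaves implicit in the phrase ``all elements of any basis ${\mathfrak B}_{\u{t}}^\Upsilon$ belong to $\im\Res_{\u{t}}$''; in fact one does not need the full inductive hypothesis $\X(\u{t}')=\im\Res_{\u{t}'}$, since $\mathfrak B_{\u{t}'}^{\Upsilon_\Delta}(\u{L}')\in\im\Res_{\u{t}'}$ follows directly by recursion from ${\mathfrak B}_{\u{\emptyset}}^\bullet(\u{\emptyset})=1\in\im\Res_{\u{\emptyset}}$ and Lemmas~\ref{lemma:Delta}\ref{lemma:Delta:ii} and~\ref{lemma:nabla}\ref{lemma:nabla:ii}, with no detour through the equality of modules at shorter lengths. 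Your treatment of the ring assertion (image of the ring homomorphism $\Res_{\u{t}}$ is a subring) is also correct.
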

\begin{proof}
The inclusion in one direction is furnished by Lemma~\ref{lemma:3}. To prove the inclusion in the other direction,
it suffices to apply Theorem~\ref{theorem:2} and note that all elements of any basis ${\mathfrak B}_{\u{t}}^\Upsilon$
belong to $\im\Res_{\u{t}}$ by Lemmas~\ref{lemma:Delta}\ref{lemma:Delta:ii} and~\ref{lemma:nabla}\ref{lemma:nabla:ii}.
\end{proof}

In the rest of the paper, we will often use the following notation for the basis elements.
Let $\u{\epsilon}\subset\u{t}$ and $X=\{i_1<\cdots<i_k\}$ be a subset of
$\{1,\ldots,|\u{t}|\}$. We denote
\begin{equation}\label{eq:nablaXepsilon}
\nabla_{\u{\epsilon}}^X=1\Delta\cdots\Delta\nabla_{\epsilon_{i_1}}\Delta\cdots\Delta\nabla_{\epsilon_{i_2}}\Delta\cdots\Delta\nabla_{\epsilon_{i_k}}\Delta\cdots\Delta.
\end{equation}
In this formula, the operators $\nabla$ are at places $i_1,\ldots,i_k$.

\subsection{Theory of even subsets}\label{even_subsets} For a reflection $p\in T$, we denote by $\boldsymbol\sigma_{\!p}$
the multiplicative subset of $\Sym(V)$ generated by $\boldsymbol\sigma$ and the elements of $V$
proportional to the roots $\alpha_t$ with $t\ne p$. Clearly, $\boldsymbol\sigma_{\!p}$ is $W$-invariant and graded.
We denote $R_p=\boldsymbol\sigma_{\!p}^{-1}\Sym(V)$.
As $\Sym(V)$ is a unique factorization domain, the GKM-condition implies
that
\begin{equation}\label{eq:intersectionQRp}
\bigcap_{p\in T}R_p=R.
\end{equation}

Let $M$ be a finite totally ordered set.
For $g\in Q^\ev(M)$, $X\subset M$ and $Z\subset_\ev M$, we denote
$$
\Sigma^{Z,\ev}_X(g)=\sum_{Y\subset_\ev X}(-1)^{|Y|_X}g(Y\triangle Z).
$$
Obviously, $\Sigma^{Z,\ev}_X$ is a $Q$-linear map $Q^\ev(M)\to Q$, which maps any $R_p^\ev(M)$ to $R_p$.
We denote by $E_p(M)$ the subset of functions $g\in R_p^\ev(M)$ such that
$\Sigma^{Z,\ev}_X(g)\in\alpha_p^{|X|-1}R_p$
for any $Z\subset_\ev M$ and nonempty $X\subset M$.


%
%
%
%
%
%
%
%

Note that we can replace the initial Coxeter system $(W,S)$ with $(W_p,\{p\})$, where $W_p=\{1,p\}$.
The restriction to $W_p$ of the representation of $W$ on $V$ is also faithful and satisfies
the GKM-condition. Of course, the only reflection $p$ of $W_p$ acts on $V$ by a reflection
and elements of $\boldsymbol\sigma_{\!p}$ are not divisible by $\alpha_p$.
Therefore, we can consider the sets $\X(\u{p}^m)$ as defined in Section~\ref{imLoc} but with respect
to $W_p$ and $\boldsymbol\sigma_{\!p}$. We will stick to this assumption in the section only.

\begin{lemma}\label{lemma:Epsubring}
Any $E_p(M)$ is a left $R_p$-submodule and a subring of $R_p^\ev(M)$.
\end{lemma}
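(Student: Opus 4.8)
The assertion splits into a module part and a ring part, and only the ring part is substantial. For the module part, recall that each $\Sigma^\ev_X\colon Q^\ev(M)\to Q$ is $Q$-linear and maps $R_p^\ev(M)$ into $R_p$; since $E_p(M)$ is cut out inside $R_p^\ev(M)$ by the linear conditions $\Sigma^\ev_X(-)\in\alpha_p^{|X|-1}R_p$, it is automatically a left $R_p$-submodule (and contains $0$). It also contains the unit: using $(-1)^{|Y|_X}=\prod_{y\in Y}(-1)^{[y\text{ at an odd position of }X]}$ and $\sum_{Y\subset_\ev X}=\tfrac12\bigl(\sum_{Y\subset X}+\sum_{Y\subset X}(-1)^{|Y|}\bigr)$ one gets $\Sigma^\ev_X(1)=\tfrac12\bigl(\prod_{x\in X}(1+(-1)^{[x\text{ odd}]})+\prod_{x\in X}(1+(-1)^{[x\text{ even}]})\bigr)$, which is $1$ for $|X|=1$ and $0$ for $|X|\ge2$, and so lies in $\alpha_p^{|X|-1}R_p$ in either case. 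Hence the whole task is closure under products.

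For that I would reduce to Corollary~\ref{corollary:2}, which is available here since throughout this section we work relative to the Coxeter system $(W_p,\{p\})$. The case $M=\emptyset$ is trivial ($E_p(\emptyset)=R_p$), so assume $m:=|M|\ge1$ and identify $M$ with $\{1,\dots,m\}$ as an ordered set. Because $W_p$ is abelian, every subexpression $\u\epsilon\subset\u{p}^{\,m-1}$ has $\u\epsilon^i=p$ for all $i$, so $\M_p(\u\epsilon)=\{1,\dots,m-1\}$ and Corollary~\ref{lemma:2} applies to each of its subsets. The index sets match up: the rule $U\mapsto U$ for $|U|$ even and $U\mapsto U\cup\{m\}$ for $|U|$ odd is a bijection $\Sub(\u{p}^{\,m-1})=\P(\{1,\dots,m-1\})\to\P_\ev(M)$, and precomposition with it gives a ring isomorphism $\Phi\colon R_p^\ev(M)\xrightarrow{\ \sim\ }R_p^{\oplus\u{p}^{\,m-1}}$ that is moreover left $R_p$-linear. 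The crux is then the claim $\Phi(E_p(M))=\X(\u{p}^{\,m-1})$. Granting it, $E_p(M)\cong\X(\u{p}^{\,m-1})=\im\Res_{\u{p}^{\,m-1}}$ as rings and as left $R_p$-modules, and $\im\Res_{\u{p}^{\,m-1}}$ is a subring and a left $R_p$-submodule of $R_p^{\oplus\u{p}^{\,m-1}}$ by Corollary~\ref{corollary:2}; hence $E_p(M)$ is a subring and a left $R_p$-submodule of $R_p^\ev(M)$.

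To prove the claim I would translate the two families of defining congruences into one another. With $\tilde g:=\Phi(g)$ and $Z:=S(\u\epsilon)$, a short computation using \eqref{eq:triangleX} and the identity $|W|_{X\cup\{m\}}+|W|_X=|W|$, valid for $W\subset X\subset\{1,\dots,m-1\}$ (this is \eqref{eq:8} with $X\cup\{m\}$ in the role of $X$, whose maximal element is $m$), yields $\Sigma^{\u\epsilon}_X(\tilde g)=(-1)^{|Z|_X}\,\Sigma^\ev_{X\cup\{m\}}(g)$ whenever $Z\subset X$; since $X\cup\{m\}$ runs through all subsets of $M$ containing $m$, the $\X(\u{p}^{\,m-1})$-conditions of this form are exactly the $E_p(M)$-conditions at subsets of $M$ containing $m$. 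The \emph{main obstacle} is the rest of the matching: showing that the $\X(\u{p}^{\,m-1})$-conditions with $Z\not\subset X$, together with the ones just identified, are equivalent to the $E_p(M)$-conditions at subsets of $M$ \emph{not} containing $m$. I expect to settle this by the same inclusion--exclusion, isolating the maximal element of $X$ and inducting on $|X|$, and using \eqref{eq:7}, \eqref{eq:8} and Corollary~\ref{lemma:2} to extract the extra power of $\alpha_p$ from the mixed terms. If that bookkeeping turns out to be unwieldy, the alternative is to bypass the reduction and argue closure under products directly: split off the maximal element $x$ of $X$ to write $\Sigma^\ev_X(gh)$ as a sum of two pieces, and bound it by combining the congruences satisfied by $g$ and by $h$ --- again by way of the same identities --- to reach divisibility by $\alpha_p^{|X|-1}$; the combinatorial heart, and the obstacle, is the same either way.
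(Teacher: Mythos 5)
Your overall strategy — reduce to $\X(\u{p}^{\,m-1})$ via the even-subset bijection and invoke Corollary~\ref{corollary:2} — is exactly the paper's strategy, and your handling of the module part and the unit is fine. But the heart of the argument, the identification $\Phi(E_p(M))=\X(\u{p}^{\,m-1})$, is left unproven. You verify the formula $\Sigma^{\u\epsilon}_X(\tilde g)=(-1)^{|Z|_X}\Sigma^\ev_{X\cup\{m\}}(g)$ only when $Z\subset X$ (where $Z$ is the support of $\u\epsilon$), and you explicitly flag the complementary case as "the main obstacle," saying only that you "expect to settle this" by a similar inclusion--exclusion. That is a genuine gap, not a detail: when $Z\not\subset X$ the substitution $Y\mapsto Y\triangle Z$ no longer permutes $\P(X)$, so the clean correspondence between $\X$-conditions at pairs $(\u\epsilon,X)$ and $E_p$-conditions at subsets of $M$ containing $m$ breaks down, and one must show the remaining $\X$-conditions are generated by (and generate, together with the identified ones) the $E_p$-conditions at subsets \emph{not} containing $m$. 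Already for $|M|=3$ these are genuinely linear combinations (e.g.\ $\tilde g(00)-\tilde g(11)=\Sigma^{(00)}_{\{1\}}(\tilde g)+\Sigma^{(10)}_{\{2\}}(\tilde g)$), so this step requires an actual argument.

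What the paper does differently, and what you are missing, is a multiplicative trick that sidesteps this bookkeeping in one direction. For the map $\phi:\X(\u{p}^{\,m-1})\to E_p(M)$, $\phi(g)(Y)=g(\f_{Y^{<n}}\u{\epsilon})$, the paper proves $\phi(g)\in E_p(M)$ for $X\subset M'$ by introducing the auxiliary function $\mu(\u\delta)=\u\delta^{\leftarrow n-1}\in\X(\u{p}^{\,m-1})$ (Lemma~\ref{lemma:elementsimLoc}), noting $\mu g\in\X$ by the already-established ring property of $\X$ (Corollary~\ref{corollary:2}), and computing $\Sigma_X^{\u\epsilon}(\mu g)+\Sigma_X^{\u\epsilon}(\u\epsilon^{\leftarrow n-1}g)=2\u\epsilon^{\leftarrow n-1}\Sigma^\ev_X(\phi(g))$; since both summands are divisible by $\alpha_p^{|X|}$ and $\u\epsilon^{\leftarrow n-1}\sim\alpha_p$, the needed divisibility by $\alpha_p^{|X|-1}$ falls out. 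This is the device that extracts the extra power of $\alpha_p$ without case analysis on $Z$; nothing in your proposal plays this role. The paper then handles $X\not\subset M'$ by the parity identities \eqref{eq:7}, \eqref{eq:8}, and gives a separate (terser) argument for the inverse map $\psi$. Your second suggested route — proving closure under products directly — would run into exactly the same need for a device like $\mu g\in\X$, so either way the gap is substantive. To complete your proof you should either import the $\mu$-trick and run the paper's two well-definedness checks, or fully carry out the condition-matching you sketch, including showing that the $\X$-conditions with $Z\not\subset X$ are consequences of the $E_p$-conditions and vice versa.
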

\begin{proof}
As $E_p(\emptyset)=R_p^\ev(\emptyset)$, we can assume that $M\ne\emptyset$.
We are going to prove that $E_p(M)\cong\X(\u{p}^{|M|-1})$ as a left submodule and a ring.

Without loss of generality, we may assume that $M=\{1,\ldots,n\}$ for $n>0$.
Then $M'=\{1,\ldots,n-1\}$. Let us fix an arbitrary subexpression $\u{\epsilon}\subset\u{p}^{n-1}$.
We obviously, have $\M_p(\u{\epsilon})=M'$ and $\M_t(\u{\epsilon})=\emptyset$ for $t\ne p$.
We define a map $\phi:\X(\u{p}^{n-1})\to E_p(M)$ by
\begin{equation}\label{eq:phi}
\phi(g)(Y)=g(\f_{Y^{<n}}\u{\epsilon})
\end{equation}
and a map $\psi:E_p(M)\to\X(\u{p}^{n-1})$ by
$$
\psi(h)(\f_Y\u{\epsilon})=
\left\{\!\!
\begin{array}{ll}
h(Y)&\text{if }|Y|\text{ is even};\\[6pt]
h(Y\cup\{n\})&\text{if }|Y|\text{ is odd}.
\end{array}
\right.
$$

To prove that the map $\phi$ is well-defined, it suffices to prove that $\phi(g)\in E_p(M)$ for any $g\in\X(\u{p}^{n-1})$.
Let $Z\subset_\ev M$ and $X$ be a nonempty subset of $M$.
We denote $H=Z^{<n}$.

First, consider the case $X\subset M'$. By Lemmas~\ref{lemma:elementsimLoc} and~\ref{lemma:3},
there is a function $\mu\in\X(\u{p}^{n-1})$
such that $\mu(\u{\delta})=\u{\delta}^{\leftarrow n-1}$. By Corollary~\ref{corollary:2}, we get $\mu g\in\X(\u{p}^{n-1})$, whence
\begin{multline*}
\Sigma_X^{\f_H\u{\epsilon}}(\mu g)+\Sigma_X^{\f_H\u{\epsilon}}((-1)^{|H|}\u{\epsilon}^{\leftarrow n-1} g)\\
=\sum_{Y\subset X}(-1)^{|Y|_X}(\f_{Y\triangle H}\u{\epsilon})^{\leftarrow n-1}g(\f_{Y\triangle H}\u{\epsilon})
+(-1)^{|H|}\u{\epsilon}^{\leftarrow n-1}\sum_{Y\subset X}(-1)^{|Y|_X}g(\f_{Y\triangle H}\u{\epsilon})\\
=(-1)^{|H|}\u{\epsilon}^{\leftarrow n-1}\sum_{Y\subset X}(-1)^{|Y|_X+|Y|}g(\f_{Y\triangle H}\u{\epsilon})
+(-1)^{|H|}\u{\epsilon}^{\leftarrow n-1}\sum_{Y\subset X}(-1)^{|Y|_X}g(\f_{Y\triangle H}\u{\epsilon})\\
=(-1)^{|H|}2\u{\epsilon}^{\leftarrow n-1}\sum_{Y\subset_\ev X}(-1)^{|Y|_X}g(\f_{Y\triangle H}\u{\epsilon})
=(-1)^{|H|}2\u{\epsilon}^{\leftarrow n-1}\sum_{Y\subset_\ev X}(-1)^{|Y|_X}g(\f_{(Y\triangle Z)^{<n}}\u{\epsilon})\\
=(-1)^{|H|}2\u{\epsilon}^{\leftarrow n-1}\sum_{Y\subset_\ev X}(-1)^{|Y|_X}\phi(g)(Y\triangle Z)
=(-1)^{|H|}2\u{\epsilon}^{\leftarrow n-1}\Sigma^{Z,\ev}_X(\phi(g)).
\end{multline*}
The left-hand side is divisible by $\alpha_p^{|X|}$. It suffices to cancel out the factor $(-1)^{|H|}2\u{\epsilon}^{\leftarrow n-1}\sim\alpha_p$.

Next, consider the case $X\not\subset M'$. In this case, $n\in X$.
We get
\begin{multline*}
\Sigma^{Z,\ev}_X(\phi(g))=\sum_{Y\subset_\ev X}(-1)^{|Y|_X}g(\f_{(Y\triangle Z)^{<n}}\u{\epsilon})
=\sum_{Y\subset_\ev X}(-1)^{|Y|_X}g(\f_{Y^{<n}}\f_H\u{\epsilon})\\
=\sum_{Y\subset_\ev X'}(-1)^{|Y|_X}g(\f_Y\f_H\u{\epsilon})+\sum_{Y\subset_\odd X'}(-1)^{|Y\cup\{n\}|_X}g(\f_Y\f_H\u{\epsilon}).
\end{multline*}
Applying~(\ref{eq:7}) and~(\ref{eq:8}), we compute the exponent of $-1$ in the sums of the right-hand side as follows:
$$
|Y|_X=-|Y|_{X'}+|Y|\=|Y|_{X'}\pmod2,
$$

\vspace{-7pt}

$$
|Y\cup\{n\}|_X=|Y|_X+1=-|Y|_{X'}+|Y|+1\=|Y|_{X'}\pmod2
$$

\vspace{5pt}

\noindent
respectively.
Therefore,
$$
\Sigma^{Z,\ev}_X(\phi(g))=\Sigma^{\f_H\u{\epsilon}}_{X'}(g).
$$
This element is divisible by $\alpha_p^{|X'|}=\alpha_p^{|X|-1}$.
Thus we have proved that $\phi$ is well-defined.

Now let us prove that $\psi$ is well-defined. Again it suffices to prove that $\psi(h)\in\X(\u{p}^{n-1})$
for $h\in E_p(M)$. Let $\u{\delta}\subset\u{p}^{n-1}$ and $X\subset M'$.
We have $\u{\delta}=\f_Z\u{\epsilon}$ for some $Z\subset M'$. 
If $|Z|$ is even, we get (applying Corollary~\ref{eq:fXfY})
\begin{multline*}
\Sigma^{\u{\delta}}_X(\psi(h))=\sum_{Y\subset X}(-1)^{|Y|_X}\psi(h)(\f_{Y\triangle Z}\u{\epsilon})
=\sum_{Y\subset_\ev X}(-1)^{|Y|_X}h(Y\triangle Z)\\
+\sum_{Y\subset_\odd X}(-1)^{|Y|_X}h((Y\triangle Z)\cup\{n\})
=\sum_{Y\subset_\ev X}(-1)^{|Y|_{X\cup\{n\}}}h(Y\triangle Z)\\
+\sum_{Y\subset_\odd X}(-1)^{|Y\cup\{n\}|_{X\cup\{n\}}}h((Y\cup\{n\})\triangle Z)
=\Sigma^{Z,\ev}_{X\cup\{n\}}(h).
\end{multline*}

If $|Z|$ is odd, we get
\begin{multline*}
\Sigma^{\u{\delta}}_X(\psi(h))
=\sum_{Y\subset X}(-1)^{|Y|_X}\psi(h)(\f_{Y\triangle Z}\u{\epsilon})
=\sum_{Y\subset_\ev X}(-1)^{|Y|_X}h((Y\triangle Z)\cup\{n\})\\
+\sum_{Y\subset_\odd X}(-1)^{|Y|_X}h(Y\triangle Z)
=\sum_{Y\subset_\ev X}(-1)^{|Y|_{X\cup\{n\}}}h(Y\triangle (Z\cup\{n\}))\\
+\sum_{Y\subset_\odd X}(-1)^{|Y\cup\{n\}|_{X\cup\{n\}}}h((Y\cup\{n\})\triangle(Z\cup\{n\}))
=\Sigma^{Z\cup\{n\},\ev}_{X\cup\{n\}}(h).
\end{multline*}
Both right-hand sides are divisible by $\alpha_p^{|X\cup\{n\}|-1}=\alpha_p^{|X|}$.
Thus we have proved that $\psi$ is well-defined.

We leave to the reader the routine check that $\phi$ and $\psi$ are mutually inverse and
are ring and module homomorphisms.
\end{proof}

\begin{corollary}[of the proof of Lemma~\ref{lemma:Epsubring}]\label{corollary:lemma:Epsubring}
Let $\beta\in R_p$ and $m\in M$. The functions of $R_p^\ev(M)$ given by $Y\mapsto p^{|Y^{<m}|}\beta$
and $Y\mapsto p^{|Y^{\le m}|}\beta$ belong to $E_p(M)$.
\end{corollary}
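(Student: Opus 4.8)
The plan is to deduce this directly from the proof of Lemma~\ref{lemma:Epsubring}. Recall that there, writing $M=\{1,\ldots,n\}$ (so that $m\in\{1,\ldots,n\}$) and fixing a subexpression $\u{\epsilon}\subset\u{p}^{n-1}$, one builds the map $\phi\colon\X(\u{p}^{n-1})\to R_p^\ev(M)$ given by $\phi(g)(Y)=g(\f_{Y^{<n}}\u{\epsilon})$, and one proves the strong well-definedness statement that $\phi(g)\in E_p(M)$ for \emph{every} $g\in\X(\u{p}^{n-1})$. So to prove the corollary it suffices, for each of the two functions in the statement, to produce some $g\in\X(\u{p}^{n-1})$ whose image under $\phi$ is that function. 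I would run the construction with $\u{\epsilon}$ chosen to be the constant-$0$ subexpression of $\u{p}^{n-1}$; then $\M_p(\u{\epsilon})=M'=\{1,\ldots,n-1\}$ and $\u{\epsilon}^{<i}=\u{\epsilon}^{\le i}=1$ for every $i$, so that Corollary~\ref{lemma:2}, applied with $X=Y^{<n}\subset M'=\M_p(\u{\epsilon})$, simplifies to $(\f_{Y^{<n}}\u{\epsilon})^{<i}=p^{|(Y^{<n})^{<i}|}$ and $(\f_{Y^{<n}}\u{\epsilon})^{\le i}=p^{|(Y^{<n})^{\le i}|}$.

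For the first function, consider $g\colon\u{\delta}\mapsto\u{\delta}^{<m}\beta$. It lies in $\X(\u{p}^{n-1})=\im\Res_{\u{p}^{n-1}}$ (Corollary~\ref{corollary:2}): for $m\le n-1$ this is Lemma~\ref{lemma:elementsimLoc}, while for $m=n$, where $\u{\delta}^{<m}=\u{\delta}^{\max}$, it follows from~(\ref{eq:Res}) by taking all tensor factors equal to $1$ except the last, which is $\beta$. Using the simplification above together with $(Y^{<n})^{<m}=Y^{<m}$ (which holds because $m\le n$), we obtain, for every $Y\in\P_\ev(M)$, that $\phi(g)(Y)=(\f_{Y^{<n}}\u{\epsilon})^{<m}\beta=p^{|Y^{<m}|}\beta$; hence $Y\mapsto p^{|Y^{<m}|}\beta$ equals $\phi(g)\in E_p(M)$. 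For the second function with $m\le n-1$ the argument is identical with $g\colon\u{\delta}\mapsto\u{\delta}^{\le m}\beta=\u{\delta}^{<m+1}\beta$ (again in $\X(\u{p}^{n-1})$, by Lemma~\ref{lemma:elementsimLoc} when $m+1\le n-1$, or by~(\ref{eq:Res}) when $m+1=n$), which yields $\phi(g)(Y)=p^{|Y^{\le m}|}\beta$.

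The only case not covered by this template is the second function when $m=n$. There $Y^{\le n}=Y$ for every $Y\in\P_\ev(M)$, so $p^{|Y^{\le n}|}\beta=p^{|Y|}\beta=\beta$ since $|Y|$ is even, and the constant function $\beta$ belongs to $E_p(M)$ because, by Lemma~\ref{lemma:Epsubring}, $E_p(M)$ is a left $R_p$-submodule of $R_p^\ev(M)$ containing the unit. I do not anticipate a genuine obstacle; the only point needing care is keeping the indices within the ranges where Corollary~\ref{lemma:2} and Lemma~\ref{lemma:elementsimLoc} are literally stated, which is precisely why the endpoints $m=n$ and $m+1=n$ are handled separately.
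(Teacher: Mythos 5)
Your proposal is correct and follows essentially the same route as the paper: fix $M=\{1,\ldots,n\}$, take $\u{\epsilon}=\u{0}^{n-1}$, push the function $\u{\delta}\mapsto\u{\delta}^{<m}\beta$ (resp.\ $\u{\delta}\mapsto\u{\delta}^{\le m}\beta$) through $\phi$, and use Corollary~\ref{lemma:2} to simplify. The only difference is that you handle the boundary cases $m=n$ (first function) and $m+1=n$ (second function) explicitly via~(\ref{eq:Res}) rather than by appealing to Lemma~\ref{lemma:elementsimLoc}, whose statement literally covers only $i\le|\u{t}|=n-1$; this is a small point of extra care, not a change of method.
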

\begin{proof} We can clearly, assume that $M=\{1,\ldots,n\}$, where $n>0$.
Remember that we have the isomorphism $\phi:\X(\u{p}^{n-1})\ito E_p(M)$ constructed by~(\ref{eq:phi})
with respect to an arbitrary choice of $\u{\epsilon}\subset\u{p}^{n-1}$.
Thus we can take
$\u{\epsilon}=\u{0}^{n-1}$.

By Lemmas~\ref{lemma:elementsimLoc} and~\ref{lemma:3},
there is a function $\mu\in\X(\u{p}^{n-1})$ such that $\u{\delta}\mapsto\u{\delta}^{<m}\beta$.
The map $\lambda=\phi(\mu)$ can be computed by Corollary~\ref{lemma:2} as follows:
$$
\lambda(Y)=\mu(\f_{Y^{<n}}\u{\epsilon})=(\f_{Y^{<n}}\u{\epsilon})^{<m}\beta
=p^{|(Y^{<n})^{<m}|}\u{\epsilon}^{<m}\beta
=p^{|Y^{<m}|}\beta.
$$
As for the second function, the case $m<n$ can be handled similarly to the above calculations
and in the case $m=n$ this function is constant with value $\beta$.
\end{proof}

\begin{corollary}\label{corollary:YlessibetainEpM}
Let $M$ be a finite subset of $\Z$ and $\beta\in R_p$. The function of $R_p^\ev(M)$ given by $Y\mapsto p^{|Y^{<i}|}\beta$
belongs to $E_p(M)$.
\end{corollary}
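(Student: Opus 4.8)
The plan is to deduce this corollary from Corollary~\ref{corollary:lemma:Epsubring} by a short case analysis on where the integer $i$ sits relative to the finite set $M$. First I would dispose of the degenerate case $M=\emptyset$, where $E_p(\emptyset)=R_p^\ev(\emptyset)$ (as noted in the proof of Lemma~\ref{lemma:Epsubring}) and there is nothing to check. So assume $M\ne\emptyset$ and put $M^{<i}=\{j\in M\suchthat j<i\}$.

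If $M^{<i}=\emptyset$, i.e.\ $i$ lies at or below $\min M$, then $Y^{<i}=\emptyset$ for every $Y\subset M$, so the function in question is the constant function $Y\mapsto\beta$, which is simply $\beta\cdot 1$ in the ring $R_p^\ev(M)$; it lies in $E_p(M)$ because, by Lemma~\ref{lemma:Epsubring}, $E_p(M)$ is a left $R_p$-submodule of $R_p^\ev(M)$ containing the unit. If instead $M^{<i}\ne\emptyset$, let $m$ be its maximal element, which is an element of $M$. The key observation is that for any $Y\subset M$ and any $y\in Y$ one has $y<i\Leftrightarrow y\le m$: indeed $m<i$ while $M$, hence $Y$, contains no element strictly between $m$ and $i$. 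Therefore $Y^{<i}=Y^{\le m}$ for all $Y$, so the function $Y\mapsto p^{|Y^{<i}|}\beta$ coincides with $Y\mapsto p^{|Y^{\le m}|}\beta$, and the latter belongs to $E_p(M)$ by Corollary~\ref{corollary:lemma:Epsubring} applied with this $m$.

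There is essentially no obstacle here: all the real content is in the earlier Corollary~\ref{corollary:lemma:Epsubring}, and the present statement merely records that, as far as the truncations $Y^{<i}$ of subsets of the \emph{finite} set $M$ are concerned, an arbitrary integer cut-off $i$ can always be replaced either by the empty cut-off or by a cut-off of the form $Y^{\le m}$ with $m\in M$. The one mildly delicate point, which I would spell out carefully, is the boundary behaviour — that the case $M^{<i}=\emptyset$ genuinely yields a constant, and that the identity $Y^{<i}=Y^{\le m}$ holds with no off-by-one error — but this is routine.
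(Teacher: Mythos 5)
Your proof is correct and essentially identical to the paper's: both reduce to Corollary~\ref{corollary:lemma:Epsubring} by using the identity $Y^{<i}=Y^{\le m}$ for $m=\max M^{<i}$ when $M^{<i}\ne\emptyset$, and by observing that the function is the constant $\beta\cdot 1$ when $M^{<i}=\emptyset$. The only cosmetic difference is that the paper first remarks it suffices to assume $i\notin M$, whereas you fold that case into the general argument; both are fine.
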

\begin{proof}
By Corollary~\ref{corollary:lemma:Epsubring}, it suffices to assume that $i\notin M$.
Suppose that there are elements of $M$ less then $i$ and let $m$ denote the greatest of them.
Then the result follows from $Y^{<i}=Y^{\le m}$. If there are no elements of $M$ less than $i$,
then $Y^{<i}=\emptyset$ and our function is constant with value $\beta$.
\end{proof}

We also consider the set $E^+_p(M)$ consisting of all functions $g\in R^\ev_p(M)$
such that $\Sigma^{Z,\ev}_X(g)\in\alpha_p^{|X|}$ in $R_p$ for any $Z\subset_\ev M$ and $X\subset M$.

\begin{lemma}\label{lemma:EtoE+}
For $M\ne\emptyset$, the function $E_p(M)\to E^+_p(M)$ given by $g\mapsto \alpha_pg$
is an isomorphism of $R_p$-modules.
\end{lemma}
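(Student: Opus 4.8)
The plan is to verify the three requirements of an isomorphism of $R_p$-modules — that $g\mapsto\alpha_p g$ is well defined into $E^+_p(M)$, injective, and surjective — after first observing that everything is $R_p$-linear. Since $\Sigma^\ev_X\colon R_p^\ev(M)\to R_p$ is $R_p$-linear, $E^+_p(M)$ is an $R_p$-submodule of $R_p^\ev(M)$ and $g\mapsto\alpha_p g$ is an $R_p$-homomorphism. For well-definedness I would take $g\in E_p(M)$ and, for nonempty $X\subset M$, compute $\Sigma^\ev_X(\alpha_p g)=\alpha_p\Sigma^\ev_X(g)\in\alpha_p\cdot\alpha_p^{|X|-1}R_p=\alpha_p^{|X|}R_p$, the case $X=\emptyset$ being vacuous since $\alpha_p^0R_p=R_p$; hence $\alpha_p g\in E^+_p(M)$. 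Injectivity is immediate: $R_p$ is a domain and $\alpha_p\neq0$, so $\alpha_p g=0$ forces $g=0$.

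The substance is surjectivity, and it reduces to the \emph{coordinatewise divisibility} claim that $\alpha_p\mid h(Y)$ in $R_p$ for every $h\in E^+_p(M)$ and every $Y\in\P_\ev(M)$. Granting this, I would set $g(Y)=h(Y)/\alpha_p$ (well defined because $R_p$ is a domain), so that $g\in R_p^\ev(M)$ and $\alpha_p g=h$; then for nonempty $X$ one has $\alpha_p\Sigma^\ev_X(g)=\Sigma^\ev_X(h)\in\alpha_p^{|X|}R_p$, and cancelling $\alpha_p$ gives $\Sigma^\ev_X(g)\in\alpha_p^{|X|-1}R_p$, i.e.\ $g\in E_p(M)$ and $h$ is in the image. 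To prove the divisibility claim I would induct on $|Y|$. For $Y=\emptyset$ I pick any $m\in M$ — this is the one point where the hypothesis $M\neq\emptyset$ enters — and note $\Sigma^\ev_{\{m\}}(h)=h(\emptyset)\in\alpha_p^{1}R_p$ by the definition of $E^+_p(M)$. For $|Y|\ge2$, the even subsets of $Y$ are $Y$ itself together with its proper even subsets, so $\Sigma^\ev_Y(h)=(-1)^{|Y|_Y}h(Y)+\sum_{Z}(-1)^{|Z|_Y}h(Z)$, the sum over even $Z\subsetneq Y$; since the coefficient of $h(Y)$ is $\pm1$, since $\Sigma^\ev_Y(h)\in\alpha_p^{|Y|}R_p\subset\alpha_p R_p$, and since every $h(Z)$ in the sum is divisible by $\alpha_p$ by the inductive hypothesis, it follows that $h(Y)=(-1)^{|Y|_Y}\bigl(\Sigma^\ev_Y(h)-\sum_Z(-1)^{|Z|_Y}h(Z)\bigr)$ is divisible by $\alpha_p$.

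I do not expect a genuine obstacle: the argument is a short Möbius-type inversion exploiting that $\Sigma^\ev_X$ is triangular with respect to inclusion of subsets. The only points that need mild care are checking that the instance $X=\emptyset$ in the definition of $E^+_p(M)$ imposes nothing, and recording why the hypothesis $M\neq\emptyset$ is indispensable — for $M=\emptyset$ one has $E_p(\emptyset)=E^+_p(\emptyset)=R_p$, so multiplication by $\alpha_p$ fails to be surjective. All other verifications are routine domain and linearity manipulations, and Lemma~\ref{lemma:Epsubring} already supplies that $E_p(M)$ is an $R_p$-submodule.
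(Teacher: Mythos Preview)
Your proof is correct and follows essentially the same approach as the paper: both establish surjectivity by proving coordinatewise divisibility $\alpha_p\mid h(Y)$ via induction on $|Y|$, using $\Sigma^\ev_{\{m\}}(h)=h(\emptyset)$ for the base case (where $M\ne\emptyset$ enters) and solving $\Sigma^\ev_Y(h)$ for $h(Y)$ in the inductive step. Your version is slightly more explicit about the $X=\emptyset$ case and the necessity of $M\ne\emptyset$, but the argument is the same.
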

\begin{proof} As $R_p$ is a domain and $\Sigma_X^{Z,\ev}$ is linear,
the map under consideration is well-defined and injective.
It remains to prove that it is surjective.
Let $g\in E^+_p(M)$. We set $h=\alpha_p^{-1}g$, which is a function of $Q^\ev(M)$.
Let $Y\subset_\ev M$. Choosing an arbitrary element $m\in M$, we get that
$$
\Sigma_{\{m\}}^{Y,\ev}(g)=(-1)^{|\emptyset|_{\{m\}}}g(\emptyset\triangle Y)=g(Y)
$$
is divisible by $\alpha_p$. Therefore, $h(Y)\in R_p$.
Thus we have proved that $h\in R_p^\ev(M)$.

For any nonempty $X\subset M$ and $Z\subset_\ev M$, we get
$$
\Sigma^{Z,\ev}_X(g)=\alpha_p\Sigma^{Z,\ev}_X(h)
$$
is divisible by $\alpha_p^{|X|}$. Hence $\Sigma^{Z,\ev}_X(h)$ is divisible by $\alpha_p^{|X|-1}$.
\end{proof}

\begin{corollary}\label{corollary:EMmod}
Any $E_p^+(M)$ is an $E_p(M)$-module.
\end{corollary}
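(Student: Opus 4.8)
The plan is to reduce everything to the two preceding lemmas, so that no fresh computation with the operators $\Sigma^\ev_X$ is needed. First observe that, since $\Sigma^\ev_X$ is $R_p$-linear and sends $R_p^\ev(M)$ into $R_p$, the congruences $\Sigma^\ev_X(g)\in\alpha_p^{|X|}R_p$ are preserved under addition and under multiplication by elements of $R_p$; hence $E^+_p(M)$ is automatically an $R_p$-submodule of the ring $R_p^\ev(M)$. Consequently the only thing to verify is the closure property: the pointwise product $gh$ of an element $g\in E_p(M)$ with an element $h\in E^+_p(M)$, computed in $R_p^\ev(M)$, again lies in $E^+_p(M)$. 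Once this is known, the module axioms (including $1\cdot h=h$, using that the unit of $R_p^\ev(M)$ lies in the subring $E_p(M)$ by Lemma~\ref{lemma:Epsubring}) are inherited from the ambient ring structure.

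I would first dispose of the degenerate case $M=\emptyset$: here $\P_\ev(M)=\{\emptyset\}$, so $R_p^\ev(M)\cong R_p$ and both $E_p(M)$ and $E^+_p(M)$ coincide with all of $R_p^\ev(M)$ (the defining congruences being vacuous for $X=\emptyset$), whence the claim is trivial.

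For $M\ne\emptyset$ I would invoke Lemma~\ref{lemma:EtoE+}: multiplication by $\alpha_p$ is an isomorphism $E_p(M)\ito E^+_p(M)$ of $R_p$-modules, so any $h\in E^+_p(M)$ can be written as $h=\alpha_p h'$ with $h'\in E_p(M)$. Then for $g\in E_p(M)$ we have $gh=\alpha_p(gh')$, and by Lemma~\ref{lemma:Epsubring} the product $gh'$ again belongs to the ring $E_p(M)$; applying Lemma~\ref{lemma:EtoE+} once more yields $gh=\alpha_p(gh')\in\alpha_p E_p(M)=E^+_p(M)$, which is exactly the closure we wanted.

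The only point that needs a word of care is the compatibility of the identifications involved — namely that the ring multiplication on $R_p^\ev(M)$ featuring in Lemma~\ref{lemma:Epsubring} is the same pointwise product appearing in the definition of $E^+_p(M)$, and that the ``multiplication by $\alpha_p$'' of Lemma~\ref{lemma:EtoE+} is taken via the left $R_p$-action; both are immediate from the definitions in Section~\ref{even_subsets}. Thus the statement is a formal consequence of Lemmas~\ref{lemma:Epsubring} and~\ref{lemma:EtoE+}, the genuine analytic content having already been absorbed into those results.
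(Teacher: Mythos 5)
Your proof is correct and follows exactly the route the paper takes: dispatch $M=\emptyset$ directly, and for $M\ne\emptyset$ combine Lemma~\ref{lemma:Epsubring} (the subring property of $E_p(M)$) with Lemma~\ref{lemma:EtoE+} (the isomorphism $\alpha_p\cdot\_:E_p(M)\ito E^+_p(M)$) to get closure of $E^+_p(M)$ under multiplication by $E_p(M)$. The paper's proof is just a one-line citation of these two lemmas; you have merely written out the same argument in detail.
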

\begin{proof}
For $M=\emptyset$, the result is follows from $E_p(\emptyset)=E^+_p(\emptyset)=R_p^\ev(\emptyset)$.
For $M\ne\emptyset$, the result follows form Lemmas~\ref{lemma:Epsubring} and~\ref{lemma:EtoE+}.
\end{proof}

\section{Homomorphisms}

The aim of this section is to describe
$$
\Hom_{R\bim}^\bullet(R_x\otimes_R R(\u{t})\otimes_R R_y,R_z\otimes_R R(\u{t}')\otimes_R R_w)
$$
as a left $R$-module and as a right $R$-module. In view of~(\ref{eq:Rtw}), it suffices to assume that $x=z=1$.

\subsection{Skew invariants}\label{Skew_invariants} Let us reformulate the problem of calculating the above right module
applying~\cite[Lemme 3.3]{Libedinsky}, Lemmas~\ref{lemma:MtotimesNRx=MtoNtimesRinversex} and~\ref{lemma:Rxop}, 
Proposition~\ref{MopNop} and~\ref{NopotimesSMop},
and~(\ref{eq:op}) and (\ref{eq:twisted_prod}), as follows (where $n=|\u{t}|$):
\begin{multline*}
\Hom_{R\bim}^\bullet(R(\u{t})\otimes_R R_y,R(\u{t}')\otimes_R R_w)\mathop{\cong}\limits_{R\text{-right gr-mod}}\Hom_{R\bim}^\bullet(R_y,R(\bar{\u{t}}\cup\u{t}')\otimes_R R_w)(2n)
\\
\cong\Hom_{R\bim}^\bullet(R_y\otimes R_{w^{-1}},R(\bar{\u{t}}\cup\u{t}'))\otimes R_w(2n)
\cong\Hom_{R\bim}^\bullet(R_{yw^{-1}},R(\bar{\u{t}}\cup\u{t}'))\otimes R_w(2n)\\[6pt]
\;\cong\Hom_{R\bim}^\bullet(R_{yw^{-1}}^\op,R(\bar{\u{t}}\cup\u{t}')^\op)^\op\otimes R_w(2n)
\cong\Hom_{R\bim}^\bullet(R_{wy^{-1}},R(\bar{\u{t}'}\cup\u{t}))^\op\otimes R_w(2n)\\[2pt]
\hspace{-192pt}\cong\Big(R_w^\op\otimes\Hom_{R\bim}^\bullet(R_{wy^{-1}},R(\bar{\u{t}'}\cup\u{t}))\Big)^\op(2n)\\
\cong\Big(R_{w^{-1}}\otimes\Hom_{R\bim}^\bullet(R_{wy^{-1}},R(\bar{\u{t}'}\cup\u{t}))(2n)\Big)^\op.
\end{multline*}
In view of this calculation, we will concentrate first on the description of the graded left $R$-module
$
\Hom_{R\bim}^\bullet(R_w,R(\u{t}))
$
for arbitrary reflection expression $\u{t}$ and $w\in W$. By Corollary~\ref{corollary:2}, we get
$$
\Hom_{R\bim}^\bullet(R_w,R(\u{t}))=\Hom_{R\bim}^\bullet(R_w,\X(\u{t}))
\cong
\{g\in\X(\u{t})\suchthat\forall r\in R: rg=g\cdot w^{-1}r\}.
$$
The last isomorphism is given by $\phi\mapsto\phi(1)$. We denote the right-hand side of
the above formula by $\X^w(\u{t})$. 

\begin{lemma}\label{lemma:zero}
$\X^w(\u{t})=\{g\in\X(\u{t})\suchthat g(\u{\epsilon})\ne0\Rightarrow\u{\epsilon}^{\max}=w\}$.
\end{lemma}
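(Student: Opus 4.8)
The statement characterizes $\X^w(\u{t})$, defined as $\{g\in\X(\u{t})\mid\forall r\in R:\ rg=g\cdot w^{-1}r\}$, in terms of the support of $g$. I would prove the two inclusions separately, working entirely with the explicit description of the left and right $R$-actions on $\X(\u{t})\subset R^{\oplus\u{t}}$ given by~(\ref{eq:sum_left_mult}) and~(\ref{eq:right}): namely $(rg)(\u{\epsilon})=r\cdot g(\u{\epsilon})$ and $(g\cdot s)(\u{\epsilon})=g(\u{\epsilon})\cdot\u{\epsilon}^{\max}s$ for $r,s\in R$.

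\textbf{The inclusion $\supseteq$.} Suppose $g\in\X(\u{t})$ satisfies $g(\u{\epsilon})\ne0\Rightarrow\u{\epsilon}^{\max}=w$. Fix $r\in R$ and evaluate $rg$ and $g\cdot w^{-1}r$ at an arbitrary $\u{\epsilon}\subset\u{t}$. If $g(\u{\epsilon})=0$, both sides vanish. If $g(\u{\epsilon})\ne0$, then $\u{\epsilon}^{\max}=w$, so $(g\cdot w^{-1}r)(\u{\epsilon})=g(\u{\epsilon})\cdot\u{\epsilon}^{\max}(w^{-1}r)=g(\u{\epsilon})\cdot w(w^{-1}r)=g(\u{\epsilon})\cdot r=(rg)(\u{\epsilon})$, using that $R$ is commutative so the order of multiplication is irrelevant. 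Hence $rg=g\cdot w^{-1}r$ for all $r$, i.e.\ $g\in\X^w(\u{t})$.

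\textbf{The inclusion $\subseteq$.} Suppose $g\in\X^w(\u{t})$ and suppose for contradiction that there is some $\u{\epsilon}_0\subset\u{t}$ with $g(\u{\epsilon}_0)\ne0$ but $u:=\u{\epsilon}_0^{\max}\ne w$. Since $u\ne w$, we have $w^{-1}u\ne 1$, so the action of $w^{-1}u$ on $V$ is nontrivial (the representation $W\to\GL(V)$ is faithful); therefore there exists $v\in V$ with $(w^{-1}u)v\ne v$, equivalently $u(w^{-1}v)\ne v$. Put $r=v\in R$. Evaluating the identity $rg=g\cdot w^{-1}r$ at $\u{\epsilon}_0$ gives $v\cdot g(\u{\epsilon}_0)=g(\u{\epsilon}_0)\cdot u(w^{-1}v)$ in $R$, i.e.\ $(v-u(w^{-1}v))\,g(\u{\epsilon}_0)=0$. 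Since $R$ is a domain (it is a localization of the symmetric algebra) and $v-u(w^{-1}v)\ne0$, we conclude $g(\u{\epsilon}_0)=0$, a contradiction. Hence $g(\u{\epsilon})\ne0\Rightarrow\u{\epsilon}^{\max}=w$, as claimed.

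\textbf{Main obstacle.} There is essentially no obstacle here: the only point requiring a moment's care is producing, from $u\ne w$, a single element $v\in V$ that is not fixed by $w^{-1}u$; this is exactly where faithfulness of the representation is used (it was imposed precisely for reasons like this), together with the fact that an element of $W$ fixing all of $V$ is the identity. Everything else is a direct computation with the bimodule structure formulas and the domain property of $R$.
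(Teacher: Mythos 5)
Your proof is correct and follows essentially the same route as the paper's: the forward inclusion is a direct evaluation using the explicit left and right actions, and the reverse inclusion applies the identity $rg=g\cdot w^{-1}r$ at a point of the support, cancels $g(\u{\epsilon})$ using that $R$ is a domain, restricts to $r\in V$, and invokes faithfulness. One small phrasing slip: ``$(w^{-1}u)v\ne v$, equivalently $u(w^{-1}v)\ne v$'' is not an equivalence for a fixed $v$ (the left side is $(w^{-1}u)$ acting on $v$, the right side is $(uw^{-1})$ acting on $v$); what you actually need is that $uw^{-1}\ne 1$ acts nontrivially, which of course also follows from $u\ne w$ and faithfulness, so the argument stands.
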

\begin{proof}
Let $g$ be a function of the right-hand side and $r\in R$.
Our aim is to prove the equality $rg=g\cdot w^{-1}r$.
Let $\u{\epsilon}\subset\u{t}$. If $g(\u{\epsilon})=0$, then $(rg)(\u{\epsilon})=(g\cdot w^{-1}r)(\u{\epsilon})$
by~(\ref{eq:sum_left_mult}) and~(\ref{eq:right}). Suppose that $g(\u{\epsilon})\ne0$. As $\u{\epsilon}^{\max}=w$, we get
by~(\ref{eq:sum_left_mult}) and~(\ref{eq:right}) that
$$
(rg)(\u{\epsilon})=rg(\u{\epsilon})=g(\u{\epsilon})\cdot\u{\epsilon}^{\max}w^{-1}r=(g\cdot w^{-1}r)(\u{\epsilon}).
$$
Thus we have proved that $rg=g\cdot w^{-1}r$.

To prove the inverse inclusion, let $g$ belong to left-hand side and $g(\u{\epsilon})\ne0$.
By~~(\ref{eq:sum_left_mult}) and~(\ref{eq:right}), we get
$$
rg(\u{\epsilon})=(rg)(\u{\epsilon})=(g\cdot w^{-1}r)(\u{\epsilon})=g(\u{\epsilon})\cdot\u{\epsilon}^{\max}w^{-1}r.
$$
As $R$ is a domain, we get $r=\u{\epsilon}^{\max}w^{-1}r$ for any $r\in R$ and in particular for $r\in V$.
The faithfulness of the action of $W$ on $V$ now proves that $\u{\epsilon}^{\max}=w$.
\end{proof}

To proceed, we introduce a map $\Sigma_X^{\u{\epsilon},\ev}:Q^{\otimes\u{t}}\to Q$ for any
$\u{\epsilon}\subset\u{t}$, $p\in T$ and $X\subset\M_p(\u{\epsilon})$ by
$$
\Sigma_X^{\u{\epsilon},\ev}(g)=\sum_{Y\subset_\ev X}(-1)^{|Y|_X}g(\f_Y\u{\epsilon}).
$$
This map is similar to the map $\Sigma_X^{\u{\epsilon}}$ introduce in Section~\ref{imLoc}.

\begin{lemma}\label{lemma:18}\!\!\!
A function $g\,{\in}\,R^{\otimes\u{t}}$ belongs to $\X^w(\u{t})$ if and only if
the following 
conditions hold:\!\!\!\!
{\renewcommand{\labelenumi}{{\it(\roman{enumi})}}
\renewcommand{\theenumi}{{\rm(\roman{enumi})}}
\begin{enumerate}
\item\label{lemma:18:eq:i} $g(\u{\epsilon})\ne0\Rightarrow\u{\epsilon}^{\max}=w$;\\[-8pt]
\item\label{lemma:18:eq:ii}$\Sigma_X^{\u{\epsilon},\ev}(g)\in\alpha_p^{|X|}R$ for any
$\u{\epsilon}\in\Sub(\u{t},w)$, $p\in T$ and $X\subset\M_p(\u{\epsilon})$.
\end{enumerate}
}
\end{lemma}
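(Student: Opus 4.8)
The plan is to leverage the two results already available: $\im\Res_{\u{t}}=\X(\u{t})$ from Corollary~\ref{corollary:2}, and the support description $\X^w(\u{t})=\{g\in\X(\u{t})\suchthat g(\u{\epsilon})\ne0\Rightarrow\u{\epsilon}^{\max}=w\}$ from Lemma~\ref{lemma:zero}. Once these are in place, the whole statement becomes a bookkeeping claim about which terms of the sums $\Sigma_X^{\u{\epsilon}}(g)$ survive the support constraint~(i). The only structural facts I will need are: for $Y\subset\M_p(\u{\epsilon})$ one has $(\f_Y\u{\epsilon})^{\max}=p^{|Y|}\u{\epsilon}^{\max}$ (Corollary~\ref{lemma:2} with $k=|\u{t}|$); $\M_p(\f_X\u{\epsilon})=\M_p(\u{\epsilon})$ for $X\subset\M_p(\u{\epsilon})$ (Lemma~\ref{lemma:sim}\ref{lemma:sim:i},\ref{lemma:sim:iii}); and the elementary identities $\f_Y\f_x=\f_{Y\triangle\{x\}}$ and $|Z\triangle\{x\}|_X\equiv|Z|_X+|\{x\}|_X\pmod2$ from~(\ref{eq:fXfY}) and~(\ref{eq:triangleX}). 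I will also use repeatedly that $p\ne1$ for $p\in T$, so $pw\ne w$.

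For the forward implication, let $g\in\X^w(\u{t})$. Then~(i) holds by Lemma~\ref{lemma:zero}, and $g\in\X(\u{t})$. Given $\u{\epsilon}\in\Sub(\u{t},w)$, $p\in T$, $X\subset\M_p(\u{\epsilon})$, I observe that for $Y\subset_\odd X$ we get $(\f_Y\u{\epsilon})^{\max}=p\,\u{\epsilon}^{\max}=pw\ne w$, so $g(\f_Y\u{\epsilon})=0$ by~(i); hence the odd terms drop out of $\Sigma_X^{\u{\epsilon}}(g)$ and $\Sigma_X^{\u{\epsilon},\ev}(g)=\Sigma_X^{\u{\epsilon}}(g)\in\alpha_p^{|X|}R$ because $g\in\X(\u{t})$. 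This is~(ii).

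For the converse, assume~(i) and~(ii). By~(i) and Lemma~\ref{lemma:zero} it suffices to show $g\in\X(\u{t})$, i.e.\ $\Sigma_X^{\u{\epsilon}}(g)\in\alpha_p^{|X|}R$ for \emph{all} $\u{\epsilon}\subset\u{t}$, $p\in T$, $X\subset\M_p(\u{\epsilon})$. I split on the value of $\u{\epsilon}^{\max}$, using $(\f_Y\u{\epsilon})^{\max}=p^{|Y|}\u{\epsilon}^{\max}$. If $\u{\epsilon}^{\max}\notin\{w,pw\}$, then no $(\f_Y\u{\epsilon})^{\max}$ equals $w$, so all terms vanish by~(i) and $\Sigma_X^{\u{\epsilon}}(g)=0$. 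If $\u{\epsilon}^{\max}=w$, the odd terms vanish by~(i) exactly as above, so $\Sigma_X^{\u{\epsilon}}(g)=\Sigma_X^{\u{\epsilon},\ev}(g)$, which lies in $\alpha_p^{|X|}R$ by~(ii) applied to $\u{\epsilon}$ itself.

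The remaining case $\u{\epsilon}^{\max}=pw$ is the only non-immediate point, and it is resolved by a change of variables. Here~(i) kills the \emph{even} terms, so $\Sigma_X^{\u{\epsilon}}(g)=\sum_{Y\subset_\odd X}(-1)^{|Y|_X}g(\f_Y\u{\epsilon})$; if $X=\emptyset$ this equals $g(\u{\epsilon})=0$ by~(i), so I may assume $X\ne\emptyset$ and fix any $x\in X$. Setting $\u{\epsilon}'=\f_x\u{\epsilon}$, I get $(\u{\epsilon}')^{\max}=p\,\u{\epsilon}^{\max}=w$ and $\M_p(\u{\epsilon}')=\M_p(\u{\epsilon})\supset X$, so $\u{\epsilon}'\in\Sub(\u{t},w)$ and $\Sigma_X^{\u{\epsilon}',\ev}(g)\in\alpha_p^{|X|}R$ by~(ii). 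The substitution $Z=Y\triangle\{x\}$ is an involution exchanging even and odd subsets of $X$, and with $\f_Y\f_x=\f_{Y\triangle\{x\}}$ and $(-1)^{|Z\triangle\{x\}|_X}=(-1)^{|\{x\}|_X}(-1)^{|Z|_X}$ it yields $\Sigma_X^{\u{\epsilon}',\ev}(g)=(-1)^{|\{x\}|_X}\Sigma_X^{\u{\epsilon}}(g)$, whence $\Sigma_X^{\u{\epsilon}}(g)\in\alpha_p^{|X|}R$. The one place to be careful is this sign/relative-cardinality computation and checking that the substitution is a bijection; the rest follows directly from the cited results.
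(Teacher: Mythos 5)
Your proof is correct and takes essentially the same route as the paper: both directions reduce to Lemma~\ref{lemma:zero} together with the case split on whether $\u{\epsilon}^{\max}$ equals $w$, $pw$, or neither, with the case $\u{\epsilon}^{\max}=pw$ handled by the involution $Y\mapsto Y\triangle\{x\}$ followed by an appeal to~(ii) for $\f_x\u{\epsilon}$. The only cosmetic difference is that the paper fixes $x$ to be the maximal element of $X$ and reads the sign off~(\ref{eq:7}), while you allow an arbitrary $x\in X$ and invoke~(\ref{eq:triangleX}); the resulting sign $(-1)^{|\{x\}|_X}$ is harmless for divisibility either way.
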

\begin{proof}
By Lemma~\ref{lemma:zero}, we only need to prove that~\ref{lemma:18:eq:i} and~\ref{lemma:18:eq:ii}
imply $g\in\X^w(\u{t})$. This is, we need to prove is that for any $\u{\epsilon}\subset\u{t}$,
$p\in T$ and $X\subset\M_p(\u{\epsilon})$, the sum
$\Sigma_X^{\u{\epsilon}}(g)$ is divisible by $\alpha_p^{|X|}$.
If $\u{\epsilon}^{\max}=w$, then $\Sigma_X^{\u{\epsilon}}(g)=\Sigma_X^{\u{\epsilon},\ev}(g)$
by Corollary~\ref{lemma:2} and the last sum is divisible by $\alpha_p^{|X|}$.
By the same corollary, $\Sigma_X^{\u{\epsilon}}(g)=0$ if $\u{\epsilon}^{\max}\notin\{w,pw\}$.

Therefore, it remains to consider the case $\u{\epsilon}^{\max}=pw$.
We can assume that $X\ne\emptyset$ and denote by $x$ the maximal element of this set.
By~(\ref{eq:7}), we get
\begin{multline*}
\Sigma_X^{\u{\epsilon}}(g)=\sum_{Y\subset_\odd X}(-1)^{|Y|_X}g(\f_Y\u{\epsilon})
=\sum_{Y\subset_\odd X}(-1)^{|Y|_X}g(\f_{Y\triangle\{x\}}\f_{x}\u{\epsilon})\\
=-\sum_{Y\subset_\odd X}(-1)^{|Y\triangle\{x\}|_X}g(\f_{Y\triangle\{x\}}\f_x\u{\epsilon})
=-\Sigma_X^{\f_{x}\u{\epsilon},\ev}(g).
\end{multline*}
The last element is divisible by $\alpha_p^{|X|}$ as $(\f_x\u{\epsilon})^{\max}=w$\
by Corollary~\ref{lemma:2}.
\end{proof}

This lemma allows us to redefine $\X^w(\u{t})$ to be the set of all functions
$g\in R^{\oplus\u{t}}(w)$ satisfying property~\ref{lemma:18:eq:ii}.
Thus we simply ignore the zero values of $g$ outside $\Sub(\u{t},w)$.
Here we also need to redefine $\Sigma_X^{\u{\epsilon},\ev}$ to be a map $Q^{\otimes\u{t}}(w)\to Q$
given by the same formula. As the summation goes only over even subsets $Y$, the subexpressions
$\f_Y\u{\epsilon}$ belong to $\Sub(\u{t},w)$ as soon as $\u{\epsilon}$ does. Therefore, no problem arises.
Henceforth we consider $\X^w(\u{t})$ only as a left $R$-module. Beside it,
we also consider the following left $R$-modules:
\begin{itemize}
\item $\X_w(\u{t})$ consisting of all functions $g\in R^{\oplus\u{t}}(w)$ such that
      $$
      \Sigma_X^{\u{\epsilon},\ev}(g)\in\alpha_p^{|X|-1}R
      $$
      for any $\u{\epsilon}\in\Sub(\u{t},w)$, $p\in T$ and nonempty $X\subset\M_p(\u{\epsilon})$.\\[-6pt]
\item $\Y_w(\u{t})$ consisting of the restrictions $g|_{\Sub(\u{t},w)}$ of all functions $g\in\X(\u{t})$.
\end{itemize}
We are going to study the connection between these three modules.

\subsection{The inner product} Let $\u{t}$ be a reflection expression of length $n$.
We define the following special function of $R^{\oplus\u{t}}$ by
$$
o(\u{\epsilon})=\prod_{i=1}^n\u{\epsilon}^{\to i}.
$$
and use it to define an inner product $\<\_|\_\>:Q^{\oplus\u{t}}(w)\times Q^{\oplus\u{t}}(w)\to Q(-2n)$ by
$$
\<g|h\>=\sum_{\u{\epsilon}\in\Sub(\u{t},w)}\frac{g(\u{\epsilon})h(\u{\epsilon})}{o(\u{\epsilon})}.
$$
Obviously, this product is $Q$-bilinear and symmetric. For any graded left $R$-submodule $M\subset Q^{\oplus\u{t}}(w)$
we denote
$$
DM=\{g\in Q^{\oplus\u{t}}(w)\suchthat\forall h\in M:\<g|h\>\in R\}.
$$
This set is also a graded left $R$-submodule of $Q^{\oplus\u{t}}(w)$ and the restriction
of the inner product on $Q^{\oplus\u{t}}(w)$
induces an $R$-bilinear homogeneous pairing
\begin{equation}\label{eq:MDMpairing}
M\times DM\to R(-2n).
\end{equation}
It also follows from the definition that
\begin{equation}\label{eq:DNDM}
N\subset M\Rightarrow DN\supset DM.
\end{equation}

\begin{lemma}\label{lemma:subset:1}
$\X^w(\u{t})\subset D\X_w(\u{t})$.
\end{lemma}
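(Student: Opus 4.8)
The assertion $\X^w(\u{t})\subset D\X_w(\u{t})$ unwinds to a single divisibility statement: $\<g|h\>\in R$ for every $g\in\X^w(\u{t})$ and $h\in\X_w(\u{t})$. Since $\bigcap_{p\in T}R_p=R$ by~(\ref{eq:intersectionQRp}), it is enough to fix a reflection $p$ and prove $\<g|h\>\in R_p$. The plan is to split the summation set $\Sub(\u{t},w)$ of $\<g|h\>$ into the equivalence classes of $\edot_p$ and to show that the contribution of each such class already lies in $R_p$.

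So fix an $\edot_p$-equivalence class, choose a representative $\u{\epsilon}$ in it, and set $M=\M_p(\u{\epsilon})$; by Lemma~\ref{lemma:sim}\ref{lemma:sim:iv} the class is exactly $\{\f_Y\u{\epsilon}\suchthat Y\subset_\ev M\}$, and $\M_p$ is constant along it. Transporting everything to $\P_\ev(M)$, I put $\tilde g(Y)=g(\f_Y\u{\epsilon})$ and $\tilde h(Y)=h(\f_Y\u{\epsilon})$ for $Y\subset_\ev M$; then $\Sigma^\ev_X(\tilde g)=\Sigma_X^{\u{\epsilon},\ev}(g)$ and $\Sigma^\ev_X(\tilde h)=\Sigma_X^{\u{\epsilon},\ev}(h)$ for all $X\subset M$, so the defining divisibilities of $\X^w(\u{t})$ and $\X_w(\u{t})$, together with $R\subset R_p$, give $\tilde g\in E^+_p(M)$ and $\tilde h\in E_p(M)$. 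Next I would untangle the denominators. By Corollary~\ref{lemma:2}, $(\f_Y\u{\epsilon})^{\to i}=p^{|Y^{<i}|}\u{\epsilon}^{\to i}$; by~(\ref{eq:alphattoi}) and the GKM-condition, $\u{\epsilon}^{\to i}$ is a $\k^\times$-multiple of $\alpha_p$ exactly when $i\in M$ (in which case $p$ merely negates it), while for $i\notin M$ it is a $\k^\times$-multiple of a root distinct from $\alpha_p$ and hence a unit in $R_p$. Using~(\ref{eq:summod2}) to track the sign, this gives
$$
o(\f_Y\u{\epsilon})=(-1)^{|Y|_M}\,o_M\,w_Y,\qquad
o_M:=\prod_{i\in M}\u{\epsilon}^{\to i},\qquad
w_Y:=\prod_{i\notin M}(\f_Y\u{\epsilon})^{\to i},
$$
where $o_M$ is a $\k^\times$-multiple of $\alpha_p^{|M|}$ and each $w_Y$ is a unit of $R_p$. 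Therefore the contribution of the class equals $o_M^{-1}\Sigma^\ev_M(\phi)$, where $\phi(Y):=\tilde g(Y)\tilde h(Y)\,w_Y^{-1}$.

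To finish I would check $\phi\in E^+_p(M)$. Since $w_Y^{-1}=\prod_{i\notin M}p^{|Y^{<i}|}\bigl(1/\u{\epsilon}^{\to i}\bigr)$ and $1/\u{\epsilon}^{\to i}\in R_p$ for $i\notin M$, Corollary~\ref{corollary:YlessibetainEpM} shows each factor $Y\mapsto p^{|Y^{<i}|}(1/\u{\epsilon}^{\to i})$ belongs to $E_p(M)$, and hence so does the function $Y\mapsto w_Y^{-1}$, because $E_p(M)$ is a subring by Lemma~\ref{lemma:Epsubring}. As $E^+_p(M)$ is an $E_p(M)$-module by Corollary~\ref{corollary:EMmod}, multiplying $\tilde g\in E^+_p(M)$ successively by $\tilde h\in E_p(M)$ and by $Y\mapsto w_Y^{-1}$ keeps us inside $E^+_p(M)$, so $\Sigma^\ev_M(\phi)\in\alpha_p^{|M|}R_p$; dividing by the $\k^\times$-times-$\alpha_p^{|M|}$ element $o_M$ puts the class contribution in $R_p$. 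Summing over all $\edot_p$-equivalence classes gives $\<g|h\>\in R_p$, and intersecting over $p\in T$ yields $\<g|h\>\in R$. I expect the main obstacle to be the denominator bookkeeping in the middle paragraph: cleanly separating the $\alpha_p$-bearing factors ($i\in M$) from the $p$-twisted ones ($i\notin M$) and recognizing, via Corollary~\ref{corollary:YlessibetainEpM}, that the latter assemble into an element of $E_p(M)$, which is exactly what lets the module structure $E_p(M)\cdot E^+_p(M)\subset E^+_p(M)$ of Corollary~\ref{corollary:EMmod} absorb them and reduce the whole computation to that single containment.
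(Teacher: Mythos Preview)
Your proof is correct and follows essentially the same approach as the paper: decompose $\Sub(\u{t},w)$ into $\edot_p$-equivalence classes, transport each class to $\P_\ev(M)$ via $Y\mapsto\f_Y\u{\epsilon}$, split the denominator $o(\f_Y\u{\epsilon})$ into the $\alpha_p$-bearing factor over $i\in M$ and the $R_p$-unit factor over $i\notin M$, and then use the ring and module structure of $E_p(M)$ and $E^+_p(M)$ (Lemma~\ref{lemma:Epsubring}, Corollaries~\ref{corollary:YlessibetainEpM} and~\ref{corollary:EMmod}) together with the sign identity~(\ref{eq:summod2}) to conclude. Your $\tilde g,\tilde h,w_Y^{-1},o_M$ are exactly the paper's $p,q,\tilde v,\prod_{i\in M}\u{\delta}^{\to i}$ respectively, and the final divisibility $\Sigma^\ev_M(\phi)\in\alpha_p^{|M|}R_p$ followed by~(\ref{eq:intersectionQRp}) is identical.
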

\begin{proof}
Let $g\in\X^w(\u{t})$. We need to prove that $\<g|h\>\in R$ for any $h\in\X_w(\u{t})$.
In order to do it, let us choose an arbitrary reflection $p$ and consider the decomposition
$$
\Sub(\u{t},w)=\Theta_1\sqcup\Theta_2\sqcup\cdots\sqcup\Theta_k
$$
into $\edot_p$-equivalence classes. Thus
$$
\<g|h\>=\sum_{m=1}^k\sum_{\u{\epsilon}\in\Theta_m}\frac{g(\u{\epsilon})h(\u{\epsilon})}{o(\u{\epsilon})}.
$$
Let us consider any of the above equivalence classes $\Theta=\Theta_m$
and choose its arbitrary representative $\u{\delta}$. We set $M=\M_p(\u{\delta})$.
By Lemma~\ref{lemma:sim}\ref{lemma:sim:iv}, we can rewrite the inner sum in the above formula as follows
\begin{equation}\label{eq:suminRp}
\sum_{\u{\epsilon}\in\Theta}\frac{g(\u{\epsilon})h(\u{\epsilon})}{o(\u{\epsilon})}
=\sum_{Y\subset_\ev M}\frac{g(\f_Y\u{\delta})h(\f_Y\u{\delta})}{o(\f_Y\u{\delta})}
=\sum_{Y\subset_\ev M}p(Y)q(Y)u(Y),
\end{equation}
where the functions $p\in R^\ev(M)$, $q\in R^\ev(M)$, $u\in Q^\ev(M)$ are defined by
$$
p(Y)=g(\f_Y\u{\delta}),\quad q(Y)=h(\f_Y\u{\delta}),\quad u(Y)=\frac1{o(\f_Y\u{\delta})}.
$$
We are going to prove that the right-hand side of~(\ref{eq:suminRp}) belongs to $R_p$.
As for any $X\subset M$ and $Z\subset_\ev M$, we have
$$
\Sigma^{Z,\ev}_X(p)=\sum_{Y\subset_\ev X}(-1)^{|Y|_X}p(Y\triangle Z)
=\sum_{Y\subset_\ev X}(-1)^{|Y|_X}g(\f_Y\f_Z\u{\delta})=\Sigma^{\f_Z\u{\delta},\ev}_X(g)
$$
and similarly
$$
\Sigma^{Z,\ev}_X(q)=\Sigma^{\f_Z\u{\delta},\ev}_X(h),
$$
we conclude that $p\in E^+_p(M)$ and $q\in E_p(M)$ (see Section~\ref{even_subsets} about the definitions).
By Corollary~\ref{corollary:EMmod}, we get $pq\in E^+_p(M)$.

As for the last function $u$, we decompose it into the product $u=u^{(1)}\cdots u^{(n)}$,
where $n=|\u{t}|$ and $u^{(i)}\in Q^\ev(M)$ is the following function (see Corollary~\ref{lemma:2}):
$$
u^{(i)}(Y)=\frac1{(\f_Y\u{\delta})^{\to i}}=p^{|Y^{<i}|}\frac1{\u{\delta}^{\to i}}.
$$
If $i\notin M$, then $u^{(i)}\in E_p(M)$ by the GKM-condition and Corollary~\ref{corollary:YlessibetainEpM}.
Thus representing $u=v\tilde v$, where
$$
v=\prod_{i\in M}u^{(i)},\qquad \tilde v=\prod_{i\in\{1,\ldots,n\}\setminus M} u^{(i)},
$$
we get $pq\tilde v\in E^+_p(M)$ by Corollary~\ref{corollary:EMmod}.
Next, we can compute the map $v$, applying~(\ref{eq:summod2}), as follows:
$$
v(Y)=\prod_{i\in M}(-1)^{|Y^{<i}|}\frac1{\u{\delta}^{\to i}}=(-1)^{\sum_{i\in M}|Y^{<i}|}\frac1{\prod_{i\in M}\u{\delta}^{\to i}}
=(-1)^{|Y|_M}\frac1{\prod_{i\in M}\u{\delta}^{\to i}}.
$$
Hence we get
$$
\sum_{Y\subset_\ev M}p(Y)q(Y)u(Y)=\frac{\Sigma_M^{\emptyset,\ev}(pq\tilde v)}{\prod_{i\in M}\u{\delta}^{\to i}}.
$$
By~(\ref{eq:alphattoi}), the denominator of the right-hand side is proportional to $\alpha_p^{|M|}$.
On the other hand, $\Sigma_M^{\emptyset,\ev}(pq\tilde v)\in\alpha_p^{|M|}R_p$, as $pq\tilde v\in E^+_p(M)$.
Thus we have proved that~(\ref{eq:suminRp}) and, therefore, $\<g|h\>$ belongs to $R_p$.
To finish the proof, it suffices to apply~(\ref{eq:intersectionQRp}).
\end{proof}

\begin{lemma}\label{lemma:subset:2}
$D\Y_w(\u{t})\subset\X^w(\u{t})$.
\end{lemma}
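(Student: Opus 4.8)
The plan is to verify that an arbitrary $g\in D\Y_w(\u{t})$ satisfies the two conditions characterising $\X^w(\u{t})$ in Lemma~\ref{lemma:18} — that $g$ is $R$-valued, and that $\Sigma_X^{\u{\epsilon},\ev}(g)\in\alpha_p^{|X|}R$ for every $\u{\epsilon}\in\Sub(\u{t},w)$, $p\in T$ and $X\subset\M_p(\u{\epsilon})$. The point is that both can be extracted by pairing $g$ against a single, carefully chosen element of $\Y_w(\u{t})$. Fix such a triple $(\u{\epsilon},p,X)$, write $n=|\u{t}|$ and $Z=\{1,\ldots,n\}\setminus X$, and take $h_X=\nabla_{\u{\epsilon}}^Z$ in the notation~(\ref{eq:nablaXepsilon}): the light-leaves basis vector of $\X(\u{t})$ that places $\nabla$ exactly at the positions of $Z$ with decoration $\epsilon_i$ at each $i\in Z$. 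It lies in $\X(\u{t})$ (it is a value of some ${\mathfrak B}_{\u{t}}^\Upsilon$; equivalently, it is obtained by applying $\Delta$'s and $\nabla_{\epsilon_i}$'s to $1\in\X(\u{\emptyset})$, which stays in $\X$ by Lemmas~\ref{lemma:Delta}\ref{lemma:Delta:i} and~\ref{lemma:nabla}\ref{lemma:nabla:i}), so its restriction lies in $\Y_w(\u{t})$.

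Unwinding the recursion defining ${\mathfrak B}$ — each $\nabla_e$ multiplies the current function by $\u{\eta}^{\to i}$ and kills the values with $\eta_i\ne e$, each $\Delta$ merely copies — gives the closed form: $h_X(\u{\eta})=\prod_{i\in Z}\u{\eta}^{\to i}$ when $\eta_i=\epsilon_i$ for all $i\in Z$, and $h_X(\u{\eta})=0$ otherwise. Hence the support of $h_X$ is $\{\f_Y\u{\epsilon}:Y\subset X\}$, and since $(\f_Y\u{\epsilon})^{\max}=p^{|Y|}w$ by Corollary~\ref{lemma:2}, the restriction $h_X|_{\Sub(\u{t},w)}$ is supported precisely on $\{\f_Y\u{\epsilon}:Y\subset_\ev X\}$. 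On that support one has $o(\u{\eta})=\prod_{i=1}^n\u{\eta}^{\to i}=h_X(\u{\eta})\cdot\prod_{i\in X}\u{\eta}^{\to i}$, so
$$
\<g\mid h_X|_{\Sub(\u{t},w)}\>=\sum_{Y\subset_\ev X}\frac{g(\f_Y\u{\epsilon})}{\prod_{i\in X}(\f_Y\u{\epsilon})^{\to i}}.
$$
For $i\in X\subset\M_p(\u{\epsilon})$ the root $\u{\epsilon}^{\to i}$ is proportional to $\alpha_p$ by~(\ref{eq:alphattoi}), and $(\f_Y\u{\epsilon})^{\to i}=p^{|Y^{<i}|}\u{\epsilon}^{\to i}=(-1)^{|Y^{<i}|}\u{\epsilon}^{\to i}$ by Corollary~\ref{lemma:2} (as $p$ negates elements proportional to $\alpha_p$); combining over $i\in X$ with~(\ref{eq:summod2}) and the evenness of $|Y|$ yields $\prod_{i\in X}(\f_Y\u{\epsilon})^{\to i}=(-1)^{|Y|_X}\prod_{i\in X}\u{\epsilon}^{\to i}$. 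Therefore
$$
\<g\mid h_X|_{\Sub(\u{t},w)}\>=\frac{1}{\prod_{i\in X}\u{\epsilon}^{\to i}}\sum_{Y\subset_\ev X}(-1)^{|Y|_X}g(\f_Y\u{\epsilon})=\frac{\Sigma_X^{\u{\epsilon},\ev}(g)}{\prod_{i\in X}\u{\epsilon}^{\to i}}.
$$

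The left-hand side lies in $R$ because $g\in D\Y_w(\u{t})$ and $h_X|_{\Sub(\u{t},w)}\in\Y_w(\u{t})$, while $\prod_{i\in X}\u{\epsilon}^{\to i}$ is a nonzero scalar multiple of $\alpha_p^{|X|}$ by~(\ref{eq:alphattoi}); hence $\Sigma_X^{\u{\epsilon},\ev}(g)\in\alpha_p^{|X|}R$. Taking $X=\emptyset$ gives $g(\u{\epsilon})\in R$ for all $\u{\epsilon}\in\Sub(\u{t},w)$, so $g$ is $R$-valued, and both conditions of Lemma~\ref{lemma:18} hold; thus $g\in\X^w(\u{t})$. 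The step I expect to need the most care is pinning down $h_X=\nabla_{\u{\epsilon}}^Z$ and its closed form, together with the observation $o=h_X\cdot\prod_{i\in X}\u{\eta}^{\to i}$ on its support — this is exactly what lets a single light-leaves vector (rather than the family of $E_p(M)$-probes used for the reverse inclusion in Lemma~\ref{lemma:subset:1}) isolate $\Sigma_X^{\u{\epsilon},\ev}(g)$; the remaining sign bookkeeping through~(\ref{eq:summod2}) is routine.
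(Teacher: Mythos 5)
Your proof is correct and is essentially the paper's own argument: the probe $h_X=\nabla_{\u{\epsilon}}^Z$ with $Z=\{1,\ldots,n\}\setminus X$ is exactly the paper's element $u=\nabla_{\u{\epsilon}}^{\bar X}$, and the identity $o(\f_Y\u{\epsilon})=h_X(\f_Y\u{\epsilon})\prod_{i\in X}(\f_Y\u{\epsilon})^{\to i}=(-1)^{|Y|_X}\,e\,\alpha_p^{|X|}\,h_X(\f_Y\u{\epsilon})$ is the same sign bookkeeping via~(\ref{eq:summod2}) that the paper uses to show $\Sigma_X^{\u{\epsilon},\ev}(g)=e\alpha_p^{|X|}\<g|h\>$. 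The only cosmetic difference is that you solve for $\<g|h\>$ while the paper solves for $\Sigma_X^{\u{\epsilon},\ev}(g)$; the substance, including deriving $R$-valuedness from the case $X=\emptyset$, is identical.
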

\noindent
{\it Proof.}
Let $g\in D\Y_w(\u{t})$. We need to prove that $g\in R^{\oplus\u{t}}(w)$ and that
$\Sigma_X^{\u{\epsilon},\ev}(g)\in\alpha_p^{|X|}R$
for any $\u{\epsilon}\in\Sub(\u{t},w)$, $p\in T$ and $X\subset\M_p(\u{\epsilon})$.
We will prove only the second fact, the first one being its special case $X=\emptyset$.
Note that by~(\ref{eq:alphattoi}) we have
\begin{equation}\label{eq:prod_epsilon_to_i}
\prod_{i\in X}\u{\epsilon}^{\to i}=e\alpha_p^{|X|}
\end{equation}
for some $e\in\k^\times$.

Let us consider the complement $\bar X=\{1,\ldots,n\}\setminus X$, where $n=|\u{t}|$, and
the following element of $\X(\u{t})$ (see~(\ref{eq:nablaXepsilon}) for the notation):
$$
u=\nabla^{\bar X}_{\u{\epsilon}}.
$$
It follows directly from the definition of the copy and concentration operators that
$$
u(\u{\gamma})=\prod_{i\in\bar X}\u{\gamma}^{\to i}.
$$
for any $\u{\gamma}=\f_Y\u{\epsilon}$, where $Y\subset X$, and that $u(\u{\gamma})=0$
for subexpressions $\u{\gamma}\subset\u{t}$ not having this form.
Therefore, for any $Y\subset_\ev X$, we get by Corollary~\ref{lemma:2},~(\ref{eq:summod2})
and~(\ref{eq:prod_epsilon_to_i}) that
\begin{multline*}
o(\f_Y\u{\epsilon})=u(\f_Y\u{\epsilon})\prod_{i\in X}^n(\f_Y\u{\epsilon})^{\to i}
=u(\f_Y\u{\epsilon})\prod_{i\in X}^np^{|Y^{<i}|}\u{\epsilon}^{\to i}
=u(\f_Y\u{\epsilon})\prod_{i\in X}^n(-1)^{|Y^{<i}|}\u{\epsilon}^{\to i}\\
=(-1)^{\sum_{i\in X}|Y^{<i}|}u(\f_Y\u{\epsilon})\prod_{i\in X}^n\u{\epsilon}^{\to i}
=e\cdot(-1)^{|Y|_X}u(\f_Y\u{\epsilon})\alpha_p^{|X|}
\end{multline*}
Let $h=u|_{\Sub(\u{t},w)}$. By definition, $h\in\Y_w(\u{t})$. Finally, we get by the above formula that
\begin{multline*}
\Sigma_X^{\u{\epsilon},\ev}(g)=\sum_{Y\subset_\ev X}(-1)^{|Y|_X}g(\f_Y\u{\epsilon})
=\sum_{Y\subset_\ev X}(-1)^{|Y|_X}\frac{g(\f_Y\u{\epsilon})o(\f_Y\u{\epsilon})}{o(\f_Y\u{\epsilon})}\\
=e\alpha_p^{|X|}\cdot\sum_{Y\subset_\ev X}\frac{g(\f_Y\u{\epsilon})u(\f_Y\u{\epsilon})  }{o(\f_Y\u{\epsilon})}
=e\alpha_p^{|X|}\<g|h\>\in\alpha_p^{|X|}R.\qquad\square
\end{multline*}

\begin{lemma}\label{lemma:subset:3}
$\Y_w(\u{t})\subset\X_w(\u{t})$.
\end{lemma}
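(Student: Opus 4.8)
The plan is to unravel the definitions: a typical element of $\Y_w(\u{t})$ is a restriction $g|_{\Sub(\u{t},w)}$ of some $g\in\X(\u{t})$, and I must verify that it satisfies the defining congruences of $\X_w(\u{t})$. So fix $g\in\X(\u{t})$, $\u{\epsilon}\in\Sub(\u{t},w)$, $p\in T$ and a nonempty $X\subset\M_p(\u{\epsilon})$. For every even $Y\subset X$ one has $(\f_Y\u{\epsilon})^{\max}=p^{|Y|}w=w$ by Corollary~\ref{lemma:2}, so $\f_Y\u{\epsilon}\in\Sub(\u{t},w)$; hence $\Sigma_X^{\u{\epsilon},\ev}$ applied to the restriction agrees with $\Sigma_X^{\u{\epsilon},\ev}(g)$, and the goal becomes $\Sigma_X^{\u{\epsilon},\ev}(g)\in\alpha_p^{|X|-1}R$. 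The gain from passing to $g\in\X(\u{t})$ is that I may now freely use the congruences $\Sigma_Z^{\u{\gamma}}(g)\in\alpha_q^{|Z|}R$ for \emph{all} $\u{\gamma}\subset\u{t}$, not only those of target $w$.

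The heart of the argument is a parity manipulation. Put $\hat\Sigma=\sum_{Y\subset X}(-1)^{|Y|_X+|Y|}g(\f_Y\u{\epsilon})$; since $(-1)^{|Y|_X}\bigl(1+(-1)^{|Y|}\bigr)$ equals $2(-1)^{|Y|_X}$ on even $Y$ and vanishes on odd $Y$, one gets $\Sigma_X^{\u{\epsilon}}(g)+\hat\Sigma=2\,\Sigma_X^{\u{\epsilon},\ev}(g)$. Next I peel off the maximal element $x$ of $X$ from $\hat\Sigma$: writing $X'=X\setminus\{x\}$, formula~(\ref{eq:8}) gives $|Y|_X+|Y|\equiv|Y|_{X'}\pmod2$ for $Y\subset X'$, while for $Y=Y_0\cup\{x\}$ with $Y_0\subset X'$ one has $\f_Y\u{\epsilon}=\f_{Y_0}\f_x\u{\epsilon}$ by~(\ref{eq:fXfY}) and, using~(\ref{eq:7}) and~(\ref{eq:8}), $|Y|_X+|Y|=|Y_0|_X+|Y_0|+2\equiv|Y_0|_{X'}\pmod2$. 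Splitting the sum defining $\hat\Sigma$ according to whether $x\in Y$ therefore yields the clean identity $\hat\Sigma=\Sigma_{X'}^{\u{\epsilon}}(g)+\Sigma_{X'}^{\f_x\u{\epsilon}}(g)$.

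It then remains only to feed in the membership of $g$ in $\X(\u{t})$. Since $X\subset\M_p(\u{\epsilon})$ we have $\Sigma_X^{\u{\epsilon}}(g)\in\alpha_p^{|X|}R$; since $X'\subset\M_p(\u{\epsilon})$ we have $\Sigma_{X'}^{\u{\epsilon}}(g)\in\alpha_p^{|X|-1}R$; and since $\u{\epsilon}\edot_p\f_x\u{\epsilon}$ (Lemma~\ref{lemma:sim}\ref{lemma:sim:iii}) forces $\M_p(\f_x\u{\epsilon})=\M_p(\u{\epsilon})\supseteq X'$ (Lemma~\ref{lemma:sim}\ref{lemma:sim:i}), also $\Sigma_{X'}^{\f_x\u{\epsilon}}(g)\in\alpha_p^{|X|-1}R$. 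Adding these, $2\,\Sigma_X^{\u{\epsilon},\ev}(g)\in\alpha_p^{|X|-1}R$, and since $2$ is invertible in $R$ we conclude $\Sigma_X^{\u{\epsilon},\ev}(g)\in\alpha_p^{|X|-1}R$, which is exactly the required congruence. No induction and no auxiliary module are needed; the only point where care is required is the two parity congruences used to split $\hat\Sigma$, and these are routine once one notes that $x$ sits at an odd position of $X$ so that adding $x$ to a subset of $X'$ changes the relative cardinality by exactly one.
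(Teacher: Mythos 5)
Your proof is correct. Both your argument and the paper's are built on the same core identity
$\Sigma_X^{\u{\epsilon}}(g)+\hat\Sigma=2\,\Sigma_X^{\u{\epsilon},\ev}(g)$, where
$\hat\Sigma=\sum_{Y\subset X}(-1)^{|Y|_X+|Y|}g(\f_Y\u{\epsilon})$, but you establish the divisibility of $\hat\Sigma$ by a different mechanism. The paper picks the auxiliary function $h(\u{\delta})=\u{\delta}^{\leftarrow x}$, uses the ring structure of $\X(\u{t})$ (Corollary~\ref{corollary:2}) to conclude $hg\in\X(\u{t})$, and then reads $\hat\Sigma$ off as $\Sigma_X^{\u{\epsilon}}(hg)/\u{\epsilon}^{\leftarrow x}$, which inherits divisibility by $\alpha_p^{|X|-1}$ from the defining condition on $\X(\u{t})$ applied to $hg$. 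You instead split $\hat\Sigma$ directly according to whether $x$ lies in the summation subset, obtaining the clean identity $\hat\Sigma=\Sigma_{X'}^{\u{\epsilon}}(g)+\Sigma_{X'}^{\f_x\u{\epsilon}}(g)$, and each summand is already divisible by $\alpha_p^{|X'|}=\alpha_p^{|X|-1}$ straight from $g\in\X(\u{t})$ together with Lemma~\ref{lemma:sim}\ref{lemma:sim:i}. Your route avoids invoking the multiplicative structure of $\X(\u{t})$ (and hence the whole basis machinery feeding Corollary~\ref{corollary:2}), trading a one-line multiplication trick for a two-term split of the odd-signed sum. The parity bookkeeping you do --- using~(\ref{eq:7}),~(\ref{eq:8}),~(\ref{eq:fXfY}) and the observation that $x=\max X$ sits at an odd relative position --- is exactly right, and the final step of cancelling $2$ is legitimate since $\mathrm{char}\,\k\ne2$.
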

\begin{proof}
We denote $n=|\u{t}|$. Let $g\in\X(\u{t})$. We need to prove that $\Sigma_X^{\u{\epsilon},\ev}(g)$
is divisible by $\alpha_p^{|X|-1}$ for any $\u{\epsilon}\in\Sub(\u{t},w)$, $p\in T$
and nonempty $X\subset\M_p(\u{\epsilon})$.

Let $x$ be the maximal element of $X$. By Lemmas~\ref{lemma:elementsimLoc} and~\ref{lemma:3}, there is a function $h\in\X(\u{t})$
such that $h(\u{\delta})=\u{\delta}^{\leftarrow x}$ for any $\u{\delta}\subset\u{t}$.
By Corollary~\ref{corollary:2}, we get $hg\in\X(\u{t})$. Hence the following elements of $R$ are divisible by $\alpha_p^{|X|}$:
\begin{multline*}
\Sigma_X^{\u{\epsilon}}(hg)=\sum_{Y\subset X}(-1)^{|Y|_X}(\f_Y\u{\epsilon})^{\leftarrow x}\cdot g(\f_Y\u{\epsilon})\\
\shoveright{=\sum_{Y\subset X}(-1)^{|Y|_X}p^{|Y|}\u{\epsilon}^{\leftarrow x}\cdot g(\f_Y\u{\epsilon})
=\u{\epsilon}^{\leftarrow x}\sum_{Y\subset X}(-1)^{|Y|_X+|Y|}g(\f_Y\u{\epsilon}),}\\[6pt]
\shoveleft{
\Sigma_X^{\u{\epsilon}}(\u{\epsilon}^{\leftarrow x}\cdot g)=\u{\epsilon}^{\leftarrow x}\Sigma_X^{\u{\epsilon}}(g)=\u{\epsilon}^{\leftarrow x}\sum_{Y\subset X}(-1)^{|Y|_X}g(\f_Y\u{\epsilon}).
}\\
\end{multline*}
Taking the sum, we get
$$
\alpha_p^{|X|}R\ni\Sigma_X^{\u{\epsilon}}(hg)+\Sigma_X^{\u{\epsilon}}(\u{\epsilon}^{\leftarrow x}\cdot g)=2\u{\epsilon}^{\leftarrow x}\Sigma_X^{\u{\epsilon},\ev}(g).
$$
Cancelling out $2\u{\epsilon}^{\leftarrow x}\sim\alpha_p$, we get the required claim.
\end{proof}

\begin{theorem}\label{theorem:DYDX}
$D\Y_w(\u{t})=D\X_w(\u{t})=\X^w(\u{t})$.
\end{theorem}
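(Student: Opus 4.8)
The plan is to simply assemble the three inclusions established in Lemmas~\ref{lemma:subset:1}, \ref{lemma:subset:2} and~\ref{lemma:subset:3}, using the order-reversing property~(\ref{eq:DNDM}) of the operator $D$, into a cycle of inclusions that forces all three modules to coincide. No further computation is needed; the genuine content has already been spent proving those lemmas (the delicate even-subset divisibility arguments of Section~\ref{even_subsets} and the explicit witnesses built from copy and concentration operators), so the present statement is purely formal bookkeeping.

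Concretely, I would first apply~(\ref{eq:DNDM}) to the inclusion $\Y_w(\u{t})\subset\X_w(\u{t})$ of Lemma~\ref{lemma:subset:3}, obtaining
$$
D\X_w(\u{t})\subset D\Y_w(\u{t}).
$$
Next I would combine this with Lemma~\ref{lemma:subset:1}, which gives $\X^w(\u{t})\subset D\X_w(\u{t})$, and with Lemma~\ref{lemma:subset:2}, which gives $D\Y_w(\u{t})\subset\X^w(\u{t})$, to produce the chain
$$
\X^w(\u{t})\subset D\X_w(\u{t})\subset D\Y_w(\u{t})\subset\X^w(\u{t}).
$$
Since the first and last terms are equal, every inclusion in this chain is an equality, which is exactly the assertion $D\Y_w(\u{t})=D\X_w(\u{t})=\X^w(\u{t})$.

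Nothing here is an obstacle: the only point to double-check is that the three lemmas are stated for the same triple $(w,\u{t})$ and that~(\ref{eq:DNDM}) is being applied to graded left $R$-submodules of the fixed ambient module $Q^{\oplus\u{t}}(w)$, which is indeed the common setting of all the objects involved. One may optionally remark that, as a by-product, the perfect pairing~(\ref{eq:MDMpairing}) now restricts to a pairing $\X_w(\u{t})\times\X^w(\u{t})\to R(-2n)$ and $\Y_w(\u{t})\times\X^w(\u{t})\to R(-2n)$, a fact that will be exploited in the subsequent study of reflexivity.
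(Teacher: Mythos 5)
Your proof is correct and is essentially identical to the paper's own argument: the paper assembles the same chain $\X^w(\u{t})\subset D\X_w(\u{t})\subset D\Y_w(\u{t})\subset\X^w(\u{t})$ from~(\ref{eq:DNDM}) and Lemmas~\ref{lemma:subset:1},~\ref{lemma:subset:2},~\ref{lemma:subset:3}.
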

\begin{proof}
The result follows from the chain of inclusions
$$
\X^w(\u{t})\subset D\X_w(\u{t})\subset D\Y_w(\u{t})\subset\X^w(\u{t}),
$$
which follows from~(\ref{eq:DNDM}) and Lemmas~\ref{lemma:subset:1},~\ref{lemma:subset:2},~\ref{lemma:subset:3}.
\end{proof}

\begin{theorem}\label{theorem:duals}
Let $M$ be a graded left $R$-module such that $\Y_m(\u{t})\subset M\subset\X_w(\u{t})$.
The inner product on $Q^{\oplus\u{t}}(w)$ induces the isomorphisms
\begin{equation}\label{eq:20}
\hspace{-20pt}
\X^w(\u{t})\ito\Hom_{R\text{-left}}^\bullet(M,R)(-2|\u{t}|),
\end{equation}
\begin{equation}\label{eq:21}
\X_w(\u{t})\ito\Hom_{R\text{-left}}^\bullet(\X^w(\u{t}),R)(-2|\u{t}|).
\end{equation}
\end{theorem}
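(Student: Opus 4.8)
The plan is to deduce both isomorphisms from one elementary fact about dual lattices, applied first to the modules $M$ in the prescribed range and then to $\X^w(\u{t})$ itself. Write $F=Q^{\oplus\u{t}}(w)$, a free $Q$-module with basis the characteristic functions $1_{\u{\epsilon}}$ of the singletons $\{\u{\epsilon}\}$, $\u{\epsilon}\in\Sub(\u{t},w)$. On $F$ the inner product is the diagonal form $\langle 1_{\u{\epsilon}}\mid 1_{\u{\delta}}\rangle=\delta_{\u{\epsilon}\u{\delta}}/o(\u{\epsilon})$, which is nondegenerate over $Q$, so $g\mapsto\langle g\mid\_\rangle$ is a graded isomorphism $F\ito\Hom_Q^\bullet(F,Q)(-2n)$. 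I would record the following \emph{dual lattice lemma}: if $L\subset F$ is a graded $R$-submodule with $L\otimes_R Q=F$, then $g\mapsto\langle g\mid\_\rangle$ restricts to a graded isomorphism of left $R$-modules $DL\ito\Hom_{R\text{-left}}^\bullet(L,R)(-2n)$. Injectivity is immediate because $L$ spans $F$ over $Q$; surjectivity follows by extending a homomorphism $\phi\colon L\to R$ to $\phi\otimes_R\id_Q\colon F\to Q$, writing it uniquely as $\langle g\mid\_\rangle$, and noting that $\langle g\mid L\rangle\subset R$ forces $g\in DL$. (Here $\Hom_R^\bullet(L,R)=\Hom_R(L,R)$ because $R$ is finitely generated, and the shift is exactly $-2n$ since the pairing takes values in $Q(-2n)$.)

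Next I would check the localization input. Since $\mathfrak B_{\u{t}}^\Upsilon$ is an $R$-basis of $\X(\u{t})$ of cardinality $2^{|\u{t}|}=\dim_Q Q^{\oplus\u{t}}$ (Theorem~\ref{theorem:2}), tensoring with $Q$ gives $\X(\u{t})\otimes_R Q\cong Q^{\oplus\u{t}}$; applying the flat restriction to $\Sub(\u{t},w)$ then gives $\Y_w(\u{t})\otimes_R Q=F$, and hence $M\otimes_R Q=F$ for every $M$ with $\Y_w(\u{t})\subset M\subset\X_w(\u{t})$. Now (\ref{eq:20}) drops out: by (\ref{eq:DNDM}) and Theorem~\ref{theorem:DYDX} we have $\X^w(\u{t})=D\X_w(\u{t})\subset DM\subset D\Y_w(\u{t})=\X^w(\u{t})$, so $DM=\X^w(\u{t})$, and the dual lattice lemma with $L=M$ yields the isomorphism $\X^w(\u{t})=DM\ito\Hom_{R\text{-left}}^\bullet(M,R)(-2n)$.

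For (\ref{eq:21}) the essential — and, I expect, the only genuinely new — step is the identity $D\X^w(\u{t})=\X_w(\u{t})$, which is \emph{not} contained in Theorem~\ref{theorem:DYDX}. I would prove it by exhibiting $\X_w(\u{t})$ as itself a dual lattice. One computes $\langle o(\u{\epsilon})1_{\u{\epsilon}}\mid g\rangle=g(\u{\epsilon})$ and, for $\u{\epsilon}\in\Sub(\u{t},w)$, $p\in T$ and nonempty $X\subset\M_p(\u{\epsilon})$,
$$\big\langle\,\alpha_p^{1-|X|}\textstyle\sum_{Y\subset_\ev X}(-1)^{|Y|_X}o(\f_Y\u{\epsilon})\,1_{\f_Y\u{\epsilon}}\,\big|\,g\,\big\rangle=\alpha_p^{1-|X|}\,\Sigma_X^{\u{\epsilon},\ev}(g),$$
where $\f_Y\u{\epsilon}\in\Sub(\u{t},w)$ because $|Y|$ is even (Lemma~\ref{lemma:sim}\ref{lemma:sim:iv}). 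Let $N\subset F$ be the $R$-submodule generated by all the elements $o(\u{\epsilon})1_{\u{\epsilon}}$ together with all the elements displayed on the left. Unwinding the definition of $\X_w(\u{t})$, a function $g\in F$ lies in $DN$ precisely when $g\in R^{\oplus\u{t}}(w)$ and $\Sigma_X^{\u{\epsilon},\ev}(g)\in\alpha_p^{|X|-1}R$ for all admissible $(\u{\epsilon},p,X)$, i.e. exactly when $g\in\X_w(\u{t})$; thus $DN=\X_w(\u{t})$. Since $N$ contains the $Q$-basis $\{o(\u{\epsilon})1_{\u{\epsilon}}\}$ it is of full rank; from $N\subset D(DN)$ (formal) and $D(DN)=D\X_w(\u{t})=\X^w(\u{t})$ (Theorem~\ref{theorem:DYDX}) we get $N\subset\X^w(\u{t})$, so $\X^w(\u{t})$ is of full rank too. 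Applying $D$ to $N\subset\X^w(\u{t})$ gives $D\X^w(\u{t})\subset DN=\X_w(\u{t})$, while $\X_w(\u{t})\subset D(D\X_w(\u{t}))=D\X^w(\u{t})$ is formal; hence $D\X^w(\u{t})=\X_w(\u{t})$. The dual lattice lemma applied with $L=\X^w(\u{t})$ then gives $\X_w(\u{t})=D\X^w(\u{t})\ito\Hom_{R\text{-left}}^\bullet(\X^w(\u{t}),R)(-2n)$, which is (\ref{eq:21}).

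Both displayed isomorphisms are, by construction, the maps induced by the inner product, with the claimed degree shift, and the grading bookkeeping causes no trouble because every module occurring is finitely generated over the Noetherian domain $R$. The one computation that requires genuine care, and where I would concentrate the write-up, is the check that $DN$ is \emph{exactly} $\X_w(\u{t})$ — i.e. that the functionals $g\mapsto g(\u{\epsilon})$ and $g\mapsto\alpha_p^{1-|X|}\Sigma_X^{\u{\epsilon},\ev}(g)$ together cut out precisely the defining congruences of $\X_w(\u{t})$; after that everything reduces to formal Galois-connection manipulation combined with the dual lattice lemma.
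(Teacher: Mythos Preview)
Your proof is correct and runs parallel to the paper's, but is organized around a cleaner abstraction. The paper constructs the inverse maps by hand: given $\phi\in\Hom(M,R)$ (resp.\ $\psi\in\Hom(\X^w,R)$) it sets $h(\u{\epsilon})=\phi(\mu_{\u{\epsilon}})$ (resp.\ $g(\u{\epsilon})=\psi(\mu_{\u{\epsilon}})$) using the delta functions $\mu_{\u{\epsilon}}=o(\u{\epsilon})1_{\u{\epsilon}}\in\Y_w(\u{t})$, clears denominators with $O=\prod_{\u{\delta}}o(\u{\delta})$ to verify $\langle\_\mid h\rangle=\phi$ (resp.\ $\langle g\mid\_\rangle=\psi$), and then checks $h\in DM=\X^w$ directly; for the second isomorphism it verifies $g\in\X_w$ by producing, for each triple $(\u{\epsilon},p,X)$, the element $\lambda=\alpha_p^{1-|X|}\sum_{Y\subset_\ev X}(-1)^{|Y|_X}\mu_{\f_Y\u{\epsilon}}$, showing $\lambda\in D\X_w=\X^w$ and computing $\psi(\lambda)=\alpha_p^{1-|X|}\Sigma_X^{\u{\epsilon},\ev}(g)\in R$.

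Your dual lattice lemma packages the ``clear denominators and extend to $Q$'' argument once and for all, and your auxiliary module $N$ is generated precisely by the paper's $\mu_{\u{\epsilon}}$ and $\lambda$'s; the formal inclusion $N\subset D(DN)=D\X_w(\u{t})=\X^w(\u{t})$ replaces the paper's direct verification that each $\lambda\in\X^w(\u{t})$. So the computational core is identical, but you get the membership $N\subset\X^w(\u{t})$ and the identity $D\X^w(\u{t})=\X_w(\u{t})$ from the Galois connection for free, whereas the paper re-derives the relevant piece each time. The price is that you must make the full-rank hypothesis explicit --- which the paper handles implicitly through the very same $\mu_{\u{\epsilon}}$; citing them would be a shorter route than the appeal to Theorem~\ref{theorem:2}.
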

\noindent
{\it Proof.}
It follows from~(\ref{eq:MDMpairing}) and Theorem~\ref{theorem:DYDX} that these maps are well-defined
and $R$-homomorphisms. Therefore, we need to prove that they are bijective Let $n=|\u{t}|$.

To this end, for any $\u{\epsilon}\in\Sub(\u{t},w)$, we introduce a function
$$
\mu_{\u{\epsilon}}=\nabla_{\u{\epsilon}}^{\{1,2,\ldots,n\}}\Big|_{\Sub(\u{t},w)}.
$$
By Lemma~\ref{lemma:nabla}, it belongs to $\Y_w(\u{t})$ and thus also to $M$.
Moreover, $\mu_{\u{\epsilon}}$ takes a nonzero value at exactly one subexpression $\u{\epsilon}$, where
$\mu_{\u{\epsilon}}(\u{\epsilon})=o(\u{\epsilon})$.
Hence
$$
\<\mu_{\u{\epsilon}}|g\>=g(\u{\epsilon}).
$$
Thus $\mu_{\u{\epsilon}}\in D\X_w(\u{t})=\X^w(\u{t})\subset\X_w(\u{t})$ and both maps~(\ref{eq:20}) and~(\ref{eq:21})
are injective.

Let us prove that~(\ref{eq:20}) is surjective. Let $\phi:M\to R$ be a homomorphism of left $R$-mod\-ules.
We define a function $h\in R^{\otimes{\u{t}}}(w)$ by $h(\u{\epsilon})=\phi(\mu_{\u{\epsilon}})$.
We consider the following elements of $R$:
$$
O=\prod_{\u{\delta}\in\Sub(\u{t},w)}o(\u{\delta}),\qquad O_{\u{\epsilon}}=\prod_{\u{\delta}\in\Sub(\u{t},w)\atop\u{\delta}\ne\u{\epsilon}}o(\u{\delta}).
$$
For any $g\in M$, we get the following decomposition:
$$
O\cdot g=\sum_{\u{\epsilon}\in\Sub(\u{t},w)} O_{\u{\epsilon}}\,g(\u{\epsilon})\cdot\mu_{\u{\epsilon}}.
$$
Hence
\begin{multline*}
O\<g|h\>=\<O\cdot g|h\>=\sum_{\u{\epsilon}\in\Sub(\u{t},w)} O_{\u{\epsilon}}\,g(\u{\epsilon})\cdot\<\mu_{\u{\epsilon}}|h\>
=\sum_{\u{\epsilon}\in\Sub(\u{t},w)} O_{\u{\epsilon}}\,g(\u{\epsilon})\cdot h(\u{\epsilon})\\
=\sum_{\u{\epsilon}\in\Sub(\u{t},w)} O_{\u{\epsilon}}\,g(\u{\epsilon})\cdot \phi(\mu_{\u{\epsilon}})
=\phi\(\sum_{\u{\epsilon}\in\Sub(\u{t},w)} O_{\u{\epsilon}}\,g(\u{\epsilon})\cdot\mu_{\u{\epsilon}}\)
=\phi(O\cdot g)=O\phi(g).
\end{multline*}
Cancelling out $O$, we get $\<g|h\>=\phi(g)\in R$. As this formula holds for any $g\in M$,
we obtain $h\in DM=\X^w(\u{t})$ by Theorem~\ref{theorem:DYDX}. We have already proved that $h$ is mapped to $\phi$.

Finally, let us prove that~(\ref{eq:21}) is surjective. Let $\psi:\X^w(\u{t})\to R$ be a morphism of left $R$-modules.
Similarly to the previous argument, we define a function $g\in R^{\otimes{\u{t}}}(w)$ by
$g(\u{\epsilon})=\psi(\mu_{\u{\epsilon}})$.
Repeating the above calculation, with respect to the second argument $h\in\X^w(\u{t})$:
\begin{multline*}
O\<g|h\>=\<g|O\cdot h\>=\sum_{\u{\epsilon}\in\Sub(\u{t},w)}O_{\u{\epsilon}}h(\u{\epsilon})\cdot\<g|\mu_{\u{\epsilon}}\>
=\sum_{\u{\epsilon}\in\Sub(\u{t},w)}O_{\u{\epsilon}}h(\u{\epsilon})\cdot g(\u{\epsilon})\\
=\sum_{\u{\epsilon}\in\Sub(\u{t},w)}O_{\u{\epsilon}}h(\u{\epsilon})\cdot \psi(\mu_{\u{\epsilon}})
=\psi\(  \sum_{\u{\epsilon}\in\Sub(\u{t},w)}O_{\u{\epsilon}}h(\u{\epsilon})\cdot\mu_{\u{\epsilon}}   \)
=\psi(O\cdot h)=O\cdot\psi(h),
\end{multline*}
we get $\<g|h\>=\psi(h)$. It remain to prove that $g\in\X_w(\u{t})$, following the definition.
Let $\u{\epsilon}\in\Sub(\u{t},w)$, $p\in T$ and $\emptyset\ne X\subset\M_p(\u{\epsilon})$.
We consider the following function of $Q^{\oplus\u{t}}(w)$:
$$
\lambda=\sum_{Y\subset_\ev X}(-1)^{|Y|_X}\frac{\mu_{\f_Y\u{\epsilon}}}{\alpha_p^{|X|-1}}.
$$
It actually belongs to $D\X_w(\u{t})=\X^w(\u{t})$, as for any $u\in\X_w(\u{t})$, we get
$$
\<\lambda|u\>=\sum_{Y\subset_\ev X}(-1)^{|Y|_X}\frac{\<\mu_{\f_Y\u{\epsilon}}|u\>}{\alpha_p^{|X|-1}}
=\sum_{Y\subset_\ev X}(-1)^{|Y|_X}\frac{u(\f_Y\u{\epsilon})}{\alpha_p^{|X|-1}}=\frac{\Sigma_X^{\u{\epsilon},\ev}(u)}{\alpha_p^{|X|-1}}\in R.
$$
Then we get
$$
\hspace{46pt}
\Sigma_X^{\u{\epsilon},\ev}(g)=\sum_{Y\subset_\ev X}(-1)^{|Y|_X}g(\f_Y\u{\epsilon})
=\psi\(\alpha_p^{|X|-1}\lambda\)=\alpha_p^{|X|-1}\psi\(\lambda\)\in\alpha_p^{|X|-1}R.
\hspace{46pt}\square
$$

\begin{theorem}\label{theorem:reflexive} For any reflection expressions $\u{t}$ and $\u{t}'$
and elements $x,y,z,w\in W$,
$$
\Hom_{R\bim}^\bullet(R_x\otimes_R R(\u{t})\otimes_R R_y,R_z\otimes_R R(\u{t}')\otimes_R R_w)
$$
is reflexive as a left $R$-module and as a right $R$-module.
\end{theorem}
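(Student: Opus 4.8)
The plan is to reduce the bimodule statement to the one-sided statements already set up in Section~\ref{Skew_invariants} and then to exhibit an explicit perfect pairing. By the chain of isomorphisms at the start of Section~\ref{Skew_invariants} (which uses \eqref{eq:Rtw}, Lemmas~\ref{lemma:MtotimesNRx=MtoNtimesRinversex} and~\ref{lemma:Rxop}, Propositions~\ref{MopNop} and~\ref{NopotimesSMop}, and \eqref{eq:op}, \eqref{eq:twisted_prod}), the space in question is isomorphic, after twisting by invertible bimodules $R_v$ and a grading shift, to $\Hom^\bullet(R_u,R(\u{s}))$ for a suitable $u\in W$ and reflection expression $\u{s}=\bar{\u{t}'}\cup\u{t}$; the twists $R_v$ are themselves reflexive (indeed free of rank one) and reflexivity is insensitive to grading shift and to $(-)^{\op}$, so it suffices to prove that $\Hom^\bullet(R_u,R(\u{s}))\cong\X^u(\u{s})$ (Corollary~\ref{corollary:2} plus Lemma~\ref{lemma:zero}) is reflexive as a left $R$-module. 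Here I use that, since $R(\u{s})$ is finitely generated, the graded Hom agrees with the ungraded one, so there is no discrepancy between graded and ungraded reflexivity.

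Next I would invoke Theorem~\ref{theorem:duals}. Taking $M=\X^w(\u{t})$ itself (which is sandwiched between $\Y_w(\u{t})$ and $\X_w(\u{t})$ by Lemmas~\ref{lemma:subset:3} and~\ref{lemma:subset:1} together with Theorem~\ref{theorem:DYDX}), the isomorphism~\eqref{eq:20} reads
$$
\X^w(\u{t})\ito\Hom_{R\text{-left}}^\bullet\big(\X^w(\u{t}),R\big)(-2n)=\big(\X^w(\u{t})\big)^\vee(-2n),
$$
and~\eqref{eq:21} reads $\X_w(\u{t})\ito\big(\X^w(\u{t})\big)^\vee(-2n)$ as well; in particular $\X^w(\u{t})\cong\X_w(\u{t})$ and $\X_w(\u{t})\cong\big(\X^w(\u{t})\big)^\vee(-2n)$. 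Dualizing the first displayed isomorphism and shifting gives
$$
\big(\X^w(\u{t})\big)^{\vee\vee}\cong\big(\X_w(\u{t})\big)^\vee(2n)\cong\big(\X^w(\u{t})\big)^{\vee\vee}
$$
—more cleanly: apply $(-)^\vee$ to~\eqref{eq:20} with $M=\X^w(\u{t})$ to get $\big(\X^w(\u{t})\big)^{\vee\vee}\cong\big(\X^w(\u{t})\big)^\vee(2n)$, and then~\eqref{eq:21} identifies $\big(\X^w(\u{t})\big)^\vee(2n)\cong\X_w(\u{t})(4n)\cong\dots$. The honest bookkeeping I would carry out is: \eqref{eq:20}${}_{M=\X^w}$ is an iso $\X^w\to(\X^w)^\vee(-2n)$; applying $(-)^\vee(-2n)$ and using $M(k)^\vee\cong M^\vee(-k)$ yields $(\X^w)^\vee(-2n)\to(\X^w)^{\vee\vee}$; composing shows the natural evaluation $\X^w\to(\X^w)^{\vee\vee}$ is an isomorphism once one checks the composite is the canonical biduality map. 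That last compatibility — that the two instances of the inner-product pairing compose to the canonical evaluation map, rather than to some twisted variant — is the one genuine point to verify, and it follows because the pairing~\eqref{eq:MDMpairing} is symmetric and $R$-bilinear, so the square relating $M\to M^{\vee\vee}$ to the two pairing-induced maps commutes.

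Finally, the right-module statement. Rather than redoing everything, I would pass to opposite modules: by Proposition~\ref{MopNop} the right module $\Hom_{R\bim}^\bullet(\dots,\dots)$ has opposite module $\Hom_{R\bim}^\bullet\big((\dots)^\op,(\dots)^\op\big)$, and by Proposition~\ref{NopotimesSMop} together with Lemma~\ref{lemma:Rxop} the opposites of the Bott--Samelson-type bimodules $R_x\otimes_R R(\u{t})\otimes_R R_y$ are again of the same shape, with $R(\u{t})^\op\cong R(\bar{\u{t}})$ by the obvious flip (using commutativity of $R$ and $Q^{t_i}R$). Since $N$ is reflexive as a left module iff $N^\op$ is reflexive as a right module (because $(-)^\vee$ commutes with $(-)^\op$ up to the canonical identifications recorded after the definition of the dual module, and $(M^\op)^\op\cong M$), the already-established left-module reflexivity of the reindexed Hom-space gives the right-module reflexivity of the original. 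The main obstacle, such as it is, is purely organizational: tracking the grading shifts and the several $(-)^\op$'s through the long chain in Section~\ref{Skew_invariants} so that the two applications of Theorem~\ref{theorem:duals} really compose to the canonical biduality morphism; no new estimate or algebraic input beyond Theorems~\ref{theorem:DYDX} and~\ref{theorem:duals} is needed.
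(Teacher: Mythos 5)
Your strategy parallels the paper's: reduce through the isomorphism chain of Section~\ref{Skew_invariants} to a twisted one-sided module, then invoke Theorem~\ref{theorem:duals}. The paper keeps the $R_w$-twist in place and constructs the perfect pairing for $R_w\otimes\X^w(\u{t})$ directly using Lemma~\ref{lemma:19}; you strip the twist first, which is defensible since that same lemma shows $R_v\otimes_R(-)$ commutes with $(-)^\vee$. Your worry about compatibility of the pairing with the biduality map is a non-issue: for any bilinear pairing $M\times N\to R(n)$ with both evaluation maps isomorphisms, the biduality map $M\to M^{\vee\vee}$ factors as $\mathrm{ev}_2^\vee\circ\iota=\mathrm{ev}_1$ by a direct check, so reflexivity of $M$ is automatic; no symmetry is required.

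The genuine gap is the instantiation $M=\X^w(\u{t})$ in Theorem~\ref{theorem:duals}. That theorem requires $\Y_w(\u{t})\subset M\subset\X_w(\u{t})$, and $\Y_w(\u{t})\not\subset\X^w(\u{t})$ whenever $\u{t}$ is nonempty: the constant function $1$ lies in $\Y_w$ (it restricts $1\in\X(\u{t})$), yet $\Sigma^{\u{\epsilon},\ev}_{\{i\}}(1)=1\notin\alpha_p R$ for any $i\in\M_p(\u{\epsilon})$, so $1\notin\X^w$. The lemmas you cite give only $\Y_w\subset\X_w$ and $\X^w=D\X_w$, neither of which yields $\Y_w\subset\X^w$. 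As a result, your intermediate claims that~\eqref{eq:20}${}_{M=\X^w}$ reads $\X^w\ito(\X^w)^\vee(-2n)$, and hence that $\X^w\cong\X_w$, are unjustified and in fact false: already for $\u{t}=(s)$ and $w=1$ one has $\X^1((s))=\alpha_s R\cong R(-2)$ while $\X_1((s))=R$. The fix is to take $M=\X_w(\u{t})$ (or $M=\Y_w(\u{t})$), for which the sandwich hypothesis holds trivially. Then~\eqref{eq:20} gives $\X^w\ito(\X_w)^\vee(-2n)$ and~\eqref{eq:21} gives $\X_w\ito(\X^w)^\vee(-2n)$; these two maps are precisely the evaluation maps of the pairing $\X^w\times\X_w\to R(-2n)$, which is therefore perfect, and $\X^w$ is reflexive by pairing against the genuinely different module $\X_w$, not by self-duality.
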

\begin{proof} In view of the isomorphisms of Section~\ref{Opposite_(bi)modules} and the calculations at the beginning
of Section~\ref{Skew_invariants}, it suffices to prove that the left $R$-module $R_w\otimes\X^w(\u{t})$
is reflexive for any reflection expression $\u{t}$ and $w\in W$. To this end, we will prove that
the pairing
$$
R_w\otimes\X_w(\u{t})\times R_w\otimes\X^w(\u{t})\to R(-2n),
$$
where $n=|\u{t}|$, defined by
$$
\<r\otimes g|r'\otimes h\>=rr'\cdot w\<g|h\>
$$
is perfect. Theorem~\ref{theorem:duals} already proves this fact for $w=1$. To cope with an arbitrary $w$,
we consider the commutative diagram
$$
\begin{tikzcd}
&R_w\otimes\X_w(\u{t})\arrow{dl}[swap]{\sim}\arrow{dr}&\\
R_w\otimes\Hom_{R\text{-left}}^\bullet(\X^w(\u{t}),R)(-2n)\arrow{rr}{\sim}&&\Hom_{R\text{-left}}^\bullet(R_w\otimes\X^w(\u{t}),R)(-2n)
\end{tikzcd}
$$
where the isomorphism of the bottom arrow comes from Lemma~\ref{lemma:19}.
We leave it to the reader to fill in the technical details.
The remaining isomorphism
$$
R_w\otimes\X^w(\u{t})\ito\Hom_{R\text{-left}}^\bullet(R_w\otimes\X_w(\u{t}),R)(-2n)
$$
can be proved by considering a similar commutative diagram.
\end{proof}

\section{Algorithms}

In this section, we will consider the following subsets of $\X_w(\u{t})$:
$$
\X_w(\u{t},\Phi)=\{g\in\X_w(\u{t})\suchthat g|_\Phi=0\},\quad \Y_w(\u{t},\Phi)=\{g\in\Y_w(\u{t})\suchthat g|_\Phi=0\},
$$
where $\Phi\subset\{1,\ldots,|\u{t}|\}$.

\subsection{Distance}\label{Distance} Let $\u{t}$ be a reflection expression of length $n$, $w\in W$,
$\u{\epsilon}\in\Sub(\u{t},w)$ and $X\subset\{1,\ldots,n\}$. We will use the following notation to respectively
{\it freeze} and {\it unfreeze} a set of possible changes:
$$
\arraycolsep=1.4pt
\begin{array}{rcl}
\Sub^X(\u{t},w,\u{\epsilon})&=&\{\u{\delta}\in\Sub(\u{t},w)\suchthat\forall i:\epsilon_i\ne\delta_i\Rightarrow i\notin X\},\\[6pt]
\Sub_X(\u{t},w,\u{\epsilon})&=&\{\u{\delta}\in\Sub(\u{t},w)\suchthat\forall i:\epsilon_i\ne\delta_i\Rightarrow i\in X\}.
\end{array}
$$
As usual, we omit brackets for one element subsets: $\Sub^x(\u{t},w,\u{\epsilon})=\Sub^{\{x\}}(\u{t},w,\u{\epsilon})$, etc.
Note that $\nabla_{\u{\epsilon}}^X$ is supported on $\Sub^X(\u{t},w,\u{\epsilon})$.

If $X\subset\M_p(\u{\epsilon})$ for some $p\in T$, we have the following simple description of the latter set:
\begin{equation}\label{eq:22}
\Sub_X(\u{t},w,\u{\epsilon})=\{\f_Y\u{\epsilon}\suchthat Y\subset_\ev X\}.
\end{equation}
Hence we get
\begin{equation}\label{eq:unfreeze_less_than_1}
|X|\le 1\Rightarrow\Sub_X(\u{t},w,\u{\epsilon})=\{\u{\epsilon}\}.
\end{equation}

Next, suppose that $\Phi\subset\Sub(\u{t},w)$ be a subset not containing $\u{\epsilon}$.
For each $p\in T$ such that $\M_p(\u{\epsilon})\ne\emptyset$, we consider the following family of sets
$$
\Phi_p(\u{\epsilon})=\{X\subset\M_p(\u{\epsilon})\suchthat\Sub_X(\u{t},w,\u{\epsilon})\subset\Phi\cup\{\u{\epsilon}\}\}.
$$
Let $n_p(\Phi,\u{\epsilon})$ denote the maximum of the cardinalities of the sets of $\Phi_p(\u{\epsilon})$.
By~(\ref{eq:unfreeze_less_than_1}), the family $\Phi_p(\u{\epsilon})$ contains all one-element subsets of $\M_p(\u{\epsilon})$,
whence $n_p(\Phi,\u{\epsilon})>0$.
We say that $\u{\epsilon}$ is {\it close to} $\Phi$ if there exists a subset $Y\subset\{1,\ldots,n\}$
such that
{\renewcommand{\labelenumi}{{\it(\alph{enumi})}}
\renewcommand{\theenumi}{{\rm(\alph{enumi})}}
\begin{enumerate}
\itemsep6pt
\item\label{close:i} $|Y\cap\M_p(\u{\epsilon})|=n_p(\Phi,\u{\epsilon})-1$ for any $p\in T$ such that $\M_p(\u{\epsilon})\ne\emptyset$;
\item\label{close:ii} $\Sub^Y(\u{t},w,\u{\epsilon})\cap\Phi=\emptyset$.
\end{enumerate}}
\noindent
Due to condition~\ref{close:i}, the cardinality $|Y|$ is the same for all choices of $Y$. Therefore, we can define
$$
\dist(\Phi,\u{\epsilon})=
\left\{\!
\begin{array}{ll}
2|Y|&\text{if }\u{\epsilon}\text{ is close to }\Phi;\\
\infty&\text{otherwise}.
\end{array}
\right.
$$
Note that $\dist(\emptyset,\u{\epsilon})=0$ for any $\u{\epsilon}$.
For the proofs in the rest of the paper, it will be convenient to define $n_p(\Phi,\u{\epsilon})=0$
if $\M_p(\u{\epsilon})=\emptyset$.

Now we can establish the main decomposition of this section.

\begin{lemma}\label{lemma:22}
Let $\u{\epsilon}$ be close to $\Phi$. Then
$$
\X_w(\u{t},\Phi)=R\mu\oplus\X_w(\u{t},\Phi\cup\{\u{\epsilon}\})
$$
for some $\mu\in\Y_w(\u{t},\Phi)$ of degree ${\dist(\Phi,\u{\epsilon})}$.
\end{lemma}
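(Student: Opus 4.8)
The plan is to realize $\X_w(\u{t},\Phi\cup\{\u{\epsilon}\})$ as the kernel of the evaluation map $\X_w(\u{t},\Phi)\to R$, $g\mapsto g(\u{\epsilon})$, and to split it by a suitable element. Concretely, the whole assertion will reduce to producing $\mu\in\Y_w(\u{t},\Phi)$ — which lies in $\X_w(\u{t},\Phi)$ by Lemma~\ref{lemma:subset:3} — such that $\mu(\u{\epsilon})$ divides $g(\u{\epsilon})$ in $R$ for every $g\in\X_w(\u{t},\Phi)$, plus a degree check $\deg\mu=\dist(\Phi,\u{\epsilon})$. For $\mu$ I would take $\mu=\nabla_{\u{\epsilon}}^{Y}\big|_{\Sub(\u{t},w)}$, where $Y\subset\{1,\dots,n\}$ ($n=|\u{t}|$) is any set witnessing that $\u{\epsilon}$ is close to $\Phi$, i.e.\ $|Y\cap\M_p(\u{\epsilon})|=n_p(\Phi,\u{\epsilon})-1$ for all $p$ with $\M_p(\u{\epsilon})\ne\emptyset$ and $\Sub^{Y}(\u{t},w,\u{\epsilon})\cap\Phi=\emptyset$. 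Lemma~\ref{lemma:nabla} puts this $\mu$ in $\Y_w(\u{t})$, and since $\nabla_{\u{\epsilon}}^{Y}$ is supported on $\Sub^{Y}(\u{t},w,\u{\epsilon})$, the second condition forces $\mu|_\Phi=0$, so $\mu\in\Y_w(\u{t},\Phi)$. Computing as in the proof of Lemma~\ref{lemma:subset:2} should give $\mu(\u{\epsilon})=\prod_{i\in Y}\u{\epsilon}^{\to i}$, a nonzero element of $R$ of degree $2|Y|=\dist(\Phi,\u{\epsilon})$ that by~(\ref{eq:alphattoi}) and the first condition is a unit multiple of $\prod_{p}\alpha_p^{\,n_p(\Phi,\u{\epsilon})-1}$ (product over $p$ with $\M_p(\u{\epsilon})\ne\emptyset$).

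The heart of the argument, and the step I expect to be the main obstacle, is the divisibility claim: $\prod_{p}\alpha_p^{\,n_p(\Phi,\u{\epsilon})-1}$ divides $g(\u{\epsilon})$ for every $g\in\X_w(\u{t},\Phi)$. I would first note that $R$ is a localization of the UFD $\Sym(V)$ at a multiplicative set containing no multiple of a root, and that by the GKM-condition the roots $\alpha_p$ are pairwise non-proportional, hence pairwise non-associate primes in $R$; it then suffices to prove $\alpha_p^{\,n_p(\Phi,\u{\epsilon})-1}\mid g(\u{\epsilon})$ separately for each relevant $p$. Fixing such $p$, I would pick $Z\in\Phi_p(\u{\epsilon})$ of maximal cardinality $n_p(\Phi,\u{\epsilon})$; then $Z\subset\M_p(\u{\epsilon})$ and, by~(\ref{eq:22}), $\Sub_Z(\u{t},w,\u{\epsilon})=\{\f_W\u{\epsilon}\suchthat W\subset_\ev Z\}\subset\Phi\cup\{\u{\epsilon}\}$. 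For nonempty even $W\subset Z$ one has $\f_W\u{\epsilon}\ne\u{\epsilon}$, hence $\f_W\u{\epsilon}\in\Phi$ and $g(\f_W\u{\epsilon})=0$; so $\Sigma_Z^{\u{\epsilon},\ev}(g)$ collapses to its single surviving term $g(\u{\epsilon})$, and the defining property of $\X_w(\u{t})$ gives $g(\u{\epsilon})=\Sigma_Z^{\u{\epsilon},\ev}(g)\in\alpha_p^{\,|Z|-1}R$. The slightly delicate bits here are making precise this telescoping of $\Sigma_Z^{\u{\epsilon},\ev}$ and the passage from prime-by-prime divisibility to divisibility by the product.

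Once the claim is in hand, the decomposition follows formally. If $r\mu\in\X_w(\u{t},\Phi\cup\{\u{\epsilon}\})$ then $r\,\mu(\u{\epsilon})=0$, so $r=0$ since $\mu(\u{\epsilon})\ne0$ and $R$ is a domain, giving $R\mu\cap\X_w(\u{t},\Phi\cup\{\u{\epsilon}\})=0$. For $g\in\X_w(\u{t},\Phi)$, the claim lets me write $g(\u{\epsilon})=r\,\mu(\u{\epsilon})$ with $r\in R$; then $g-r\mu$ lies in $\X_w(\u{t},\Phi)$, vanishes at $\u{\epsilon}$, and vanishes on $\Phi$ (both $g$ and $\mu$ do), so $g-r\mu\in\X_w(\u{t},\Phi\cup\{\u{\epsilon}\})$ and $g=r\mu+(g-r\mu)$. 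This yields the asserted direct sum, with $\mu$ of degree $\dist(\Phi,\u{\epsilon})$ as required. Everything beyond the divisibility step is bookkeeping with the copy/concentration operators already done in Lemmas~\ref{lemma:nabla} and~\ref{lemma:subset:2}.
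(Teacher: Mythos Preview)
Your proposal is correct and follows essentially the same approach as the paper: you choose the same $\mu=\nabla^Y_{\u{\epsilon}}\big|_{\Sub(\u{t},w)}$, use condition~\ref{close:ii} to get $\mu\in\Y_w(\u{t},\Phi)$, compute $\mu(\u{\epsilon})\sim\prod_p\alpha_p^{\,n_p(\Phi,\u{\epsilon})-1}$ from condition~\ref{close:i}, and obtain the key divisibility $\alpha_p^{\,n_p(\Phi,\u{\epsilon})-1}\mid g(\u{\epsilon})$ by picking a maximal $Z\in\Phi_p(\u{\epsilon})$ so that $\Sigma_Z^{\u{\epsilon},\ev}(g)$ collapses to $g(\u{\epsilon})$. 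The only cosmetic difference is that the paper phrases the direct-sum verification slightly differently and does not invoke Lemma~\ref{lemma:subset:2} for the value $\mu(\u{\epsilon})$ (this follows straight from the definition of the $\nabla$-operators).
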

\begin{proof} Let $Y$ be a set satisfying conditions~\ref{close:i} and~\ref{close:ii}.
We define
$$
\mu=\nabla^Y_{\u{\epsilon}}\Big|_{\Sub(\u{t},w)}. 
$$
Condition~\ref{close:ii} guarantees that $\mu\in\Y_w(\u{t},\Phi)$.
Condition~\ref{close:i} in its turn implies that $\mu$ has degree $\dist(\Phi,\u{\epsilon})$.

If $r\mu+\lambda=0$ for some $r\in R$ and $\lambda\in\X_w(\u{t},\Phi\cup\{\u{\epsilon}\})$,
then evaluating at $\u{\epsilon}$, we get $r\mu(\u{\epsilon})=0$.
As the second factor is nonzero, we get $r=0$. Hence $\lambda=0$.

We have just proved that the sum in the right-hand side is direct.
So it remains to prove that it yields the whole of $\X_w(\u{t},\Phi)$.
Let $g$ be an arbitrary function of this set.
Let $p\in T$ be such that $\M_p(\u{\epsilon})\ne\emptyset$.
We take a subset $X\in\Phi_p(\u{\epsilon})$ of cardinality $n_p(\Phi,\u{\epsilon})$.
By the definition of $\Phi_p(\u{\epsilon})$,~(\ref{eq:22}) and by $g\in\X_w(\u{t})$, we get
$$
g(\u{\epsilon})=\Sigma_X^{\u{\epsilon},\ev}(g)\in\alpha_p^{n_p(\Phi,\u{\epsilon})-1}R.
$$
Varying $p$, we get by the GKM-condition that $g(\u{\epsilon})$ is divisible be the product
$$
c=\prod_{p\in T\atop\M_p(\u{\epsilon})\ne\emptyset}\alpha_p^{n_p(\Phi,\u{\epsilon})-1}.
$$
On the other hand, by condition~\ref{close:i}, we get
$$
\mu(\u{\epsilon})=\prod_{i\in Y}\u{\epsilon}^{\to i}=\prod_{p\in T\atop\M_p(\u{\epsilon})\ne\emptyset}\prod_{i\in Y\cap\M_p(\u{\epsilon})}\u{\epsilon}^{\to i}\sim c.
$$
Therefore, $g(\u{\epsilon})$ is also divisible by $\mu(\u{\epsilon})$. Hence we get a decomposition
$$
g=\frac{g(\u{\epsilon})}{\mu(\u{\epsilon})}\mu+\(g-\frac{g(\u{\epsilon})}{\mu(\u{\epsilon})}\mu\)
$$
with the first summand belonging to $R\mu$ and the second summand belonging to $\X_w(\u{t},\Phi\cup\{\u{\epsilon}\})$.
\end{proof}

\subsection{Algorithm 1}\label{Algorithm_1} We will formulate a condition in the algorithmic form sufficient
for the equality $\X_w(\u{t})=\Y_w(\u{t})$ to hold, where $\u{t}$ is an arbitrary reflection expression and $w\in W$.
We denote $m=|\Sub(\u{t},w)|$.

A total strict order $<$ on a subset $\Phi\subset\Sub(\u{t},w)$ is called {\it perfect}
if any $\u{\epsilon}\in\Phi$ is close to the set 
$\Phi^{<\u{\epsilon}}$.
If such an order exists on $\Phi$, then we say that $\Phi$ is {\it perfectly orderable}.

Algorithm 1 starts at step $0$ with $\mathscr F_0=\{\emptyset\}$ and at step $k$ stores a nonempty family $\mathscr F_k\ne\emptyset$ of 
$k$-element 
subsets of $\Sub(\u{t},w)$.
If $k=m$, then the algorithm stops at step $m$.
If $k<m$, then the algorithm tries to produce $\mathscr F_{k+1}$ from $\mathscr F_k$ by
taking each subset $\Phi\in\mathscr F_k$ and finding all subexpressions
$\u{\epsilon}\in\Sub(\u{t},w)$ that are close to $\Phi$.
Then we define $\mathscr F_{k+1}$ to be the set of all such unions $\Phi\cup\{\u{\epsilon}\}$.
As for the order, we assume that $\u{\epsilon}$ is the maximal element of this subset.
Obviously, we may get $\mathscr F_{k+1}=\emptyset$, in which case the algorithm stops {\it prematurely} at step $k$.
This situation means that there are no perfectly orderable subsets of $\Sub(\u{t},w)$ having cardinality $k+1$.
From this description, we get the following result.

\begin{proposition}\label{lemma:21}
Each family $\mathscr F_k$ produced by Algorithm~1 consists of all perfectly orderable subsets of $\Sub(\u{t},w)$
of cardinality $k$. In particular, the set $\Sub(\u{t},w)$ is perfectly orderable
if and only if Algorithm 1 does not stop prematurely.
\end{proposition}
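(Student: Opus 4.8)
The statement is a bookkeeping fact about the recursion defining Algorithm~1, so the plan is to prove it by induction on the step number $k$, the inductive content being that a perfect order on a subset of cardinality $k+1$ amounts to a perfect order on a subset of cardinality $k$ together with a subexpression close to it, placed on top as a new $<$-maximum. For the base case $k=0$ the algorithm stores $\mathscr F_0=\{\emptyset\}$, and the empty subset is vacuously perfectly orderable, so $\mathscr F_0$ is exactly the set of perfectly orderable subsets of $\Sub(\u{t},w)$ of cardinality~$0$.

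For the inductive step, assume $\mathscr F_k$ is precisely the family of all perfectly orderable subsets of $\Sub(\u{t},w)$ of cardinality~$k$. To see that $\mathscr F_{k+1}$ consists of perfectly orderable subsets of cardinality $k+1$, note that any $\Psi\in\mathscr F_{k+1}$ is by construction of the form $\Phi\cup\{\u{\epsilon}\}$ with $\Phi\in\mathscr F_k$ and $\u{\epsilon}$ close to $\Phi$; since closeness (Section~\ref{Distance}) presupposes $\u{\epsilon}\notin\Phi$, we have $|\Psi|=k+1$, and extending a perfect order $<$ on $\Phi$ by declaring $\u{\epsilon}$ its new maximum yields $\Psi^{<\u{\delta}}=\Phi^{<\u{\delta}}$ for $\u{\delta}\in\Phi$ and $\Psi^{<\u{\epsilon}}=\Phi$, so the extended order is perfect. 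Conversely, let $\Psi$ be perfectly orderable of cardinality $k+1$ with perfect order $<$, put $\u{\epsilon}=\max_{<}\Psi$ and $\Phi=\Psi\setminus\{\u{\epsilon}\}=\Psi^{<\u{\epsilon}}$; because $\u{\epsilon}$ is the $<$-maximum one has $\Phi^{<\u{\delta}}=\Psi^{<\u{\delta}}$ for every $\u{\delta}\in\Phi$, so the restriction of $<$ to $\Phi$ is perfect and $\Phi\in\mathscr F_k$ by induction, while closeness of $\u{\epsilon}$ to $\Psi^{<\u{\epsilon}}=\Phi$ is exactly what the algorithm requires to include $\Psi=\Phi\cup\{\u{\epsilon}\}$ in $\mathscr F_{k+1}$. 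This closes the induction.

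For the final assertion, note first that deleting the $<$-maximum of a perfect order produces a shorter perfect order; hence, by downward induction, the cardinalities of perfectly orderable subsets of $\Sub(\u{t},w)$ form an initial segment $\{0,1,\dots,N\}$ of $\{0,1,\dots,m\}$ with $m=|\Sub(\u{t},w)|$. The first part of the statement then shows (again by induction on the step) that the algorithm reaches step $k$ with $\mathscr F_k\ne\emptyset$ precisely when $k\le N$; so it stops prematurely at step $N$ when $N<m$, and it runs all the way to step $m$ when $N=m$. Finally, $N=m$ is equivalent to $\Sub(\u{t},w)$, the unique subset of cardinality $m$, being perfectly orderable.

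I do not expect a genuine obstacle in this argument. The only points that need care are purely definitional: that ``close to $\Phi$'' forces $\u{\epsilon}\notin\Phi$, so that forming $\Phi\cup\{\u{\epsilon}\}$ genuinely raises the cardinality by one, and that the quantities $n_p(\Phi,\u{\epsilon})$ and the witness set $Y$ in the definition of closeness depend only on the pair $(\Phi,\u{\epsilon})$ and not on any ambient order, so that closeness of a subexpression to a set is unaffected by the passage between $\Phi$ and $\Psi=\Phi\cup\{\u{\epsilon}\}$.
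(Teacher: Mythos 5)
Your proof is correct and is exactly the induction-on-steps argument that the paper leaves implicit (the paper states the proposition without proof, presenting it as an immediate consequence of the algorithm's description). Your observations that closeness of $\u{\epsilon}$ to $\Phi$ depends only on the pair $(\Phi,\u{\epsilon})$ and forces $\u{\epsilon}\notin\Phi$, and that removing the $<$-maximum of a perfect order yields a shorter perfect order, are precisely the definitional checks needed to make the bookkeeping airtight.
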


\begin{theorem}
If $\Sub(\u{t},w)$ is perfectly orderable, then $\X_w(\u{t})=\Y_w(\u{t})$ and this left $R$-module
is graded free.
\end{theorem}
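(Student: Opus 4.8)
The plan is to proceed by induction on the cardinality $m=|\Sub(\u{t},w)|$, unpacking the hypothesis that $\Sub(\u{t},w)$ is perfectly orderable. By Proposition~\ref{lemma:21} this means there is a perfect order $\u{\epsilon}_1<\u{\epsilon}_2<\cdots<\u{\epsilon}_m$ on $\Sub(\u{t},w)$; equivalently, setting $\Phi_k=\{\u{\epsilon}_1,\ldots,\u{\epsilon}_k\}$, each $\u{\epsilon}_{k+1}$ is close to $\Phi_k$. First I would observe that $\X_w(\u{t},\Phi_m)=0$ and $\Y_w(\u{t},\Phi_m)=0$, since a function supported on $\Sub(\u{t},w)$ that vanishes on all of $\Sub(\u{t},w)$ is zero. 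At the other end, $\X_w(\u{t},\Phi_0)=\X_w(\u{t})$ and $\Y_w(\u{t},\Phi_0)=\Y_w(\u{t})$, since $\Phi_0=\emptyset$.

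The core of the argument is to apply Lemma~\ref{lemma:22} repeatedly along the chain $\Phi_0\subset\Phi_1\subset\cdots\subset\Phi_m$. Since $\u{\epsilon}_{k+1}$ is close to $\Phi_k$, the lemma gives $\mu_{k+1}\in\Y_w(\u{t},\Phi_k)$ of degree $\dist(\Phi_k,\u{\epsilon}_{k+1})$ with
$$
\X_w(\u{t},\Phi_k)=R\mu_{k+1}\oplus\X_w(\u{t},\Phi_k\cup\{\u{\epsilon}_{k+1}\})=R\mu_{k+1}\oplus\X_w(\u{t},\Phi_{k+1}).
$$
Iterating from $k=0$ to $k=m-1$ and using $\X_w(\u{t},\Phi_m)=0$, I obtain a direct sum decomposition $\X_w(\u{t})=\bigoplus_{k=1}^m R\mu_k$. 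Because $R$ is a domain and each $\mu_k$ is a nonzero element of a torsion-free module (it takes a nonzero value $o(\u{\epsilon}_k)$ at $\u{\epsilon}_k$, as in the proof of Lemma~\ref{lemma:22}), each summand $R\mu_k$ is free of rank one, and with the grading shift it is $\cong R(-\dist(\Phi_{k-1},\u{\epsilon}_k))$. Hence $\X_w(\u{t})$ is graded free with graded rank $\sum_{k=1}^m v^{\dist(\Phi_{k-1},\u{\epsilon}_k)}$, in particular bounded below and finitely generated over the nonnegatively graded ring $R_{\ge0}$-part (or directly gr-free in the sense of the earlier discussion), so the terminology ``graded free'' applies.

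It remains to prove the equality $\X_w(\u{t})=\Y_w(\u{t})$. The inclusion $\Y_w(\u{t})\subset\X_w(\u{t})$ is Lemma~\ref{lemma:subset:3}. For the reverse inclusion, each generator $\mu_k$ produced by Lemma~\ref{lemma:22} lies in $\Y_w(\u{t},\Phi_{k-1})\subset\Y_w(\u{t})$, and $\Y_w(\u{t})$ is an $R$-submodule of $R^{\oplus\u{t}}(w)$; therefore $\X_w(\u{t})=\sum_{k=1}^m R\mu_k\subset\Y_w(\u{t})$. Combining the two inclusions gives $\X_w(\u{t})=\Y_w(\u{t})$, and the module is graded free by what was just shown. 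I expect the only mildly delicate point to be bookkeeping the grading shifts and confirming that ``finitely generated gr-free'' coincides with the paper's notion of ``graded free'' over $R$ — but this is exactly the content of the corollary to the graded Nakayama lemma recorded earlier, so there is no real obstacle; the substance of the proof is entirely carried by Lemma~\ref{lemma:22} and Proposition~\ref{lemma:21}.
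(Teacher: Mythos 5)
Your proof is correct and follows the same route the paper intends: the paper's one-sentence proof is precisely "iterate Lemma~\ref{lemma:22} along a perfect order," and you have spelled this out accurately, including the observation that the containment $\X_w(\u{t})\subset\Y_w(\u{t})$ drops out because each generator $\mu_k$ produced by Lemma~\ref{lemma:22} already lies in $\Y_w(\u{t},\Phi_{k-1})\subset\Y_w(\u{t})$, and the opposite containment is Lemma~\ref{lemma:subset:3}. One small slip: the value $\mu_k(\u{\epsilon}_k)$ is not $o(\u{\epsilon}_k)$ but rather $\prod_{i\in Y}\u{\epsilon}_k^{\to i}$ for the chosen set $Y$; this product is still a nonzero homogeneous element, which is all that is needed for freeness of $R\mu_k$, so the argument is unaffected.
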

\begin{proof}
The result follows by induction on the cardinality of $\Phi$ from Lemma~\ref{lemma:22}
with the base case $\X_w(\u{t},\emptyset)=\X_w(\u{t})$.
\end{proof}

\begin{remark}
\rm The degrees of generators\footnote{and actually the generators themselves}
of the module $\X_w(\u{t})=\Y_w(\u{t})$ can also be read off Lemma~\ref{lemma:22}
if we know a perfect order on $\Sub(\u{t},w)$. However, we will postpone
this calculation until we describe Algorithm 2.
\end{remark}

\begin{example}\label{example:2}
\rm
Let $W=S_4$ be the symmetric group on four symbols. We adhere to the standard notation
of elements of this group (see Section~\ref{Symmetric_groups}).
The pair $(W,\{(12),(23),(34)\})$ is a Coxeter system. Let us consider the following reflection expression:
$$
\u{t}=((13), (24), (12), (34), (14), (23)).
$$
Then $\Sub(\u{t},1)=(\u{\epsilon},\u{\delta})$, where $\u{\epsilon}=(0,0,0,0,0,0)$ and $\u{\delta}=(1,1,1,1,1,1)$.
We get the following data: all sets $\M_p(\u{\epsilon})$ and $\M_p(\u{\delta})$ are empty except
\begin{multline*}
\M_{(13)}(\u{\epsilon})=\{1\},\quad \M_{(24)}(\u{\epsilon})=\{2\}, \quad \M_{(12)}(\u{\epsilon})=\{3\},\\[4pt]
\M_{(34)}(\u{\epsilon})=\{4\},\quad \M_{(14)}(\u{\epsilon})=\{5\}, \quad \M_{(23)}(\u{\epsilon})=\{6\},
\end{multline*}
\begin{multline*}
\M_{(13)}(\u{\delta})=\{1\},\quad \M_{(24)}(\u{\delta})=\{2\}, \quad \M_{(34)}(\u{\delta})=\{3\},\\[4pt]
\M_{(12)}(\u{\delta})=\{4\},\quad \M_{(14)}(\u{\delta})=\{5\}, \quad \M_{(23)}(\u{\delta})=\{6\}.
\end{multline*}
\noindent
Thus testing $\u{\epsilon}$ for being close to $\{\u{\delta}\}$ or vice versa
$\u{\delta}$ for being close to $\{\u{\epsilon}\}$, we get $Y=\emptyset$ due to condition~\ref{close:i}.
This set, however, fails condition~\ref{close:ii}.
So Algorithm~1, produces the families $\mathscr F_0=\{\emptyset\}$ and $\mathscr F_1=\{\{\u{\epsilon}\},\{\u{\delta}\}\}$
and then stops prematurely (at step 1).

Note that in the present case the divisibility conditions defining $\X_1(\u{t})$ are empty.
Hence $\X_1(\u{t})=R^{\oplus\u{t}}=R\oplus R$. On the other hand, the characteristic functions
$1_{\u{\epsilon}}$ and $1_{\u{\delta}}$ do not belong to $\Y_1(\u{t})$.
To prove this fact, it suffice to apply Theorem~\ref{theorem:2} and note that
all basis elements distinct from $1\Delta\Delta\Delta\Delta\Delta\Delta$ produce functions
whose values belong to the ideal $I\trianglelefteqslant R$ generated by all roots $\alpha_p$.
Therefore, $g(\u{\epsilon})\equiv g(\u{\delta})$ modulo $I$ for any $g\in\Y_1(\u{t})$.
Thus, we have prove that $\Y_1(\u{t})\ne\X_1(\u{t})$.
\end{example}

\subsection{Balanced subexpression}\label{Balanced_subexpression} In this section (and only here), we consider the Bruhat order $\prec$ on the Coxeter group $W$.
Let $\u{\epsilon}$ be a subexpression of a reflection expression $\u{t}$. An index $i=1,\ldots,|\u{t}|$
is called {\it positive} if $\u{\epsilon}^{<i}t_i\prec\u{\epsilon}^{<i}$ and is called {\it negative}
if $\u{\epsilon}^{<i}t_i\succ\u{\epsilon}^{<i}$.
In the language of~\cite{EW}, the index $i$ is decorated by $D$ and $U$ respectively.
We call $\u{\epsilon}$ {\it balanced} if the minimal index of any nonempty set $\M_p(\u{\epsilon})$ is negative.
In its turn, a subset of $\Sub(\u{t})$ is called {\it balanced} if all its subexpressions are balanced.
There is the following well-known fact, whose proof easily follows from a geometrical argument,
see, for example,~\cite[Corollary~3.3.5]{cycb}.

\begin{proposition}\label{proposition:1}
Any subexpression of an expression is balanced.
\end{proposition}

\noindent
In the proof of the following result, we use the notation and
terminology of Section~\ref{Distance}.

\begin{theorem}\label{theorem:5}
Let a reflection expression $\u{t}$ and $w\in W$ be such that $\Sub(\u{t},w)$ is balanced.
Let $\llcurly$ be the following strict total order on this set:
$\u{\delta}\llcurly\u{\epsilon}$ if and only if the maximal index $i$ such that $\delta_i\ne\epsilon_i$
is positive for $\u{\epsilon}$ (and thus negative for $\u{\delta}$).
Then $\llcurly$ is a perfect order.
\end{theorem}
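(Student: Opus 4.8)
The plan is to verify the defining property of a perfect order directly from Section~\ref{Distance}: writing $n=|\u{t}|$ and, for $\u{\epsilon}\in\Sub(\u{t},w)$, $\Phi^{<\u{\epsilon}}=\{\u{\delta}\in\Sub(\u{t},w)\suchthat\u{\delta}\llcurly\u{\epsilon}\}$, I must exhibit for every $\u{\epsilon}$ a set $Y\subset\{1,\dots,n\}$ satisfying conditions~\ref{close:i} and~\ref{close:ii} with $\Phi$ replaced by $\Phi^{<\u{\epsilon}}$. The candidate is
$$
Y=\{i\in\{1,\dots,n\}\suchthat i\text{ is positive for }\u{\epsilon}\}.
$$
The technical backbone is the elementary ``prefix lemma'': if $\u{\delta},\u{\epsilon}\in\Sub(\u{t},w)$ are distinct and $i$ is the largest index with $\delta_i\ne\epsilon_i$, then equating the common target $w$ gives $\u{\delta}^{\le i}=\u{\epsilon}^{\le i}$, hence $\u{\delta}^{<i}=\u{\epsilon}^{<i}t_i$ and $\u{\epsilon}^{<i}=\u{\delta}^{<i}t_i$, so that $i$ is positive for $\u{\epsilon}$ if and only if it is negative for $\u{\delta}$. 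This makes $\llcurly$ well defined and antisymmetric (transitivity and hence totality being the standard geometric fact cited from~\cite{cycb}), and it yields the reformulation used throughout: for $\u{\delta}\ne\u{\epsilon}$ one has $\u{\delta}\llcurly\u{\epsilon}$ exactly when the top index of disagreement is positive for $\u{\epsilon}$.

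Condition~\ref{close:ii} is then immediate. If $\u{\delta}\in\Sub^Y(\u{t},w,\u{\epsilon})$ with $\u{\delta}\ne\u{\epsilon}$, then $\u{\delta}$ agrees with $\u{\epsilon}$ on all positive indices, so its top index of disagreement lies outside $Y$, i.e.\ is negative for $\u{\epsilon}$; by the prefix lemma $\u{\epsilon}\llcurly\u{\delta}$, hence $\u{\delta}\notin\Phi^{<\u{\epsilon}}$. Thus $\Sub^Y(\u{t},w,\u{\epsilon})\cap\Phi^{<\u{\epsilon}}=\emptyset$.

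For condition~\ref{close:i} I would compute $n_p(\Phi^{<\u{\epsilon}},\u{\epsilon})$ for each $p\in T$ with $\M_p(\u{\epsilon})\ne\emptyset$. By~(\ref{eq:22}) a subset $X=\{i_1<\dots<i_k\}\subset\M_p(\u{\epsilon})$ lies in $(\Phi^{<\u{\epsilon}})_p(\u{\epsilon})$ iff $\f_Z\u{\epsilon}\in\Phi^{<\u{\epsilon}}\cup\{\u{\epsilon}\}$ for every $Z\subset_\ev X$; since $\f_Z\u{\epsilon}$ differs from $\u{\epsilon}$ precisely on $Z$ with top index $\max Z$, and $i_j$ arises as $\max Z$ for some nonempty even $Z\subset X$ exactly when $j\ge 2$, this is equivalent to: $i_2,\dots,i_k$ are all positive for $\u{\epsilon}$ (with no constraint on $i_1$). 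An admissible $X$ therefore consists of positive indices of $\M_p(\u{\epsilon})$ together with at most one further, necessarily smallest, index; this is where the \textbf{balanced hypothesis} enters, since $\min\M_p(\u{\epsilon})$ is negative for $\u{\epsilon}$ and is smaller than every positive index of $\M_p(\u{\epsilon})$, so $X=\{\min\M_p(\u{\epsilon})\}\cup\{i\in\M_p(\u{\epsilon})\suchthat i\text{ positive for }\u{\epsilon}\}$ is admissible, and no admissible set can contain two negative indices. Hence
$$
n_p(\Phi^{<\u{\epsilon}},\u{\epsilon})=1+|\{i\in\M_p(\u{\epsilon})\suchthat i\text{ positive for }\u{\epsilon}\}|=1+|Y\cap\M_p(\u{\epsilon})|,
$$
which is precisely condition~\ref{close:i}.

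Combining the two conditions shows $\u{\epsilon}$ is close to $\Phi^{<\u{\epsilon}}$; as $\u{\epsilon}$ was arbitrary, $\llcurly$ is a perfect order. The only genuinely delicate point is the bookkeeping of the last paragraph: identifying the family $(\Phi^{<\u{\epsilon}})_p(\u{\epsilon})$ via the ``$j\ge 2$'' observation, and invoking balancedness at exactly the step where the smallest index of $\M_p(\u{\epsilon})$ must be free to play the role of $i_1$. Everything else is a routine unwinding of the definitions of Section~\ref{Distance} together with the prefix lemma.
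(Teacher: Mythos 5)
Your proof is correct and matches the paper's argument: you choose $Y$ to be the set of positive indices of $\u{\epsilon}$, discharge condition~\ref{close:ii} via the prefix observation, and verify condition~\ref{close:i} by showing $n_p(\Phi^{<\u{\epsilon}},\u{\epsilon})$ equals one more than the number of positive indices in $\M_p(\u{\epsilon})$. Your ``all but the smallest element is positive'' criterion for membership in $(\Phi^{<\u{\epsilon}})_p(\u{\epsilon})$ is stated somewhat more sharply than the paper's description (which restricts the added negative index more than necessary, though without affecting the resulting maximal cardinality), and balancedness enters exactly where you say---to guarantee that $\min\M_p(\u{\epsilon})$ is negative and therefore available as the extra admissible element.
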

\begin{proof}
We need to prove that any subexpression $\u{\epsilon}$ is close to the set $\Sub(\u{t},w)^{\llcurly\u{\epsilon}}$,
which we denote by $\Phi$. So let $p\in T$ be such that $\M_p(\u{\epsilon})\ne\emptyset$.
We denote by $\M_p(\u{\epsilon})^+$ and $\M_p(\u{\epsilon})^-$ the subsets of positive and negative indices
of $\M_p(\u{\epsilon})$ respectively.

We claim that $\Phi_p(\u{\epsilon})$ consists exactly of the subsets $X$ and $\{j\}\cup X$, where $X\subset\M^+_p(\u{\epsilon})$
and $j\in\M^-_p(\u{\epsilon})$ is an element that is less than any element of $X$.
Indeed, the maximal element of any nonempty subset
$Y\subset_\ev X$ or $Y\subset_\ev\{j\}\cup X$ belongs to $X$ and, therefore, is a positive index
for $\u{\epsilon}$. Hence $\f_Y\u{\epsilon}\in\Phi$. It remains to apply~(\ref{eq:22}) to prove that both
sets $X$ and $\{j\}\cup X$ belong to $\Phi_p(\u{\epsilon})$.

Conversely, let $Z\in\Phi_p(\u{\epsilon})$. If $Z\subset\M_p^+(\u{\epsilon})$, then this set already has the required form.
Suppose the contrary, that is, $Z\cap\M_p^-(\u{\epsilon})\ne\emptyset$.
If this intersection contains at least two elements $k<j$, then
$\u{\epsilon}\llcurly\f_{k,j}\u{\epsilon}$ and the last element does not belong to $\Phi$.
This fact contradicts the definition of $\Phi_p(\u{\epsilon})$. So let $j$ be the only element of $Z\cap\M_p^-(\u{\epsilon})$.
If there is an element $k\in Z\cap\M_p^+(\u{\epsilon})$ such that $k<j$, then we again get
a contradiction $\u{\epsilon}\llcurly\f_{k,j}\u{\epsilon}$. Hence we get the required representation $Z=\{j\}\cup X$,
where $X=Z\cap\M_p^+(\u{\epsilon})$.

From this description and the fact that $\u{\epsilon}$ is balanced it follows that
$n_p(\Phi,\u{\epsilon})=|\M_p^+(\u{\epsilon})|+1$.
Now let $Y$ be the set of all positive indices for $\u{\epsilon}$.
As $Y\cap\M_p(\u{\epsilon})=\M^+_p(\u{\epsilon})$ 
the set $Y$ satisfies condition~\ref{close:i}.
Let us check condition~\ref{close:ii}. Let $\u{\delta}\in\Sub^Y(\u{t},w,\u{\epsilon})\cap\Phi$.
As $\u{\epsilon}\notin\Phi$, we get $\u{\delta}\ne\u{\epsilon}$. Let $i$ be the maximal index
such that $\delta_i\ne\epsilon_i$. As $\u{\delta}\in\Sub^Y(\u{t},w,\u{\epsilon})$
this index is negative for $\u{\epsilon}$. Hence $\u{\delta}\ggcurly\u{\epsilon}$,
which contradicts $\u{\delta}\in\Phi$.
\end{proof}

\begin{corollary} Let a reflection expression $\u{t}$ and $w\in W$ be such that $\Sub(\u{t},w)$ is balanced
(for example, if $\u{t}$ is an expression).
Then $\X_w(\u{t})=\Y_w(\u{t})$ and this left $R$-modules is graded free.
\end{corollary}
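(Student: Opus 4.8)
The plan is to read the corollary off as an immediate consequence of two results already established. Theorem~\ref{theorem:5} shows that whenever $\Sub(\u{t},w)$ is balanced, the explicit relation $\llcurly$ is a perfect order on it, so $\Sub(\u{t},w)$ is perfectly orderable; and the theorem of Section~\ref{Algorithm_1} asserting that perfect orderability of $\Sub(\u{t},w)$ implies $\X_w(\u{t})=\Y_w(\u{t})$ shows in addition that this common left $R$-module is graded free. Chaining these two implications proves everything claimed, so the only point that needs a word of its own is the parenthetical remark about expressions.

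Concretely: assuming $\Sub(\u{t},w)$ is balanced, Theorem~\ref{theorem:5} produces the strict total order $\llcurly$ under which every $\u{\epsilon}$ is close to $\Sub(\u{t},w)^{\llcurly\u{\epsilon}}$, i.e. $\Sub(\u{t},w)$ is perfectly orderable; then the theorem of Section~\ref{Algorithm_1} --- whose proof is the iteration of Lemma~\ref{lemma:22} along a perfect order, with base case $\X_w(\u{t},\emptyset)=\X_w(\u{t})$ --- gives $\X_w(\u{t})=\Y_w(\u{t})$ together with graded freeness. As a bonus, an explicit homogeneous basis is furnished by the functions $\mu$ of Lemma~\ref{lemma:22} produced at the successive steps of that iteration, their degrees being the numbers $\dist(\Phi,\u{\epsilon})$, as noted in the Remark following that theorem.

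For the parenthetical assertion that an expression $\u{t}$ always has $\Sub(\u{t},w)$ balanced, I would note first that this is a purely Coxeter-combinatorial statement about $(W,S)$ and argue it in the geometric representation of $(W,S)$, where positivity of roots is available. Fix $\u{\epsilon}\subset\u{t}$, $p\in T$ with $\M_p(\u{\epsilon})\ne\emptyset$, and the minimal index $i$ of $\M_p(\u{\epsilon})$; suppose $i$ were positive, so $\u{\epsilon}^{<i}$ carries the simple root $\alpha_{t_i}$ to a negative root $\u{\epsilon}^{\to i}$. Applying the simple reflections at the active positions $j<i$ (those with $\epsilon_j=1$) to $\alpha_{t_i}$ one at a time, the running root changes sign at some such $j$, which forces it to pass through $\alpha_{t_j}$; unwinding the rest of the product then yields $\u{\epsilon}^{\to j}\sim\u{\epsilon}^{\to i}\sim\alpha_p$, hence $\u{\epsilon}^j=p$ with $j<i$, contradicting minimality. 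Thus every subexpression of an expression is balanced, and the corollary in particular recovers Soergel's freeness theorem~\cite{Soergel}. There is no genuine obstacle in any of this: the content lies in Theorem~\ref{theorem:5} (which itself rests on the cotransversality description of $\Phi_p(\u{\epsilon})$, Lemmas~\ref{cotransversal} and~\ref{lemma:20}) and in the perfect-orderability theorem, and the last point can alternatively be quoted from the standard light-leaves combinatorics~\cite{EW}.
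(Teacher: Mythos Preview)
Your proposal is correct and follows exactly the route the paper intends: the corollary is stated without proof precisely because it is the concatenation of Theorem~\ref{theorem:5} (balanced $\Rightarrow$ perfectly orderable via $\llcurly$) with the preceding theorem of Section~\ref{Algorithm_1} (perfectly orderable $\Rightarrow$ $\X_w(\u{t})=\Y_w(\u{t})$ graded free). Your supplementary argument for the parenthetical claim---that expressions are always balanced, via the sign-change of the running root under simple reflections---is the standard one and is sound; the paper itself does not spell this out, treating it as folklore.
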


\begin{remark}
\rm The set $\Sub(\u{t},w)$ may be balanced without $\u{t}$ being an expression.
It is still not clear why this happens and if all such cases can be somehow
reduced to subexpressions of expressions.
\end{remark}

\begin{remark}
\rm
In Example~\ref{example:2}, the subexpression $\u{\epsilon}$ is balanced,
whereas $\u{\delta}$ is not, as $5$ and $6$ are positive indices both minimal within
$\M_{(14)}(\u{\delta})$ and $\M_{(23)}(\u{\delta})$ respectively.
\end{remark}

\subsection{Connected component} As we saw in Example~\ref{example:2}, the module $\X_w(\u{t})$
may be free even if Algorithm~1 fails. Therefore, our next aim is to improve it.
The same example prompts that we need to bring connected components into play.

Let $\u{t}$ be a reflection expression, $w\in W$ and $\Phi\subset\Sub(\u{t},w)$.
We denote by $\Gr(\Phi)$ the undirected graph whose vertices are $\Phi$ and
the edges are two-element subsets $\{\u{\epsilon},\u{\delta}\}$ such that $\u{\epsilon}\edot_p\u{\delta}$
for some $p\in T$. By Lemma~\ref{lemma:sim}\ref{lemma:sim:iv}, this condition is equivalent to
the existence of a nonempty subset
$Y\subset_\ev\M_p(\u{\epsilon})$ such that $\u{\delta}=\f_Y\u{\epsilon}$.
We draw this edge in one of the following forms:
$$
\begin{tikzcd}
\u{\epsilon}\arrow[leftrightarrow]{r}[swap]{p}&\u{\delta},
\end{tikzcd}
\qquad
\begin{tikzcd}
\u{\epsilon}\arrow[leftrightarrow]{r}{f_Y}&\u{\delta},
\end{tikzcd}
\qquad
\begin{tikzcd}
\u{\epsilon}\arrow[leftrightarrow,dashed]{r}{f_Y}&\u{\delta}.
\end{tikzcd}
$$
The connected components of this graph are called {\it connected components} of $\Phi$.
For example, the connected components of $\Sub(\u{t},w)$ in Example~\ref{example:2} are
the subsets $\{\u{\epsilon}\}$ and $\{\u{\delta}\}$.

\begin{lemma}\label{lemma:connected_component}
Let $g\in\X_w(\u{t})$ (respectively, $g\in\X^w(\u{t})$) be a function supported on a subset
$\Phi\subset\Sub(\u{t},w)$. Let $\Psi$ be a connected component of $\Phi$
and $1_\Psi\in R^{\oplus\u{t}}(w)$ be its characteristic function.
Then $g\cdot1_\Psi\in\X_w(\u{t})$ (respectively, $g\cdot1_\Psi\in\X^w(\u{t})$).
\end{lemma}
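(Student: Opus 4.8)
The plan is to verify directly the defining divisibility conditions of $\X_w(\u{t})$ (and, in the parenthetical case, of $\X^w(\u{t})$) for the function $g\cdot 1_\Psi$. First I would note that $g\cdot 1_\Psi\in R^{\oplus\u{t}}(w)$, since $g$ takes values in $R$ and $1_\Psi$ in $\{0,1\}$; in particular the case $X=\emptyset$, which matters only for $\X^w(\u{t})$, is immediate. So I fix $\u{\epsilon}\in\Sub(\u{t},w)$, a reflection $p\in T$ and a nonempty $X\subset\M_p(\u{\epsilon})$, and aim to show $\Sigma_X^{\u{\epsilon},\ev}(g\cdot 1_\Psi)\in\alpha_p^{|X|-1}R$ (respectively $\in\alpha_p^{|X|}R$). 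Because $g$ is supported on $\Phi$ and $\Psi\subset\Phi$, the functions $g$ and $g\cdot 1_\Psi$ differ only on $\Phi\setminus\Psi$, where the latter vanishes; so by linearity of $\Sigma_X^{\u{\epsilon},\ev}$ the difference $\Sigma_X^{\u{\epsilon},\ev}(g)-\Sigma_X^{\u{\epsilon},\ev}(g\cdot 1_\Psi)$ equals the partial sum $\sum (-1)^{|Y|_X}g(\f_Y\u{\epsilon})$ taken over those $Y\subset_\ev X$ with $\f_Y\u{\epsilon}\in\Phi\setminus\Psi$.

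The crux, and the only step that really needs care, is the observation that the subexpressions $\f_Y\u{\epsilon}$ with $Y\subset_\ev X$ entering one fixed sum $\Sigma_X^{\u{\epsilon},\ev}$ all lie in a single $\edot_p$-equivalence class of $\Sub(\u{t},w)$, hence that any two of them lying in $\Phi$ are joined by an edge of $\Gr(\Phi)$. Concretely: all such $\f_Y\u{\epsilon}$ lie in $\Sub(\u{t},w)$ by Lemma~\ref{lemma:sim}\ref{lemma:sim:iv}; and if $Y_1,Y_2\subset_\ev X\subset\M_p(\u{\epsilon})$, then $\f_{Y_1}\u{\epsilon}\edot_p\u{\epsilon}$ gives $\M_p(\f_{Y_1}\u{\epsilon})=\M_p(\u{\epsilon})$ by Lemma~\ref{lemma:sim}\ref{lemma:sim:i}, and since $\f_{Y_2}\u{\epsilon}=\f_{Y_1\triangle Y_2}(\f_{Y_1}\u{\epsilon})$ by~(\ref{eq:fXfY}) with $Y_1\triangle Y_2\subset\M_p(\f_{Y_1}\u{\epsilon})$, Lemma~\ref{lemma:sim}\ref{lemma:sim:iii} yields $\f_{Y_1}\u{\epsilon}\edot_p\f_{Y_2}\u{\epsilon}$. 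Consequently, any two distinct members of $S=\{\f_Y\u{\epsilon}:Y\subset_\ev X\}\cap\Phi$ are adjacent in $\Gr(\Phi)$, so $S$ is contained in a single connected component of $\Phi$.

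Since $\Psi$ is a whole connected component, $S$ is then either entirely contained in $\Psi$ or disjoint from it. In the first case the correction sum of the first paragraph is empty, so $\Sigma_X^{\u{\epsilon},\ev}(g\cdot 1_\Psi)=\Sigma_X^{\u{\epsilon},\ev}(g)$, which is divisible by $\alpha_p^{|X|-1}$ (respectively $\alpha_p^{|X|}$) because $g\in\X_w(\u{t})$ (respectively $g\in\X^w(\u{t})$). In the second case $(g\cdot 1_\Psi)(\f_Y\u{\epsilon})=0$ for every $Y\subset_\ev X$, whence $\Sigma_X^{\u{\epsilon},\ev}(g\cdot 1_\Psi)=0$. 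Either way the required divisibility holds, so $g\cdot 1_\Psi\in\X_w(\u{t})$ (respectively $\in\X^w(\u{t})$); the $\X^w$ version is word for word the same, the remaining case $X=\emptyset$ being trivial since $g\cdot 1_\Psi$ takes values in $R$. I do not anticipate any genuine obstacle beyond the connected-component bookkeeping; everything else is a routine manipulation of the relations $\edot_p$ and the operators $\f_Y$.
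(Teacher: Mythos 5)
Your proof is correct and uses the same key observation as the paper: all $\f_Y\u{\epsilon}$ with $Y\subset_\ev X\subset\M_p(\u{\epsilon})$ lie in a single $\edot_p$-equivalence class, so those that lie in $\Phi$ form a clique in $\Gr(\Phi)$ and therefore sit inside one connected component. You package this as a clean dichotomy ($S\subset\Psi$ or $S\cap\Psi=\emptyset$), whereas the paper trichotomizes on the position of $\u{\epsilon}$ (in $\Psi$, in $\Phi\setminus\Psi$, or outside $\Phi$, with a further subcase split in the third) and re-derives the same $\edot_p$-clique fact in each branch; the underlying argument is identical, and your organization is a modest streamlining.
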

\noindent
{\it Proof.}
We will prove the lemma for $g\in\X_w(\u{t})$, the proof for $g\in\X^w(\u{t})$ being similar.
Let $\u{\epsilon}\in\Sub(\u{t},w)$, $p\in T$ and $\emptyset\ne X\subset\M_p(\u{\epsilon})$.
We are going to compute $\Sigma_X^{\u{\epsilon},\ev}(g\cdot1_\Psi)$.

{\it Case 1: $\u{\epsilon}\in\Psi$.}
By definition, we get
\begin{equation}\label{eq:5.5}
\Sigma_X^{\u{\epsilon},\ev}(g\cdot1_\Psi)=\sum_{Y\subset_\ev X}(-1)^{|Y|_X}g(\f_Y\u{\epsilon})1_\Psi(\f_Y\u{\epsilon}).
\end{equation}
If $\f_Y\u{\epsilon}\in\Phi$ in this formula,
then $\u{\epsilon}$ and $\f_Y\u{\epsilon}$ are connected by an edge in $\Gr(\Phi)$,
whence $\f_Y\u{\epsilon}\in\Psi$ and $1_\Psi(\f_Y\u{\epsilon})=1$.
If $\f_Y\u{\epsilon}\notin\Phi$, then $g(\f_Y\u{\epsilon})=0$ by the hypothesis.
In any case, $g(\f_Y\u{\epsilon})1_\Psi(\f_Y\u{\epsilon})=g(\f_Y\u{\epsilon})$,
whence $$
\Sigma_X^{\u{\epsilon},\ev}(g\cdot1_\Psi)=\Sigma_X^{\u{\epsilon},\ev}(g)\in\alpha_p^{|X|-1}R.
$$%

{\it Case 2: $\u{\epsilon}\in\Phi\setminus\Psi$.} In this case, $\u{\epsilon}$ belongs to a connected component $\Psi'\ne\Psi$.
Arguing as in the previous case, we get $\f_Y\u{\epsilon}\in\Psi'$ or $\f_Y\u{\epsilon}\notin\Phi$ in~(\ref{eq:5.5}).
Therefore, $g(\f_Y\u{\epsilon})1_\Psi(\f_Y\u{\epsilon})=0$ in both cases and $\Sigma_X^{\u{\epsilon},\ev}(g\cdot1_\Psi)=0$.

{\it Case 3: $\u{\epsilon}\notin\Phi$.} Suppose that there simultaneously exist
two even subsets $Y\subset_\ev X$ and $Z\subset_\ev X$ such that
$\f_Y\u{\epsilon}\in\Psi$ and $\f_Z\u{\epsilon}\in\Phi\setminus\Psi$.
By~(\ref{eq:fXfY}), we get
$$
\f_{Z\triangle Y}\f_Y\u{\epsilon}=\f_{Z\triangle Y\triangle Y}\u{\epsilon}=\f_Z\u{\epsilon}.
$$
As $Z\triangle Y\subset_\ev X\subset\M_p(\u{\epsilon})=\M_p(\f_Y\u{\epsilon})$,
we get $\f_Y\u{\epsilon}\edot_p\f_Z\u{\epsilon}$, which is impossible as these subexpressions belong
to different connected components of $\Phi$. This observation leads to the following subcases.

{\it Case 3.1: $\f_Y\u{\epsilon}\notin\Psi$ for any $Y\subset_\ev X$.} We have $1_\Psi(\f_Y\u{\epsilon})=0$
for any $Y$ as in~(\ref{eq:5.5}). Hence $\Sigma_X^{\u{\epsilon},\ev}(g\cdot1_\Psi)=0$.

{\it Case 3.2: $\f_Y\u{\epsilon}\notin\Phi\setminus\Psi$ for any $Y\subset_\ev X$.} Let $\bar\Phi=\Sub(\u{t},w)\setminus\Phi$.
We consider the characteristic function $1_{\Psi\cup\bar\Phi}\in R^{\oplus\u{t}}(w)$. 
It follows from the hypothesis that $g\cdot1_\Psi=g\cdot1_{\Psi\cup\bar\Phi}$.
Hence, we get
\begin{multline*}
\Sigma_X^{\u{\epsilon},\ev}(g\cdot1_\Psi)=\Sigma_X^{\u{\epsilon},\ev}(g\cdot1_{\Psi\cup\bar\Phi})
=\sum_{Y\subset_\ev X}(-1)^{|Y|_X}g(\f_Y\u{\epsilon})1_{\Psi\cup\bar\Phi}(\f_Y\u{\epsilon})\\
=\sum_{Y\subset_\ev X}(-1)^{|Y|_X}g(\f_Y\u{\epsilon})=\Sigma_X^{\u{\epsilon},\ev}(g)\in\alpha_p^{|X|-1}R.\qquad\square\!\!\!\!\!
\end{multline*}

\subsection{Con-distance}\label{Connected_distance} We will follow the setup and notation of Section~\ref{Distance}.
We denote by $\Sub^X_{con}(\u{t},w,\u{\epsilon})$ the connected component of $\Sub^X(\u{t},w,\u{\epsilon})$
containing $\u{\epsilon}$. A subexpression $\u{\epsilon}\in\Sub(\u{t},w)$ is {\it con-close} to a subset
$\Phi\subset\Sub(\u{t},w)$ if $\u{\epsilon}\notin\Phi$ and there exists a subset $Y\subset\{1,\ldots,n\}$
that satisfies condition~\ref{close:i} and the following condition:
{\renewcommand{\labelenumi}{{\it(\alph{enumi}${}'$)}}
\renewcommand{\theenumi}{{\rm(\alph{enumi}${}'$)}}
\begin{enumerate}
\itemsep6pt
\setcounter{enumi}{1}
\item\label{close:ii'} $\Sub_{con}^Y(\u{t},w,\u{\epsilon})\cap\Phi=\emptyset$.
\end{enumerate}}
In that case, we define $\cdist(\Phi,\u{\epsilon})=2|Y|$.
We also set $\cdist(\Phi,\u{\epsilon})=\infty$ if $\u{\epsilon}$ is not con-close to $\Phi$.
\noindent
Form these definitions, we immediately get the following result.

\begin{proposition}
A subexpression $\u{\epsilon}$ that is close to $\Phi$ is con-close to $\Phi$.
In that case, $\dist(\Phi,\u{\epsilon})=\cdist(\Phi,\u{\epsilon})$.
\end{proposition}

\begin{lemma}\label{lemma:23}
Let $\u{\epsilon}$ be con-close to $\Phi$. Then
$$
\X_w(\u{t},\Phi)=R\mu\oplus\X_w(\u{t},\Phi\cup\{\u{\epsilon}\})
$$
for some $\mu\in\X_w(\u{t},\Phi)$ of degree ${\cdist(\Phi,\u{\epsilon})}$.
\end{lemma}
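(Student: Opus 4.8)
The plan is to run the proof of Lemma~\ref{lemma:22} again with one modification: the natural candidate for $\mu$ must now be restricted to a single connected component, which is exactly what forces $\mu$ into $\X_w(\u{t})$ rather than $\Y_w(\u{t})$. First I would fix a subset $Y$ witnessing that $\u{\epsilon}$ is con-close to $\Phi$ in the sense of Section~\ref{Connected_distance}, that is, one satisfying condition~\ref{close:i} together with condition~\ref{close:ii'}. Writing $\Psi=\Sub^Y_{con}(\u{t},w,\u{\epsilon})$ for the connected component of $\Sub^Y(\u{t},w,\u{\epsilon})$ containing $\u{\epsilon}$, I would set
$$
\mu=\nabla^Y_{\u{\epsilon}}\big|_{\Sub(\u{t},w)}\cdot 1_\Psi.
$$
The factor $1_\Psi$ is needed because $\nabla^Y_{\u{\epsilon}}$, although supported on $\Sub^Y(\u{t},w,\u{\epsilon})$, may be nonzero at subexpressions in $\Phi$ lying outside $\Psi$; condition~\ref{close:ii'} only controls the component $\Psi$, so the rest of the support has to be discarded.

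Next I would check that $\mu\in\X_w(\u{t},\Phi)$ and that $\mu$ is homogeneous of degree $\cdist(\Phi,\u{\epsilon})$. Since $\nabla^Y_{\u{\epsilon}}\big|_{\Sub(\u{t},w)}\in\Y_w(\u{t})\subset\X_w(\u{t})$ by Theorem~\ref{theorem:2} and Lemma~\ref{lemma:subset:3}, and this function is supported on $\Sub^Y(\u{t},w,\u{\epsilon})$ with $\Psi$ one of its connected components, Lemma~\ref{lemma:connected_component} gives $\mu\in\X_w(\u{t})$; this is precisely why $\mu$ lands in $\X_w(\u{t})$ and not, as in Lemma~\ref{lemma:22}, already in $\Y_w(\u{t})$. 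For $\u{\delta}\in\Phi$ the value $\mu(\u{\delta})$ vanishes: either $\u{\delta}\notin\Psi$, and then $1_\Psi(\u{\delta})=0$, or $\u{\delta}\in\Psi\cap\Phi$, which is empty by condition~\ref{close:ii'}; hence $\mu|_\Phi=0$, that is $\mu\in\X_w(\u{t},\Phi)$. Finally $\mu\ne0$ because $\mu(\u{\epsilon})=\prod_{i\in Y}\u{\epsilon}^{\to i}$ is a nonzero product of roots in the domain $R$, and all nonzero values of $\mu$ have degree $2|Y|$ since those of $\nabla^Y_{\u{\epsilon}}$ do; as $2|Y|=\cdist(\Phi,\u{\epsilon})$ by definition, $\mu$ has the stated degree.

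The remaining two points are copied verbatim from the proof of Lemma~\ref{lemma:22}. For directness: if $r\mu\in\X_w(\u{t},\Phi\cup\{\u{\epsilon}\})$, evaluation at $\u{\epsilon}$ gives $r\mu(\u{\epsilon})=0$, whence $r=0$ as $\mu(\u{\epsilon})\ne0$ and $R$ is a domain, so $R\mu\cap\X_w(\u{t},\Phi\cup\{\u{\epsilon}\})=0$. For spanning: take $g\in\X_w(\u{t},\Phi)$, and for each $p\in T$ with $\M_p(\u{\epsilon})\ne\emptyset$ pick $X\in\Phi_p(\u{\epsilon})$ with $|X|=n_p(\Phi,\u{\epsilon})$. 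By~(\ref{eq:22}) every $\f_Y\u{\epsilon}$ with $Y\subset_\ev X$ belongs to $\Phi\cup\{\u{\epsilon}\}$, and $g$ vanishes on $\Phi$, so $\Sigma_X^{\u{\epsilon},\ev}(g)=g(\u{\epsilon})$, whence $g(\u{\epsilon})\in\alpha_p^{n_p(\Phi,\u{\epsilon})-1}R$ because $g\in\X_w(\u{t})$. Combining these divisibilities over all such $p$ via the GKM-condition, and using condition~\ref{close:i} together with~(\ref{eq:alphattoi}) to identify $\mu(\u{\epsilon})$ up to a unit with $\prod_p\alpha_p^{n_p(\Phi,\u{\epsilon})-1}$, I get $g(\u{\epsilon})/\mu(\u{\epsilon})\in R$ and write $g=\frac{g(\u{\epsilon})}{\mu(\u{\epsilon})}\mu+\big(g-\frac{g(\u{\epsilon})}{\mu(\u{\epsilon})}\mu\big)$, the second summand lying in $\X_w(\u{t})$, vanishing on $\Phi$ and at $\u{\epsilon}$, hence in $\X_w(\u{t},\Phi\cup\{\u{\epsilon}\})$.

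The one step demanding genuine thought beyond Lemma~\ref{lemma:22} is the construction of $\mu$: one must notice simultaneously that passing to a connected component preserves membership in $\X_w(\u{t})$ --- which is the content of Lemma~\ref{lemma:connected_component} --- and that $\mu$ nevertheless remains supported inside $\Sub^Y(\u{t},w,\u{\epsilon})$, so that the combinatorics of $\Phi_p(\u{\epsilon})$ and $n_p(\Phi,\u{\epsilon})$, borrowed unchanged from Section~\ref{Distance}, still controls $g(\u{\epsilon})$. Once $\mu$ is in hand, everything else is mechanical.
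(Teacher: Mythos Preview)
Your proof is correct and follows essentially the same approach as the paper: you define $\mu=\nabla^Y_{\u{\epsilon}}\big|_{\Sub(\u{t},w)}\cdot 1_{\Sub^Y_{con}(\u{t},w,\u{\epsilon})}$, invoke Lemma~\ref{lemma:connected_component} to get $\mu\in\X_w(\u{t})$, and then rerun the argument of Lemma~\ref{lemma:22} verbatim. The paper's own proof says exactly this, only more tersely.
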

\begin{proof}
The proof of this lemma is the same as that of Lemma~\ref{lemma:22} with the only difference
$$
\mu=\nabla^Y_{\u{\epsilon}}\Big|_{\Sub(\u{t},w)}\cdot1_{\Sub_{con}^Y(\u{t},w,\u{\epsilon})}.
$$
This function no longer needs to belong to $\Y_w(\u{t},\Phi)$ but it belongs to $\X_w(\u{t},\Phi)$
by Lemma~\ref{lemma:connected_component}.
\end{proof}

\subsection{Algorithm 2}\label{Algorithm_2}
This algorithm is similar to Algorithm~1 but uses the notion of con-closeness instead.
This idea prompts the following definition: a total strict order $<$ on a subset $\Phi\subset\Sub(\u{t},w)$
is called {\it con-perfect} if any $\u{\epsilon}\in\Phi$ is con-close to the set 
$\Phi^{<\u{\epsilon}}$. If such an order exists on $\Phi$, then
we say that $\Phi$ is {\it con-perfectly orderable}.

Algorithm~2 also uses Laurent polynomials with nonnegative coefficients to keep track of degrees.
Thus it starts at step $0$ with $\mathscr G_0=\{(\emptyset,0)\}$ and at step $k$ stores a nonempty family
$\mathscr G_k$ of pairs $(\Phi,P)$, where $\Phi$ is a $k$-element con-perfectly orderable subset of $\Sub(\u{t},w)$
and $P\in\Z^{\ge0}[v^{\pm1}]$. If $k=m$, where $m=|\Sub(\u{t},w)|$, then the algorithm stops at step $m$.
If $k<m$, then the algorithm tries to produce $\mathscr G_{k+1}$ from $\mathscr G_k$ by
taking each pair $(\Phi,P)\in\mathscr G_k$ and finding all subexpressions
$\u{\epsilon}\in\Sub(\u{t},w)$ that are con-close to $\Phi$.
Then we define $\mathscr G_{k+1}$ to be the family of all pairs $(\Phi\cup\{\u{\epsilon}\},P+v^{-\cdist(\Phi,\u{\epsilon})})$.
If $\mathscr G_{k+1}=\emptyset$, then Algorithm~2 stops {\it prematurely} at step $k$.
From this description, we ge the following result.

\begin{theorem}\label{theorem:4}
{\renewcommand{\labelenumi}{{\it(\roman{enumi})}}
\renewcommand{\theenumi}{{\rm(\roman{enumi})}}
\begin{enumerate}
\item\label{theorem:4:p:i} The first components of each family $\mathscr G_k$ produced by Algorithm~2 are exactly
all con-perfectly orderable subsets of $\Sub(\u{t},w)$ of cardinality $k$. In particular, the set $\Sub(\u{t},w)$
is con-perfectly orderable if and only if Algorithm 2 does not stop prematurely.
\item\label{theorem:4:p:ii} For each pair $(\Phi,P)\in\mathscr G_k$, there is an isomorphism $\X_w(\u{t})\cong R^{\oplus P}\oplus\X_w(\u{t},\Phi)$ of
graded left $R$-modules.
\item\label{theorem:4:p:iii} If $(\Phi,P)$ and $(\Phi,P')$ both belong to $\mathscr G_k$, then $P=P'$.
\end{enumerate}}
\end{theorem}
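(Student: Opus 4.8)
The plan is to establish \ref{theorem:4:p:i} and \ref{theorem:4:p:ii} together by induction on $k$, and then to deduce \ref{theorem:4:p:iii} from \ref{theorem:4:p:ii}. Throughout I will use the reduction noted in the introduction that lets us assume $\boldsymbol\sigma=\{1\}$, so that $R=\Sym(V)$ is a nonnegatively graded polynomial ring with $R_0=\k$ a field; this is only needed in the very last step.

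For \ref{theorem:4:p:i} the base case $k=0$ is immediate, since $\mathscr G_0=\{(\emptyset,0)\}$ and $\emptyset$ is the only con-perfectly orderable subset of cardinality $0$. For the inductive step, assume the first components of $\mathscr G_k$ are exactly the con-perfectly orderable subsets of $\Sub(\u{t},w)$ of cardinality $k$. If $(\Phi\cup\{\u{\epsilon}\},P)\in\mathscr G_{k+1}$, then by construction $\Phi$ is a first component of $\mathscr G_k$, hence con-perfectly orderable, and $\u{\epsilon}$ is con-close to $\Phi$; adjoining $\u{\epsilon}$ as the new maximal element of a con-perfect order on $\Phi$ yields a con-perfect order on $\Phi\cup\{\u{\epsilon}\}$, because the set of predecessors of each element of $\Phi$ is unchanged and $\u{\epsilon}$ is con-close to its set of predecessors $\Phi$. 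Conversely, given a con-perfectly orderable $\Psi$ of cardinality $k+1$, fix a con-perfect order on it and let $\u{\eta}$ be its maximal element; restricting the order to $\Psi\setminus\{\u{\eta}\}$ keeps it con-perfect (same reasoning), so $\Psi\setminus\{\u{\eta}\}$ appears in $\mathscr G_k$ by induction, $\u{\eta}$ is con-close to it, and therefore $\Psi$ is produced in $\mathscr G_{k+1}$. The ``in particular'' clause follows since the only subset of cardinality $m=|\Sub(\u{t},w)|$ is $\Sub(\u{t},w)$ itself.

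For \ref{theorem:4:p:ii} the base case reads $\X_w(\u{t})\cong R^{\oplus 0}\oplus\X_w(\u{t},\emptyset)$, which is just $\X_w(\u{t},\emptyset)=\X_w(\u{t})$. For the inductive step, take $(\Phi,P)\in\mathscr G_k$ with $\X_w(\u{t})\cong R^{\oplus P}\oplus\X_w(\u{t},\Phi)$ and $\u{\epsilon}$ con-close to $\Phi$, so that $(\Phi\cup\{\u{\epsilon}\},P+v^{-\cdist(\Phi,\u{\epsilon})})\in\mathscr G_{k+1}$. Lemma~\ref{lemma:23} gives $\X_w(\u{t},\Phi)=R\mu\oplus\X_w(\u{t},\Phi\cup\{\u{\epsilon}\})$ with $\mu$ homogeneous of degree $\cdist(\Phi,\u{\epsilon})$; since $R$ is a domain and the ambient module is free, $r\mapsto r\mu$ is a graded isomorphism $R(-\cdist(\Phi,\u{\epsilon}))\ito R\mu$, that is $R\mu\cong R^{\oplus v^{-\cdist(\Phi,\u{\epsilon})}}$. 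Splicing these two isomorphisms together gives $\X_w(\u{t})\cong R^{\oplus(P+v^{-\cdist(\Phi,\u{\epsilon})})}\oplus\X_w(\u{t},\Phi\cup\{\u{\epsilon}\})$, as required.

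For \ref{theorem:4:p:iii}, if $(\Phi,P)$ and $(\Phi,P')$ both lie in $\mathscr G_k$, then \ref{theorem:4:p:ii} produces a graded isomorphism $R^{\oplus P}\oplus N\cong R^{\oplus P'}\oplus N$, where $N=\X_w(\u{t},\Phi)$ is finitely generated. Applying the functor $M\mapsto M/R_+M$, which is additive on direct sums and turns a finitely generated graded free module into a graded $\k$-vector space whose degreewise dimensions recover the Laurent polynomial of that free module, we cancel the common contribution of $N$ and conclude $P=P'$. The main obstacle of the whole argument is exactly this final cancellation: one must first pass to the polynomial-ring case so that $R_0$ is a field and graded Nakayama applies, and the real content of the statement is that the generator degrees packaged in $R^{\oplus P}$ form an intrinsic invariant of $\X_w(\u{t})$ and $\Phi$, hence are independent of the particular con-perfect order that Algorithm~2 happens to follow. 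Parts \ref{theorem:4:p:i} and \ref{theorem:4:p:ii}, by contrast, are essentially bookkeeping once Lemma~\ref{lemma:23} is in hand.
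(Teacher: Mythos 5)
Your proof is correct and follows essentially the same approach as the paper: part (i) unwinds the definitions, part (ii) iterates Lemma~\ref{lemma:23}, and part (iii) reduces to $\boldsymbol\sigma=\{1\}$ and invokes uniqueness of graded rank. The only minor variation is in (iii), where the paper identifies both $R^{\oplus P}$ and $R^{\oplus P'}$ with the quotient $\X_w(\u{t})/\X_w(\u{t},\Phi)$ (using that Lemma~\ref{lemma:23} produces an internal direct sum), whereas you compose abstract isomorphisms to get $R^{\oplus P}\oplus N\cong R^{\oplus P'}\oplus N$ and cancel $N$ via $M\mapsto M/R_+M$; both variants are valid and rest on the same final invariance.
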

\begin{proof}
Part~\ref{theorem:4:p:i} follows from the definition. Part~\ref{theorem:4:p:ii} follows from Lemma~\ref{lemma:23}.
To establish part~\ref{theorem:4:p:iii}, we note that Algorithm~2 does not depend on the choice of the multiplicative
subset $\boldsymbol\sigma$ appearing in the definition of $R$ (see Section~\ref{Representations}).
So we can assume $\boldsymbol\sigma=\{1\}$. In that case, the result follows from part~~\ref{theorem:4:p:ii}
and the following calculation
$$
R^{\oplus P}\cong \X_w(\u{t})/\X_w(\u{t},\Phi)\cong R^{\oplus P'},
$$
as the graded rank of finitely generated graded free modules over a ring of polynomials is defined uniquely.
\end{proof}

Let us again consider Example~\ref{example:2}. We can see that $\u{\epsilon}$ is con-close to $\{\u{\delta}\}$ and
$\u{\delta}$ is con-close to $\{\u{\epsilon}\}$. Therefore, Algorithm~2 does not stop prematurely
and produces the following families:
$$
\mathscr G_0=\{(\emptyset,0)\},\quad \mathscr G_1=\{(\{\u{\epsilon}\},1),(\{\u{\delta}\},1)\},\quad \mathscr G_2=\{(\{\u{\epsilon},\u{\delta}\},2)\}.
$$
Hence we get the isomorphism $\X_1(\u{t})\cong R\oplus R$, which we have already established.

We conclude this section with an interesting observation.

\begin{lemma}\label{lemma:penultimate_step}
Algorithm~2 never strops at the penultimate step. Moreover, if $(\Phi,P)$ is a pair produced at the penultimate step,
then the second component of the pair produced at the last step is equal to $P+v^{2(\ell-n)}$, where $n$ is the length
of the reflection expression and $\ell$ is the number of reflections
$p\in T$ such that $\M_p(\u{\epsilon})\ne\emptyset$, where $\u{\epsilon}\in\Sub(\u{t},w)\setminus\Phi$.
\end{lemma}
\begin{proof}
What we need to prove is that any subexpression $\u{\epsilon}\in\Sub(\u{t},w)$ is con-close
to its complement $\Phi=\Sub(\u{t},w)\setminus\{\u{\epsilon}\}$.
In this case, each family $\Phi_p(\u{\epsilon})$ consists of all subsets of
$\M_p(\u{\epsilon})\ne\emptyset$. Therefore, $n_p(\Phi,\u{\epsilon})=|\M_p(\u{\epsilon})|$.
Let us choose an index $i_p\in\M_p(\u{\epsilon})$ for any reflection $p\in T$ such that $\M_p(\u{\epsilon})\ne\emptyset$
and then define $Y_p=\M_p(\u{\epsilon})\setminus\{i_p\}$.
Let us consider the union $Y$ of all sets $Y_p$. This set satisfies conditions~\ref{close:i}.
It also satisfies conditions~\ref{close:ii'} as $\Sub_{con}^Y(\u{t},w,\u{\epsilon})=\{\u{\epsilon}\}$.
Indeed, suppose that this equality is not satisfied.
Then there exists some $\u{\delta}\in\Sub_{con}^Y(\u{t},w,\u{\epsilon})$ distinct
from $\u{\epsilon}$ such that $\u{\delta}\edot_p\u{\epsilon}$. By Lemma~\ref{lemma:sim}\ref{lemma:sim:iv},
we get $\u{\delta}=\f_Z\u{\epsilon}$ for some nonempty $Z\subset_\ev\M_p(\u{\epsilon})$.
As $|Z|\ge2$, we get $Z\cap Y\ne\emptyset$, which implies
a contradiction $\u{\delta}\notin\Sub^Y(\u{t},w,\u{\epsilon})$.

The formula for the second component follows from the fact that $|Y|=n-\ell$.
\end{proof}

\subsection{The acyclic case} As Algorith~2 does not stop prematurely if Algorithm~1 does not,
we see that Algorithm~2 does not stop prematurely for balanced sets $\Sub(\u{t},w)$ by Theorem~\ref{theorem:5}.
We are going to describe here a different situation when it happens (even though Algorithm~1 may stop).

\begin{theorem}
Let $\Gr(\Sub(\u{t},w))$ be acyclic (a forest) with $m>0$ vertices and $\l$ connected components.
Algorithm~2 in this case does not stop prematurely and produces the pair
$(\Sub(\u{t},w),\l+(m-\l)v^{-2})$ at the last step.
Therefore, $\X_w(\u{t})\cong R^{\oplus\l}\oplus R(-2)^{\oplus(m-\l)}$.
\end{theorem}
\begin{proof}
Note that the acyclicity condition implies that $|\M_p(\u{\epsilon})|\le2$ for any subexpression $\u{\epsilon}\subset\u{t}$.
Let $\Phi_1,\ldots,\Phi_\l$ be the connected components of $\Sub(\u{t},w)$.
We denote $m_i=|\Phi_i|$. Then we get $m=m_1+\cdots+m_\l$.
We claim that for any $i=1,\ldots,\l$ and for any nonempty connected subset $\Psi\subset\Phi_i$, the
pair $(\Phi_1\cup\cdots\cup\Phi_{i-1}\cup\Psi,i+(m_1+\cdots+m_{i-1}+|\Psi|-i)v^{-2})$
belongs to $\mathscr G_k$, where $k=m_1+\cdots+m_{i-1}+|\Psi|$. We will prove our claim by induction on $k$.

First we consider the case $|\Psi|=1$. We denote the only subexpression of this set by $\u{\epsilon}$
and define $\Phi=\Phi_1\cup\cdots\cup\Phi_{i-1}$, $P=i-1+(m_1+\cdots+m_{i-1}-i+1)v^{-2}$.
By induction (or by Step 0 if $\Phi=\emptyset$),
we get $(\Phi,P)\in\mathscr G_{k-1}$.
It is easy to note that $\u{\epsilon}$ is con-close to $\Phi$ and that $\cdist(\Phi,\u{\epsilon})=0$.
Thus $(\Phi\cup\Psi,P+1)\in\mathscr G_k$. It remains to note that $P+1=i+(m_1+\cdots+m_{i-1}+|\Psi|-i)v^{-2}$
as required.

Next, suppose that $|\Psi|>1$. Let $\u{\epsilon}$ be a subexpression of $\Psi$ that is not a cut vertex. 
We denote $\Psi'=\Psi\setminus\{\u{\epsilon}\}$ and $\Xi=\Phi\cup\Psi'$.
By induction, $(\Xi,P)\in\mathscr G_{k-1}$, where $P=i+(m_1+\cdots+m_{i-1}+|\Psi'|-i)v^{-2}$.
We are going to prove that $\u{\epsilon}$ is con-close to $\Xi$.

We claim that there is a unique reflection $p\in T$ such that $n_p(\Xi,\u{\epsilon})=2$.
Indeed as $\Psi$ is connected, there is some $\u{\delta}\in\Psi'$
such that $\u{\epsilon}\edot_p\u{\delta}$ for some $p\in T$. Then $\u{\delta}=\f_X\u{\epsilon}$
for some $X\subset_\ev\M_p(\u{\epsilon})$ by Lemma~\ref{lemma:sim}\ref{lemma:sim:iv}.
Hence 
$$
2\le |X|\le n_p(\Xi,\u{\epsilon})\le|\M_p(\u{\epsilon})|\le2.
$$
If $n_q(\Xi,\u{\epsilon})=2$ for some reflection $q\ne p$, then
$\u{\tau}=\f_Y\u{\epsilon}\in\Psi'$ for some $Y\subset_\ev\M_q(\u{\epsilon})$.
Connecting $\u{\delta}$ with $\u{\tau}$ by a chain inside $\Psi'$,
we get a cycle in $\Psi$, 
which is a contradiction.

Let $Y$ be any of two one-element subsets of $X$.
By what we have proved $Y$ satisfies conditions~\ref{close:i}.
We are going to check condition~\ref{close:ii'}. Let us assume the contrary and let
$\u{\nu}\in\Sub_{con}^Y(\u{t},w,\u{\epsilon})\cap\Xi$.
This subexpression can not belong to any of the connected components $\Phi_1,\ldots,\Phi_{i-1}$.
Therefore, $\u{\nu}\in\Sub_{con}^Y(\u{t},w,\u{\epsilon})\cap\Psi'$.
There exists a simple 
chain (that is, all vertices are distinct)
$$
\begin{tikzcd}[column sep=35pt]
\u{\epsilon}=\u{\epsilon}^{(0)}\arrow[leftrightarrow]{r}[swap]{p_1}&\u{\epsilon}^{(1)}\arrow[leftrightarrow]{r}[swap]{p_2}&\u{\epsilon}^{(2)}\cdots\arrow[leftrightarrow]{r}[swap]{p_{N-1}}&\u{\epsilon}^{(N-1)}\arrow[leftrightarrow]{r}[swap]{p_N}&\u{\epsilon}^{(N)}=\u{\nu}
\end{tikzcd}
$$
inside $\Sub_{con}^Y(\u{t},w,\u{\epsilon})$. We get $\u{\epsilon}^{(j)}\ne\u{\delta}=\f_X\u{\epsilon}$ for any $j=1,\ldots,N$
as $Y\subset X$.

Thus connecting $\u{\delta}$ and $\u{\nu}$ by a simple chain inside $\Psi'$ we get a cycle in $\Phi_i$,
which is a contradiction. We have proved that $\cdist(\Xi,\u{\epsilon})=2$.
Therefore, the pair $(\Phi\cup\Psi,P+v^{-2})$ belongs to $\mathscr G_k$.
It remains to note that $P+v^{-2}=i+(m_1+\cdots+m_{i-1}+|\Psi|-i)v^{-2}$ as required.
Now the claim of the lemma corresponds to $i=\l$ and $\Psi=\Phi_\l$.
\end{proof}

Note that the simplest deviation from the acyclic case (that is, a single cycle) as in Section~\ref{Examples}
leads to a completely different picture.

\subsection{Cotransversal subsets} We are going to make a small observation
that makes our algorithms slightly more effective.

Let $M$ be a finite set and $\mathscr F$ be a nonempty family of its nonempty subsets.
Let $n$ denote the maximum of the cardinalities $|X|$ for $X\in\mathscr F$.
A subset $Y\subset M$ is called {\it cotransversal} to $\mathscr F$ if $|Y|=n-1$ and
it contains all elements of any subset $X\in\mathscr F$ except possibly for one.

\begin{lemma}\label{cotransversal}
Let $M$ be a finite set, $\mathscr F$ be a nonempty family of its nonempty subsets and
$Z$ be an element of $\mathscr F$ of maximal cardinality $n$. A subset $Y\subset M$
is cotransversal to $\mathscr F$ if and only if
{\renewcommand{\labelenumi}{{\it(\roman{enumi})}}
\renewcommand{\theenumi}{{\rm(\roman{enumi})}}
\begin{enumerate}
\itemsep6pt
\item\label{cotransversal:i} $|Y|=n-1$;
\item\label{cotransversal:ii} $|X\setminus Z|\le 1$ for any $X\in\mathscr F$;
\item\label{cotransversal:iii} $\displaystyle\bigcup_{X\in\mathscr F\atop X\not\subset Z}X\cap Z\subset Y\subset Z$.
\end{enumerate}}
\end{lemma}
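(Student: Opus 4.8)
The plan is to unwind both directions straight from the definition: $Y$ is cotransversal to $\mathscr F$ precisely when $|Y| = n-1$ and $|X \setminus Y| \le 1$ for every $X \in \mathscr F$, where $n = |Z|$ is the maximal cardinality occurring in $\mathscr F$. The pivot in both implications will be the inclusion $Y \subseteq Z$ together with the fact that $Z \setminus Y$ is then a single element.

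\emph{From cotransversality to (i)--(iii).} Suppose $Y$ is cotransversal. Condition (i) is the definition. Applying $|X \setminus Y| \le 1$ to $X = Z$ gives $|Z \setminus Y| \le 1$, while $|Y| = n - 1 < n = |Z|$ forces $|Z \setminus Y| \ge 1$; hence $|Z \setminus Y| = 1$ and, by counting, $|Y \cap Z| = |Y|$, i.e.\ $Y \subseteq Z$ --- this is the right-hand inclusion of (iii). Once $Y \subseteq Z$ we have $X \setminus Z \subseteq X \setminus Y$ for every $X$, so $|X \setminus Z| \le |X \setminus Y| \le 1$, which is (ii). For the left-hand inclusion of (iii), take $X \in \mathscr F$ with $X \not\subseteq Z$; then $X \setminus Z = \{a\}$ for some $a \notin Z \supseteq Y$, and since $a \in X \setminus Y$ and $|X \setminus Y| \le 1$ we get $X \setminus Y = \{a\}$, whence $X \cap Z \subseteq X \cap Y \subseteq Y$. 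Taking the union over all such $X$ yields (iii).

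\emph{From (i)--(iii) to cotransversality.} Assume (i), (ii), (iii). From $Y \subseteq Z$ and $|Y| = n - 1 = |Z| - 1$ we get $Z \setminus Y = \{c\}$ for a single element $c$. Let $X \in \mathscr F$. If $X \subseteq Z$ then $X \setminus Y = X \cap (Z \setminus Y) \subseteq \{c\}$. If $X \not\subseteq Z$ then (ii) gives $|X \setminus Z| = 1$, say $X \setminus Z = \{a\}$, and the left-hand inclusion of (iii) gives $X \cap Z \subseteq Y$, so $X \setminus Y \subseteq \{a\}$. In either case $|X \setminus Y| \le 1$, and together with (i) this says exactly that $Y$ is cotransversal.

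The argument is elementary set bookkeeping, so I do not foresee a genuine obstacle; the one point needing care is the logical status of condition (ii), which does not mention $Y$. In the forward direction it must be \emph{derived} (as a consequence of $Y \subseteq Z$), while in the backward direction it is used as a hypothesis to pin down the single element of $X \setminus Z$. The edge cases --- $n = 1$ (so $Y = \emptyset$), $X \subseteq Z$, or $\mathscr F = \{Z\}$ --- are all absorbed by the same reasoning and require no separate treatment.
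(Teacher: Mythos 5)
Your proof is correct and takes essentially the same approach as the paper's: elementary set bookkeeping centred on the observation that $Y\subset Z$ with $|Z\setminus Y|=1$. The only cosmetic differences are that you derive $Y\subseteq Z$ by applying the cotransversality condition to $X=Z$ and counting, and you prove the left inclusion of (iii) directly rather than by the paper's contradiction argument; both amount to the same thing.
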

\begin{proof}
Let $Y$ be cotransversal to $\mathscr F$. Property~\ref{cotransversal:i} is satisfied by definition.
It is also clear that $Y\subset Z$, as otherwise $|Z\setminus Y|\ge 2$.
Hence we also get~\ref{cotransversal:ii}, as $X\setminus Z\subset X\setminus Y$ and the last set contains at most one element.
It remains to prove the first inclusion of~\ref{cotransversal:iii}. Suppose that $X\cap Z$ is not a subset of $Y$
for some $X\in\mathscr F$ not contained in $Z$. Then there are two point $x_1\in(X\cap Z)\setminus Y$ and $x_2\in X\setminus Z$.
These points are obviously distinct and both belong to $X\setminus Y$.

Conversely, suppose that conditions~\ref{cotransversal:i}--\ref{cotransversal:iii} are satisfied for a subset $Y\subset M$.
We need to show that $|X\setminus Y|\le 1$ for any $X\in\mathscr F$. As $|Z\setminus Y|=1$ by~\ref{cotransversal:i}
and~\ref{cotransversal:iii}, we may assume that $X$ is not a subset of $Z$. By~\ref{cotransversal:ii},
we get a disjoint union $X=(X\cap Z)\sqcup\{x\}$ for some $x\in X\setminus Z$.
The first inclusion of~\ref{cotransversal:iii} now implies $X\setminus Y=\{x\}$.
\end{proof}

\begin{lemma}\label{lemma:20}
Let $Y\subset\{1,\ldots,n\}$ be a subset satisfying conditions~\ref{close:i} and~\ref{close:ii'}
(for example, conditions~\ref{close:i} and~\ref{close:ii}).
For any $p\in T$ such that $\M_p(\u{\epsilon})\ne\emptyset$, the intersection $Y_p=Y\cap\M_p(\u{\epsilon})$
is cotransversal to $\Phi_p(\u{\epsilon})$.
\end{lemma}
\begin{proof}
Let us suppose the contrary: $|X\setminus Y_p|>1$ for some $X\in\Phi_p(\u{\epsilon})$.
Taking any distinct points $x_1,x_2\in X\setminus Y_p$, we get a contradiction
$$
\f_{\{x_1,x_2\}}\u{\epsilon}\in\Sub_{con}^Y(\u{t},w,\u{\epsilon})\cap\Big(\Sub_X(\u{t},w,\u{\epsilon})\setminus\{\u{\epsilon}\}\Big)\subset\Sub^Y(\u{t},w,\u{\epsilon})\cap\Phi
$$
with condition~\ref{close:ii'}.
\end{proof}

This result and Lemma~\ref{cotransversal} allow us to choose sets $Y$ more effectively,
when testing if a subexpression $\u{\epsilon}$ is con-close (close) to $\Phi$. We can also spare
some memory and computation time by replacing $\Phi_p(\u{\epsilon})$ with the family of its
maximal by inclusion subsets.


\section{Examples}\label{Examples}


\subsection{Symmetric groups}\label{Symmetric_groups} For any positive integer $n$, we denote by $S_n$ the set of all bijections
of the set $\{1,\ldots,n\}$, which are called {\it permutations}. This set is a group, called the {\it symmetric group},
under the following multiplication rule:
$
(\alpha\beta)(i)=\alpha(\beta(i)).
$
The neutral element of this group is the identical bijection.

For distinct numbers $i_1,i_2,\ldots,i_m\in\{1,\ldots,n\}$, we denote by $(i_1i_2\cdots i_m)$
the permutation such that
$$
i_1\mapsto i_2,\quad i_2\mapsto i_3,\quad \ldots,\quad i_{m-1}\mapsto i_m,\quad i_m\mapsto i_1,
$$
leaving all other numbers unchanged. Such permutations are called {\it cycles}.
Any permutation is a product of uniquely defined (up to order)
independent cycles. 
We will use the following decomposition:
\begin{equation}\label{eq:aprod}
(i_1,i_2,i_3,\ldots,i_m)=(i_1,i_m)\cdots(i_1,i_3)(i_1,i_2).
\end{equation}

It is well-known that the pair $(S_n,\{(1\,2),(2\,3),\ldots,(n-1\,n)\})$ is a Coxeter system.
According to the standard classification, it has type $A_{n-1}$.
Its set of reflections $T_n$ consists of all {\it transpositions} $(i\,j)$.

To avoid ambiguity, we will use the following terminology:
a nonempty sequence $(y_1,\ldots,y_n)$ is a {\it rearrangement} of a sequence $(x_1,\ldots,x_n)$ if and only if
there exists a permutation $\sigma\in S_n$ such that $y_i=x_{\sigma(i)}$ for any $i=1,\ldots,n$.

Let $\u{i}=(i_1,\ldots,i_m)$ be a nonempty sequence of distinct elements of $\{1,\ldots,n\}$.
We consider the following sequences of transpositions (called $\a$- and {\it $\b$-sequences} respectively):
$$
\arraycolsep=1.4pt
\begin{array}{rcl}
\a(i_1,i_2,\ldots,i_m)&=&((i_1i_2),(i_1i_3),\ldots,(i_1i_m)),\\[6pt]
\b(i_1,i_2,\ldots,i_m)&=&((i_1i_2),(i_2i_3),\ldots,(i_{m-1}i_m))
\end{array}
$$
of length $m-1$.
If $m\ge2$, then we also consider the sequences (called $\c$- and {\it $\D$-sequences} respectively):
$$
\begin{array}{rcl}
\c(i_1,i_2,\ldots,i_m)&=&((i_1i_2),(i_2i_3),\ldots,(i_{m-1}i_m),(i_mi_1)),\\[6pt]
\D(i_1,i_2,\ldots,i_m)&=&\a(i_1,i_2,\ldots,i_m)\cup\c(i_1,i_2,\ldots,i_m)
\end{array}
$$
of lengths $m$ and $2m-1$ respectively. These sequences are subject to the following relations:
$$
\arraycolsep=2pt
\begin{array}{rcl}
\a(i_1,i_2,\ldots,i_m)&=&\a(i_1,\ldots,i_k)\cup\a(i_1,i_{k+1},\ldots,i_m)\text{ for }1\le k\le m,\\[5pt]
\b(i_1,i_2,\ldots,i_m)&=&\b(i_1,i_2,\ldots,i_k)\cup\b(i_k,i_{k+1}\ldots,i_m)\text{ for }1\le k\le m,\\[5pt]
\c(i_1,i_2,\ldots,i_m)&=&\b(i_1,\ldots,i_k)\cup\b(i_k,\ldots,i_m,i_1)\text{ for }1<k\le m.
\end{array}
$$
We will abbreviate $\a(\u{i})=\a(i_1,\ldots,i_m)$, $\b(\u{i})=\b(i_1,\ldots,i_m)$, etc.
In the proofs below, we will also use the fact that if all elements of $\a(i_1,\ldots,i_m)$ or
$\b(i_1,\ldots,i_m)$ are simultaneously conjugated by a permutation $\sigma$,
then the resulting sequence will be $\a(\sigma(i_1),\ldots,\sigma(i_m))$ or $\b(\sigma(i_1),\ldots,\sigma(i_m))$
respectively.

Note that for $m\ge3$ there holds
\begin{equation}\label{eq:MD}
\M_{(i_1i_m)}(\D(\u{i}))=\{m-1,2m-1\},\quad \M_{(i_1i_2)}(\D(\u{i}))=\{1,m\}
\end{equation}
and $|\M_p(\D(\u{i}))|\le1$ for any other reflection $p$.

\subsection{The setup}\label{Setup}
Let $(W,S)$ be a coxeter system such that there exist distinct elements $s_1,\ldots,s_{n-1}\in S$,
where $n\ge3$, defining a subsystem of type $A_{n-1}$.
More exactly, we assume that $\ord(s_is_j)\le2$ unless $|i-j|=1$, in which case $\ord(s_is_j)=3$.
Let $J=\{s_1,\ldots,s_{n-1}\}$. We will identify the standard parabolic subgroup $W_J$ with $S_n$ and
fix this number $n$ for the rest of the paper.
Under this identification, any $s_i$ is identified with the transposition $(i\,i+1)$ and
the set of reflections of the parabolic subgroup $T_J=W_J\cap T$ is identified
with $T_n$.




Let $\rho:W\to\GL(V)$ be a geometric representation of $W$.
Here $V$ is a real vector space spanned by the abstract vectors $\alpha_s$, where $s\in S$.
The representation $\rho$ is defined by the bilinear form $(\_,\_):V\times V\to\R$ such that
$$
(\alpha_s\,\alpha_{s'})=-\cos\frac\pi{\ord(ss')}.
$$
Let $\Phi=\{w\alpha_s\suchthat w\in W,s\in S\}$ be the set of {\it roots}
of this representation and $\Phi\to T$ be a map given by $we_s\mapsto wsw^{-1}$.
This map is well-defined and induces a bijection $\Phi/\{-1,1\}\ito T$.
Moreover, if $\alpha\mapsto t$, then $t$ acts on $V$ as the reflection with root $\alpha$
and coroot $\alpha^\vee=2(\alpha,\_)$.

We will use the following notation for the roots $\alpha_i=\alpha_{s_i}$ and
\begin{equation}\label{eq:alpha_roots}
\alpha_{ij}
=
\left\{
\begin{array}{ll}
\!\!\;\;\,\alpha_i+\alpha_{i+1}+\ldots+\alpha_{j-1}&\text{if }i<j;\\[3pt]
\!\!-\alpha_j-\alpha_{i+1}-\ldots-\alpha_{i-1}&\text{if }j<i
\end{array}
\right.
\end{equation}
for distinct $i,j=1,\ldots,n$. It is easy to prove by induction on the length that for any $\tau\in W_J$, there holds
$$
\tau\alpha_{ij}=\alpha_{\tau(i)\tau(j)}.
$$
Hence it follows that $\pm\alpha_{ij}\mapsto(i\,j)$ under the above map $\Phi/\{-1,1\}\to T$.
So the reader who only wants to understand the main idea of the examples of this section
may assume that $W=S_n$, $V=\{(x_1,\ldots,x_n)\in\R^n\suchthat x_1+\cdots+x_n=0\}$ and $\alpha_{ij}=e_i-e_j$,
where $e_i$ and $e_j$ are coordinate vectors. In this case, the action of $W$ on $V$ is the restriction
of the natural permutation action of $W$ on $\R^n$.

In the rest of the paper, we will assume that $R=\Sym(V)$, which corresponds to the choice $\boldsymbol\sigma=\{1\}$.

\subsection{Folding $\D$-sequences} For the next result, recall the notation of Section~\ref{Sequences}.

\begin{lemma}\label{lemma:ex:1} Let $\u{t}$ be a reflection expression and $\u{\epsilon}\in\Sub(\u{t},1)$.
Then $\u{\epsilon}[k]\in\Sub(\u{t}[k],1)$ for any $k\in\Z$.
\end{lemma}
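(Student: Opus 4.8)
The plan is to unwind the definitions and reduce everything to a one-line group-theoretic computation. Recall that $\u{\epsilon}\in\Sub(\u{t},1)$ means exactly $t_1^{\epsilon_1}t_2^{\epsilon_2}\cdots t_n^{\epsilon_n}=1$, where $n=|\u{t}|$, and that $\u{\epsilon}[k]\in\Sub(\u{t}[k],1)$ means the analogous product formed from the cyclically shifted data equals $1$; here the shifted $0/1$-sequence $\u{\epsilon}[k]$ is to be read as a subexpression of the shifted reflection expression $\u{t}[k]$, in accordance with the convention of Section~\ref{Sequences} that a subexpression ``knows'' its reflection expression. Passing to the periodic picture of Section~\ref{Sequences}, we have $(\u{t}[k])_i=t_{k+i}$ and $(\u{\epsilon}[k])_i=\epsilon_{k+i}$ with $t$ and $\epsilon$ extended $n$-periodically, so the target of $\u{\epsilon}[k]$ in $\u{t}[k]$ is
$$
\prod_{i=1}^{n} t_{k+i}^{\epsilon_{k+i}},
$$
and it is this product that I want to show equals $1$.

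First I would reduce to the range $0\le k<n$. This is immediate from periodicity: writing $k=qn+r$ with $0\le r<n$, one has $t_{k+i}=t_{r+i}$ and $\epsilon_{k+i}=\epsilon_{r+i}$ for every $i$, hence $\u{t}[k]=\u{t}[r]$ and $\u{\epsilon}[k]=\u{\epsilon}[r]$ as length-$n$ sequences; alternatively one may invoke $\u{x}[k][\l]=\u{x}[k+\l]$ together with $\u{x}[n]=\u{x}$.

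Next, for $0\le k<n$ I would split the shifted product at the point where the periodic extension is used:
$$
\prod_{i=1}^{n} t_{k+i}^{\epsilon_{k+i}}=\prod_{j=k+1}^{n+k} t_{j}^{\epsilon_{j}}=\Bigl(\prod_{j=k+1}^{n} t_{j}^{\epsilon_{j}}\Bigr)\Bigl(\prod_{j=n+1}^{n+k} t_{j}^{\epsilon_{j}}\Bigr)=\Bigl(\prod_{j=k+1}^{n} t_{j}^{\epsilon_{j}}\Bigr)\Bigl(\prod_{\l=1}^{k} t_{\l}^{\epsilon_{\l}}\Bigr),
$$
using $t_{n+\l}=t_\l$ and $\epsilon_{n+\l}=\epsilon_\l$ for $1\le\l\le k$. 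Setting $A=t_1^{\epsilon_1}\cdots t_k^{\epsilon_k}$ and $B=t_{k+1}^{\epsilon_{k+1}}\cdots t_n^{\epsilon_n}$ (with $A=1$ when $k=0$), the hypothesis $\u{\epsilon}\in\Sub(\u{t},1)$ reads $AB=1$, so $B=A^{-1}$ and the shifted product above equals $BA=A^{-1}A=1$. This is precisely the assertion that $\u{\epsilon}[k]\in\Sub(\u{t}[k],1)$, completing the argument.

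There is no genuine obstacle in this lemma: the computation is just the observation that a cyclic rotation of the word $AB=t_1^{\epsilon_1}\cdots t_n^{\epsilon_n}$ is conjugate to it, hence still trivial. The only points requiring a little care are the bookkeeping with the $n$-periodic extension and the reduction step, both of which are routine once the definitions are spelled out.
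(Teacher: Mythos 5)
Your proof is correct and rests on the same idea as the paper's: a cyclic rotation of a word equal to $1$ is still $1$, because $BA=A^{-1}(AB)A$. The paper reduces to $k=1$ and conjugates once by $t_1^{\epsilon_1}$ (implicitly iterating for general $k$), while you reduce to $0\le k<n$ by periodicity and then split the product as $BA$ in one step; this is a bookkeeping variant of the same argument, not a different route.
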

\begin{proof}
It suffices to consider the case $k=1$. Let $m=|\u{t}|$. Conjugating the equality
$t_1^{\epsilon_1}t_2^{\epsilon_2}\cdots t_m^{\epsilon_m}=1$
by $t_1^{\epsilon_1}$ we get the required equality
$
t_2^{\epsilon_2}\cdots t_m^{\epsilon_m}t_1^{\epsilon_1}=1
$.
\end{proof}

For a reflection expression $\u{t}$, a reflection $p\in T$ and $k\in\Z$, we have
\begin{equation}\label{eq:Mshift}
\M_p(\u{t}[k])\=-k+\M_p(\u{t})\pmod{|\u{t}|}.
\end{equation}
Here and in what follows, the addition of an element to a set is componentwise.
From this formula and~(\ref{eq:MD}), we get
\begin{equation}\label{eq:MDk}
\arraycolsep=1.4pt
\begin{array}{rcl}
\M_{(i_1i_n)}(\D(\u{i})[k])&\=&\{-k,n-1-k\}\pmod{2n-1},\\[6pt]
\M_{(i_1i_2)}(\D(\u{i})[k])&\=&\{1-k,n-k\}\pmod{2n-1}.
\end{array}
\end{equation}
with $|\M_p(\D(\u{i})[k])|\le1$ in all other cases.

This idea can also be applied to the folding operators as follows.
Let $Y=\{y_1,\ldots,y_l\}\subset\Z$ be such that $y_i\not\=y_j\pmod{2n-1}$ for distinct $i$ and $j$.
For each $y_i$, there is a unique integer $x_i$ such that $y_i\=x_i\pmod{2n-1}$ and $1\le x_i\le 2n-1$. We define
$$
\f_Y=\f_{y_1,\ldots,y_l}=\f_X,
$$
where $X=\{x_1,\ldots,x_l\}$ (see Section~\ref{Expressions_and_subexpressions} about the definition of the folding operator $\f_X$).
We have the following connection between folding operators and cyclic shifts applied to subexpressions:
\begin{equation}\label{eq:5}
\f_{-k+Y}(\u{\epsilon}[k])=(\f_Y\u{\epsilon})[k]. 
\end{equation}
The reader should be careful not to apply this formula to reflection expressions.
Instead of it, Corollary~\ref{corollary:+} must be applied.



\begin{lemma}\label{lemma:ex:3}
Let $\u{i}$ be a rearrangement of $(1,\ldots,\n)$ and $k$ be an integer.
Then
$$
\overline{\D(\u{i})[k]}=\D\big(\u{\ddot\imath}\big)[-\n-k].
$$
If $1-\n\le k\le\n-1$, then
$$
\f_{1-k,\n-k}(\D(\u{i})[k])=
\left\{\!\!
\begin{array}{ll}
\D(\u{i}[1])[k-1]&\text{if } k\le0;\\[4pt]
\D(\u{i}\<1\>)[k-1]&\text{if }k>0
\end{array}
\right.
$$
and if $-\n\le k\le\n-2$, then
$$
\f_{-k,\n-1-k}(\D(\u{i})[k])=
\left\{\!\!
\begin{array}{ll}
\D(\u{i}[-1])[k+1]&\text{if } k<0;\\[4pt]
\D(\u{i}\<-1\>)[k+1]&\text{if }k\ge0.
\end{array}
\right.
$$
\end{lemma}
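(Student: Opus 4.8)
\noindent
The plan is to view all three identities as instances of one phenomenon --- reversing, or folding, a cyclic shift of a $\D$-sequence produces another $\D$-sequence carrying a prescribed cyclic shift --- and to prove them by unwinding the explicit formulas for $\a$, $\b$, $\c$, $\D$ from Section~\ref{Symmetric_groups}. For~(i) I would first strip the outer shift, using $\overline{\u x[k]}=\overline{\u x}[-k]$ from~(\ref{eq:rev_shift}) together with $\u x[a][b]=\u x[a+b]$, so that the claim reduces to the single identity $\overline{\D(\u i)}=\D(\u{\ddot\imath})[c]$ for a constant $c$ to be computed. For~(ii) and~(iii) I would work with general $k$ directly; since a cyclic shift only relabels positions, the argument splits into a small number of cases according to how the shift by $k$ places the two folding positions relative to the two ``junctions'' of $\D(\u i)=\a(\u i)\cup\c(\u i)$ (the one between the $\a$-block and the $\c$-block, and the cyclic one between the $\c$-block and the $\a$-block).

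For~(i): reversal turns a concatenation inside out, so $\overline{\D(\u i)}=\overline{\a(\u i)\cup\c(\u i)}=\overline{\c(\u i)}\cup\overline{\a(\u i)}$. Unwinding the definition of $\a$ and using that a transposition equals its own reverse gives $\overline{\a(i_1,\dots,i_\n)}=((i_1i_\n),(i_1i_{\n-1}),\dots,(i_1i_2))=\a(i_1,i_\n,i_{\n-1},\dots,i_2)=\a(\u{\ddot\imath})$, and the analogous short computation yields $\overline{\c(\u i)}=\c(\u{\ddot\imath})$. Hence $\overline{\D(\u i)}=\c(\u{\ddot\imath})\cup\a(\u{\ddot\imath})$, which is $\D(\u{\ddot\imath})=\a(\u{\ddot\imath})\cup\c(\u{\ddot\imath})$ with its two blocks cyclically interchanged; since the $\a$-block has length $\n-1$ and the $\c$-block length $\n$ (total $2\n-1$), a one-line count of positions fixes $c$, and applying $[-k]$ to both sides and invoking~(\ref{eq:rev_shift}) gives the statement for every $k$.

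For~(ii) and~(iii): the decisive input is~(\ref{eq:MDk}), according to which the only reflections occurring twice in $\D(\u i)[k]$ are $(i_1i_\n)$, at the positions $\{-k,\n-1-k\}$ modulo $2\n-1$, and $(i_1i_2)$, at the positions $\{1-k,\n-k\}$, every other reflection occurring at most once; this is exactly what makes $\f_{-k,\n-1-k}$ and $\f_{1-k,\n-k}$ legitimate folding operators on $\D(\u i)[k]$. Each of them conjugates, by the relevant transposition, the block of entries strictly between the two occurrences of that reflection and fixes the rest. I would then locate this block inside $\D(\u i)=\a(\u i)\cup\c(\u i)$ --- it is a contiguous sub-block of $\a(\u i)$, of $\c(\u i)$, or one straddling a junction --- and identify it using the splitting relations for $\a$, $\b$, $\c$ together with the fact recorded in Section~\ref{Symmetric_groups} that conjugating all entries of an $\a$- or $\b$-sequence by a permutation $\sigma$ again gives an $\a$- or $\b$-sequence, on the $\sigma$-image of the underlying tuple. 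Conjugating the relevant sub-block by the transposition swapping $i_1$ with $i_\n$, respectively $i_1$ with $i_2$, then carries the corresponding part of $\D(\u i)$ to the corresponding part of the $\D$-sequence of either a full cyclic rotation $\u i[\pm1]$ of the tuple or a rotation $\u i\langle\pm1\rangle$ of its tail $(i_2,\dots,i_\n)$; which of the two occurs, and with which cyclic shift, is governed by how the shift by $k$ has placed the folded arc, which is precisely what the case distinctions ``$k\le0$ versus $k>0$'' in~(ii) and ``$k<0$ versus $k\ge0$'' in~(iii) encode.

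The step I expect to be the main obstacle is exactly this index bookkeeping modulo $2\n-1$: determining, for each admissible $k$, which cyclic block of entries of $\D(\u i)[k]$ the folding operator conjugates --- including the wrap-around configurations where that block runs through position $2\n-1$ --- and checking that the boundary values $k=0$ and $k=\pm1$ land on the side of the case distinctions asserted in the statement. Once the block has been correctly located, recognizing the folded sequence as a $\D$-sequence of a tuple obtained from $\u i$ by a rotation or a tail-reversal (with a prescribed cyclic shift) is purely mechanical, and at no point does anything about the roots, the representation $V$, or bimodules enter the argument.
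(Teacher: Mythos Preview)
Your approach is correct and matches the paper's. For the reversal identity the paper reduces to $k=0$ via~(\ref{eq:rev_shift}) and computes $\overline{\D(\u i)}=\overline{\c(\u i)}\cup\overline{\a(\u i)}=\c(\u{\ddot\imath})\cup\a(\u{\ddot\imath})=\D(\u{\ddot\imath})[\n]$, exactly as you propose; for the folding identities it carries out the same block-by-block manipulation of $\a$-, $\b$-, $\c$-pieces in the two cases $k\le0$ and $k>0$, using the splitting relations and the conjugation rule just as you outline.

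The one economy you miss is that the paper proves only the $\f_{1-k,\n-k}$ formula by direct computation and then deduces the $\f_{-k,\n-1-k}$ formula from it, using that folding operators have order~$2$: if $\f_{1-k,\n-k}(\D(\u i)[k])=\D(\u j)[k-1]$, then applying the same operator once more gives $\f_{1-k,\n-k}(\D(\u j)[k-1])=\D(\u i)[k]$, and reindexing $k\mapsto k+1$ (so that $1-k,\n-k$ become $-k,\n-1-k$) together with $\u j[-1]=\u i$, respectively $\u j\langle-1\rangle=\u i$, yields the third display. This halves the explicit bookkeeping you flag as the main obstacle. (A minor slip: in your last paragraph ``tail-reversal'' should read ``tail-rotation,'' since $\u i\langle\pm1\rangle$ cyclically shifts the tail rather than reversing it.)
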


\noindent
{\it Proof.}\!
By~(\ref{eq:rev_shift}), it suffices to prove the first formula for $k=0$. In this case, we get
\begin{multline*}
\overline{\D(\u{i})}=\overline{\c(\u{i})}\cup\overline{\a(\u{i})}
=((i_1 i_n),(i_n i_{n-1}),\ldots,(i_3 i_2),(i_2 i_1))\cup((i_1i_n),\ldots,(i_1i_3),(i_1i_2))\\
=\c(\u{\ddot\imath})\cup\a(\u{\ddot\imath})=(\a(\u{\ddot\imath})\cup\c(\u{\ddot\imath}))[-\n]
=\D(\u{\ddot\imath})[-\n].
\end{multline*}

We will prove the formula for $\f_{1-k,\n-k}(\D(\u{i})[k])$. The formula for $\f_{-k,\n-1-k}(\D(\u{i})[k])$ will follow from
the fact that folding operators have order $2$.



\medskip

\noindent
{\it Case: $k\le0$}. We get
\begin{multline*}
\f_{1-k,n-k}(\D(\u{i})[k])=\f_{1-k,n-k}\big((\a(\u{i})\cup(i_1i_2)\cup\b(i_2,\ldots,i_{\n},i_1))[k]\big)\\
\shoveleft{=\f_{1-k,n-k}\big((\a(\u{i})\cup(i_1i_2)\cup\b(i_2,\ldots i_{\n+1+k})\cup\b(i_{\n+1+k},\ldots,i_{\n},i_1))[k]\big)}\\
=\f_{1-k,n-k}\big(\b(i_{\n+1+k},\ldots,i_{\n},i_1)\cup\uwave{(i_1,i_2)}\cup\a(i_1,i_3,\ldots,i_\n)\cup\uwave{(i_1,i_2)}\cup\b(i_2,\ldots,i_{\n+1+k})\big)\\[-4pt]
=\b(i_{\n+1+k},\ldots,i_{\n},i_1)\cup(i_1,i_2)\cup\a(i_2,i_3,\ldots,i_\n)\cup(i_1,i_2)\cup\b(i_2,\ldots,i_{\n+1+k})\\
\shoveright{=(\a(i_2,i_3,\ldots,i_\n,i_1)\cup\b(i_2,\ldots,i_{\n+1+k})\cup\b(i_{\n+1+k},\ldots,i_{\n},i_1)\cup(i_1,i_2))[k-1]}\\
=(\a(i_2,i_3,\ldots,i_\n,i_1)\cup\b(i_2,i_3\ldots,i_{\n},i_1)\cup(i_1,i_2))[k-1]\\
=(\a(i_2,i_3,\ldots,i_\n,i_1)\cup\c(i_2,i_3,\ldots,i_{\n},i_1))[k-1]\\
=(\a(\u{i}[1])\cup\c(\u{i}[1]))[k-1]=\D(\u{i}[1])[k-1].
\end{multline*}
In this computation and in the computations below, we use the wavy underline to mark out the positions,
where a folding operator is applied.

\medskip

\noindent
{\it Case: $k>0$}. We get
\begin{multline*}
\f_{1-k,n-k}(\D(\u{i})[k])=\f_{n-k,2n-k}\big((\a(i_1,\ldots,i_{k+1})\cup\a(i_1,i_{k+2},\ldots,i_\n)\cup\c(\u{i}))[k]\big)\\
\shoveleft{=\f_{n-k,2n-k}(\a(i_1,i_{k+2},\ldots,i_\n)\cup\c(\u{i})\cup\a(i_1,\ldots,i_{k+1}))}\\
=\f_{n-k,2n-k}(\a(i_1,i_{k+2},\ldots,i_\n)\cup\uwave{(i_1i_2)}\cup\b(i_2,i_3\ldots,i_\n,i_1)\cup\uwave{(i_1i_2)}\cup\a(i_1,i_3\ldots,i_{k+1}))\\[-4pt]
=\a(i_1,i_{k+2},\ldots,i_\n)\cup(i_1i_2)\cup\b(i_1,i_3\ldots,i_\n,i_2)\cup(i_2i_1)\cup\a(i_1,i_3\ldots,i_{k+1})\\
=\a(i_1,i_{k+2},\ldots,i_\n,i_2)\cup\c(i_1,i_3\ldots,i_\n,i_2)\cup\a(i_1,i_3\ldots,i_{k+1})\\
\shoveright{=(\a(i_1,i_3\ldots,i_{k+1})\cup\a(i_1,i_{k+2},\ldots,i_\n,i_2)\cup\c(i_1,i_3\ldots,i_\n,i_2))[k-1]}\\
=(\a(i_1,i_3,\ldots,i_\n,i_2)\cup\c(i_1,i_3\ldots,i_\n,i_2))[k-1]\\
=(\a(\u{i}\<1\>)\cup\c(\u{i}\<1\>))[k-1]=\D(\u{i}\<1\>)[k-1].\hspace{10pt}\square\hspace{-10pt}
\end{multline*}

We leave the proof of the following simple result to the reader.

\begin{proposition}\label{proposition:ex:1}
Let $\u{e}\subset\b(i_1,\ldots,i_n)$ be a subexpression such that
$$
\u{e}=(0,\ldots,0,\underbrace{1,\ldots,1}_{x_1,\ldots,y_1},0,\ldots,0,\underbrace{1,\ldots,1}_{x_2,\ldots,y_2},0,\ldots,0,\underbrace{1,\ldots,1}_{x_k,\ldots,y_k},0\ldots,0),
$$
where $x_1\le y_1<y_1+1<x_2\le y_2<y_2+1<\cdots<y_{k-1}+1<x_k\le y_k$.
Then there holds the following decomposition into independent cycles:
$$
\b(i_1,\ldots,i_n)^{\u{e}}=(i_{x_1},i_{x_1+1},\ldots,i_{y_1+1})(i_{x_2},i_{x_2+1},\ldots,i_{y_2+1})\cdots(i_{x_k},i_{x_k+1},\ldots,i_{y_k+1}).
$$
\end{proposition}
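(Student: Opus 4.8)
The plan is to compute directly the product of transpositions prescribed by the subexpression $\u e$, exploiting the block structure. First I would observe that $\b(i_1,\ldots,i_n)$ is the sequence $((i_1i_2),(i_2i_3),\ldots,(i_{n-1}i_n))$, so that the product $\b(i_1,\ldots,i_n)^{\u e}$ is obtained from $(i_1i_2)^{e_1}(i_2i_3)^{e_2}\cdots(i_{n-1}i_n)^{e_n}$ by reading the entries from left to right and keeping only the factors where $e_j=1$. Because of the hypothesis $y_\ell+1<x_{\ell+1}$, the index ranges $\{x_\ell,\ldots,y_\ell+1\}$ for different $\ell$ are pairwise disjoint and moreover not even adjacent, so each factor $(i_{x_\ell}i_{x_\ell+1})\cdots(i_{y_\ell}i_{y_\ell+1})$ involves symbols disjoint from those of every other block. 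Hence the cycles obtained from distinct blocks commute and are independent, and it suffices to handle a single block.

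For one block, the claim reduces to the identity
$$
(i_xi_{x+1})(i_{x+1}i_{x+2})\cdots(i_{y}i_{y+1})=(i_x,i_{x+1},\ldots,i_{y+1}),
$$
where the factors are composed under the rule $(\alpha\beta)(j)=\alpha(\beta(j))$ as fixed in Section~\ref{Symmetric_groups}, i.e. the rightmost transposition acts first. This is precisely the decomposition~(\ref{eq:aprod}) applied to the $\c$-sequence, or more directly a $\b$-sequence analogue: it is the well-known fact that a path of adjacent transpositions multiplies to a single cycle. I would prove it by induction on $y-x$, the base case $y=x$ being the transposition $(i_xi_{x+1})=(i_x,i_{x+1})$; for the inductive step one writes the leftmost factor times the inductive hypothesis and checks the image of each symbol, or alternatively one cites~(\ref{eq:aprod}) after noting that $\b(j_1,\ldots,j_m)$ and $\c(j_1,\ldots,j_m)$ are related to $\a$-sequences by the decomposition formulas already recorded in Section~\ref{Symmetric_groups}.

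Putting these together: $\b(i_1,\ldots,i_n)^{\u e}$ equals the ordered product over $\ell=1,\ldots,k$ of the single-block products, which by the previous paragraph are the cycles $(i_{x_\ell},\ldots,i_{y_\ell+1})$; since these cycles have pairwise disjoint supports they commute, yielding exactly the asserted decomposition into independent cycles. The only point requiring a little care — and the reason the hypothesis is stated with strict inequalities $y_\ell+1<x_{\ell+1}$ rather than merely $y_\ell<x_{\ell+1}$ — is the independence: if consecutive blocks shared the symbol $i_{y_\ell+1}=i_{x_{\ell+1}}$ the resulting permutation would be a longer single cycle rather than a product of two, so one must check that the gap condition really forces disjoint supports. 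This is immediate from $\{i_{x_\ell},\ldots,i_{y_\ell+1}\}\cap\{i_{x_{\ell+1}},\ldots,i_{y_{\ell+1}+1}\}=\emptyset$, which follows since $i_1,\ldots,i_n$ are distinct and $y_\ell+1<x_{\ell+1}$. I do not expect any genuine obstacle here; the statement is elementary and the author has, consistently with the surrounding text, left it to the reader.
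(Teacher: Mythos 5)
The paper explicitly leaves this proposition to the reader (``We leave the proof of the following simple result to the reader''), so there is no proof of record to compare against; your job was simply to supply the obvious one, and you have done that correctly. The block decomposition, the single-block induction showing $(i_xi_{x+1})(i_{x+1}i_{x+2})\cdots(i_{y}i_{y+1})=(i_x,i_{x+1},\ldots,i_{y+1})$ under the convention $(\alpha\beta)(j)=\alpha(\beta(j))$, and the use of the strict gap $y_\ell+1<x_{\ell+1}$ together with the distinctness of $i_1,\ldots,i_n$ to get disjoint supports are all in order. One small inaccuracy in passing: you say the single-block identity ``is precisely the decomposition~(\ref{eq:aprod}) applied to the $\c$-sequence,'' but~(\ref{eq:aprod}) expands a cycle as a product of transpositions all sharing the first symbol (an $\a$-sequence), not as a chain of adjacent transpositions (a $\b$-sequence); these are genuinely different factorizations. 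Your direct induction is the right move, and you hedge correctly with ``or more directly a $\b$-sequence analogue,'' so nothing is broken — just drop the appeal to~(\ref{eq:aprod}) and keep the induction.
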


Let us define the subexpressions $\u{e}^{(0)}$, $\u{e}^{(1)}$, \ldots, $\u{e}^{(2\n-2)}$ of length $2n-1$ by the formula
$$
\u{e}^{(\l)}=
\left\{
\begin{array}{ll}
(\underbrace{1,\ldots,1}_{1,\ldots,\l},0,\ldots,0,\underbrace{1,\ldots,1}_{\n,\ldots,\n+\l-1},0,\ldots,0)&\text{if }0\le \l<\n;\\
(0,\ldots,0,\underbrace{1,\ldots,1}_{\l-\n+1,\ldots,\n-1},0,\ldots,0,\underbrace{1,\ldots,1}_{\l+1,\ldots,2\n-1})&\text{if }\n\le \l\le 2\n-2;
\end{array}
\right.
$$
We also extend the sequence $\u{e}^{(0)}$, $\u{e}^{(1)}$, \ldots, $\u{e}^{(2\n-2)}$
cyclicly so that $\u{e}^{(\l)}=\u{e}^{(\l+2n-1)}$ for any $\l\in\Z$.
This stipulation implies
\begin{equation}\label{eq:fe}
\f_{\l,\l+\n-1}\u{e}^{(\l-1)}=\u{e}^{(\l)}.
\end{equation}

\begin{lemma}\label{lemma:ex:2} Let $\u{i}$ be a rearrangenent of $(1,2,\ldots,\n)$ and $k\in\Z$.
The equation $(\D(\u{i})[k])^{\u{e}}\,{=}1$ has the following set of solutions:
$\u{e}=\u{e}^{(\l)}[k]$, where $\l=0,\ldots,2\n-2$.
\end{lemma}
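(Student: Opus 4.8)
The claim is a statement about subexpressions of $\D(\u{i})[k]$ with target $1$, i.e.\ about which of the $\u{e}^{(\ell)}[k]$ exhaust $\Sub(\D(\u{i})[k],1)$. By Lemma~\ref{lemma:ex:1} (and since cyclic shift is a bijection $\Sub(\u{t},1)\to\Sub(\u{t}[k],1)$ with inverse $\u{e}\mapsto\u{e}[-k]$), it suffices to prove the case $k=0$: the equation $\D(\u{i})^{\u{e}}=1$ has exactly the solutions $\u{e}=\u{e}^{(\ell)}$ for $\ell=0,\ldots,2\n-2$. Indeed, a subexpression $\u{e}$ of $\D(\u{i})[k]$ satisfies $(\D(\u{i})[k])^{\u{e}}=1$ iff $\u{e}[-k]$ is a subexpression of $\D(\u{i})$ with $\D(\u{i})^{\u{e}[-k]}=1$, and $\u{e}^{(\ell)}[k][-k]=\u{e}^{(\ell)}$.

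\textbf{Reduction to a further rearrangement.} Since conjugating all entries of $\D(\u{i})$ by a fixed permutation only relabels the $i_j$'s (as recorded in the remark after the $\a$- and $\b$-sequence relations in Section~\ref{Symmetric_groups}), and since the defining formulas for $\u{e}^{(\ell)}$ do not see the actual values $i_j$, we may assume $\u{i}=(1,2,\ldots,\n)$. So we must solve $\D(1,2,\ldots,\n)^{\u{e}}=1$. Write $\D=\a\cup\c$ with $\a=\a(1,\ldots,\n)$ of length $\n-1$ and $\c=\c(1,\ldots,\n)$ of length $\n$, and split $\u{e}=(\u{e}_\a,\u{e}_\c)$ accordingly. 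Using the decomposition $\a(1,\ldots,\n)=\a(1,2)\cup\a(1,3)\cup\cdots$ one sees that the product of the $\a$-part equals a permutation of the form $(1,S)$ built from a set $S$ of chosen positions, and $\c(1,\ldots,\n)$-subexpressions decompose into independent cycles on consecutive blocks by Proposition~\ref{proposition:ex:1} (note $\c(1,\ldots,\n)=\b(1,\ldots,\n)\cup((\n\,1))$, or one applies the $\c=\b\cup\b$ relation to fold the cycle). The equation $\D^{\u{e}}=1$ then becomes: the permutation coming from $\u{e}_\a$ must be the inverse of the permutation coming from $\u{e}_\c$.

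\textbf{The combinatorial core.} The main work — and the expected obstacle — is the following finite combinatorial analysis. The $\a$-part $\u{e}_\a$ selects a subset $A\subseteq\{2,\ldots,\n\}$ and contributes the cycle-type permutation $\sigma_A$ obtained from $(1,i_{a_1},\ldots,i_{a_r})$-style products via~(\ref{eq:aprod}); explicitly $\sigma_A$ fixes every point outside $\{1\}\cup A$ and permutes $\{1\}\cup A$ as a single cycle in increasing order. The $\c$-part, by Proposition~\ref{proposition:ex:1}, contributes a product of disjoint ``interval'' cycles $(x,x+1,\ldots,y+1)$ on a chain, with possible wrap-around through position $\n$ back to $1$. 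One then checks that $\sigma_{\u{e}_\a}\cdot\sigma_{\u{e}_\c}=1$ forces: (i) the $\c$-part consists of a single interval cycle (at most one block, since $\sigma_{\u{e}_\a}$ is a single cycle and the product must be trivial); (ii) that interval cycle must be $(1,\ldots,j)$ or a wrap-around block $(j,j+1,\ldots,\n,1)$; and (iii) the $\a$-part must select exactly $A=\{2,\ldots,j\}$ (resp.\ the complementary set) so that $\sigma_{\u{e}_\a}$ is the inverse of the $\c$-cycle. Matching these constraints against the two defining regimes $0\le\ell<\n$ and $\n\le\ell\le 2\n-2$ of $\u{e}^{(\ell)}$ shows that the solutions are precisely the $2\n-1$ subexpressions $\u{e}^{(\ell)}$. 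The bookkeeping of the wrap-around case (where an interval cycle of the $\c$-sequence passes through $(i_\n i_1)$) is where I expect the argument to be most delicate; a clean way to handle it uniformly is to verify directly, using~(\ref{eq:fe}) and~(\ref{eq:5}), that each $\u{e}^{(\ell)}$ is a solution and that the $\f_{\ell,\ell+\n-1}$-folding permutes these solutions transitively and bijectively, so that by Lemma~\ref{lemma:ex:3} (applied to $\f_{1-k,\n-k}$ and $\f_{-k,\n-1-k}$ with $k=\ell$) one reduces counting the solution set to a single base case $\ell=0$, where $\D(1,\ldots,\n)^{\u{e}^{(0)}}=1$ is an immediate check and minimality/uniqueness follows because $\u{e}^{(0)}$ and its neighbours already realize all cyclically-shifted patterns.

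\textbf{Assembling.} Finally, to rule out spurious solutions one argues by counting: each $\u{e}$ with $\D(\u{i})^{\u{e}}=1$ is determined by the pair (interval cycle of $\c$-part, matching $\a$-part), and the discussion above exhibits an injection from the solution set into $\{0,1,\ldots,2\n-2\}$ compatible with the explicit list $\{\u{e}^{(\ell)}\}$; since all $2\n-1$ of the $\u{e}^{(\ell)}$ are solutions and pairwise distinct (visible from their support patterns in the two regimes), the solution set is exactly $\{\u{e}^{(\ell)}:0\le\ell\le 2\n-2\}$. Shifting back by $k$ via Lemma~\ref{lemma:ex:1} gives the statement for general $k$. $\hfill\square$
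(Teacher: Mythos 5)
Your plan correctly identifies the paper's route — reduce to $k=0$ via Lemma~\ref{lemma:ex:1}, split $\D(\u{i})$ into its $\a$- and $\c$-parts, use~(\ref{eq:aprod}) and Proposition~\ref{proposition:ex:1} to read off the cycle shapes, and then argue that matching forces $\u{e}$ to be one of the $\u{e}^{(\l)}$. The extra reduction to $\u{i}=(1,\ldots,\n)$ by simultaneous conjugation is a legitimate normalisation that the paper does not bother to make.

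However, the entire content of the lemma sits in the step you summarise as ``one then checks that $\sigma_{\u{e}_\a}\cdot\sigma_{\u{e}_\c}=1$ forces (i)--(iii),'' and you assert those conclusions rather than prove them. That step is precisely the paper's three-way case analysis of~(\ref{eq:ex:1}). In particular you have not treated the case where the bits at the two positions carrying $(i_1i_\n)$ (positions $\n-1$ and $2\n-1$) are both equal to $1$. Moving those two factors to one side conjugates the remaining $\a$-product by $(i_1i_\n)$, replacing $i_1$ by $i_\n$; the resulting target cycle then does \emph{not} contain $i_1$, and the $\b$-subexpression from Proposition~\ref{proposition:ex:1} that matches it is an interval not through $i_1$. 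Your clause ``(ii) that interval cycle must be $(1,\ldots,j)$ or a wrap-around block $(j,\ldots,\n,1)$'' does not cover this, and more importantly it would silently misclassify the wrap-around combinations where the extra factor $(\n\,1)^{e_{2\n-1}}$ merges two interval blocks into a non-interval cycle such as $(1,\n-1,\n,2)$, which cannot be matched by any $\a$-subexpression and must be ruled out explicitly. Without the case analysis there is no a priori reason the solution set is not strictly larger than $\{\u{e}^{(\l)}\}$.

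Your fallback argument via the foldings does not close this gap. Checking that $\u{e}^{(0)}$ is a solution and that $\f_{\l,\l+\n-1}$ maps $\u{e}^{(\l-1)}$ to $\u{e}^{(\l)}$ (which is~(\ref{eq:fe}) together with Lemma~\ref{lemma:ex:3.5}'s computation of $\M_p$) shows only that $\{\u{e}^{(0)},\ldots,\u{e}^{(2\n-2)}\}$ is a set of solutions closed under these particular folds. It gives no upper bound on $|\Sub(\D(\u{i})[k],1)|$: another solution could simply lie in a different fold-orbit. The sentence ``minimality/uniqueness follows because $\u{e}^{(0)}$ and its neighbours already realize all cyclically-shifted patterns'' is not an argument. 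To make the fold-orbit idea into a proof you would need an independent count of $\Sub(\D(\u{i})[k],1)$, which is exactly what the case analysis provides. As written, the proposal reduces the lemma to a smaller assertion that is essentially the lemma itself.
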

\begin{proof}
Lemma~\ref{lemma:ex:1} reduces the proof to the case $k=0$. Decomposing
$$
\u{e}=\u{u}\cup e_{\n-1}\cup\u{v}\cup e_{2\n-1},
$$
where $\u{u}=(e_1,\ldots,e_{\n-2})$ and $\u{v}=(e_\n,\ldots,e_{2\n-2})$, we can write down the equation
$\D(\u{i})^{\u{e}}=1$ in an equivalent form
\begin{equation}\label{eq:ex:1}
\b(\u{i})^{\u{v}}=(i_1i_{\n})^{e_{\n-1}}\a(i_1,i_{\n-1},i_{\n-2}\ldots,i_2)^{\overline{\u{u}}}(i_1i_{\n})^{e_{2\n-1}}.
\end{equation}
By~(\ref{eq:aprod}), the right-hand side is a single cycle or the identity permutation.
In the latter case, computing the left-hand side by Proposition~\ref{proposition:ex:1},
we get $e_\n=\cdots=e_{2\n-2}=0$. Applying~(\ref{eq:aprod}) one more time, we get
$e_1=\cdots=e_{\n-2}=0$ and $e_{\n-1}=e_{2\n-1}$.
Hence $\u{e}=\u{e}^{(0)}$ or $\u{e}=\u{e}^{(2\n-2)}$.

Now suppose that the right-hand side of~(\ref{eq:ex:1}) is a single cycle.
If $e_{\n-1}=0$ or $e_{2\n-1}=0$, then this cycle contains $i_1$ by~(\ref{eq:aprod}).
Therefore, by Proposition~\ref{proposition:ex:1}, we can obtain the same cycle
in the left-hand side of~(\ref{eq:ex:1}) only if %
$$
e_\n=\cdots=e_q=1,\quad e_{q+1}=\cdots=e_{2\n-2}=0
$$
for some $q$ such that $\n\le q\le 2\n-2$. Let $s_1<\cdots<s_r$ be all indices $s=1,\ldots,\n-1$ such that $e_s=1$.
We can rewrite~(\ref{eq:ex:1}) as follows
$$
(i_1i_2\cdots i_{q-\n+2})=
\left\{
\begin{array}{ll}
(i_1i_{s_1+1}\cdots i_{s_r+1})&\text{if }e_{2\n-1}=0;\\[6pt]
(i_1i_\n i_{s_1+1}\cdots i_{s_r+1})&\text{if }e_{2\n-1}=1.
\end{array}
\right.
$$
As $\n\ge3$, the case $e_{2\n-1}=1$ is impossible. Hence $e_{2\n-1}=0$ and $s_1=1,s_2=2,\ldots,s_r=r$
and $r=q-\n+1$. In other words,
$$
e_1=\cdots=e_{q-\n+1}=1,\quad e_{q-\n+2}=\cdots=e_{\n-1}=0
$$
Thus $\u{e}=\u{e}^{(q-\n+1)}$. Note that $1\le q-\n+1\le\n-1$.

Finally, consider the case $e_{\n-1}=e_{2\n-1}=1$. In this case, the right-hand side of~(\ref{eq:ex:1})
is equal to
$$
\a(i_\n,i_{\n-1},i_{\n-2}\ldots,i_2)^{\overline{\u{u}}}
$$
By~(\ref{eq:aprod}), this product is a cycle containing $i_\n$ but not $i_1$.
By Proposition~\ref{proposition:ex:1}, we can obtain such a cycle in the left-hand side
of~(\ref{eq:ex:1}) only if
$$
e_\n=\cdots=e_{q-1}=0,\quad e_q=\cdots=e_{2\n-2}=1
$$
for some $q$ such that $\n<q\le 2\n-2$. Let $s_1<\cdots<s_r$ be all indices $s=1,\ldots,\n-2$ such that $e_s=1$.
We can rewrite~(\ref{eq:ex:1}) as follows
$$
(i_{q-\n+1},i_{q-\n+2},\ldots,i_\n)=(i_{s_1+1}\cdots i_{s_r+1}i_\n).
$$
Hence $s_1=q-n$, $s_2=q-n+1$,\ldots,$s_r=\n-2$ and $r=2\n-q-1$.
In other words,
$$
e_1=\cdots=e_{q-\n-1}=0,\quad e_{q-\n}=\cdots=e_{\n-2}=1.
$$
Thus $\u{e}=\u{e}^{(q-1)}$. Note that $\n\le q-1\le2\n-3$.

In the course of this proof, we also checked that the subexpressions $\u{e}^{(0)},\ldots,\u{e}^{(2\n-2)}$
are indeed solutions.
\end{proof}

%
%
%

\begin{lemma}\label{lemma:ex:3.5}
Let $\u{i}$ be a rearrangement of $(1,2,\ldots,\n)$ and $k,\l\in\Z$ be such that
$1-\n\le k\le \n-1$ and $1-\n+k\le\l\le \n-1+k$.
Considering $\u{e}^{(\l)}[k]$ as a subexpression of $\D(\u{i})[k]$, we get
$$
(\u{e}^{(\l)}[k])^\bigcdot=
\left\{\!\!
\begin{array}{ll}
\D\big(\u{i}\<k\>[\l-k]\big)[k-\l]&\text{if }k\le\l\le \n+k-1;\\[6pt]
\D\big(\u{i}\<\l\>\big)[k-\l]&\text{if }1-\n+k\le\l\le k
\end{array}
\right.
$$
for $k>0$ and
$$
(\u{e}^{(\l)}[k])^\bigcdot=
\left\{\!\!
\begin{array}{ll}
\D\big(\u{i}[\l]\big)[k-\l]&\text{if }k\le\l\le \n+k-1;\\[6pt]
\D\big(\u{i}[k]\<\l-k\>\big)[k-\l]&\text{if }1-\n+k\le \l\le k
\end{array}
\right.
$$
for $k\le0$. Therefore, $\Gr(\Sub(\D(\u{i})[k],1))$ is a cycle of length $2n-1$.
\end{lemma}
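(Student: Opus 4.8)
The plan is to reduce everything to the base case $k=0$ using cyclic-shift identities, and then to compute $(\u{e}^{(\l)})^\bigcdot$ directly by applying Proposition~\ref{proposition:ex:1} and the $\a,\b,\c,\D$-decomposition relations of Section~\ref{Symmetric_groups}. First I would invoke~(\ref{eq:5}) together with the identity $\u{x}[k][\l]=\u{x}[k+\l]$ to rewrite $(\u{e}^{(\l)}[k])^\bigcdot$ in terms of $(\u{e}^{(\l-k)})^\bigcdot$ applied to $\D(\u{i})$ and then shifted by $k$; more precisely, for $\u{\epsilon}\subset\u{t}$ one has $(\u{\epsilon}[k])^\bigcdot=(\u{\epsilon}^\bigcdot)[k]$, which follows from Lemma~\ref{lemma:fX}-type bookkeeping (the positions $\u{\epsilon}^i$ simply get cyclically relabelled). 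This collapses the statement to computing $(\u{e}^{(\l)})^\bigcdot$ as a subexpression of $\D(\u{i})$ for $1-\n\le\l\le\n-1$, and the two branches $k>0$, $k\le0$ will then come out of tracking how the shift interacts with the angle-bracket operator $\u{i}\<\cdot\>$ versus the ordinary shift $\u{i}[\cdot]$, exactly as in Lemma~\ref{lemma:ex:3}.

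Next I would carry out the base computation. Recall $\D(\u{i})=\a(\u{i})\cup\c(\u{i})$ and that $\u{e}^{(\l)}$ for $0\le\l<\n$ selects the first $\l$ entries of $\a(\u{i})$ and the entries of $\c(\u{i})$ in positions corresponding to $\b(i_\n,i_1)\cup\cdots$; for $\n\le\l\le 2\n-2$ it selects a tail of $\a(\u{i})$ and a tail of $\c(\u{i})$. The reflection expression $(\u{e}^{(\l)})^\bigcdot=(\,(\u{e}^{(\l)})^1,\ldots,(\u{e}^{(\l)})^{2\n-1}\,)$ is built from the conjugates $\u{\epsilon}^{<i}t_i(\u{\epsilon}^{<i})^{-1}$, and by Proposition~\ref{proposition:ex:1} the partial products $\u{\epsilon}^{<i}$ are explicit single cycles (or identity). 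Conjugating each transposition of $\D(\u{i})$ by the relevant cycle and using the remark in Section~\ref{Symmetric_groups} that conjugating all entries of an $\a$- or $\b$-sequence by a permutation $\sigma$ yields $\a(\sigma(\ldots))$ or $\b(\sigma(\ldots))$, one should be able to recognize the resulting sequence as a $\D$-sequence of a rearranged index tuple, shifted appropriately. The relations $\a(i_1,\ldots,i_m)=\a(i_1,\ldots,i_k)\cup\a(i_1,i_{k+1},\ldots,i_m)$, the analogous one for $\b$, and $\c(i_1,\ldots,i_m)=\b(i_1,\ldots,i_k)\cup\b(i_k,\ldots,i_m,i_1)$ are the main tools for splitting and reassembling; the wavy-underline folding computations in the proof of Lemma~\ref{lemma:ex:3} are a direct template for the kind of manipulation needed.

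The bookkeeping naturally splits into the two sub-ranges $k\le\l\le\n+k-1$ and $1-\n+k\le\l\le k$, matching the two cases in the statement. In the first sub-range the selected ``$1$''-block in $\u{e}^{(\l)}[k]$ sits so that the partial product at the relevant breakpoint is a cycle on a contiguous block of the $i$'s, and conjugation turns $\D(\u{i})$ into $\D$ of a rotated tuple; in the second sub-range the block wraps around, which is precisely what introduces the $\<\,\rangle$-operator rather than the plain shift. I would handle $k\le0$ and $k>0$ separately at the very end simply because $\f_{1-k,\n-k}$ versus $\f_{-k,\n-1-k}$ behave differently there, exactly as recorded in~(\ref{eq:fe}) and Lemma~\ref{lemma:ex:3}; iterating those single-step folding identities $|\l-k|$ times, with~(\ref{eq:5}) converting folds-after-shift into shifts-after-folds, assembles the closed form.

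The main obstacle I expect is not any single deep idea but the sheer combinatorial case analysis: keeping the index tuples straight under the interplay of three operations — ordinary cyclic shift $[\cdot]$, the angle-bracket rotation $\<\cdot\>$, and the folding $\f$ — while simultaneously tracking the outer shift on the reflection expression $(\cdot)^\bigcdot$. In particular, verifying that the boundary cases $\l=k$ and $\l=\n+k-1$ (where the ``$1$''-block degenerates or abuts the ends) give consistent answers in both formulas, and that the two displayed formulas for $k>0$ and $k\le0$ agree on their overlap, is where errors are most likely to creep in; I would check these endpoint cases explicitly against small $\n$ (say $\n=3$) as a sanity test before writing up the general induction.
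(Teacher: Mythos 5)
Your plan takes a genuinely different route from the paper (the paper runs a two-sided induction on $\l$, stepping by one fold at a time and invoking Lemma~\ref{lemma:ex:3} together with~(\ref{eq:fe}), (\ref{eq:5}) and Corollary~\ref{corollary:+}; you propose a reduction to $k=0$ followed by a direct computation of $(\u{e}^{(\l)})^\bigcdot$). The reduction step, however, rests on a false identity. The claim $(\u{\epsilon}[k])^\bigcdot=(\u{\epsilon}^\bigcdot)[k]$ does not hold: writing $u=\u{\epsilon}^{<k+1}$, one has $(\u{\epsilon}[k])^i=u^{-1}\u{\epsilon}^{k+i}u$ (indices read cyclically using $\u{\epsilon}^{\max}=1$), so the positions are cyclically relabelled but each reflection is also conjugated by $u^{-1}$. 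This conjugation is not postponable bookkeeping — it is precisely what produces the $\<\cdot\>$ versus $[\cdot]$ split between the two displayed branches. Concretely, take $n=3$, $\u{i}=(1,2,3)$, $k=1$, $\l=1$. Then $\u{e}^{(1)}=(1,0,1,0,0)$, $(\u{e}^{(1)})^\bigcdot=((12),(23),(12),(23),(13))$, and a direct computation gives $(\u{e}^{(1)}[1])^\bigcdot=((13),(12),(13),(23),(12))=\D((1,3,2))=\D(\u{i}\<1\>)$, matching the lemma; whereas $(\u{e}^{(1)})^\bigcdot[1]=((23),(12),(23),(13),(12))=\D((2,3,1))=\D(\u{i}[1])$, which is the answer belonging to the other branch. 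Your reduction would therefore collapse the very distinction the lemma encodes and return the wrong expression whenever $k\ne0$.

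The fallback of computing $(\u{e}^{(\l)}[k])^\bigcdot$ directly via the partial products $\u{\epsilon}^{<i}$ and Proposition~\ref{proposition:ex:1} can be made to work, but only if you carry the conjugator $\u{\epsilon}^{<k+1}$ through at general $k$, which is exactly the case analysis you hoped to avoid. The paper's induction on $\l$ is designed to sidestep this: the base $\l=0$ uses $\u{e}^{(0)}$, the all-zeros subexpression, for which the conjugator is trivial and $(\u{e}^{(0)}[k])^\bigcdot=\D(\u{i})[k]$ holds on the nose; then each single-fold step is processed by Lemma~\ref{lemma:ex:3}, with the four subcases (whether the fold indices $\l-k+1$, $\l-k+\n$ wrap modulo $2\n-1$) producing the $k>0$/$k\le0$ and $\l\le k$/$k\le\l$ splits automatically.
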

\begin{proof}
First, let us prove the formulas by induction on $\l=0,1,\ldots,\n+k-1$.
In the base case $\l=0$, both formulas hold, as
$$
(\u{e}^{(0)}[k])^\bigcdot=\D(\u{i})[k]=\D\big(\u{i}\<0\>\big)[k-0]\quad\text{ if }\;k>0
$$
and
$$
(\u{e}^{(0)}[k])^\bigcdot=\D(\u{i})[k]=\D\big(\u{i}[0]\big)[k-0]\quad\text{ if }\;k\le0.
$$
Suppose that $0\le\l<\n+k-1$ and the formulas are true for $\l$.

{\it Case 1: $k\le\l$.} As $1\le\l-k+1<\n\le\l-k+\n<2\n-1$, we get
by the inductive hypothesis,~(\ref{eq:fe}), (\ref{eq:5}),
(\ref{eq:MDk}) and Corollary~\ref{corollary:+} that
$$
(\u{e}^{(\l+1)}[k])^\bigcdot=((\f_{\l+1,\l+\n}\u{e}^{(\l)})[k])^\bigcdot=
(\f_{\l-k+1,\l-k+\n}(\u{e}^{(\l)}[k]))^\bigcdot=
\f_{\l-k+1,\l-k+\n}(\u{e}^{(\l)}[k])^\bigcdot.
$$
If $k>0$, then by the inductive hypothesis and Lemma~\ref{lemma:ex:3} the right-hand side is equal to
$$
\f_{\l-k+1,\l-k+\n}\big(\D\big(\u{i}\<k\>[\l-k]\big)[k-\l]\big)=\D\big(\u{i}\<k\>[\l-k][1]\big)[k-\l-1]\\
=\D\big(\u{i}\<k\>[\l+1-k]\big)[k-(\l+1)],
$$
and if $k\le0$, then it is equal to
$$
\f_{\l-k+1,\l-k+\n}\big(\D\big(\u{i}[\l]\big)[k-\l]\big)=\D\big(\u{i}[\l][1]\big)[k-\l-1]\\
=\D\big(\u{i}[\l+1-k]\big)[k-(\l+1)].
$$

{\it Case 2: $\l<k$.} We have $1\le\l-k+\n<\n<\l-k+2\n\le2\n-1$. Arguing similarly to Case~1, we get
\begin{multline*}
(\u{e}^{(\l+1)}[k])^\bigcdot=((\f_{\l+1,\l+\n}\u{e}^{(\l)})[k])^\bigcdot=(\f_{\l-k+1,\l-k+\n}(\u{e}^{(\l)}[k]))^\bigcdot
=(\f_{\l-k+\n,\l-k+2\n}(\u{e}^{(\l)}[k]))^\bigcdot\\
=\f_{\l-k+\n,\l-k+2\n}(\u{e}^{(\l)}[k])^\bigcdot=\f_{\l-k+\n,\l-k+2\n}\big(\D\big(\u{i}\<\l\>\big)[k-\l]\big)
=
\f_{\l-k+1,\l-k+\n}\big(\D\big(\u{i}\<\l\>\big)[k-\l]\big)
\\
=\D\big(\u{i}\<\l\>\<1\>\big)[k-\l-1]=\D\big(\u{i}\<\l+1\>\big)[k-(\l+1)].
\end{multline*}

Next, let us prove the formulas by inverse induction on $\l=0,-1,\ldots,1-\n+k$. The base case $\l=0$ was already considered.
So let $1-\n+k<\l\le0$.

{\it Case 3: $k<\l$.} We have $1\le\l-k<\n\le\l-k+\n-1<2\n-1$. Arguing as in the previous cases, we get
\begin{multline*}
(\u{e}^{(\l-1)}[k])^\bigcdot=((\f_{\l,\l+\n-1}\u{e}^{(\l)})[k])^\bigcdot=(\f_{\l-k,\l-k+\n-1}(\u{e}^{(\l)}[k]))^\bigcdot
=\f_{\l-k,\l-k+\n-1}(\u{e}^{(\l)}[k])^\bigcdot\\
=\f_{\l-k,\l-k+\n-1}\big(\D\big(\u{i}[\l]\big)[k-\l]\big)=\D\big(\u{i}[\l][-1]\big)[k-\l+1]
=\D\big(\u{i}[\l-1]\big)[k-(\l-1)].
\end{multline*}

{\it Case 4: $\l\le k$.} We have $0<\l-k+\n-1<\n<\l-k+2\n-1\le2\n-1$. Arguing as in the previous cases, we get
\begin{multline*}
(\u{e}^{(\l-1)}[k])^\bigcdot=((\f_{\l,\l+\n-1}\u{e}^{(\l)})[k])^\bigcdot=(\f_{\l-k,\l-k+\n-1}(\u{e}^{(\l)}[k]))^\bigcdot\\
=(\f_{\l-k+\n-1,\l-k+2\n-1}(\u{e}^{(\l)}[k]))^\bigcdot=\f_{\l-k+\n-1,\l-k+2\n-1}(\u{e}^{(\l)}[k])^\bigcdot
=\f_{\l-k,\l-k+\n-1}(\u{e}^{(\l)}[k])^\bigcdot.
\end{multline*}
If $k>0$, then the right-hand side is equal to
$$
\f_{\l-k,\l-k+\n-1}\big(\D\big(\u{i}\<\l\>\big)[k-\l]\big)=\D\big(\u{i}\<\l\>\<-1\>\big)[k-\l+1]
=\D\big(\u{i}\<\l-1\>\big)[k-(\l-1)],
$$
and if $k\le0$, then it is equal to
\begin{multline*}
\f_{\l-k,\l-k+\n-1}\big(\D\big(\u{i}[k]\<\l-k\>\big)[k-\l]\big)=\D\big(\u{i}[k]\<\l-k\>\<-1\>\big)[k-\l+1]\\
=\D\big(\u{i}[k]\<\l-1-k\>\big)[k-(\l-1)].\hspace{12pt}
\end{multline*}
Now the claim about the cycle follows from~(\ref{eq:MDk}), (\ref{eq:fe}) and Lemma~\ref{lemma:ex:2}.
\end{proof}

\subsection{Linear independence} In the proofs of the previous results, it was very useful to think
of finite sequences as maps with residue rings as domains. We are going to develop this idea
and introduce a new notation for $\D$-sequences.
To this end, we consider the residue ring $\Z/(2n-1)\Z$ and denote by $\o{x}$ the residue class
of an integer $x$. We assume that
$$
y+\o{x}=\o{y+x}
$$
for any integers $x$ and $y$. For any reflection sequence $\u{t}$ of length $2n-1$ and a reflection $p$,
we denote by $\o{\M}_p(\u{t})$ the set of residue classes $\o{x}$, where $x\in\M_p(\u{t})$.
Similarly, we define $\o{\M}_p(\u{\epsilon})=\o{\M}_p(\u{\epsilon}^\bigcdot)$
for a subexpression $\u{\epsilon}\subset\u{t}$ and $\f_A=\f_X$ for a subset $A\subset\Z/(2n-1)\Z$ and
a set of representatives $X\subset\Z$ of $A$.

Let $\rchi_n$ denote the set of maximal chords in the $(2n-1)$-gon $\Z/(2n-1)\Z$. Thus any element of $\rchi_n$ is a set $A=\{a,n-1+a\}$,
where $a\in\Z/(2n-1)\Z$. We denote $A_1=a$ and $A_2=n-1+a$.
Let us fix a rearrangement $\u{i}$ of $(1,2,\ldots,n)$ and an integer $k$ such that $1-\n\le k\le \n-1$.
Then if follows from~(\ref{eq:MDk}) and Lemma~\ref{lemma:ex:3.5} that there exists exactly one integer $\l$
such that $1-\n+k\le\l\le \n+k-1$ and
\begin{equation}\label{eq:ex:2}
\o{\M}_p(\u{e}^{(\l)}[k])=A,\quad \o{\M}_q(\u{e}^{(\l)}[k])=1+A
\end{equation}
for some distinct reflections $p$ and $q$. Therefore, we introduce the following notation:
$$
\u{e}^{A,\bullet}=\u{e}^{\bullet,1+A}=\u{e}^{(\l)}[k],\quad \t(A)=p,\quad \t(1+A)=q,\quad \alpha^A=\alpha_p,\quad \alpha^{1+A}=\alpha_q.
$$
We also have $\t(A)=(e^{(\l)}[k])^{\l-k}$ and $a=\overline{\l-k}$.
Here are the key properties:
$$
\u{e}^{A,\bullet}=\u{e}^{\bullet,B}\Leftrightarrow 1+A=B,\quad \f_A\,\u{e}^{A,\bullet}=\u{e}^{\bullet,A},\quad \f_B\,\u{e}^{\bullet,B}=\u{e}^{B,\bullet},\quad \alpha^A=\alpha_{\t(A)},
$$
$$
\M_{\t(A)}(e^{A,\bullet})=\M_{\t(A)}(e^{\bullet,A})=A.
$$
Thus $A\mapsto\u{e}^{A,\bullet}$ and $A\mapsto\u{e}^{\bullet,A}$ are two (different) bijections from $\rchi_n$
to the set of solutions of the equation $(\D(\u{i})[k])^{\u{e}}=1$.
We refer the reader to Example~\ref{example:n4} for a better understanding of this notation.

The direct calculation of the roots $\alpha^A$ in terms of the initial sequence $\u{i}$ is quite complicated.
Fortunately, we need only the following result.

\begin{lemma}\label{lemma:linear_independence}
For any $A\in\rchi_n$, the roots $\alpha^{1+A},\ldots,\alpha^{n-1+A}$ are linearly independent.
\end{lemma}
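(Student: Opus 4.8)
The plan is to translate the claim into a statement about spanning trees of the complete graph $K_n$. Recall that $\alpha^B=\alpha_{\t(B)}$ for every chord $B\in\rchi_n$, and that under the correspondence between roots and reflections (Section~\ref{Setup}) the transposition $(ij)\in T_n$ corresponds to the root $\alpha_{ij}$, so $\alpha^B$ is a nonzero scalar multiple of $\alpha_{ij}$ whenever $\t(B)=(ij)$. In the permutation model $V_J=\{x\in\R^n:x_1+\cdots+x_n=0\}$ with $\alpha_{ij}=e_i-e_j$ one has $\dim V_J=n-1$, and the roots attached to the $n-1$ edges of an arbitrary spanning tree of $K_n$ are linearly independent: if $v$ is a leaf of the tree, incident to the single edge $\{v,w\}$, then only $e_v-e_w$ contributes to the $e_v$-coordinate of any linear relation, so its coefficient vanishes, and one deletes $v$ and induces. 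Since we are handed exactly $n-1$ roots $\alpha^{1+A},\dots,\alpha^{n-1+A}$, it therefore suffices to show that for every $A\in\rchi_n$ the transpositions $\t(1+A),\t(2+A),\dots,\t(n-1+A)$, read as edges on the vertex set $\{1,\dots,n\}$, form a spanning tree of $K_n$.

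The first step is to describe how $\t$ behaves when we pass from a chord $B$ to the next chord $B+1$. By Lemma~\ref{lemma:ex:3.5} there are a rearrangement $\u{j}=(j_1,\dots,j_n)$ of $(1,\dots,n)$ and an integer $m$ with $|m|\le n-1$ such that $(\u{e}^{\bullet,B})^\bigcdot=\D(\u{j})[m]$. By~(\ref{eq:MDk}) the two reflections occurring twice in $\D(\u{j})[m]$ are $(j_1j_n)$, with position set $\{-m,n-1-m\}$, and $(j_1j_2)$, with position set $\{1-m,n-m\}=\{-m,n-1-m\}+1$; comparing with the defining property of $\u{e}^{\bullet,B}$ this forces $\t(B-1)=(j_1j_n)$ and $\t(B)=(j_1j_2)$. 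Moreover $\u{e}^{\bullet,B+1}=\f_B\,\u{e}^{\bullet,B}$ with $B=\M_{(j_1j_2)}(\D(\u{j})[m])=\{1-m,n-m\}$, so Corollary~\ref{corollary:+} together with Lemma~\ref{lemma:ex:3} (applied with $k=m$, which lies in the permitted range) yields
$$
(\u{e}^{\bullet,B+1})^\bigcdot=\f_B\big(\D(\u{j})[m]\big)=\D(\u{j}')[m-1],\qquad \u{j}'\in\{\u{j}[1],\ \u{j}\<1\>\}.
$$
Hence $\t(B+1)=(j_1'j_2')$ equals $(j_2j_3)$ in the first case and $(j_1j_3)$ in the second. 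In both cases the first two entries of $\u{j}'$ are contained in $\{j_1,j_2,j_3\}$ and contain $j_3$, while the entries of $\u{j}'$ in positions $3,\dots,n-1$ are exactly $j_4,\dots,j_n$.

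Iterating this around the window of $n-1$ chords, and re-reducing the shift modulo $2n-1$ into $[-(n-1),n-1]$ whenever it leaves that interval (which affects neither the sequence nor any $\t$-value), one obtains: writing $\u{j}^{(0)}=\u{j},\u{j}^{(1)},\dots$ for the resulting sequences, $\t(B+r)=(\u{j}^{(r)}_1,\u{j}^{(r)}_2)$ for every $r$, the third entry of $\u{j}^{(r)}$ equals $j_{r+3}$ for $0\le r\le n-3$, and consequently for $1\le r\le n-2$ the edge $\t(B+r)$ joins the vertex $j_{r+2}$, which is an endpoint of none of $\t(B),\dots,\t(B+r-1)$, to some vertex of $\{j_1,\dots,j_{r+1}\}$. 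Therefore, when we add the edges in the order $\t(B),\t(B+1),\dots,\t(B+n-2)$, the first joins $j_1$ to $j_2$ and each subsequent one attaches a previously unused vertex to the connected subgraph built so far; after all $n-1$ edges we get a connected acyclic graph on the $n$ vertices $j_1,\dots,j_n$, that is, a spanning tree of $K_n$ (in particular its $n-1$ edges are pairwise distinct). Taking $B=1+A$ and applying the reduction of the first paragraph completes the argument.

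The point requiring care is the bookkeeping in the last two paragraphs: controlling the rearrangement $\u{j}$ and the shift $m$ while iterating Lemma~\ref{lemma:ex:3} around the window, and verifying that both moves $\u{j}\mapsto\u{j}[1]$ and $\u{j}\mapsto\u{j}\<1\>$ have the stated effect on the initial entries of $\u{j}$. Everything else is a routine dictionary between the combinatorics of $\D$-sequences and elementary graph theory.
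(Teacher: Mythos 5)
Your argument is correct, and it uses the same overall strategy as the paper: translate the linear independence of $\alpha^{1+A},\ldots,\alpha^{n-1+A}$ into the statement that the transpositions $\t(1+A),\ldots,\t(n-1+A)$, viewed as edges on $\{1,\ldots,n\}$, form a spanning tree of $K_n$. Where you differ is in how the transpositions are identified. The paper first writes $\t(j+A)=(\u{e}^{(\l+j)}[k])^{-k+\l+j}$ (equation~(\ref{eq:ex:3}) in the proof) and then feeds Lemma~\ref{lemma:ex:3.5} into each entry, producing the full explicit list of transpositions in the two cases $\l<k$ and $k\le\l$ and reading off the tree from the resulting pictures (a star centered at one vertex glued to a path). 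You instead invoke Lemma~\ref{lemma:ex:3.5} just once, to put $(\u{e}^{\bullet,B})^\bigcdot$ in the form $\D(\u{j})[m]$, identify $\t(B)=(j_1j_2)$ from~(\ref{eq:MDk}), and then iterate chord-by-chord via Corollary~\ref{corollary:+} and Lemma~\ref{lemma:ex:3}, observing that at each step the pair $(\u{j}^{(r)}_1,\u{j}^{(r)}_2)$ attaches the fresh vertex $j_{r+2}$ to the vertices already reached — so the tree is built up one leaf at a time, with no case split. The trade-off is that your route needs the shift-reduction bookkeeping around the $(2n-1)$-cycle, which you correctly flag; the paper avoids this by committing to a single $\l$ and $k$ throughout and pays with the two-case explicit computation. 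Both are sound; yours is more modular and avoids drawing the pictures, the paper's is more direct given that Lemma~\ref{lemma:ex:3.5} is already on the page.
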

\begin{proof} Let us choose $\l$ so that $1-\n+k\le\l\le \n+k-1$ and~(\ref{eq:ex:2}) holds.
Then $A=\{\o{-k+\l},\o{n-1-k+\l}\}$. Hence we get
\begin{equation}\label{eq:ex:3}
\big(\t(1+A),\ldots,\t(n-1+A)\big)=\big((e^{(\l+1)}[k])^{-k+\l+1},\ldots,(e^{(\l+n-1)}[k])^{-k+\l+n-1}\big).
\end{equation}

We denote $\u{j}=\u{i}\<k\>$ if $k>0$ and $\u{j}=\u{i}[k]$ if $k\le0$.

{\it Case $\l<k$.} By Lemma~\ref{lemma:ex:3.5}, we can rewrite the right-hand side of~(\ref{eq:ex:3}) as follows:
\begin{multline*}
\big(\D(\u{j}\<\l+1-k\>)_0,
\ldots,\D(\u{j}\<-1\>)_0,\D(\u{j})_0,\D(\u{j}[1])_0,\ldots,\D(\u{j}[-k+\l+n-1])_0\big)\\
=\big(\c(\u{j}\<\l+1-k\>)_n,\ldots,\c(\u{j}\<-1\>)_n,\c(\u{j})_n,\c(\u{j}[1])_n,\ldots,\c(\u{j}[-k+\l+n-1])_n\big)\\
=\big((j_{n-k+\l+1}\,j_1),\ldots,(j_{n-1}j_1),(j_nj_1),(j_1j_2),(j_2j_3),\ldots,(j_{n-k+\l-1}j_{n-k+\l})\big).
\end{multline*}
The list of roots corresponding to the right-hand side is linearly independent, as the transpositions form
an acyclic graph (tree), which can be represented graphically as follows:


\begin{center}
\scalebox{0.7}{
\begin{tikzpicture}[baseline=-63pt]
\draw[line width=\st] (0,0) circle(2);

\draw[red,line width=\tt] (0,2) to [bend right=45] (.8134732850, 1.827090916);
\draw[red,line width=\tt] (.8134732850, 1.827090916) to [bend right=45] (1.486289651, 1.338261213);
\draw[red,line width=\tt] (1.486289651, 1.338261213) to [bend right=45] (1.902113033, .6180339888);

\draw[red,line width=\tt] (1.732050808, -1) to [bend right=45] (1.175570504, -1.618033989);
\draw[red,line width=\tt] (1.175570504, -1.618033989) to [bend right=45] (.4158233808, -1.956295202);

\draw[red,fill] (1.790139412, .1881512338) circle(0.03);
\draw[red,fill] (1.790139412, -.1881512338) circle(0.03);
\draw[red,fill] (1.711901729, -.5562305899) circle(0.03);

\draw[blue,fill] (-1.711901729, -.5562305899) circle(0.03);
\draw[blue,fill] (-1.790139412, -.1881512338) circle(0.03);
\draw[blue,fill] (-1.790139412, .1881512338) circle(0.03);

\draw[blue,line width=\tt] (0,2) to [bend left=45] (-.8134732850, 1.827090916);
\draw[blue,line width=\tt] (0,2) to [bend left=45] (-1.486289651, 1.338261213);
\draw[blue,line width=\tt] (0,2) to [bend left=45] (-1.902113033, .6180339888);
\draw[blue,line width=\tt] (0,2) to [bend left=35] (-1.732050808, -1);
\draw[blue,line width=\tt] (0,2) to [bend left=25] (-1.175570504, -1.618033989);
\draw[blue,line width=\tt] (0,2) to [bend left=15] (-.4158233808, -1.956295202);

\draw[fill] (0,2) circle(0.07) node[above=2pt]{$1$};
\draw[fill] (.8134732850, 1.827090916) circle(0.07) node[above=10pt,right=-3pt]{$2$};
\draw[fill] (1.486289651, 1.338261213) circle(0.07) node[above=10pt,right=-3pt]{$3$};
\draw[fill] (1.902113033, .6180339888) circle(0.07);
\draw[fill] (1.732050808, -1) circle(0.07);
\draw[fill] (1.175570504, -1.618033989) circle(0.07);
\draw[fill] (.4158233808, -1.956295202) circle(0.07) node[below=9pt,right=-3pt]{$n-k+\l$};
\draw[fill] (-.4158233808, -1.956295202) circle(0.07)node[below=9pt,right=-76pt]{$n-k+\l+1$};;
\draw[fill] (-1.175570504, -1.618033989) circle(0.07);
\draw[fill] (-1.732050808, -1) circle(0.07);
\draw[fill] (-1.902113033, .6180339888) circle(0.07);
\draw[fill] (-1.486289651, 1.338261213) circle(0.07) node[above=10pt,left=-3pt]{$n-1$};;
\draw[fill] (-.8134732850, 1.827090916) circle(0.07) node[above=10pt,left=-3pt]{$n$};

\end{tikzpicture}}
\end{center}

\vspace{2pt}
\noindent
Note that the inequality of this case implies that the blue part may not be empty, whereas the red part may be.

{\it Case $k\le\l$.} By Lemma~\ref{lemma:ex:3.5}, we can rewrite the right-hand side of~(\ref{eq:ex:3}) as follows:
\begin{multline*}
\big(\D(\u{j}[\l+1-k])_0,\ldots,\D(\u{j}[n-2])_0,\D(\u{j}[n-1])_0,\D(\u{j}\<1-n\>)_0,\D(\u{j}\<2-n\>)_0,\ldots,\D(\u{j}\<\l-k-n\>)_0\big)\\
=\big(\D(\u{j}[\l+1-k-n])_0,\ldots,\D(\u{j}[-2])_0,\D(\u{j}[-1])_0,\D(\u{j})_0,\D(\u{j}\<1\>)_0,\ldots,\D(\u{j}\<\l-k-1\>)_0\big)\\
=\big(\c(\u{j}[\l+1-k-n])_n,\ldots,\c(\u{j}[-2])_n,\c(\u{j}[-1])_n,\c(\u{j})_n,\c(\u{j}\<1\>)_n,\ldots,\c(\u{j}\<\l-k-1\>)_n\big)\\
=\big((j_{\l-k+1}j_{\l-k+2}),\ldots,(j_{n-2}j_{n-1}),(j_{n-1}j_n),(j_nj_1),(j_1j_2),\ldots,(j_1j_{\l-k})\big)
\end{multline*}
Again the linear independence follows from the acyclicity of the graph represented by the last line:

\vspace{-7pt}

\begin{center}
\scalebox{0.7}{
\begin{tikzpicture}[baseline=-63pt]
\draw[line width=\st] (0,0) circle(2);

\draw[red,line width=\tt] (0,2) to [bend right=45] (.8134732850, 1.827090916);
\draw[red,line width=\tt] (0,2) to [bend right=45] (1.486289651, 1.338261213);
\draw[red,line width=\tt] (0,2) to [bend right=45] (1.902113033, .6180339888);

\draw[red,line width=\tt] (0,2) to [bend right=35] (1.732050808, -1);
\draw[red,line width=\tt] (0,2) to [bend right=25] (1.175570504, -1.618033989);
\draw[red,line width=\tt] (0,2) to [bend right=15] (.4158233808, -1.956295202);

\draw[red,fill] (1.790139412, .1881512338) circle(0.03);
\draw[red,fill] (1.790139412, -.1881512338) circle(0.03);
\draw[red,fill] (1.711901729, -.5562305899) circle(0.03);

\draw[blue,fill] (-1.711901729, -.5562305899) circle(0.03);
\draw[blue,fill] (-1.790139412, -.1881512338) circle(0.03);
\draw[blue,fill] (-1.790139412, .1881512338) circle(0.03);

\draw[blue,line width=\tt] (0,2) to [bend left=45] (-.8134732850, 1.827090916);
\draw[blue,line width=\tt] (-.8134732850, 1.827090916) to [bend left=45] (-1.486289651, 1.338261213);
\draw[blue,line width=\tt] (-1.486289651, 1.338261213) to [bend left=45] (-1.902113033, .6180339888);
\draw[blue,line width=\tt] (-1.732050808, -1) to [bend left=45] (-1.175570504, -1.618033989);
\draw[blue,line width=\tt] (-1.175570504, -1.618033989) to [bend left=45] (-.4158233808, -1.956295202);

\draw[fill] (0,2) circle(0.07) node[above=2pt]{$1$};
\draw[fill] (.8134732850, 1.827090916) circle(0.07) node[above=10pt,right=-3pt]{$2$};
\draw[fill] (1.486289651, 1.338261213) circle(0.07) node[above=10pt,right=-3pt]{$3$};
\draw[fill] (1.902113033, .6180339888) circle(0.07);
\draw[fill] (1.732050808, -1) circle(0.07);
\draw[fill] (1.175570504, -1.618033989) circle(0.07);
\draw[fill] (.4158233808, -1.956295202) circle(0.07) node[below=9pt,right=-3pt]{$\l-k$};
\draw[fill] (-.4158233808, -1.956295202) circle(0.07)node[below=9pt,right=-46pt]{$\l-k+1$};;
\draw[fill] (-1.175570504, -1.618033989) circle(0.07);
\draw[fill] (-1.732050808, -1) circle(0.07);
\draw[fill] (-1.902113033, .6180339888) circle(0.07);
\draw[fill] (-1.486289651, 1.338261213) circle(0.07) node[above=10pt,left=-3pt]{$n-1$};;
\draw[fill] (-.8134732850, 1.827090916) circle(0.07) node[above=10pt,left=-3pt]{$n$};

\end{tikzpicture}}
\end{center}

\vspace{2pt}

\noindent
Again the blue part is not empty.
\end{proof}

\subsection{Connected components} To formulate the next result,
we use the notation introduced in Section~\ref{Connected_distance}.

\begin{lemma}\label{lemma:5}
Let $\u{i}$ be a rearrangement of $(1,\ldots,n)$, $\u{t}=\D(\u{i})[k]$ and $A\in\rchi_n$. Then the following holds:
{\renewcommand{\labelenumi}{{\it(\roman{enumi})}}
\renewcommand{\theenumi}{{\rm(\roman{enumi})}}
\begin{enumerate}
\itemsep4pt
\item\label{lemma:5:p:i-}
$\Sub_{con}^{A_1}(\u{t},1,\u{e}^{A,\bullet})=\{\u{e}^{A,\bullet},\u{e}^{1+A,\bullet},\ldots,\u{e}^{n-1+A,\bullet}\}$.
\item\label{lemma:5:p:i}
$\Sub_{con}^{A_2}(\u{t},1,\u{e}^{A,\bullet})=\{\u{e}^{A,\bullet},\u{e}^{1+A,\bullet},\ldots,\u{e}^{n-2+A,\bullet}\}$.
\item\label{lemma:5:p:ii}
$\Sub_{con}^{A_1}(\u{t},1,\u{e}^{\bullet,A})=\{\u{e}^{\bullet,A},\u{e}^{\bullet,-1+A},\ldots,\u{e}^{\bullet, 2-n+A}\}$.
\item\label{lemma:5:p:ii+}
$\Sub_{con}^{A_2}(\u{t},1,\u{e}^{\bullet,A})=\{\u{e}^{\bullet,A},\u{e}^{\bullet,-1+A},\ldots,\u{e}^{\bullet, 1-n+A}\}$.
\end{enumerate}}
\end{lemma}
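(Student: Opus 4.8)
The plan is to prove the four statements simultaneously by analyzing how the folding operators $\f_{A_1}$ and $\f_{A_2}$ (equivalently, the elementary folds $\f_i$ at positions in a maximal chord) act on the distinguished subexpressions $\u{e}^{(\l)}[k]$. Recall from Section~\ref{Connected_distance} that $\Sub_{con}^X(\u{t},1,\u{\epsilon})$ is the connected component of $\u{\epsilon}$ inside $\Sub^X(\u{t},1,\u{\epsilon})$, and recall that the vertices of the graph $\Gr$ are joined precisely when two subexpressions are related by some $\edot_p$. The key structural input is formula~(\ref{eq:fe}), $\f_{\l,\l+\n-1}\u{e}^{(\l-1)}=\u{e}^{(\l)}$, together with its shifted form obtained by applying~(\ref{eq:5}), which after matching up residue classes becomes, in the notation of Section~\ref{Linear independence}, the relations $\f_A\,\u{e}^{A,\bullet}=\u{e}^{\bullet,A}$, $\f_B\,\u{e}^{\bullet,B}=\u{e}^{B,\bullet}$, and $\u{e}^{A,\bullet}=\u{e}^{\bullet,1+A}$. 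In other words, applying a single maximal fold sends $\u{e}^{A,\bullet}$ to $\u{e}^{\bullet,A}=\u{e}^{(A-1)+A,\bullet}$ in the first superscript coordinate; iterating this is exactly how one walks along the component.

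First I would fix $A$ and the chord $\{A_1,A_2\}$ and work out which edges of $\Gr$ lie inside $\Sub^{A_1}(\u{t},1,\u{e}^{A,\bullet})$, i.e. those that change only coordinates \emph{other than} $A_1$. By Lemma~\ref{lemma:ex:2} the subexpressions with target $1$ are precisely the $2n-1$ subexpressions $\u{e}^{(\l)}[k]$, and by~(\ref{eq:ex:2}) each carries exactly two ``active'' reflections, with $\o{\M}$-sets a maximal chord $B$ and its shift $1+B$; so each $\u{e}^{\bullet,B}$ has exactly the two neighbours $\f_B\,\u{e}^{\bullet,B}=\u{e}^{B,\bullet}$ and $\f_{1+B}\,\u{e}^{\bullet,B}$ under $\edot$. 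Thus $\Gr$ restricted to these $2n-1$ vertices is a single $(2n-1)$-cycle, and each edge is labelled by the maximal chord that gets folded along it. Freezing the coordinate $A_1$ removes exactly those edges whose label-chord contains $A_1$; I would count that there are precisely two such edges (the one entering and the one leaving the unique vertex whose chord is $A_1$-centred), so removing them from the cycle leaves a single path, i.e. the component is an interval of consecutive $\u{e}$'s. The content of (i$^-$)--(iv) is then just identifying the two endpoints of that interval, which amounts to bookkeeping with the shift $B\mapsto1+B$: starting from $\u{e}^{A,\bullet}=\u{e}^{\bullet,1+A}$ and walking away from the frozen edges in each of the two superscript conventions gives the ranges $A,1+A,\dots,(n-1)+A$ versus $A,1+A,\dots,(n-2)+A$ for $A_1$ versus $A_2$ in part~(i), and the mirror-image ranges in parts~(iii),(iv). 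The asymmetry $n-1$ vs.\ $n-2$ comes from whether the distinguished vertex $\u{e}^{A,\bullet}$ itself sits at or next to the frozen edge, which is read off from $A_1=A$ being the ``first'' of the two endpoints of the chord $A$.

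The step I expect to be the real obstacle is verifying cleanly that no \emph{extra} edges appear: a priori a subexpression $\u{e}^{\bullet,B}$ could be $\edot_p$-related to some $\u{e}^{\bullet,B'}$ with $|B\triangle B'|\ge4$, or two non-consecutive $\u{e}$'s could be joined. Ruling this out is where Lemma~\ref{lemma:linear_independence} and the precise description~(\ref{eq:ex:2}) of the $\o{\M}$-sets are essential: since each $\u{e}^{\bullet,B}$ has only the two reflections $\t(B),\t(1+B)$ occurring with a single position each except possibly at the chord endpoints, Lemma~\ref{lemma:sim}\ref{lemma:sim:iv} forces any $\edot_p$-neighbour to be obtained by flipping an even subset of a \emph{one-} or \emph{two-}element $\M_p$-set, hence the flipped set has size exactly $2$ and equals that chord; so neighbours are exactly $\f_B$ and $\f_{1+B}$ applied to $\u{e}^{\bullet,B}$, giving the two incident edges and nothing more. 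Once the ambient graph on these vertices is pinned down as the $(2n-1)$-cycle with the stated edge-labels, parts~(i$^-$)--(iv) follow by removing the (two) $A_1$- or $A_2$-labelled edges and reading off the surviving interval, using~(\ref{eq:MDk}) and Lemma~\ref{lemma:ex:3.5} to translate the shift arithmetic back into the $\u{e}^{A,\bullet}$/$\u{e}^{\bullet,A}$ notation. I would present the argument once for~\ref{lemma:5:p:i-} in full and then indicate the (symmetric) modifications for the other three.
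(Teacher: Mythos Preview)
Your approach is correct and is essentially the same as the paper's: both arguments walk along the edges $\f_{j+A}$ connecting consecutive $\u{e}^{j+A,\bullet}$'s, using the fact (from~(\ref{eq:ex:2}), Lemma~\ref{lemma:ex:3.5} and~(\ref{eq:MDk})) that each vertex has exactly two $\edot_p$-neighbours, labelled by its two chords. The paper presents this as a step-by-step induction building the chain and then checking it cannot be extended (deriving parts~\ref{lemma:5:p:ii} and~\ref{lemma:5:p:ii+} from parts~\ref{lemma:5:p:i} and~\ref{lemma:5:p:i-} by the substitutions $B=n-1+A$ and $B=n+A$), whereas you phrase it globally as ``the ambient graph is a $(2n-1)$-cycle, delete the two edges whose chord contains $A_i$, and read off the surviving arc''.

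Two small corrections. First, deleting the two edges whose label-chord contains $A_1$ does \emph{not} leave ``a single path'' and these edges are not incident to a common vertex: the chords containing $A_1$ are $A$ and $n+A$, whose edges sit at opposite sides of the cycle (between $\u{e}^{-1+A,\bullet}$ and $\u{e}^{A,\bullet}$, and between $\u{e}^{n-1+A,\bullet}$ and $\u{e}^{n+A,\bullet}$), so removal leaves two arcs; the component of $\u{e}^{A,\bullet}$ is one of them. Second, Lemma~\ref{lemma:linear_independence} is irrelevant here---it concerns linear independence of roots, not the graph structure. What you actually need (and correctly use) to rule out extra edges is just that $|\M_p(\u{e}^{(\ell)}[k])|\le 2$ for all $p$, with equality for exactly two reflections, which is~(\ref{eq:ex:2}) together with~(\ref{eq:MDk}) and Lemma~\ref{lemma:ex:3.5}.
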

\noindent
{\it Proof.}\,
\ref{lemma:5:p:i-}, \ref{lemma:5:p:i}
Let $A=\{a,n-1+a\}$ for the corresponding $a\in\Z/(2n-1)\Z$. So $A_1=a$ and $A_2=n-1+a$.
Let us prove by induction on $j=0,\ldots,n-2$ the following inclusion:
\begin{equation}\label{eq:ex:4}
\{\u{e}^{A,\bullet},\u{e}^{1+A,\bullet},\ldots,\u{e}^{j+A,\bullet}\}\subset\Sub_{con}^{A_1}(\u{t},1,\u{e}^{A,\bullet})\cap\Sub_{con}^{A_2}(\u{t},1,\u{e}^{A,\bullet}).
\end{equation}
The case $j=0$ is true by definition. So assume that $j>0$. By the inductive hypothesis, we get
$$
\{\u{e}^{A,\bullet},\u{e}^{1+A,\bullet},\ldots,\u{e}^{j-1+A,\bullet}\}\subset\Sub_{con}^{A_1}(\u{t},1,\u{e}^{A,\bullet})\cap\Sub_{con}^{A_2}(\u{t},1,\u{e}^{A,\bullet}).
$$
As set $j+A$ contains neither $A_1$ nor $A_2$ and
$$
\f_{j+A}\,\u{e}^{j-1+A,\bullet}=\f_{j+A}\,\u{e}^{\bullet,j+A}=\u{e}^{j+A,\bullet},
$$
we have proved that $\u{e}^{j+A,\bullet}$ belongs to the right-hand side of~(\ref{eq:ex:4}).
Moreover, $A_1\notin n-1+A$ and
$$
\f_{n-1+A}\,\u{e}^{n-2+A,\bullet}=\f_{n-1+A}\,\u{e}^{\bullet,n-1+A}=\u{e}^{n-1+A,\bullet}.
$$
It follows that the graphs $\Gr(\Sub_{con}^{A_1}(\u{t},1,\u{e}^{A,\bullet}))$ and $\Gr(\Sub_{con}^{A_2}(\u{t},1,\u{e}^{A,\bullet}))$
contain the subgraphs
$$
\begin{tikzcd}[column sep=42pt]
\u{e}^{A,\bullet}\arrow[leftrightarrow]{r}{\f_{1+A}}&\u{e}^{1+A,\bullet}\arrow[leftrightarrow]{r}{\f_{2+A}}&\cdots\arrow[leftrightarrow]{r}{\f_{n-2+A}}&\u{e}^{n-2+A,\bullet}\arrow[leftrightarrow]{r}{\f_{n-1+A}}&\u{e}^{n-1+A,\bullet}
\end{tikzcd}
$$
and
$$
\begin{tikzcd}[column sep=42pt]
\u{e}^{A,\bullet}\arrow[leftrightarrow]{r}{\f_{1+A}}&\u{e}^{1+A,\bullet}\arrow[leftrightarrow]{r}{\f_{2+A}}&\cdots\arrow[leftrightarrow]{r}{\f_{n-2+A}}&\u{e}^{n-2+A,\bullet}
\end{tikzcd}
$$
respectively. To prove that we get the entire connected components, note that $A_1\in n+A$ and $A_2\in n-1+A$.
Thus the the first subgraph can not be extended by applying $\f_A$ or $\f_{n+A}$ and the second subgraph
can not be extended by applying $\f_A$ or $\f_{n-1+A}$.

\ref{lemma:5:p:ii} This part can be obtained from part~\ref{lemma:5:p:i} as follows. Let $B=n-1+A$.
Then $B_1=A_2$ and we get
\begin{multline*}
\Sub_{con}^{B_1}(\u{t},1,\u{e}^{\bullet,B})=\Sub_{con}^{A_2}(\u{t},1,\u{e}^{-1+B,\bullet})
=\Sub_{con}^{A_2}(\u{t},1,\u{e}^{n-2+A,\bullet})=\Sub_{con}^{A_2}(\u{t},1,\u{e}^{A,\bullet})\\
=\{\u{e}^{A,\bullet},\u{e}^{1+A,\bullet},\ldots,\u{e}^{n-2+A,\bullet}\}
=\{\u{e}^{\bullet,1+A},\u{e}^{\bullet,2+A},\ldots,\u{e}^{\bullet,n-1+A}\}\\
=\{\u{e}^{\bullet,2-n+B},\u{e}^{\bullet,3-n+B},\ldots,\u{e}^{\bullet,B}\}.
\end{multline*}
It remains to note that $B$ can be any chord of $\rchi_n$ and rename it to $A$.

\ref{lemma:5:p:ii+} This part can be obtained from part~\ref{lemma:5:p:i-} as follows.
Let $B=n+A$. Then $B_2=A_1$ and we get
\begin{multline*}
\Sub_{con}^{B_2}(\u{t},1,\u{e}^{\bullet,B})=\Sub_{con}^{A_1}(\u{t},1,\u{e}^{-1+B,\bullet})
=\Sub_{con}^{A_1}(\u{t},1,\u{e}^{n-1+A,\bullet})=\Sub_{con}^{A_1}(\u{t},1,\u{e}^{A,\bullet})\\
=\{\u{e}^{A,\bullet},\u{e}^{1+A,\bullet},\ldots,\u{e}^{n-1+A,\bullet}\}
=\{\u{e}^{\bullet,1+A},\u{e}^{\bullet,2+A},\ldots,\u{e}^{\bullet,n+A}\}\\
=\{\u{e}^{\bullet,1-n+B},\u{e}^{\bullet,2-n+B},\ldots,\u{e}^{\bullet,B}\}.\hspace{10pt}\square\hspace{-10pt}
\end{multline*}

\subsection{Computing projective dimensions} We are ready to run the algorithm described
in Section~\ref{Algorithm_2} in the present case.
For the formulation of the next result, recall the string modules constructed in Section~\ref{string_modules}.

\begin{theorem}\label{theorem:ex:1} Let $\u{i}$ be a rearrangement of $(1,2,\ldots,n)$ and $k\in\Z$.
{\renewcommand{\labelenumi}{{\it(\roman{enumi})}}
\renewcommand{\theenumi}{{\rm(\roman{enumi})}}
\begin{enumerate}
\item\label{theorem:ex:1:i} If $n=3$, then Algorithm~2 applied to $\Sub(\D(\u{i})[k],1)$ does not stop prematurely.
In this case, the following decomposition of left $R$-modules holds:
$$
\X_1(\D(\u{i})[k])\cong R\oplus R(-2)^{\oplus3}\oplus R(-4).
$$
\item\label{theorem:ex:1:ii} If $n\ge4$, then Algorithm~2 applied to $\Sub(\D(\u{i})[k],1)$ stops prematurely at step $n+1$.
In this case, for any $A\in\rchi_n$ the following decomposition of left $R$-modules holds:
$$
\X_1(\D(\u{i})[k])\cong R\oplus R(-2)^{\oplus n}\oplus\St_R(\alpha^A,\alpha^{1+A},\ldots,\alpha^{n-2+A}).
$$
\end{enumerate}}
\end{theorem}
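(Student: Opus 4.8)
The plan is to run Algorithm~2 on $\Sub(\D(\u{i})[k],1)$ and track the families $\mathscr G_j$ explicitly, using the combinatorial description of connected components obtained in Lemma~\ref{lemma:5} together with the linear independence of roots from Lemma~\ref{lemma:linear_independence}. Fix $A\in\rchi_n$. By Lemma~\ref{lemma:ex:2} the set $\Sub(\u{t},1)$ has exactly $2n-1$ elements, which (via the bijection $B\mapsto\u{e}^{B,\bullet}$) we index by $\rchi_n$. The first step is to feed in the subexpressions one at a time along the ``$\bullet$-indexed'' cyclic order: first I would show that starting from $\Phi=\emptyset$, the subexpression $\u{e}^{A,\bullet}$ is con-close with $\cdist=0$ (both $\M_p$-sets are empty when all changes are frozen, or more precisely the relevant $n_p$ equal $1$), so $\mathscr G_1$ contains $(\{\u{e}^{A,\bullet}\},1)$. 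Then, processing $\u{e}^{1+A,\bullet},\dots,\u{e}^{n-1+A,\bullet}$ in turn, I would use Lemma~\ref{lemma:5}\ref{lemma:5:p:i-}--\ref{lemma:5:p:i} to see that at each stage the connected component $\Sub_{con}^{Y}(\u{t},1,\u{e}^{j+A,\bullet})$ for an appropriate one-element frozen set $Y$ avoids the already-inserted subexpressions, giving $\cdist=0$ and contributing another $v^0=1$. This produces $n$ insertions of degree $0$, so after $n$ steps $\mathscr G_n$ contains $(\Phi_0,n)$ with $\Phi_0=\{\u{e}^{A,\bullet},\dots,\u{e}^{n-1+A,\bullet}\}$.

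Next I would analyze what happens when we try to insert the remaining $n-1$ subexpressions, which are $\u{e}^{\bullet,A}=\u{e}^{-1+A,\bullet},\dots$ — more usefully indexed as $\u{e}^{\bullet,j+A}$ for $j=1,\dots,n-1$ in the ``$\bullet,-$'' labelling, i.e. the elements of $\rchi_n$ not yet used. Here the key point is that for such a subexpression $\u{\epsilon}$, the families $\Phi_p(\u{\epsilon})$ now contain a two-element set (coming from a maximal chord $\M_p(\u{\epsilon})$ both of whose ``ends'' lead into $\Phi_0$ after even folding), forcing $n_p(\Phi_0,\u{\epsilon})=2$ for exactly one reflection $p=\t(\,\cdot\,)$, while all other $n_q$ equal $1$. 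Then condition~\ref{close:i} forces $|Y|=1$, so $\cdist=2$, and one checks via Lemma~\ref{lemma:5}\ref{lemma:5:p:ii}--\ref{lemma:5:p:ii+} that con-closeness does hold with this $Y$. Thus each of the remaining $n-1$ insertions contributes $v^{-2}$, and Algorithm~2 runs to completion producing $(\Sub(\u{t},1),\,n+(n-1)v^{-2})$ — wait, this is the acyclic pattern; for $n\ge4$ the point is precisely that it does \emph{not} run to completion this way. So the refinement is: after the $n$ degree-$0$ insertions, the subexpression $\u{e}^{\bullet,A}$ and its neighbours get inserted with $\cdist=2$, but after inserting some of them the \emph{connected} component stops shrinking to a point, a cycle appears in $\Gr$, and $\Phi_p(\u{\epsilon})$ acquires a set of size $\ge3$ — no, rather the issue is that con-closeness fails because $\Sub_{con}^Y\cap\Phi\ne\emptyset$ for every admissible $Y$. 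Concretely: I would show that after $n+1$ steps the family $\mathscr G_{n+1}$ is nonempty but $\mathscr G_{n+2}$ is empty, because once both an ``$A$-type'' and a ``$(1+A)$-type'' neighbour of some $\u{e}^{\bullet,B}$ are in $\Phi$, every one-element freeze $Y$ still connects $\u{e}^{\bullet,B}$ to one of them inside $\Sub_{con}^Y$.

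The mechanism that makes this precise, and which simultaneously identifies the leftover module, is the identification of $\X_1(\u{t},\Phi_0)$ with a string module. After the $n$ degree-zero insertions, Theorem~\ref{theorem:4}\ref{theorem:4:p:ii} (applied through Lemma~\ref{lemma:23}) gives $\X_1(\u{t})\cong R^{\oplus(1+nv^{-2})}\oplus\X_1(\u{t},\Phi_0)$ — actually the Laurent polynomial is $1+nv^{-2}$ only if the degrees come out as $0$; let me not commit to the exact shifts here, but the point is $\X_1(\u{t})\cong R^{\oplus P}\oplus\X_1(\u{t},\Phi_0)$ for a monomial-support $P$. I would then directly compute $\X_1(\u{t},\Phi_0)$ from its defining divisibility conditions: the functions in $\X_1(\u{t})$ vanishing on $\Phi_0$ are supported on $\{\u{e}^{\bullet,A},\dots\}$, and the only nontrivial divisibility constraints that survive are the ``relative-cardinality'' conditions over the chords $\M_{\t(j+A)}$, which by~(\ref{eq:MDk}) consist of the root $\alpha^{j+A}$ and its overlapping neighbour — producing exactly the string relations $f_i\equiv f_{i-1}\pmod{\alpha^{i+A}}$, $f_1\equiv 0\pmod{\alpha^A}$, $f_{n-1}\equiv 0\pmod{\alpha^{n-2+A}}$ after a suitable re-indexing of the support. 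This is precisely $\St_R(\alpha^A,\alpha^{1+A},\dots,\alpha^{n-2+A})$, which makes sense because Lemma~\ref{lemma:linear_independence} guarantees these $n-1$ roots are linearly independent, so the string module is well-defined. For $n=3$ the string module $\St_R$ of a length-$2$ sequence is free of rank $1$ (Remark~\ref{remark:n2}, shifted by $-4$), which collapses the decomposition to the stated $R\oplus R[-2]^{\oplus3}\oplus R[-4]$, and Algorithm~2 does finish (no cycle can form with only two chords).

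The main obstacle I anticipate is the bookkeeping of the degree shifts — tracking $\cdist$ exactly through the $n$ zero-degree insertions and verifying that the premature stop occurs at step $n+1$ rather than earlier or later — together with the careful matching of the support of $\X_1(\u{t},\Phi_0)$ with the index set $\{1,\dots,n-1\}$ of a string module so that the congruences~(\ref{eq:Stvee_in_coordinates})-style line up with the chord structure from~(\ref{eq:MDk}). The conceptual content is all in Lemmas~\ref{lemma:ex:2}, \ref{lemma:5}, \ref{lemma:linear_independence} and Theorem~\ref{theorem:4}; what remains is to assemble them, and the delicate part is showing that \emph{no} admissible freeze set $Y$ can disconnect the troublesome subexpressions from $\Phi$ once a cycle is present in $\Gr$, which is exactly the ``single odd cycle of length $\ge7$'' phenomenon advertised in the introduction. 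I would isolate that as a short lemma (``$\Gr(\Sub(\D(\u{i})[k],1))$ is a single $(2n-1)$-cycle'') proved directly from Lemma~\ref{lemma:5}, and then deduce the premature stop from the general principle that a cycle obstructs con-perfect orderability beyond a spanning-tree's worth of vertices.
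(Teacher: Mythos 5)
Your overall plan — run Algorithm~2, show it produces intervals of consecutive $\u{e}^{j+A,\bullet}$'s, verify it stops at step $n+1$, and identify the leftover with a string module whose $n-1$ roots are linearly independent by Lemma~\ref{lemma:linear_independence} — matches the paper's strategy. However, there are two genuine errors in the degree/cardinality accounting that would derail the proof as written.

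\textbf{The $\cdist$ values are wrong.} You say each of the first $n$ insertions contributes $v^0$ with ``an appropriate one-element frozen set $Y$... giving $\cdist=0$.'' But $\cdist(\Phi,\u{\epsilon})=2|Y|$; a one-element $Y$ gives $\cdist=2$, not $0$. Only the very first insertion (from $\Phi=\emptyset$) has $\cdist=0$, because there $n_p(\emptyset,\u{\epsilon})=1$ for every $p$, forcing $Y=\emptyset$. From the second insertion onward, $\u{e}^{j-1+A,\bullet}\in\Phi$ makes the two-element chord $j+A$ belong to $\Phi_{\t(j+A)}(\u{e}^{j+A,\bullet})$, so $n_{\t(j+A)}(\Phi,\cdot)=2$ and $|Y|=1$ is forced. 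The correct Laurent polynomial at step $m$ is $p_m=1+(m-1)v^{-2}$, not $m$. Your proposed $\mathscr G_n=(\Phi_0,n)$ and the alternate guess $n+(n-1)v^{-2}$ are both wrong; the paper's $\mathscr G_{n+1}$ entry is $(\Phi,1+nv^{-2})$, which is the $1+nv^{-2}$ in the theorem.

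\textbf{The stopping set has the wrong size.} You take $\Phi_0$ with $n$ elements, leaving a complement of $n-1$ subexpressions, and then want to identify $\X_1(\u{t},\Phi_0)$ with $\St_R(\alpha^A,\ldots,\alpha^{n-2+A})$. But that string module has $n-1$ roots and hence lives in $R^{\oplus n-2}$, while your complement supports functions in $R^{\oplus n-1}$ — a rank mismatch. The paper's $\Phi$ at step $n+1$ has $n+1$ elements, so the complement has $n-2$ elements, and $\St_R$ of $n-1$ roots with its $n-2$ coordinates fits exactly. You need one more degree-$2$ insertion past $\u{e}^{n-1+A,\bullet}$ before the obstruction kicks in.

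On the ``single $(2n-1)$-cycle obstructs con-perfect orderability'' shortcut: the observation that $\Gr(\Sub(\u{t},1))$ is a $(2n-1)$-cycle is correct and is implicit in the paper, but no such general lemma exists in the paper, and you do not actually prove the principle. The paper instead verifies directly (Cases~3 and~4 of its proof, using Lemma~\ref{lemma:5}) that once $\Phi$ is an arc of length $n+1$ in the cycle, neither endpoint $\u{e}^{-1+A,\bullet}$ nor $\u{e}^{n+1+A,\bullet}$ admits a valid one-element freeze, precisely because the $n-2$ remaining subexpressions on the far side of the cycle are too few for the connected component $\Sub_{con}^{Y}$ to avoid $\Phi$; this is where $n\geq4$ is used. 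Your proof would need to supply that argument explicitly rather than appeal to a generic principle.
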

\begin{proof}
We denote $\u{t}=\D(\u{i})[k]$ for brevity.
First, we will prove that in both cases this algorithm does not stop until step $n$ inclusively.
To this end, let us prove by induction that at each step $m=0,\ldots,n+1$, it produces the family $\mathscr G_m$ consisting of all pairs
$$
(\{\u{e}^{A,\bullet},\u{e}^{A+1,\bullet},\ldots,\u{e}^{A+m-1,\bullet}\},p_m),
$$
where $A\in\mathbb\rchi_n$, $p_0=0$ and $p_m=1+v^{-2}(m-1)$ for $m>0$.
This claim is obviously true for $m\le1$.
Therefore, assume $m>1$. Let us consider a nonempty subset
$$
\Phi=\{\u{e}^{A,\bullet},\u{e}^{A+1,\bullet},\ldots,\u{e}^{A+m-2,\bullet}\}.
$$
The family $\mathscr G_{m-1}$ consists of all pairs $(\Phi,p_{m-1})$ for $\Phi$ as above. The corresponding part
of the graph $\Gr(\Sub(\u{t},1))$ looks as follows:
$$
\begin{tikzcd}[column sep=40pt]
\u{e}^{-1+A,\bullet}\arrow[leftrightarrow,dashed]{r}{\f_A}&\u{e}^{A,\bullet}\arrow[leftrightarrow]{r}{\f_{1+A}}&\u{e}^{1+A,\bullet}\arrow[leftrightarrow]{r}{\f_{2+A}}&\cdots\arrow[leftrightarrow]{r}{\f_{m-2+A}}&\u{e}^{m-2+A,\bullet}\arrow[leftrightarrow,dashed]{r}{\f_{m-1+A}}&\u{e}^{m-1+A,\bullet}
\end{tikzcd}
$$
where the dashed arrows are used to underline the position of $\Phi$.
Note that the leftmost and the rightmost subexpressions in the picture above are different as $2n-1$ does not divide $m$.

We are going to measure the con-distance between an arbitrary subexpression $\u{e}\in\Sub(\u{t},1)$
and $\Phi$.

{\it Case 0: $\u{e}\ne\u{e}^{-1+A,\bullet}$ and $\u{e}\ne\u{e}^{m-1+A,\bullet}$.}
We get $n_p(\Phi,\u{e})\le1$ for any reflection $p\in T$.
This fact shows that only $Y=\emptyset$ satisfies condition~\ref{close:i}.
However, this set does not satisfy condition~\ref{close:ii'},
as $\Sub_{con}^\emptyset(\u{t},1,\u{e})=\Sub(\u{t},1)$ and the last set contains $\Phi\ne\emptyset$.
Therefore, $\cdist(\Phi,\u{e})=\infty$. 

{\it Case 1: $\u{e}=\u{e}^{-1+A,\bullet}$.} In this case, $n_{\t(A)}(\Phi,\u{e})=2$ and
$n_p(\Phi,\u{e})\le1$ for any $p\ne\t(A)$. Let $Y=\{A_1\}$.
This subset satisfies condition~\ref{close:i}.
Let us check condition~\ref{close:ii'}. By Lemma~\ref{lemma:5}\ref{lemma:5:p:ii}, we get
\begin{multline*}
\Sub_{con}^{Y}(\u{t},1,\u{e})=\Sub_{con}^{A_1}(\u{t},1,\u{e}^{\bullet,A})\\
=\{\u{e}^{\bullet,A},\u{e}^{\bullet,-1+A},\ldots,\u{e}^{\bullet,2-n+A}\}=
\{\u{e}^{-1+A,\bullet},\u{e}^{-2+A,\bullet},\ldots,\u{e}^{1-n+A,\bullet}\}.
\end{multline*}
One can easily see that this set intersects emptily with $\Phi$. Hence $\cdist(\Phi,\u{e})=2$
and $(\Phi\cup\{\u{e}\},p_{m-1}+v^{-2})=(\Phi\cup\{\u{e}\},p_m)\in\mathscr G_m$.

{\it Case 2: $\u{e}=\u{e}^{m-1+A,\bullet}$.} This case is similar to Case~1. Let $B=m-1+A$ and $Y=\{B_2\}$.
This subset satisfies condition~\ref{close:i}. Condition~\ref{close:ii'} can be checked
using Lemma~\ref{lemma:5}\ref{lemma:5:p:i} as follows:
\begin{multline*}
\Sub_{con}^{Y}(\u{t},1,\u{e})=\Sub_{con}^{B_2}(\u{t},1,\u{e}^{B,\bullet})\\
=\{\u{e}^{B,\bullet},\u{e}^{1+B,\bullet},\ldots,\u{e}^{n-2+B,\bullet}\}=
\{\u{e}^{m-1+A,\bullet},\u{e}^{m+A,\bullet},\ldots,\u{e}^{m+n-3+A,\bullet}\}.
\end{multline*}
This set also intersects emptily with $\Phi$ and the proof is finished as in Case~1.

Now that our initial claim is proved, we can finish the proof. If $n=3$, then the algorithm
does not stop prematurely by Lemma~\ref{lemma:penultimate_step}, which also furnishes
the decomposition of Case~\ref{theorem:ex:1:i}.

Finally, let us consider Case~\ref{theorem:ex:1:ii}. The family $\mathscr G_{n+1}$ produced at step $n+1$ (which is not penultimate)
consists of all pairs $(\Phi,1+nv^{-2})$, where
$$
\Phi=\{\u{e}^{A,\bullet},\u{e}^{1+A,\bullet},\ldots,\u{e}^{n+A,\bullet}\}
$$
where $A\in\rchi_n$. The corresponding part of the graph $\Gr(\Sub(\u{t},1))$ looks as follows:
$$
\begin{tikzcd}[column sep=40pt]
\u{e}^{-1+A,\bullet}\arrow[leftrightarrow,dashed]{r}{\f_A}&\u{e}^{A,\bullet}\arrow[leftrightarrow]{r}{\f_{1+A}}&\u{e}^{1+A,\bullet}\arrow[leftrightarrow]{r}{\f_{2+A}}&\cdots\arrow[leftrightarrow]{r}{\f_{n+A}}&\u{e}^{n+A,\bullet}\arrow[leftrightarrow,dashed]{r}{\f_{n+1+A}}&\u{e}^{n+1+A,\bullet}
\end{tikzcd}
$$
We are going to measure the con-distance between an arbitrary subexpression $\u{e}\in\Sub(\u{t},1)$
and $\Phi$. Agrguing similarly to Case~0 above, we conlude that $\u{e}$ is not con-close to $\Phi$
except the following two cases.

{\it Case 3: $\u{e}=\u{e}^{-1+A,\bullet}$.} In this case, $n_{\t(A)}(\Phi,\u{e})=2$. Moreover,
$n_p(\Phi,\u{e})\le1$ for any $p\ne\t(A)$, as $n\ge4$. Thus the only subsets $Y$ satisfying condition~\ref{close:i}
are $Y=\{A_1\}$ and $Y=\{A_2\}$. By Lemma~\ref{lemma:5}\ref{lemma:5:p:ii},\ref{lemma:5:p:ii+}, we get
\begin{multline*}
\Sub_{con}^{Y}(\u{t},1,\u{e})\cap\Phi\supset\Sub_{con}^{A_1}(\u{t},1,\u{e}^{\bullet,A})\cap\Phi
=
\{\u{e}^{\bullet,A},\u{e}^{\bullet,-1+A},\ldots,\u{e}^{\bullet,-n+2+A}\}\cap\Phi\\
=
\{\u{e}^{-1+A,\bullet},\u{e}^{-2+A,\bullet},\ldots,\u{e}^{-n+1+A,\bullet}\}\cap\Phi\supset\{\u{e}^{n+A,\bullet}\}\ne\emptyset.
\end{multline*}
Therefore, neither of these sets $Y$ satisfy condition~\ref{close:ii'} and $\u{e}$ is not con-close to $\Phi$.

{\it Case 4: $\u{e}=\u{e}^{n+1+A,\bullet}$.} Let $B=n+1+A$. Similarly to Case~3, the only possible choices of $Y$
satisfying condition~\ref{close:i} are $Y=\{B_1\}$ and $Y=\{B_2\}$.
However, condition~\ref{close:ii'} is satisfied for neither of these sets,
as by Lemma~\ref{lemma:5}\ref{lemma:5:p:i-},\ref{lemma:5:p:i}, there holds
\begin{multline*}
\Sub_{con}^{Y}(\u{t},1,\u{e})\cap\Phi\supset\Sub_{con}^{B_2}(\u{t},1,\u{e}^{B,\bullet})\cap\Phi
=\{\u{e}^{B,\bullet},\u{e}^{1+B,\bullet},\ldots,\u{e}^{n-2+B,\bullet}\}\cap\Phi\\
=\{\u{e}^{A+n+1,\bullet},\u{e}^{n+2+A,\bullet},\ldots,\u{e}^{2n-1+A,\bullet}\}\cap\Phi\supset\{\u{e}^{A,\bullet}\}\ne\emptyset.
\end{multline*}
Thus we have proved that Algorithm~2 indeed stops at step $n+1$. By Theorem~\ref{theorem:4}\ref{theorem:4:p:ii}, we get
$$
\X_1(\D(\u{i})[k])\cong R\oplus R(-2)^{\oplus n}\oplus\X_1(\D(\u{i})[k],\Phi).
$$
It is easy to understand that
$$
\X_1(\D(\u{i})[k],\Phi)\cong\St_R(\alpha^{n+1+A},\alpha^{n+2+A},\ldots,\alpha^{2n-2+A},\alpha^{2n-1+A}),
$$
as the roots in the brackets are linearly independent by Lemma~\ref{lemma:linear_independence}.
It remains to replace $A$ with $-n-1+A$.
\end{proof}

\begin{theorem}\label{theorem:main_example}
Let $\u{i}$ be a rearrangement of $(1,2,\ldots,n)$, where $n\ge4$, and $k\in\Z$. Then $\Hom_{R\bim}^\bullet(R,R(\D(\u{i})[k]))$
is neither a free left nor a free right $R$-module.
The projective dimensions of both modules are equal to $1$ and the projective dimensions of their
duals are equal to $n-3$.
\end{theorem}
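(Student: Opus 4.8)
The plan is to reduce the statement to the string module computations of Section~\ref{string_modules}, feeding in the decomposition of $\X_1(\u{t})$ produced by Algorithm~2. Throughout write $\u{t}=\D(\u{i})[k]$ and $N=|\u{t}|=2n-1$, and recall that in Section~\ref{Examples} we have $R=\Sym(V)$, a polynomial ring. The first step is to identify the module: since $R=R_1$, the isomorphism $\phi\mapsto\phi(1)$ of Section~\ref{Skew_invariants} together with Corollary~\ref{corollary:2} gives an isomorphism of left $R$-modules $\Hom_{R\bim}^\bullet(R,R(\u{t}))\cong\X^1(\u{t})$. Specialising Theorem~\ref{theorem:duals}, that is~(\ref{eq:21}) and~(\ref{eq:20}) with $M=\X_1(\u{t})$, to $w=1$ then yields
$$
\big(\X^1(\u{t})\big)^\vee\cong\X_1(\u{t})(2N),\qquad \big(\X_1(\u{t})\big)^\vee\cong\X^1(\u{t})(2N).
$$
Finally Theorem~\ref{theorem:ex:1}\ref{theorem:ex:1:ii} supplies, for any $A\in\rchi_n$, the decomposition
$$
\X_1(\u{t})\cong R\oplus R(-2)^{\oplus n}\oplus\St_R(\u{x}),\qquad\u{x}=(\alpha^A,\alpha^{1+A},\ldots,\alpha^{n-2+A}),
$$
the $n-1$ entries of $\u{x}$ being linearly independent by Lemma~\ref{lemma:linear_independence}, so that $\St_R(\u{x})$ is an honest string module on a length-$(n-1)$ sequence.

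From here I would just combine these inputs. Using $(M\oplus M')^\vee\cong M^\vee\oplus (M')^\vee$, $R^\vee\cong R$ and $M(a)^\vee\cong M^\vee(-a)$, the two displayed isomorphisms give
$$
\Hom_{R\bim}^\bullet(R,R(\u{t}))^\vee\cong\big(R\oplus R(-2)^{\oplus n}\oplus\St_R(\u{x})\big)(2N)
$$
and
$$
\Hom_{R\bim}^\bullet(R,R(\u{t}))\cong\big(R\oplus R(2)^{\oplus n}\oplus\St_R(\u{x})^\vee\big)(-2N).
$$
Since the projective dimension of a finite direct sum is the maximum of those of its summands and free summands contribute $0$, Corollary~\ref{corollary:ca:1} gives $\pd\Hom_{R\bim}^\bullet(R,R(\u{t}))^\vee=\pd\St_R(\u{x})=|\u{x}|-2=n-3$; as $n\ge4$ this is positive, hence the dual is not free, and therefore $\Hom_{R\bim}^\bullet(R,R(\u{t}))$ is not free either, the graded dual of a finitely generated graded free module being free. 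For the module itself, Corollary~\ref{corollary:ca:2} (available because $|\u{x}|=n-1\ge3$) gives $\pd\St_R(\u{x})^\vee\le1$, hence $\pd\Hom_{R\bim}^\bullet(R,R(\u{t}))\le1$; combined with non-freeness this pins the value at exactly $1$.

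It remains to treat the right-module structure, which I would handle by reversal. By Propositions~\ref{MopNop} and~\ref{NopotimesSMop} and the isomorphism $R(\u{t})^\op\cong R(\overline{\u{t}})$, the right $R$-module $\Hom_{R\bim}^\bullet(R,R(\u{t}))$ is the opposite of the left $R$-module $\Hom_{R\bim}^\bullet(R,R(\overline{\u{t}}))$, and by Lemma~\ref{lemma:ex:3} the reversed sequence $\overline{\D(\u{i})[k]}=\D(\u{\ddot\imath})[n-k]$ is again of the required form $\D(\u{j})[k']$ with $\u{j}$ a rearrangement of $(1,\ldots,n)$ and $k'\in\Z$; since $(-)^\op$ preserves projective dimension and freeness and $(M^\op)^\vee\cong(M^\vee)^\op$, the computation above applies verbatim. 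The one genuinely hard ingredient is Theorem~\ref{theorem:ex:1}\ref{theorem:ex:1:ii} itself, established in Section~\ref{Examples} by running Algorithm~2 on $\Sub(\D(\u{i})[k],1)$ and recognising the residual module as a string module; in the present assembly the only points requiring attention are carrying the degree shift $2N$ correctly between $\X^1(\u{t})$ and $\X_1(\u{t})$ and checking that reversal keeps us inside the family covered by Theorem~\ref{theorem:ex:1}.
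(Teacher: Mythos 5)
Your proof is correct and takes essentially the same route as the paper: it identifies $\Hom_{R\bim}^\bullet(R,R(\u{t}))$ with $\X^1(\u{t})$, invokes Theorem~\ref{theorem:duals} to pass between $\X^1$ and $\X_1$, plugs in the decomposition of $\X_1(\D(\u{i})[k])$ from Theorem~\ref{theorem:ex:1}\ref{theorem:ex:1:ii}, reads off projective dimensions from Corollaries~\ref{corollary:ca:1} and~\ref{corollary:ca:2} plus Quillen--Suslin, and transfers to the right-module structure via $(-)^\op$ and Lemma~\ref{lemma:ex:3}. The only cosmetic difference is that you track the degree shift $(2N)$ explicitly where the paper suppresses it (harmlessly, since shifts do not affect projective dimension or freeness).
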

\noindent
{\it Proof.}\;
The result for left modules follows from~(\ref{eq:21}) and Theorem~\ref{theorem:ex:1}\ref{theorem:ex:1:ii}:
\begin{multline*}
\Hom_{R\bim}^\bullet(R,R(\D(\u{i})[k]))^\vee
=\X^1(\D(\u{i})[k])^\vee\cong\X_1(\D(\u{i})[k])(4n-2)\\
\cong R(4n-2)\oplus R(4n-4)^{\oplus n}\oplus\St_R(\alpha_1,\alpha_2,\ldots,\alpha_{n-1})(4n-2)
\end{multline*}
for some linearly independent roots $\alpha_1,\ldots,\alpha_{n-1}$. By Corollary~\ref{corollary:ca:1}, the projective dimension
of the above modules is equal to $n-3$. As the module in the left-hand side is reflexive by Theorem~\ref{theorem:reflexive},
we get
$$
\Hom_{R\bim}^\bullet(R,R(\D(\u{i})[k]))\cong R(2-4n)\oplus R(4-4n)^{\oplus n}\oplus\St_R(\alpha_1,\alpha_2,\ldots,\alpha_{n-1})^\vee(2-4n).
$$
The projective dimension of the right hand-side is at most $1$ by Corollary~\ref{corollary:ca:2}.
However, it can not be zero by the Quillen–Suslin theorem, as the module in the left-hand side is not free because its dual is not.

To prove the results for the right module, note that by Propositions~\ref{MopNop},~\ref{NopotimesSMop}
and Lemma~\ref{lemma:ex:3},
there is the following isomorphism of $R$-bimodules:
\begin{multline*}
\Hom_{R\bim}^\bullet(R,R(\D(\u{i})[k]))^\op\cong\Hom_{R\bim}^\bullet(R^\op,R(\D(\u{i})[k])^\op)\\
\cong\Hom_{R\bim}^\bullet\big(R,R\big(\o{\D(\u{i})[k]}\big)\big)
=\Hom_{R\bim}^\bullet(R,R(\D\big(\u{\ddot\imath}\big)[-\n-k])).\hspace{10pt}\square\hspace{-10pt}
\end{multline*}


\subsection{Example for $n=4$}\label{example:n4} Here we draw the simplest example for $\u{i}=(1,2,3,4)$ and $k=0$.
The reader can consider only the picture below to understand how to get a non-free module
as in Theorem~\ref{theorem:main_example}.

\smallskip

\def\sl{8pt}
\def\bb{0.06}
\def\stt{1pt}

\begin{center}
\scalebox{0.7}{
\begin{tikzpicture}

\begin{scope}[shift={(0,8)}]

\draw[-{Stealth[length=\sl]},line width=\tt] (-1.172747224, .9352347030) arc (90+360/7:90+2*360/7-360:1.5);

\node[below=-5pt,gray] at (0,0) {$e^{(0)}$};

\draw[line width=\stt,red] (-1.172747224, .9352347030)--(1.462391868, -.3337814010);
\draw[line width=\stt,red] (1.172747224, .9352347030)--(-1.462391868, -.3337814010);

\draw[fill] (0, 1.5) circle(\bb) node[below=2pt]{$\scriptstyle 0$} node[above=2pt]{$\scriptstyle                                             (13)$};
\draw[fill] (-1.172747224, .9352347030)  circle(\bb) node[right=4pt,below=1pt]{$\scriptstyle 0$} node[left=7pt,above=1.5pt]{$\scriptstyle    (12)$};
\draw[fill] (1.172747224, .9352347030)   circle(\bb) node[left=4pt,below=1pt]{$\scriptstyle 0$} node[right=7pt,above=1.5pt]{$\scriptstyle    (14)$};
\draw[fill] (1.462391868, -.3337814010)  circle(\bb) node[left=7.5pt,below=-3pt]{$\scriptstyle 0$} node[right=11pt,below=-2pt]{$\scriptstyle (12)$};
\draw[fill] (.6508256091, -1.351453302)  circle(\bb) node[left=2pt,above=1pt]{$\scriptstyle 0$} node[right=11pt,below=-2pt]{$\scriptstyle    (23)$};
\draw[fill] (-.6508256091, -1.351453302) circle(\bb) node[right=2pt,above=1pt]{$\scriptstyle 0$} node[left=11pt,below=-2pt]{$\scriptstyle    (34)$};
\draw[fill] (-1.462391868, -.3337814010) circle(\bb)node[right=8pt,below=-3pt]{$\scriptstyle 0$} node[left=12pt,below=-2pt]{$\scriptstyle    (41)$};

\end{scope}

\begin{scope}[shift={(5.863736118, 4.676173515)}]

\draw[-{Stealth[length=\sl]},line width=\tt] (-1.172747224, .9352347030) arc (90+360/7:90+2*360/7-360:1.5);

\draw[line width=\stt,red] (-1.172747224, .9352347030)--(1.462391868, -.3337814010);
\draw[line width=\stt,red] (.6508256091, -1.351453302)--(0, 1.5);

\node[below=2pt,left=-6pt,gray] at (0,0) {$e^{(1)}$};

\draw[fill] (0, 1.5) circle(\bb) node[left=3pt,below=2pt]{$\scriptstyle 0$} node[above=2pt]{$\scriptstyle                                    (23)$};
\draw[fill] (-1.172747224, .9352347030)  circle(\bb) node[right=4pt,below=1pt]{$\scriptstyle 1$} node[left=7pt,above=1.5pt]{$\scriptstyle    (12)$};
\draw[fill] (1.172747224, .9352347030)   circle(\bb) node[left=4pt,below=1pt]{$\scriptstyle 0$} node[right=7pt,above=1.5pt]{$\scriptstyle    (24)$};
\draw[fill] (1.462391868, -.3337814010)  circle(\bb) node[left=7.5pt,below=-3pt]{$\scriptstyle 1$} node[right=11pt,below=-2pt]{$\scriptstyle (12)$};
\draw[fill] (.6508256091, -1.351453302)  circle(\bb) node[left=6pt,above=1pt]{$\scriptstyle 0$} node[right=11pt,below=-2pt]{$\scriptstyle    (23)$};
\draw[fill] (-.6508256091, -1.351453302) circle(\bb) node[right=2pt,above=1pt]{$\scriptstyle 0$} node[left=11pt,below=-2pt]{$\scriptstyle    (34)$};
\draw[fill] (-1.462391868, -.3337814010) circle(\bb)node[right=8pt,below=-3pt]{$\scriptstyle 0$} node[left=12pt,below=-2pt]{$\scriptstyle    (41)$};

\end{scope}

\begin{scope}[shift={(-5.863736118, 4.676173515)}]

\draw[-{Stealth[length=\sl]},line width=\tt] (-1.172747224, .9352347030) arc (90+360/7:90+2*360/7-360:1.5);

\draw[line width=\stt,red] (-.6508256091, -1.351453302)--(0, 1.5);
\draw[line width=\stt,red] (1.172747224, .9352347030)--(-1.462391868, -.3337814010);

\draw[fill] (0, 1.5) circle(\bb) node[right=3pt,below=2pt]{$\scriptstyle 0$} node[above=2pt]{$\scriptstyle                                   (13)$};
\draw[fill] (-1.172747224, .9352347030)  circle(\bb) node[right=4pt,below=1pt]{$\scriptstyle 0$} node[left=7pt,above=1.5pt]{$\scriptstyle    (12)$};
\draw[fill] (1.172747224, .9352347030)   circle(\bb) node[left=4pt,below=1pt]{$\scriptstyle 1$} node[right=7pt,above=1.5pt]{$\scriptstyle    (14)$};
\draw[fill] (1.462391868, -.3337814010)  circle(\bb) node[left=7.5pt,below=-3pt]{$\scriptstyle 0$} node[right=11pt,below=-2pt]{$\scriptstyle (42)$};
\draw[fill] (.6508256091, -1.351453302)  circle(\bb) node[left=2pt,above=1pt]{$\scriptstyle 0$} node[right=11pt,below=-2pt]{$\scriptstyle    (23)$};
\draw[fill] (-.6508256091, -1.351453302) circle(\bb) node[right=7pt,above=1pt]{$\scriptstyle 0$} node[left=11pt,below=-2pt]{$\scriptstyle    (31)$};
\draw[fill] (-1.462391868, -.3337814010) circle(\bb)node[right=8pt,below=-3pt]{$\scriptstyle 1$} node[left=12pt,below=-2pt]{$\scriptstyle    (41)$};

\node[below=2pt,right=-4pt,gray] at (0,0) {$e^{(6)}$};

\end{scope}

\begin{scope}[shift={(7.311959342, -1.668907005)}]

\draw[-{Stealth[length=\sl]},line width=\tt] (-1.172747224, .9352347030) arc (90+360/7:90+2*360/7-360:1.5);

%
\draw[line width=\stt,red] (1.172747224, .9352347030)--(-.6508256091, -1.351453302);

\draw[line width=\stt,red] (.6508256091, -1.351453302)--(0, 1.5);

\draw[fill] (0, 1.5) circle(\bb) node[left=4pt,below=2pt]{$\scriptstyle 1$} node[above=2pt]{$\scriptstyle                                    (23)$};
\draw[fill] (-1.172747224, .9352347030)  circle(\bb) node[right=4pt,below=1pt]{$\scriptstyle 1$} node[left=7pt,above=1.5pt]{$\scriptstyle    (12)$};
\draw[fill] (1.172747224, .9352347030)   circle(\bb) node[left=1pt,below=4pt]{$\scriptstyle 0$} node[right=7pt,above=1.5pt]{$\scriptstyle    (34)$};
\draw[fill] (1.462391868, -.3337814010)  circle(\bb) node[left=7.5pt,below=-3pt]{$\scriptstyle 1$} node[right=11pt,below=-2pt]{$\scriptstyle (13)$};
\draw[fill] (.6508256091, -1.351453302)  circle(\bb) node[left=6pt,above=1pt]{$\scriptstyle 1$} node[right=11pt,below=-2pt]{$\scriptstyle    (23)$};
\draw[fill] (-.6508256091, -1.351453302) circle(\bb) node[right=-1pt,above=3pt]{$\scriptstyle 0$} node[left=11pt,below=-2pt]{$\scriptstyle   (34)$};
\draw[fill] (-1.462391868, -.3337814010) circle(\bb)node[right=8pt,below=-3pt]{$\scriptstyle 0$} node[left=12pt,below=-2pt]{$\scriptstyle    (41)$};

\node[below=-2pt,left=-6pt,gray] at (0,0) {$e^{(2)}$};

\end{scope}

\begin{scope}[shift={(-7.311959342, -1.668907005)}]

\draw[-{Stealth[length=\sl]},line width=\tt] (-1.172747224, .9352347030) arc (90+360/7:90+2*360/7-360:1.5);

\draw[line width=\stt,red] (0, 1.5)--(-.6508256091, -1.351453302);
\draw[line width=\stt,red] (-1.172747224, .9352347030)--(.6508256091, -1.351453302);

\draw[fill] (0, 1.5) circle(\bb) node[below=10pt,right=-0.5pt]{$\scriptstyle 1$} node[above=2pt]{$\scriptstyle                               (13)$};
\draw[fill] (-1.172747224, .9352347030)  circle(\bb) node[right=11pt,below=-6pt]{$\scriptstyle 0$} node[left=7pt,above=1.5pt]{$\scriptstyle  (12)$};
\draw[fill] (1.172747224, .9352347030)   circle(\bb) node[left=4pt,below=1pt]{$\scriptstyle 1$} node[right=7pt,above=1.5pt]{$\scriptstyle    (34)$};
\draw[fill] (1.462391868, -.3337814010)  circle(\bb) node[left=7.5pt,below=-3pt]{$\scriptstyle 0$} node[right=11pt,below=-2pt]{$\scriptstyle (24)$};
\draw[fill] (.6508256091, -1.351453302)  circle(\bb) node[left=-1pt,above=2pt]{$\scriptstyle 0$} node[right=11pt,below=-2pt]{$\scriptstyle   (21)$};
\draw[fill] (-.6508256091, -1.351453302) circle(\bb) node[right=7.5pt,above=-1pt]{$\scriptstyle 1$} node[left=11pt,below=-2pt]{$\scriptstyle (31)$};
\draw[fill] (-1.462391868, -.3337814010) circle(\bb)node[right=8pt,below=-3pt]{$\scriptstyle 1$} node[left=12pt,below=-2pt]{$\scriptstyle    (41)$};

\node[below=-2pt,right=-4pt,gray] at (0,0) {$e^{(5)}$};

\end{scope}

\begin{scope}[shift={(3.254128046, -6.757266508)}]

\draw[-{Stealth[length=\sl]},line width=\tt] (-1.172747224, .9352347030) arc (90+360/7:90+2*360/7-360:1.5);

\draw[line width=\stt,red] (-1.462391868, -.3337814010)--(1.462391868, -.3337814010);
\draw[line width=\stt,red] (1.172747224, .9352347030)--(-.6508256091, -1.351453302);

\draw[fill] (0, 1.5) circle(\bb) node[below=2pt]{$\scriptstyle 1$} node[above=2pt]{$\scriptstyle                                               (23)$};
\draw[fill] (-1.172747224, .9352347030)  circle(\bb) node[right=4pt,below=1pt]{$\scriptstyle 1$} node[left=7pt,above=1.5pt]{$\scriptstyle      (12)$};
\draw[fill] (1.172747224, .9352347030)   circle(\bb) node[left=1pt,below=4pt]{$\scriptstyle 1$} node[right=7pt,above=1.5pt]{$\scriptstyle      (34)$};
\draw[fill] (1.462391868, -.3337814010)  circle(\bb) node[left=7.5pt,below=-1.5pt]{$\scriptstyle 1$} node[right=11pt,below=-2pt]{$\scriptstyle (14)$};
\draw[fill] (.6508256091, -1.351453302)  circle(\bb) node[left=2pt,above=1pt]{$\scriptstyle 1$} node[right=11pt,below=-2pt]{$\scriptstyle      (24)$};
\draw[fill] (-.6508256091, -1.351453302) circle(\bb) node[right=1pt,above=2pt]{$\scriptstyle 1$} node[left=11pt,below=-2pt]{$\scriptstyle      (34)$};
\draw[fill] (-1.462391868, -.3337814010) circle(\bb)node[right=9pt,below=-1.5pt]{$\scriptstyle 0$} node[left=12pt,below=-2pt]{$\scriptstyle    (41)$};

\node[below=-6pt,left=-12pt,gray] at (0,0) {$e^{(3)}$};

\end{scope}

\begin{scope}[shift={(-3.254128046, -6.757266508)}]

\draw[-{Stealth[length=\sl]},line width=\tt] (-1.172747224, .9352347030) arc (90+360/7:90+2*360/7-360:1.5);

\draw[line width=\stt,red] (-1.462391868, -.3337814010)--(1.462391868, -.3337814010);
\draw[line width=\stt,red] (-1.172747224, .9352347030)--(.6508256091, -1.351453302);

\draw[fill] (0, 1.5) circle(\bb) node[below=2pt]{$\scriptstyle 1$} node[above=2pt]{$\scriptstyle                                               (23)$};
\draw[fill] (-1.172747224, .9352347030)  circle(\bb) node[right=7pt,above=-7pt]{$\scriptstyle 1$} node[left=7pt,above=1.5pt]{$\scriptstyle     (12)$};
\draw[fill] (1.172747224, .9352347030)   circle(\bb) node[left=4pt,below=1pt]{$\scriptstyle 1$} node[right=7pt,above=1.5pt]{$\scriptstyle      (34)$};
\draw[fill] (1.462391868, -.3337814010)  circle(\bb) node[left=7.5pt,below=-1.5pt]{$\scriptstyle 0$} node[right=11pt,below=-2pt]{$\scriptstyle (14)$};
\draw[fill] (.6508256091, -1.351453302)  circle(\bb) node[left=-2pt,above=1pt]{$\scriptstyle 1$} node[right=11pt,below=-2pt]{$\scriptstyle     (21)$};
\draw[fill] (-.6508256091, -1.351453302) circle(\bb) node[right=2pt,above=1pt]{$\scriptstyle 1$} node[left=11pt,below=-2pt]{$\scriptstyle      (31)$};
\draw[fill] (-1.462391868, -.3337814010) circle(\bb)node[right=8pt,below=-1.5pt]{$\scriptstyle 1$} node[left=12pt,below=-2pt]{$\scriptstyle    (41)$};

\node[below=-6pt,right=-4pt,gray] at (0,0) {$e^{(4)}$};

\end{scope}

\draw[{Stealth[length=15pt, width=9pt]}-{Stealth[length=13pt, width=9pt]},line width=2pt,blue] (1.954578706, 6.892057839)--node[right=10pt,above,text=black]{$\f_{1,4}$}(3.909157412, 5.784115677);
\draw[{Stealth[length=15pt, width=9pt]}-{Stealth[length=13pt, width=9pt]},line width=2pt,blue] (-1.954578706, 6.892057839)--node[left=10pt,above,text=black]{$\f_{3,7}$}(-3.909157412, 5.784115677);
\draw[{Stealth[length=15pt, width=9pt]}-{Stealth[length=13pt, width=9pt]},line width=2pt,blue] (6.346477193, 2.561146676)--node[right=16pt,below=-15pt,text=black]{$\f_{2,5}$}(6.829218267, .446119836);
\draw[{Stealth[length=15pt, width=9pt]}-{Stealth[length=13pt, width=9pt]},line width=2pt,blue] (-6.346477193, 2.561146676)--node[left=16pt,below=-15pt,text=black]{$\f_{2,6}$}(-6.829218267, .446119836);
\draw[{Stealth[length=15pt, width=9pt]}-{Stealth[length=13pt, width=9pt]},line width=2pt,blue] (5.959348910, -3.365026839)--node[right=16pt,below=-4pt,text=black]{$\f_{3,6}$}(4.606738478, -5.061146673);
\draw[{Stealth[length=15pt, width=9pt]}-{Stealth[length=13pt, width=9pt]},line width=2pt,blue] (-5.959348910, -3.365026839)--node[left=16pt,below=-4pt,text=black]{$\f_{1,5}$}(-4.606738478, -5.061146673);
\draw[{Stealth[length=15pt, width=9pt]}-{Stealth[length=13pt, width=9pt]},line width=2pt,blue] (1.084709349, -6.757266508)--node[right=2pt,below=5.5pt,text=black]{$\f_{4,7}$}(-1.084709349, -6.757266508);

\end{tikzpicture}}
\end{center}

\medskip

\noindent
In this picture, the red lines represent the sets $\M_p(e^{(\l)})$ of cardinality greater than $1$.
So $e^{(0)}=e^{\{\bar 3,\bar 7\},\bullet}=e^{\bullet,\{\bar 1,\bar 4\}}$,
$e^{(1)}=e^{\{\bar 1,\bar 4\},\bullet}=e^{\bullet,\{\bar 2,\bar 5\}}$, etc.
The blue lines represent the edges of the graph $\Gr(    \Sub(\D(\u{i}),1)    )$.
The numbers inside the gapped circles represent the subexpressions $e^{(\l)}$
and the transpositions outside represent the reflection expressions $(e^{(\l)})^\bigcdot$.

\section{Cohomology}

Here we continue to work in the general setting of the previous section.

\subsection{Bott-Samelson varieties}\label{Bott-Samelson_varieties}
Let $\mathcal G=\SL_n(\C)$ be the complex special linear group and $\mathcal T$ and $\mathcal B$
be its subgroups of diagonal and upper triangular matrices respectively. The root system of this group is
$$
\Phi=\{\alpha_{i,j}\suchthat i\ne j\text{ and }i,j=1,\ldots,n\},
$$
where the root $\alpha_{i,j}$ is the map $\diag(c_1,\ldots,c_n)\mapsto c_ic_j^{-1}$.
The Weyl group $W$ of $\mathcal G$ is the symmetric group $S_n$, the simple reflections are the transpositions $(i\,i+1)$
and the simple roots are $\alpha_{i\,i+1}$. As usual, we denote by $T$ the set of all reflections in $W$,
which in our case consists of all transpositions $(ij)$. We choose $\alpha_{(ij)}=\alpha_{i,j}$ for $i<j$.

For any root $\alpha\in\Phi$, there is the root homomorphism $x_\alpha:{\mathbb G}_{\rm a}\to\mathcal G$,
where ${\mathbb G}_{\rm a}$ denotes the additive group of complex numbers\footnote{Let $\alpha=\alpha_{i,j}$. Then $\phi_\alpha$ maps a matrix
$
\(
\begin{array}{cc}
a&b\\
c&d
\end{array}
\)
$ to $ae_{i,i}+be_{i,j}+ce_{j,i}+de_{j,j}$, where $e$ with subscripts denote the corresponding matrix units.}.
In addition to being a homomorphism,
$x_\alpha$ has the following property:
$$
\tcurly x_\alpha(c)=x_\alpha(\alpha(\tcurly)c)\tcurly
$$
for any $\tcurly\in\mathcal T$. Let us consider the homomorphism $\phi_\alpha:\SL_2(\C)\to\mathcal G$
such that
$$
\(
\begin{array}{cc}
1&c\\
0&1
\end{array}
\)
\mapsto
x_\alpha(c),\qquad
\(
\begin{array}{cc}
1&0\\
c&1
\end{array}
\)
\mapsto
x_{-\alpha}(c).
$$
Let $\omega_\alpha$ denote the image of the matrix
$$
\(
\begin{array}{cc}
\;\;0&1\\
-1&0
\end{array}
\).
$$

For any simple reflection $s$, we denote by $\mathcal P_s$ the minimal parabolic subgroup of $\mathcal G$
containing the image of $\phi_{\alpha_s}$.
Let $\u{s}$ be a map from a finite totaly ordered set $I=\{i_1<\cdots<i_m\}$ to the set of simple reflections.
We also call $\u{s}$ an {\it expression}, extending thus the definition of Section~\ref{Expressions_and_subexpressions}.
Let us consider the following algebraic variety:
$$
\BS(\u{s})=\P_{s_{i_1}}\times\P_{s_{i_2}}\times\cdots\times\P_{s_{i_m}}/\B^m,
$$
where the right action of $\B^m$ is given by\footnote{Note that $\pcurly$, $\bcurly$ and the right-hand side
are maps defined on $I$.}
$$
(\pcurly_{i_1},\pcurly_{i_2},\ldots,\pcurly_{i_m})(\bcurly_{i_1},\bcurly_{i_2},\ldots,\bcurly_{i_m})
=
(\pcurly_{i_1}\bcurly_{i_1},\bcurly_{i_1}^{-1}\pcurly_{i_2}\bcurly_{i_2},\ldots,\bcurly_{i_{m-1}}^{-1}\pcurly_{i_m}\bcurly_{i_m}).
$$
This variety is called a {\it Bott-Samelson variety}. Denoting by $[\pcurly_{i_1},\pcurly_{i_2},\ldots,\pcurly_{i_m}]$
the orbit of $(\pcurly_{i_1},\pcurly_{i_2},\ldots,\pcurly_{i_m})$, we get the following left action of the torus $\T$:
$$
\tcurly\cdot[\pcurly_{i_1},\pcurly_{i_2},\ldots,\pcurly_{i_m}]=[\tcurly\pcurly_{i_1},\pcurly_{i_2},\ldots,\pcurly_{i_m}].
$$
In the proofs below, we will write $\BS(\u{s})$ as $\BS(s_{i_1},s_{i_2},\ldots,s_{i_m})$
if there can not be a confusion about $I$.
This algebraic definition is due to Demazure~\cite{Demazure} and Hansen~\cite{Hansen}.


However, there is the original definition of this varieties (as smooth manifolds) by R.\,Bott and H.\,Samelson~\cite{BS}.
These varieties are often more convenient than their algebraic counterparts,
as they allow for more degrees of freedom (see, for example,~\cite{bt}).
We will briefly remind the reader their definition. 

Let $\mathcal C$ and $\K$ be the maximal compact subgroups of $\mathcal G$ and $\T$ respectively.
For any reflection $t$, let $\mathcal C_t=\phi_{\alpha_t}(\SU_2)\K$,
be a closed subgroup of $\mathcal G$, where $\SU_2$ denotes the special unitary group of degree 2.
Let $\mathcal N$ be the subgroup of $\mathcal C$ generated by all elements $\omega_\alpha$
and the torus $\mathcal K$. The maps $t\mapsto\omega_{\alpha_t}\mathcal K$, where $t\in T$, extend to
an isomorphism $\phi:W\to \mathcal N/\mathcal K$. For any $w\in W$, we choose an element $\dot w\in\phi(w)$,
which we call a {\it lifting} of $w$. Abusing notation, we abbreviate $w\mathcal K=\dot w\mathcal K$.

Let us consider a map $\u{t}:I\to T$. 
We will call $\u{t}$ a {\it reflection expression}, again extending the definition
of Section~\ref{Expressions_and_subexpressions}.
The {\it compactly defined Bott-Samelson variety} for $\u{t}$ is the quotient variety
$$
\BS_c(\u{t})={\mathcal C}_{t_{i_1}}\times{\mathcal C}_{t_{i_2}}\times\cdots\times{\mathcal C}_{t_{i_m}}/\K^m,
$$
where the right action of $\K^m$ is given by
$$
(\ccurly_{i_1},\ccurly_{i_2},\ldots,\ccurly_{i_m})(\kcurly_{i_1},\kcurly_{i_2},\ldots,\kcurly_{i_m})
=
(\ccurly_{i_1}\kcurly_{i_1},\kcurly_1^{-1}\ccurly_{i_2}\kcurly_{i_2},\ldots,\kcurly_{i_{n-1}}^{-1}\ccurly_{i_m}\kcurly_{i_m}).
$$
This variety is also acted upon by $\K$ on the left by the rule
$$
\kcurly[\ccurly_{i_1},\ccurly_{i_2},\ldots,\ccurly_{i_m}]=[\kcurly \ccurly_{i_1},\ccurly_{i_2},\ldots,\ccurly_{i_m}]
$$
with a similar notation for orbits. The set points of this space $\BS_c(\u{t})^{\mathcal K}$ fixed by
$\mathcal K$ will be identified with the set of subexpressions $\u{\epsilon}:I\to\{0,1\}$ so that
$$
\u{\epsilon}\cong[\dot t_{i_1}^{\epsilon_{i_1}},\ldots,\dot t_{i_m}^{\epsilon_{i_m}}].
$$
Recall that the dotted letters denote liftings. 

It follows from the Iwasawa decomposition that for any expression $\u{s}$, the canonical map $\BS_c(\u{s})\to\BS(\u{s})$
is a diffeomorphism. Moreover, the diagram

$$
\begin{tikzcd}
BS_c(\u{s})\arrow{r}{\sim}\arrow{d}[swap]{\pi_c(\u{s})}&\BS(\u{s})\arrow{d}{\pi(\u{s})}\\
\mathcal C/\K\arrow{r}{\sim}&\mathcal\G/\B
\end{tikzcd}
$$
is commutative. Here the projections $\pi_c(\u{s})$ and $\pi(\u{s})$ are given by
$$
[\ccurly_{i_1},\ccurly_{i_2},\ldots,\ccurly_{i_m}]\mapsto \ccurly_1\ccurly_2\cdots \ccurly_m\K,\qquad [\pcurly_{i_1},\pcurly_{i_2},\ldots,\pcurly_{i_m}]\mapsto \pcurly_{i_1}\pcurly_{i_2}\cdots\pcurly_{i_m}\B
$$
respectively.
The lower arrow induces a bijection between the sets points fixed by the tori:
$$
(\mathcal C/\K)^\K=\{w\K\suchthat w\in W\}\ito(\G/\B)^\T=\{w\B\suchthat w\in W\}.
$$

As mentioned in the introduction, any fibre $\pi(\u{s})^{-1}(w\B)$ and thus also the homeomorphic fibre $\pi_c(\u{s})^{-1}(w\K)$
have affine pavings. It follows from this fact that the cohomologies of these spaces vanish in odd degrees.
Here it is important that $\u{s}$ be an expression. For reflection expressions, the situation is, however,
different and we are going to use the examples of Section~\ref{Examples} to show it.

\subsection{Nested fibre bundles} We are going to recall the definition of a nested structure from~\cite{bt}.
Let $I$ be a finite totally ordered set as before. For any $i\in I$ distinct from the last element,
we denote by $i+1$ the element of $I$ directly following $i$.

Let $R$ be a subset of $I^2$. Thus $r=(r_1,r_2)$ for any $r\in R$.
We denote $$
[r]=\{i\in I\suchthat r_1\le i\le r_2\}
$$
for any $r\in R$ and say that $R$ is a {\it nested structure} on $I$ if
\smallskip
\begin{itemize}
\item $r_1\le r_2$ for any $r\in R$;\\[-8pt]
\item $\{r_1,r_2\}\cap\{r'_1,r'_2\}=\emptyset$ for any distinct $r,r'\in R$;\\[-8pt]
\item for any $r,r'\in R$, the intervals $[r]$ and $[r']$ are
either disjoint or
      one of them is contained in the other.
\end{itemize}
\smallskip
Abusing notation, we will write
$r\subset r'$ to say that $[r]\subset[r']$ and $r<r'$ to say that $r_2<r'_1$.
For a reflection expression $\u{t}:I\to T$ and a map $\u{v}:R\to W$, we consider the space\footnote{Here and in what follows, $v_r$ is the value of $\u{v}$ at $r$.}
$$
\BS_c(\u{t},\u{v})=\{[\ccurly_{i_1},\ldots,\ccurly_{i_m}]\in\BS_c(\u{t})\suchthat\forall r\in R:\ccurly_{r_1}\ccurly_{r_1+1}\cdots\ccurly_{r_2}\in v_r\K\}.
$$

Let $F\subset R$ be a nonempty subset such that the intervals $[f]$, where $f\in F$,
are pairwise disjoint. Let us write $F=\{f^1,\ldots,f^N\}$, where $f^1<f^2<\cdots<f^N$.
We set
$$
I^F=I\setminus\bigcup_{f\in F}[f],\quad R^F=R\setminus\{r\in R\suchthat\exists f\in F: r\subset f\}.
$$
Obviously, $R^F$ is a nested structure on $I^F$.
For any $i\in I^F$, we can choose a unique $\l=0,\ldots,N$ such that $f^\l_2<i<f^{\l+1}_1$,
where $f^0_2=-\infty$ and $f^{N+1}_1=+\infty$. Then we set
$$
v^i=v_{f_1}v_{f_2}\cdots v_{f_\l}.
$$
Now we define the maps $\u{t}^F:I^F\to T$ and $\u{v}^F:R^F\to W$ by
$$
t^F_i=v^it_i(v^i)^{-1},\quad v^F_r=v^{r_1}v_r(v^{r_2})^{-1}.
$$
On the other, hand for any $j=1,\ldots,N$, we can consider the restrictions:
$$
\u{t}_{f_j}=\u{t}\big|_{[f^j]},\quad \u{v}_{f^j}=\u{v}\big|_{R_{f^j}},\text{ where }R_{f^j}=\{r\in R\suchthat r\subset f^j\}
$$
Note that $R_{f^j}$ is a nested structure on $[f^j]$.

\begin{proposition}[\mbox{\cite[Lemma~3, Theorem~6]{bt}}]\label{proposition:pF}
\!There exists a continuous $\mathcal K$-equivariant map $p^F{:}\hspace{1pt}\BS_c(\u{t},\u{v})\hspace{1pt}{\to}\hspace{1pt}\BS_c(\u{t}^F,\u{v}^F)$, which is a fibre bundle
with fibre 
$\BS_c(\u{t}_{f^1},\u{v}_{f^1})\times\cdots\times\BS_c(\u{t}_{f^{\n}},\u{v}_{f^{\n}})$.
\end{proposition}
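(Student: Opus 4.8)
\noindent
This is \cite[Lemma~3 and Theorem~6]{bt}; we only indicate the shape of the argument, since that is all we shall need below.

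Since the intervals $[f^1],\ldots,[f^N]$ are pairwise disjoint, the plan is to reduce to the case $N=1$: collapsing $f^1$ turns $(\u{t},\u{v})$ into $(\u{t}^{\{f^1\}},\u{v}^{\{f^1\}})$, after which $\{f^2,\ldots,f^N\}$ is again a family of pairwise disjoint intervals of a nested structure on $I^{\{f^1\}}$, so one may collapse $f^2$, then $f^3$, and so on; a tower of fibre bundles whose local trivialisations can be chosen compatibly is again a fibre bundle with fibre the product of the fibres. Equivalently, one collapses all the blocks simultaneously, the resulting bookkeeping of which $v_{f^j}$ twists which later coordinate being exactly what the auxiliary elements $v^i$ record. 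So fix $F=\{f\}$ and write $[f]=\{i_a,i_{a+1},\ldots,i_b\}$, $v=v_f$.

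Next I would write down the map. Given a class $[\ccurly_{i_1},\ldots,\ccurly_{i_m}]\in\BS_c(\u{t},\u{v})$, the constraint attached to $f$ reads $\ccurly_{i_a}\cdots\ccurly_{i_b}\in v\K$, so after fixing a lift $\dot v\in\mathcal C$ of $v$ there is a unique $\kcurly\in\K$ with $\ccurly_{i_a}\cdots\ccurly_{i_b}=\dot v\kcurly$; absorbing $\kcurly$ into the next slot by the $\K^m$-action (allowed because $\K\subset\mathcal C_{t_{i_b}}$) we may assume $\ccurly_{i_a}\cdots\ccurly_{i_b}=\dot v$. Then I would set
$$
p^F[\ccurly_{i_1},\ldots,\ccurly_{i_m}]=\bigl[\ccurly_{i_1},\ldots,\ccurly_{i_{a-1}},\ \dot v\ccurly_{i_{b+1}}\dot v^{-1},\ \dot v\ccurly_{i_{b+2}}\dot v^{-1},\ \ldots,\ \dot v\ccurly_{i_m}\dot v^{-1}\bigr].
$$
The verifications are then routine: each $\dot v\ccurly_{i}\dot v^{-1}$ lies in $\dot v\,\mathcal C_{t_i}\,\dot v^{-1}=\mathcal C_{v t_i v^{-1}}=\mathcal C_{t^F_i}$, because $\dot v$ normalises $\K$ and conjugates the root subgroup $\phi_{\alpha_{t_i}}(\SU_2)$ to $\phi_{\alpha_{t^F_i}}(\SU_2)$; for every $r\in R^F$ the product $\ccurly_{r_1}\cdots\ccurly_{r_2}$ along $[r]$ is carried, after this normalisation and conjugation, into $v^{r_1}v_r(v^{r_2})^{-1}\K=v^F_r\K$ (one distinguishes the cases $[r]$ before $[f]$, after $[f]$, or containing $[f]$, using that $\dot v$ normalises $\K$); the residual $\K^m$-freedom and the choice of $\kcurly$ change the right-hand side only by the $\K^{I^F}$-action on the target; and $\K$-equivariance and continuity are immediate from the formula.

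Finally, the fibre. Unravelling the normalisation, the data one is free to choose inside $[f]$ over a fixed point of $\BS_c(\u{t}^F,\u{v}^F)$ is precisely a class $[\ccurly_{i_a},\ldots,\ccurly_{i_b}]\in\BS_c(\u{t}|_{[f]})$ with $\ccurly_{i_a}\cdots\ccurly_{i_b}\in v\K$ and with the internal constraints indexed by $R_f$, that is, a point of $\BS_c(\u{t}_f,\u{v}_f)$; across disjoint blocks these choices are independent, which gives the fibre $\BS_c(\u{t}_{f^1},\u{v}_{f^1})\times\cdots\times\BS_c(\u{t}_{f^N},\u{v}_{f^N})$. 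The one genuinely non-formal point is local triviality of $p^F$: here I would use that each defining fibration $\mathcal C_t\to\mathcal C_t/\K$ admits local sections (it is a principal $\K$-bundle, $\K$ a compact Lie group), assemble these into a local section of the iterated $S^2$-bundle $\BS_c(\u{t}^F,\u{v}^F)$, and feed it into the formula above to produce, over a neighbourhood $U$ of any point of the base, a homeomorphism from $(p^F)^{-1}(U)$ onto $U\times\prod_{j=1}^N\BS_c(\u{t}_{f^j},\u{v}_{f^j})$. Choosing these local sections compatibly along the induction on $N$ is the main technical obstacle, and is exactly what is carried out in \cite{bt}.
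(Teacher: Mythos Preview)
The paper gives no proof of this proposition; it is stated as a direct citation of \cite[Lemma~3, Theorem~6]{bt} and nothing more. Your sketch is a faithful outline of the construction one finds there---reduce to a single block, normalise the product over $[f]$ to a fixed lift $\dot v$, conjugate the later coordinates by $\dot v$, and use local sections of the principal $\K$-bundles $\mathcal C_t\to\mathcal C_t/\K$ for local triviality---so there is nothing to compare against within the present paper.
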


\subsection{Drawing curves} Let $I=\{1,\ldots,m\}$, $\u{t}:I\to T$ be a reflection expression and
$\u{\epsilon}\subset\u{t}$ be a subexpression. Suppose that there exist distinct indices $i,j\in I$ are
such that $\u{\epsilon}^i=\u{\epsilon}^j$. We denote $p=\u{\epsilon}^i$ and $w=\u{\epsilon}^{\max}$ and consider
the fibre
$$
\BS_c(\u{t},w)
=\pi_c(\u{t})^{-1}(w\K).
$$
As $p$ is a reflection, there exists $x\in W$ such that $xpx^{-1}$ is a simple reflection.
We denote it by $s$.
Let us consider the following nested structure on $I$:
$$
R=\{(k,k)\suchthat k\in I\setminus\{i,j\}\}\cup\{(i,j)\}.
$$
and the function $\u{v}:R\to W$ given by
$$
v_{(k,k)}=t_k^{\epsilon_k},\quad v_{(i,j)}=t_i^{\epsilon_i}t_{i+1}^{\epsilon_{i+1}}\cdots t_j^{\epsilon_j}.
$$
Thus we get
$$
\u{\epsilon},\f_{i,j}\u{\epsilon}\in\BS_c(\u{t},\u{v})\subset\BS_c(\u{t},w).
$$

Next, we consider the following subset $F=\{(k,k)\suchthat k\in I\setminus\{i,j\}\}$ of $R$.
Obviously, the intervals $[f]$, where $f\in F$, are disjoint as in the previous section.
Then we get
$$
I^F=\{i,j\},\quad R^F=\{(i,j)\}.
$$
We are going to compute the functions $\u{t}^F:I^F\to T$ and $\u{v}^F:R^F\to W$, starting with
$$
v^i=v_{(1,1)}v_{(2,2)}\cdots v_{(i-1,i-1)}=t_1^{\epsilon_1}t_2^{\epsilon_2}\cdots t_{i-1}^{\epsilon_{i-1}}=\u{\epsilon}^{<i},
$$

\vspace{-20pt}

\begin{multline*}
v^j=v_{(1,1)}v_{(2,2)}\cdots v_{(i-1,i-1)}v_{(i+1,i+1)}v_{(i+2,i+2)}\cdots v_{(j-1,j-1)}\\[3pt]
=\u{\epsilon}^{<i}t_{i+1}^{\epsilon_{i+1}}t_{i+2}^{\epsilon_{i+2}}\cdots t_{j-1}^{\epsilon_{j-1}}
=\u{\epsilon}^{<i}t_i^{\epsilon_i}(\u{\epsilon}^{<i})^{-1}\cdot\u{\epsilon}^{<i}t_i^{\epsilon_i}t_{i+1}^{\epsilon_{i+1}}\cdots t_{j-1}^{\epsilon_{j-1}}
=p^{\epsilon_i}\u{\epsilon}^{<j}.
\end{multline*}
Hence we get
$$
t^F_i=v^it_i(v^i)^{-1}=\u{\epsilon}^{<i}t_i(\u{\epsilon}^{<i})^{-1}=\u{\epsilon}^i=p,\quad
$$

\vspace{-10pt}

$$
t^F_j=v^jt_j(v^j)^{-1}=p^{\epsilon_i}\u{\epsilon}^{<j}t_j(\u{\epsilon}^{<j})^{-1}p^{\epsilon_i}=p^{\epsilon_i}\u{\epsilon}^jp^{\epsilon_i}=p^{1+2\epsilon_i}=p.
$$

\vspace{-10pt}

$$
v^F_{(i,j)}=v^iv_{(i,j)}(v^j)^{-1}=\u{\epsilon}^{<i}t_i^{\epsilon_i}t_{i+1}^{\epsilon_{i+1}}\cdots t_j^{\epsilon_j}(p^{\epsilon_i}\u{\epsilon}^{<j})^{-1}
=\u{\epsilon}^{<j}t_j^{\epsilon_j}(\u{\epsilon}^{<j})^{-1}p^{\epsilon_i}=p^{\epsilon_i+\epsilon_j}.
$$
Therefore, by Proposition~\ref{proposition:pF}, in which the fibre a singleton, we get a chain of homeomorphisms:
$$
\begin{tikzcd}
\BS_c(\u{t},\u{v})\arrow{r}{p^F}[swap]{\sim}&\BS_c((p,p),p^{\epsilon_i+\epsilon_j})\arrow{r}{d_x}{\sim}&\BS_c((s,s),s^{\epsilon_i+\epsilon_j})\cong\BS((s,s),s^{\epsilon_i+\epsilon_j}),
\end{tikzcd}
$$
where $d_x$ is defined by $[\ccurly_i,\ccurly_j]\mapsto[\dot{x}\ccurly_i\dot{x}^{-1},\dot{x}\ccurly_j\dot{x}^{-1}]$
for some lifting $\dot{x}$ of $x$. This map is {\it $x$-skew $\K$-equivariant} in the following sense:
\begin{equation}\label{eq:skew}
d_x(\kcurly\acurly)=\dot{x}\kcurly\dot{x}^{-1}d_x(\acurly).
\end{equation}
Let $\pi_1$ and $\pi_2$ be the maps from $\BS((s,s),s^{\epsilon_i+\epsilon_j})$ to $\P_s/\B$
given by $[\pcurly_i,\pcurly_j]\mapsto\pcurly_i\B$ and $[\pcurly_i,\pcurly_j]\mapsto\pcurly_i\pcurly_j\B$
respectively. They induce the homeomorphism
$$
\begin{tikzcd}
\BS((s,s))\arrow{r}{\sim}[swap]{\pi_1\times\pi_2}&(\P_s/\B)^2.
\end{tikzcd}
$$
Hence the restriction of $\pi_1$ induces a $\T$-equivariant homeomorphism
$$
\BS((s,s),s^{\epsilon_i+\epsilon_j})\ito\P_s/\B\cong{\mathbb P}^1(\mathbb C)
$$
provided that we define an appropriate action of $\T$ on the complex projective line ${\mathbb P}^1(\mathbb C)$.

To this end, let us describe the homeomorphism $\P_s/\B\cong{\mathbb P}^1(\mathbb C)$ precisely.
We denote $\alpha=\alpha_s$ and consider the map $\phi_\alpha:\SL_2(\mathbb C)\to\P_s$ described
in Section~\ref{Bott-Samelson_varieties}. Let $\B_2$ denote the Borel subgroup of $\SL_2(\mathbb C)$ consisting
of the upper triangular matrices. As $\phi_\alpha^{-1}(\B)=\B_2$, we get a homeomorphism
$\SL_2(\mathbb C)/\B_2\ito\P_s/\B$ induced by $\phi_\alpha$. The reader can easily check that
this homeomorphis is $\T$-equivariant under the following action of $\T$ on its domain
$$
\tcurly
\(
\begin{array}{cc}
a&b\\
c&d
\end{array}
\)\B_2
=
\(
\begin{array}{cc}
a&\alpha(\tcurly)b\\
\alpha(\tcurly)^{-1}c\;&d
\end{array}
\)\B_2.
$$
Finally, note that $\SL_2(\mathbb C)/\B_2\cong{\mathbb P}^1(\mathbb C)$ given by
$$
\(
\begin{array}{cc}
a&b\\
c&d
\end{array}
\)\B_2
\mapsto
[a:c].
$$
This homeomorphism becomes $\T$-equivariant, if we let $\T$ act on ${\mathbb P}^1(\mathbb C)$ by the rule
$$
\tcurly*[a:c]=[\alpha(\tcurly)a:c].
$$

Collecting all data above, we construct a homeomorphism $\psi:\BS_c(\u{t},\u{v})\to{\mathbb P}^1(\mathbb C)$
that is $x$-skew $\K$-equivariant, see~(\ref{eq:skew}). In order to get a $\K$-equivariant homeomorphism,
we need to modify the above action $*$ (and restrict it to $\K$) as follows:
$\kcurly h=\dot{x}\kcurly\dot{x}^{-1}*h$. A direct computation yields the following formula:
$$
\kcurly[a:c]=\dot{x}\kcurly\dot{x}^{-1}*[a:c]=[\alpha(\dot{x}\kcurly\dot{x}^{-1})a:c]=[(x^{-1}\alpha)(\kcurly)a:c].
$$
Replacing $x$ with $xp$ if necessary, we obtain $x^{-1}\alpha=\alpha_p$ by~(\ref{eq:conj}).
We have proved the following result.

\begin{lemma}\label{lemma:t:1}
Let $\u{t}$ be a reflection expression, $\u{\epsilon}\subset\u{t}$,
$w=\u{\epsilon}^{\max}$, $i<j$ and $\u{\epsilon}^i=\u{\epsilon}^j=p$.
The complex projective line $\mathbb P^1(\mathbb C)$ with the $\K$-action given by $\kcurly[a:c]=[\alpha_p(\kcurly)a:c]$
embeds\linebreak $\K$-equivariantly in $\BS_c(\u{t},w)$ in such a way
that $\mathbb P^1(\mathbb C)^\K=\{[1:0],[0:1]\}$ is mapped bijectively to
$\{\u{\epsilon},\f_{i,j}\u{\epsilon}\}$.
\end{lemma}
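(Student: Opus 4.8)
The plan is to assemble, into a single $\K$-equivariant embedding, the chain of homeomorphisms that has just been constructed, and then to identify its image and its two $\K$-fixed points. First I would fix, exactly as above, the nested structure $R=\{(k,k)\suchthat k\in I\setminus\{i,j\}\}\cup\{(i,j)\}$ on $I=\{1,\dots,m\}$ together with the map $\u{v}\colon R\to W$ given by $v_{(k,k)}=t_k^{\epsilon_k}$ and $v_{(i,j)}=t_i^{\epsilon_i}\cdots t_j^{\epsilon_j}$. Both $\u{\epsilon}$ and $\f_{i,j}\u{\epsilon}$ then define points of $\BS_c(\u{t},\u{v})$, which is contained in $\BS_c(\u{t},w)$ --- for $\f_{i,j}\u{\epsilon}$ the target is still $w$ because $\{i,j\}$ is an even subset of $\M_p(\u{\epsilon})$. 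A routine inspection of the subexpression description of $\K$-fixed points (the coordinates $k\ne i,j$ are frozen by the one-element blocks, and inside the $(i,j)$-block the only admissible options are to leave both $\epsilon_i,\epsilon_j$ unchanged or to flip both) shows that these are in fact the only two $\K$-fixed points of $\BS_c(\u{t},\u{v})$.

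Next I would collapse the frozen blocks: applying Proposition~\ref{proposition:pF} with $F=\{(k,k)\suchthat k\in I\setminus\{i,j\}\}$, whose fibre is a single point, gives a $\K$-equivariant homeomorphism $\BS_c(\u{t},\u{v})\ito\BS_c(\u{t}^F,\u{v}^F)$, where the folded data were computed above to be $I^F=\{i,j\}$, $R^F=\{(i,j)\}$, $t_i^F=t_j^F=p$ and $v^F_{(i,j)}=p^{\epsilon_i+\epsilon_j}$. Choosing $x\in W$ with $s:=xpx^{-1}$ simple, conjugation by a lift $\dot x$ produces the $x$-skew $\K$-equivariant homeomorphism $d_x$ onto $\BS_c((s,s),s^{\epsilon_i+\epsilon_j})$, which by the Iwasawa decomposition is $\T$-equivariantly homeomorphic to the algebraic variety $\BS((s,s),s^{\epsilon_i+\epsilon_j})$. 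Composing further with the homeomorphism induced by $\pi_1$ onto $\P_s/\B$ and with the chain $\P_s/\B\cong\SL_2(\mathbb C)/\B_2\cong\mathbb P^1(\mathbb C)$ coming from $\phi_{\alpha_s}$, I obtain a homeomorphism $\psi\colon\BS_c(\u{t},\u{v})\to\mathbb P^1(\mathbb C)$; the desired embedding of $\mathbb P^1(\mathbb C)$ into $\BS_c(\u{t},w)$ is then $\psi^{-1}$ followed by the inclusion $\BS_c(\u{t},\u{v})\hookrightarrow\BS_c(\u{t},w)$.

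It remains to keep track of the actions. Transporting the standard action $\tcurly*[a:c]=[\alpha_s(\tcurly)a:c]$ on $\mathbb P^1(\mathbb C)$ back through $\phi_{\alpha_s}$ and through the skew conjugation, one finds that $\psi$ intertwines the $\K$-action on $\BS_c(\u{t},\u{v})$ with $\kcurly[a:c]=[\alpha_s(\dot x\kcurly\dot x^{-1})a:c]=[(x^{-1}\alpha_s)(\kcurly)a:c]$, and after replacing $x$ by $xp$ if necessary, (\ref{eq:conj}) yields $x^{-1}\alpha_s=\alpha_p$, which is exactly the $\K$-action in the statement. Since $\psi$ is a $\K$-equivariant homeomorphism, it carries $\mathbb P^1(\mathbb C)^\K=\{[1:0],[0:1]\}$ bijectively onto the two $\K$-fixed points of $\BS_c(\u{t},\u{v})$ found in the first step, namely $\u{\epsilon}$ and $\f_{i,j}\u{\epsilon}$, and only the bijection is needed. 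I expect the main obstacle to be propagating the skew equivariance coherently through the three successive identifications --- in particular verifying that the induced twist is $\alpha_p$ itself rather than a proportional root, so that the $\K$-action on $\mathbb P^1(\mathbb C)$ comes out precisely as $\kcurly[a:c]=[\alpha_p(\kcurly)a:c]$; the rest is the unravelling of the fibre-bundle and $\SL_2$ pictures already set up.
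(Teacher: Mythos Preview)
Your proposal is correct and follows essentially the same approach as the paper: the lemma is stated after the construction in the ``Drawing curves'' subsection, and the proof \emph{is} that construction --- the nested structure $R$ with $\u{v}$, the collapse via Proposition~\ref{proposition:pF} with $F=\{(k,k):k\ne i,j\}$, the $x$-skew conjugation $d_x$ to $\BS_c((s,s),s^{\epsilon_i+\epsilon_j})$, the identification with $\mathbb P^1(\mathbb C)$ through $\pi_1$ and $\phi_{\alpha_s}$, and the final adjustment $x\mapsto xp$ via~(\ref{eq:conj}) to pin down the action as $\kcurly[a:c]=[\alpha_p(\kcurly)a:c]$. Your explicit verification that $\u{\epsilon}$ and $\f_{i,j}\u{\epsilon}$ are the \emph{only} $\K$-fixed points of $\BS_c(\u{t},\u{v})$ is a small addition the paper leaves implicit (it follows since $\psi$ is a $\K$-equivariant homeomorphism and $\mathbb P^1(\mathbb C)^\K$ has exactly two points).
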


Let $\F$ be a field of characteristic distinct from $2$ and $R=H_\K^\bullet(\pt,\F)$
be the equivariant cohomology of the point (and forget that $R$ denoted a nested structure above).

\begin{corollary}\label{corollary:t:1}
Under the hypothesis of the previous lemma, any function $g$ in image of the restriction map
$$
H_\K^\bullet(\BS_c(\u{t},w),\F)\to H_\K^\bullet(\BS_c(\u{t},w)^\K,\F)\cong R^{\oplus\u{t}}(w)
$$
has the following property $g(\u{\epsilon})\=g(\f_{i,j}\u{\epsilon})\pmod{\alpha_p}$.
\end{corollary}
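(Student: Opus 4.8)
The plan is to deduce the congruence from Lemma~\ref{lemma:t:1}, which reduces it to the classical local computation of $\K$-equivariant cohomology at an edge, i.e.\ for $\mathbb P^1(\mathbb C)$ with a linear circle action. First I would write $g=\mathrm{res}(\tilde g)$ for some $\tilde g\in H_\K^\bullet(\BS_c(\u{t},w),\F)$ and invoke Lemma~\ref{lemma:t:1} to get a $\K$-equivariant embedding $\iota\colon\mathbb P^1(\mathbb C)\hookrightarrow\BS_c(\u{t},w)$, where $\mathbb P^1(\mathbb C)$ carries the action $\kcurly[a:c]=[\alpha_p(\kcurly)a:c]$ and $\iota$ restricts to a bijection of $\mathbb P^1(\mathbb C)^\K=\{[1:0],[0:1]\}$ onto $\{\u{\epsilon},\f_{i,j}\u{\epsilon}\}$ (the two target points are indeed distinct elements of $\Sub(\u{t},w)$ since $i,j\in\M_p(\u{\epsilon})$ and $|\{i,j\}|$ is even, by Lemma~\ref{lemma:sim}\ref{lemma:sim:iv}). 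By naturality of restriction to $\K$-fixed points applied to $\iota$, the pullback $\iota^*\tilde g\in H_\K^\bullet(\mathbb P^1(\mathbb C),\F)$ restricts at the fixed points $[1:0]$ and $[0:1]$ exactly to $g(\u{\epsilon})$ and $g(\f_{i,j}\u{\epsilon})$ (in the order dictated by $\iota$). Hence it suffices to prove: for every class on $\mathbb P^1(\mathbb C)$ with this $\K$-action, its two values at the fixed points are congruent modulo $\alpha_p$.

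For that $\mathbb P^1$-statement I would cover $\mathbb P^1(\mathbb C)$ by the two $\K$-invariant affine charts $U_0=\{a\ne0\}$ and $U_1=\{c\ne0\}$, each isomorphic to $\mathbb C$ with a linear $\K$-action (weight $-\alpha_p$ on $U_0$ in the coordinate $c/a$, weight $+\alpha_p$ on $U_1$ in the coordinate $a/c$), hence $\K$-equivariantly contractible onto its unique fixed point by the scaling homotopy; therefore $H_\K^\bullet(U_i,\F)\cong R$ via that fixed point. The overlap $U_0\cap U_1\cong\mathbb C^\times$ carries the $\K$-action through the character $\alpha_p\colon\K\to S^1$; since $\alpha_p$ is a primitive root, $\K_p:=\ker\alpha_p$ is a corank-one subtorus and $U_0\cap U_1$ is $\K$-equivariantly homotopy equivalent to $\K/\K_p$ (retract onto the unit circle). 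Consequently $H_\K^\bullet(U_0\cap U_1,\F)\cong H^\bullet(B\K_p,\F)\cong R/\langle\alpha_p\rangle$, and I would check that under the identifications $H_\K^\bullet(U_i,\F)\cong R$ each of the two restriction maps $H_\K^\bullet(U_i,\F)\to H_\K^\bullet(U_0\cap U_1,\F)$ becomes the canonical projection $R\to R/\langle\alpha_p\rangle$, both being induced by the constant map $U_0\cap U_1\to\mathrm{pt}$, i.e.\ by $B\K_p\to B\K$. Then, given $h\in H_\K^\bullet(\mathbb P^1(\mathbb C),\F)$ with values $f_0,f_1\in R$ at the two fixed points, restricting $h$ to $U_0\cap U_1$ through either chart yields the classes of $f_0$ and of $f_1$ in $R/\langle\alpha_p\rangle$; these coincide, so $f_0\=f_1\pmod{\alpha_p}$. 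Applying this to $h=\iota^*\tilde g$ gives $g(\u{\epsilon})\=g(\f_{i,j}\u{\epsilon})\pmod{\alpha_p}$, as desired.

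The only genuinely topological input, and the step I expect to need the most care, is the identification $H_\K^\bullet(U_0\cap U_1,\F)\cong R/\langle\alpha_p\rangle$ together with the fact that the two chart restrictions realize the canonical projection $R\to R/\langle\alpha_p\rangle$ — this is precisely the local GKM computation at an edge, and it is where the hypothesis that $\alpha_p$ is a (primitive) root and that $\mathrm{char}\,\F\ne2$ is used to control $B\K_p$. Everything else is formal: the reduction via Lemma~\ref{lemma:t:1}, the $\K$-equivariant contractibility of the affine charts, and the naturality of restriction to fixed points. In particular I would not need equivariant formality of $\mathbb P^1(\mathbb C)$, since only the inclusion ``$\subseteq$'' is asserted, and that already follows from the compatibility of the restrictions on the two members of a $\K$-invariant open cover.
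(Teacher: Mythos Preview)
Your proof is correct and follows essentially the same route as the paper: both reduce to the $\mathbb P^1(\mathbb C)$ computation via the $\K$-equivariant embedding of Lemma~\ref{lemma:t:1} and then invoke the standard GKM edge relation for $\mathbb P^1$. The only difference is cosmetic---the paper simply cites this last step as a classical fact (referring to Brion and Jantzen), whereas you spell out the open-cover argument explicitly.
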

\begin{proof}
It suffices to consider a longer sequence including $\mathbb P^1(\mathbb C)$ embedded to $\BS_c(\u{t},w)$
as in Lemma~\ref{lemma:t:1}:
$$
H_\K^\bullet(\BS_c(\u{t},w),\F)\to H_\K^\bullet(\mathbb P^1(\C),\F)\to H_\K^\bullet(\{\u{\epsilon},\f_{i,j}\u{\epsilon}\},\F)\cong R\oplus R\\
$$
It is a classical result that all functions in the image of the second map satisfy the required equivalence
(see, for example,~\cite{Brion},~\cite[1.10(3)]{Jantzen}).
Therefore, it is also true for all functions in the image of the entire composition.
\end{proof}

\subsection{Fibers with not vanishing odd cohomology} We are ready to prove our main topological result.

\begin{theorem}\label{theorem:t:1}
Let $\u{t}$ be a reflection expression and $w\in W=S_n$ be such that
{\renewcommand{\labelenumi}{{\it(\roman{enumi})}}
\renewcommand{\theenumi}{{\rm(\roman{enumi})}}
\begin{enumerate}
\item\label{theorem:t:1:odd:i} $|\M_p(\u{\epsilon})|\le2$ for any $\epsilon\in\Sub(\u{t},w)$ and $p\in T$;\\[-8pt]
\item\label{theorem:t:1:odd:ii} $\X_w(\u{t})$ is not a free left $R$-module.
\end{enumerate}}

\noindent
For any field $\k$ of characteristic distinct from $2$, there exists $j\in\Z$
such that
$$
H^{2j+1}(\BS_c(\u{t},w),\F)\ne0.
$$
\end{theorem}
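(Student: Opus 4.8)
The strategy is proof by contradiction: assume $H^{2j+1}(\BS_c(\u{t},w),\F)=0$ for every $j\in\Z$ and deduce that the left $R$-module $\X_w(\u{t})$ is free, contradicting hypothesis~\ref{theorem:t:1:odd:ii}. Throughout write $n=|\u{t}|$ and $X=\BS_c(\u{t},w)$; this is a compact space (a closed subset of the compact manifold $\BS_c(\u{t})$) on which $\K$ acts with fixed point set $X^\K\cong\Sub(\u{t},w)$, a finite set.

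First I would establish equivariant formality together with injectivity of the restriction to fixed points. Since $H^\bullet(X,\F)$ is concentrated in even degrees by assumption and $H^\bullet(B\K,\F)\cong R$ is as well, the Serre spectral sequence of the Borel fibration $X\to X_{h\K}\to B\K$ has its $E_2$-page concentrated in bidegrees with both coordinates even; each differential $d_r$ changes the parity of exactly one coordinate, so all differentials vanish and the spectral sequence degenerates. Hence $H_\K^\bullet(X,\F)$ is a free $R$-module, and by the localization theorem (the relative module $H_\K^\bullet(X,X^\K;\F)$ is $R$-torsion, while $H_\K^\bullet(X,\F)$ is torsion-free, being free) the restriction map
$$
\iota^*\colon H_\K^\bullet(X,\F)\longrightarrow H_\K^\bullet(X^\K,\F)\cong R^{\oplus\u{t}}(w)
$$
is injective. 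Put $M=\im\iota^*$; it is a graded left $R$-submodule of $R^{\oplus\u{t}}(w)$ which is free (isomorphic to $H_\K^\bullet(X,\F)$), hence graded free since $R$ is nonnegatively graded with $R_0=\F$ a field.

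Next I would show $\Y_w(\u{t})\subset M\subset\X_w(\u{t})$. The upper inclusion comes from Corollary~\ref{corollary:t:1}: every $g\in M$ satisfies $g(\u{\epsilon})\equiv g(\f_{i,j}\u{\epsilon})\pmod{\alpha_p}$ whenever $\u{\epsilon}^i=\u{\epsilon}^j=p$ with $i<j$; under hypothesis~\ref{theorem:t:1:odd:i} every $\M_p(\u{\epsilon})$ has at most two elements, and for such sets these congruences are precisely the divisibility relations $\Sigma_X^{\u{\epsilon},\ev}(g)\in\alpha_p^{|X|-1}R$ defining $\X_w(\u{t})$. For the lower inclusion I would use the total variety $\BS_c(\u{t})$, which is an iterated $\mathbb P^1$-bundle and whose equivariant cohomology is therefore $H_\K^\bullet(\BS_c(\u{t}),\F)\cong R(\u{t})$, with the restriction to $\BS_c(\u{t})^\K\cong\Sub(\u{t})$ identified with $\Res_{\u{t}}$; by Corollary~\ref{corollary:2} its image is $\X(\u{t})$. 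The inclusion $X\hookrightarrow\BS_c(\u{t})$ identifies $X^\K$ with the subset $\Sub(\u{t},w)\subset\Sub(\u{t})$ and gives a commutative square relating $\iota^*$ to the restriction-of-functions map $R^{\oplus\u{t}}\to R^{\oplus\u{t}}(w)$; chasing it shows $\{g|_{\Sub(\u{t},w)}:g\in\X(\u{t})\}=\Y_w(\u{t})$ is contained in $M$.

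Finally, Theorem~\ref{theorem:duals} applied to this $M$ yields an isomorphism $\X^w(\u{t})\cong\Hom_{R\text{-left}}^\bullet(M,R)(-2n)$; since $M$ is graded free, so is its dual, hence $\X^w(\u{t})$ is graded free, and then the isomorphism~(\ref{eq:21}) exhibits $\X_w(\u{t})\cong\Hom_{R\text{-left}}^\bullet(\X^w(\u{t}),R)(-2n)$ as graded free as well --- contradicting~\ref{theorem:t:1:odd:ii}. The main obstacle is the second half of the third step: one must carefully justify that $H_\K^\bullet(\BS_c(\u{t}),\F)\cong R(\u{t})$ with the fixed-point restriction realizing $\Res_{\u{t}}$ --- classical when $\u{t}$ is an expression, and extending to reflection expressions via the iterated $\mathbb P^1$-bundle structure of $\BS_c(\u{t})$, but needing explicit setup --- and that the fixed-point sets and restriction maps for the (possibly singular) fibre $X$ are compatible with those for $\BS_c(\u{t})$ in the way claimed. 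The equivariant-formality and localization inputs of the second step are standard for compact $\K$-spaces, though one should note that $X$, being a closed subspace of a compact manifold, is a finite CW complex and hence amenable to these tools.
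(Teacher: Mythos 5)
Your proposal is correct and follows essentially the same route as the paper's proof: contradiction, equivariant formality via spectral-sequence degeneration, the localization theorem to inject $H_\K^\bullet(\BS_c(\u{t},w),\F)$ into $R^{\oplus\u{t}}(w)$, the sandwich $\Y_w(\u{t})\subset\im\iota^*\subset\X_w(\u{t})$ (using Corollary~\ref{corollary:t:1} and hypothesis~\ref{theorem:t:1:odd:i} for the upper inclusion, and the identification of the fixed-point restriction on the full variety with $\Res_{\u{t}}$ for the lower), and two applications of Theorem~\ref{theorem:duals} to conclude $\X_w(\u{t})$ is free. The only difference is cosmetic: you invoke the Serre spectral sequence of the Borel fibration with an explicit parity argument where the paper cites the Leray spectral sequence, and you flag the identification $H_\K^\bullet(\BS_c(\u{t}),\F)\cong\X(\u{t})$ as something needing setup, whereas the paper simply cites its companion result from the twisted-actions paper.
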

\begin{proof} We denote $X=\BS_c(\u{t})$ and $F=\BS_c(\u{t},w)$ for brevity.
Suppose that the claim of the theorem is false. Then the Leray spectral sequence for $F$
collapses at the second page and we get an isomorphism of left $R$-modules
$$
H_\K^\bullet(F,\k)\cong R\otimes_\F H^\bullet(F,\F).
$$
Hence $H_\K^\bullet(F,\F)$ is a free $R$-module.
Consider the following commutative diagram
$$
\begin{tikzcd}
                           &[-15pt]X&F\arrow[hook']{l}[swap]{\rho}\\
\Sub(\u{t})\arrow[equal]{r}&[-15pt]X^\K\arrow[hook]{u}{i_X}&F^\K\arrow[equal]{r}\arrow[hook]{u}[swap]{i_F}\arrow[hook']{l}[swap]{\rho_K}&[-15pt]\Sub(\u{t},w)
\end{tikzcd}
$$
As the localization theorem works for both $X$ and $F$ (see~\cite[Theorem 6]{Brion}), we get that in commutative diagram
$$
\begin{tikzcd}
&[-15pt]H_\K^\bullet(X,\k)\arrow{r}{\rho^\star}\arrow[hook]{d}[swap]{i_X^\star}&H_\K^\bullet(F,\k)\arrow[hook]{d}{i_F^\star}\\
R^{\oplus\u{t}}\arrow[equal]{r}&[-15pt]H_\K^\bullet(X^\K,\k)\arrow{r}[swap]{\rho_\K^\star}&H_\K^\bullet(F^\K,\k)\arrow[equal]{r}&[-15pt]R^{\oplus\u{t}}(w)
\end{tikzcd}
$$
the vertical arrows are injective\footnote{where the superscript ${}^\star$ denotes the equivarinat pullback as in~\cite{tw}}.
Here $\rho_\K^\star$ is the usual set theoretical restriction. Compairing~\cite[Theorem~5.7]{tw} and~(\ref{eq:Res}),
we obtain that $i_X^\star\circ\theta_{\u{t}}=\Loc_{\u{t}}$, where $\theta_{\u{t}}:R(\u{t})\ito H^\bullet(X,\k)$
is the  isomorphism as in~\cite[Theorem 5.3]{tw}.
Therefore, $\im i_X^\star=\im\Loc_{\u{t}}=\X(\u{t})$ by Corollary~\ref{corollary:2}.
Thus
$$
\Y_w(\u{t})=\rho_\K^\star(\X(\u{t}))=\im(\rho_\K^\star\circ i_X^\star)=\im(i_F^\star\circ\rho^\star)\subset\im i_F^\star\cong H_\K^\bullet(F,\k).
$$
Note that we do not care if $\rho^\star$ is epimorphic or not. We claim that $\im i_F^\star\subset\X_w(\u{t})$.
Indeed, let $\u{\epsilon}\in F^\K$ and $\M_p(\u{\epsilon})=\{i,j\}$, where $i<j$, for some $p\in T$.
Then
$$
\Sigma^{\u{\epsilon},\ev}_{\{i,j\}}(g)=g(\u{\epsilon})-g(\f_{i,j}\u{\epsilon})
$$
is divisible by $\alpha_p$ by Corollary~\ref{corollary:t:1}. By condition~\ref{theorem:t:1:odd:i}, there is no need
to consider any other cases for $\M_p(\u{\epsilon})$.
Thus we have proved that
$$
\Y_w(\u{t})\subset\im i_F^\star\subset\X_w(\u{t}).
$$
By our assumption the middle module is free. It also has finite rank as $\X_w(\u{t})$ is Noetherian
(being a submodule of a free $R$-module $R^{\otimes\u{t}}(w)$ having finite rank).
Taking duals, we get by Theorem~\ref{theorem:duals} that
$$
(\im i_F^\star)^\vee=\X^w(\u{t})(2m),
$$
where $m=|\u{t}|$. Thus $\X^w(\u{t})$ is also free. Applying Theorem~\ref{theorem:duals} once again,
we get that $\X^w(\u{t})^\vee(-2m)=\X_w(\u{t})$ is free. This contradicts condition~\ref{theorem:t:1:odd:ii}. 
\end{proof}

\begin{corollary}\label{corollary:topological_example}
Let $\u{i}$ be a rearrangement of $(1,2,\ldots,n)$, where $n\ge4$, and $k\in\Z$.
For any field $\k$ of characteristic distinct from $2$, there exists $j\in\Z$
such that
$$
H^{2j+1}(\BS_c(\D(\u{i})[k],1),\F)\ne0.
$$
\end{corollary}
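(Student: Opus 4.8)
The plan is to derive this as an immediate application of Theorem~\ref{theorem:t:1} with $\u{t}=\D(\u{i})[k]$ and $w=1$. All that is required is to verify the two hypotheses~\ref{theorem:t:1:odd:i} and~\ref{theorem:t:1:odd:ii} of that theorem — namely that $|\M_p(\u{\epsilon})|\le 2$ for every $\u{\epsilon}\in\Sub(\D(\u{i})[k],1)$ and $p\in T$, and that $\X_1(\D(\u{i})[k])$ is not a free left $R$-module — after which the theorem furnishes the desired odd-degree cohomology class.

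To check the first hypothesis I would recall, from Lemma~\ref{lemma:ex:2}, that $\Sub(\D(\u{i})[k],1)$ is exactly the set of subexpressions $\u{e}^{(\l)}[k]$ with $\l=0,\ldots,2n-2$, and, from Lemma~\ref{lemma:ex:3.5}, that each reflection expression $(\u{e}^{(\l)}[k])^\bigcdot$ is again a cyclic shift of $\D(\u{j})$ for some rearrangement $\u{j}$ of $(1,\ldots,n)$. Applying~(\ref{eq:MD}) together with the shift rule~(\ref{eq:MDk}) then shows that $|\M_p(\D(\u{j})[m])|\le2$ for every reflection $p$, so hypothesis~\ref{theorem:t:1:odd:i} holds on all of $\Sub(\D(\u{i})[k],1)$.

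To check the second hypothesis I would invoke Theorem~\ref{theorem:ex:1:ii}: since $n\ge4$, for any chord $A\in\rchi_n$ there is an isomorphism of graded left $R$-modules
$$
\X_1(\D(\u{i})[k])\cong R\oplus R(-2)^{\oplus n}\oplus\St_R(\alpha^A,\alpha^{1+A},\ldots,\alpha^{n-2+A}),
$$
where the $n-1$ roots $\alpha^A,\ldots,\alpha^{n-2+A}$ are linearly independent by Lemma~\ref{lemma:linear_independence}. By Corollary~\ref{corollary:ca:1} the string module occurring here has projective dimension $(n-1)-2=n-3\ge1$, so it is not free, and hence neither is the whole direct sum; this is hypothesis~\ref{theorem:t:1:odd:ii}. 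One subtlety has to be addressed: in the cohomological setting $R$ is $H_\K^\bullet(\pt,\F)$ rather than $\Sym(V)$ over $\R$. But the computations of Section~\ref{Examples} use nothing about the coefficient field beyond $\mathrm{char}\,\F\ne2$ and the GKM-condition, and the latter holds for the maximal torus of $\SL_n(\C)$ since the roots $\alpha_{ij}\sim e_i-e_j$ are pairwise nonproportional over any field; so Theorem~\ref{theorem:ex:1:ii} applies verbatim with this $R$.

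With both hypotheses in hand, Theorem~\ref{theorem:t:1} produces $j\in\Z$ with $H^{2j+1}(\BS_c(\D(\u{i})[k],1),\F)\ne0$, as asserted. I expect the only genuinely delicate point to be the coefficient-ring identification just mentioned, i.e.\ reading the algorithmic computation of $\X_1(\D(\u{i})[k])$ as taking place over the equivariant cohomology ring of the torus; the remainder is a direct substitution into results already established.
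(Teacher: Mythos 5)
Your proof is correct and takes essentially the same route as the paper: both reduce the statement to verifying hypotheses~\ref{theorem:t:1:odd:i} and~\ref{theorem:t:1:odd:ii} of Theorem~\ref{theorem:t:1} for $\u{t}=\D(\u{i})[k]$ and $w=1$. The paper cites Theorem~\ref{theorem:main_example} for both conditions, whereas you verify condition~\ref{theorem:t:1:odd:i} directly from Lemma~\ref{lemma:ex:3.5} and equations~(\ref{eq:MD})--(\ref{eq:MDk}), and condition~\ref{theorem:t:1:odd:ii} directly from Theorem~\ref{theorem:ex:1}\ref{theorem:ex:1:ii} and Corollary~\ref{corollary:ca:1} — this is simply unwinding the chain of citations one level, since Theorem~\ref{theorem:main_example} itself rests on Theorem~\ref{theorem:ex:1}\ref{theorem:ex:1:ii}. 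Your remark about the coefficient ring (that in Theorem~\ref{theorem:t:1} $R$ is $H_\K^\bullet(\pt,\F)$ rather than $\Sym(V)$ over $\R$, and that the algorithmic and string-module computations transfer because linear independence of the relevant roots is a tree condition on type-$A$ roots, valid over any field of characteristic $\ne2$) is a genuinely careful point that the paper leaves implicit; it is correctly argued and does not affect the conclusion.
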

\begin{proof} The result follows from the fact that by Theorem~\ref{theorem:main_example}, the reflection expression $\u{t}=\D(\u{i})[k]$
and the element 
$w=1$
satisfy conditions~\ref{theorem:t:1:odd:i} and~\ref{theorem:t:1:odd:ii} of Theorem~\ref{theorem:t:1}.
\end{proof}

\def\sep{\\[-6pt]}

\end{document}